\newtheorem{thm}{Theorem}[section]
\newtheorem{prop}[thm]{Proposition}
\newtheorem{cor}[thm]{Corollary}
\theoremstyle{remark}
\newtheorem{remark}[thm]{Remark}%[section]
\theoremstyle{definition}
\newtheorem{definition}[thm]{Definition}
\newtheorem{example}[thm]{Example}
\newcommand*\isom{%
  \xrightarrow{\sim}%
}
\def\qq{\mathbb{Q}}
\def\PP{\mathbb{P}}
\def\rr{\mathbb{R}}
\def\zz{\mathbb{Z}}
\def\cc{\mathbb{C}}
\def\mm{\mathcal{M}}
\def\CC{\mathcal{C}}
\def\VV{\mathcal{V}}
\def\a{\mathrm{a}}
\def\vv{\mathbb{V}}
\def\ll{\mathcal{L}}
\def\oo{\mathcal{O}}
\def\ss{\mathcal{S}}
\def\dd{\mathcal{D}}
\def\pp{\mathcal{P}}
\def\ee{\mathcal{E}}
\def\uu{\mathcal{U}}
\def\ra{\rightarrow}
\newcommand{\beq}{\begin{equation}}
\newcommand{\eeq}{\end{equation}}
\newcommand{\pair}[1]{\langle#1\rangle}
\newcommand{\abs}[1]{|#1|}
\newcommand{\divisor}{\operatorname{div}}
\newcommand{\ord}{\operatorname{ord}}
\newcommand{\Hom}{\operatorname{Hom}}
\newcommand{\Spec}{\operatorname{Spec}}
\newcommand{\on}[1]{\operatorname{#1}}
\newcommand{\bb}[1]{{\mathbb{#1}}}
\newcommand{\ca}[1]{{\mathcal{#1}}}
\newcommand{\nm}{\on{N}}
\newcommand{\nmo}{\on{N}^0}
\newcommand\epoints{\partial}
\newcommand\vertices{\mathrm{Vert}}
\newcommand\edges{\mathrm{Ed}}
\newcommand\Graph{\Gamma}
\newcommand\resist{\mu}
\newcommand\current[3]{I(#1:#2\to#3)}
\newcommand\laplace[1][]{L_{#1}}
\newcommand\basis{\mathrm{e}}
\newcommand\transpose[1]{\vphantom{{#1}}^t{#1}}
\newcommand\divD{\mathcal{D}}
\newcommand\divE{\mathcal{E}}
\newcommand\green{g}
\newcommand\tree{T}
\newcommand\trees{\mathrm{Trees}}
\newcommand\forest{F}
\newcommand\forests[1]{#1\text{-}\mathrm{Forests}}
\newcommand\permute[5]{\sigma^{#1#2}_{#3#4}(#5)}
\newcommand\indicator[1]{\chi_{#1}}
\newcommand\maxforests{\mathrm{Max\text{-}Forests}}
\newcounter{nootje}
\numberwithin{equation}{section}
\begin{document}

\title{N\'eron models and the height jump divisor}

\author{Owen Biesel, David Holmes and Robin de Jong}

\subjclass[2010]{Primary 14H10, secondary 11G50, 14G40, 14K15.}

\keywords{Canonical height, Deligne pairing, dual graph, effective resistance, Green's function, height jump divisor, labelled graph, N\'eron model, resistive network.}

\begin{abstract} 
We define an algebraic analogue, in the case of jacobians of curves, of the height jump divisor introduced recently by R. Hain. We give explicit combinatorial formulae for the height jump for families of semistable curves using labelled reduction graphs. With these techniques we prove a conjecture of Hain on the effectivity of the height jump, and also give a new proof of a theorem of Tate, Silverman and Green on the variation of heights in families of abelian varieties. 
\end{abstract}

\maketitle
\thispagestyle{empty}

\tableofcontents

\section{Introduction}

When $S$ is a reduced scheme, and $A$ is an abelian scheme over an open dense subscheme $U$ of $S$, a \emph{N\'eron model} for $A$ over $S$ is a smooth separated group scheme $\nm(A,S) \to S$ together with an isomorphism $j \colon A \isom \nm(A,S)|_U$ satisfying the universal property that for all smooth separated morphisms $T \to S$ and all $U$-morphisms $f \colon T|_U \to A$ there exists a unique $S$-morphism $F \colon T \to \nm(A,S)$ whose restriction to $T|_U$ is equal to $jf$. As was shown by A. N\'eron and M. Raynaud \cite{blr} \cite{ne} \cite{ray} in the 1960s, N\'eron models always exist when the base scheme $S$ is a Dedekind scheme. In contrast, when the base scheme $S$ has dimension at least two, N\'eron models rarely exist, even after allowing alterations of the base. 

If $A$ is a family of jacobian varieties with semistable reduction and $S$ is a quasicompact regular separated base scheme, then by results of the second author \cite{ho1} there exists a largest open subscheme $V \subset S$ containing $U$ such that $A$ does have a N\'eron model $\nm(A,V)$ over $V$. Moreover, the complement of $V$ has codimension at least two in $S$. 

Now given two sections $P, Q \in A(U)$, there exist $m, n \in \zz_{>0}$ such that the multiples $mP, nQ$ extend as sections of the fiberwise connected component $\nmo(A,V)$ of $\nm(A,V)$ over $V$ (perhaps after slightly shrinking $V$). Using a suitable extension of the Poincar\'e bundle on $A \times_U A^\lor$ to the fiberwise connected component of its N\'eron model over $V$ we will construct a canonical $\qq$-line bundle $\pair{P,Q}_\a$ associated to $P, Q$ on $S$, independent of the choice of $m, n$. In the case where $\dim S =1$, the $\qq$-line bundle $\pair{P,Q}_\a$ coincides with the admissible variant, due to S. Zhang \cite{zh}, of the Deligne pairing between degree zero divisors on a semistable curve.

The formation of this $\qq$-line bundle $\pair{P,Q}_\a$ does \emph{not} in general commute with base changes $f \colon T \to S$. In particular, when $T$ is a Dedekind scheme, and $f$ is non-degenerate in the sense that $f^{-1}U$ is dense in $T$, we would like to compare the two $\qq$-line bundles $f^*\pair{P,Q}_\a$ and $\pair{f^*P,f^*Q}_\a$ on $T$. The difference can be viewed as a divisor supported on $f^{-1}(S \setminus V)$, called the \emph{height jump divisor} associated to $P, Q$ and $f$. The terminology is due to R. Hain, who discovered and studied the height jumping phenomenon in a Hodge theoretic context, cf. \cite{hain_normal}. The jump is in fact defined whenever $T$ is a normal scheme, but its computation can always be reduced to the Dedekind case, and so we only consider this (see discussion after Proposition \ref{mostlytrivial}). 

If $A$ has a N\'eron model over $S$, then for every $P, Q$ and $f$ as above the height jump divisor is trivial. In the general case, we apply the theory of labelled graphs (developed by the second author \cite{ho1}) to calculate the height jump divisor in terms of the combinatorics of the dual graphs associated to the fibers of $C \to S$ (cf.\ Theorem \ref{formulahtjump}). Our calculation features the Green's function of the dual graph, viewed as a resistive network. We are then able to give bounds for the coefficients of the height jump divisor by a careful analysis of the Green's function of such networks (cf.\ Theorems \ref{maineffective} and \ref{mainquasisplit}). 

In particular, we show that if $P$ and $Q$ are equal, then the height jump divisor is an effective divisor. The effectivity of the height jump divisor in the diagonal case is related to a conjecture (Conjecture 14.5) in \cite{hain_normal}, which was proven for the tautological sections of the universal jacobian over the moduli stack $\mm_{g,n}$ of $n$-pointed genus-$g$ curves in a previous paper \cite{hdj} by two of the authors. More details will be given in Section \ref{sec:main}.

As a further application of our bounds, we give a new proof of a classical result about the variation of the canonical height $\hat{\on{h}}_\xi$ in a family $(A,\xi)$ of polarized abelian varieties. Assume that $S$ is a smooth projective geometrically connected curve over a number field $K$, and let $U$ be an open dense subscheme of $S$ together with an abelian scheme $A \to U$ over $U$ and a section $P \in A(U)$. Let $\xi$ be a symmetric relatively ample divisor class on $A/U$. Then the aforementioned result states that there exists a $\qq$-line bundle $L$ on $S$ such that $ \deg_S L = \hat{\on{h}}_{\xi_\eta}(P_\eta) $, and such that the function from $U(\bar{K})$ to $\mathbb{R}$ given by $ u \mapsto \hat{\on{h}}_{\xi_u}(P_u)$ extends into a Weil height on $S(\bar{K})$ with respect to $L$ (here $\eta$ denotes the generic point of $S$). 

At the beginning of the 1980s, J. Tate obtained this result for elliptic surfaces over $S$ \cite{tate}, and around the same time J. Silverman \cite{silv} proved an asymptotic version of the general result. Importantly, his result shows that the height of algebraic points $u \in U(\bar{K})$ such that $P_u$ is torsion in $A_u$ is bounded, and hence, by the Northcott property, the set of rational points $u \in U(K)$ such that $P_u$ is torsion in $A_u$, is finite. 

S. Lang \cite[Chapter 12]{lang} and G. Call \cite{call} obtained the result under the assumption of the existence of so-called ``good completions'' of N\'eron models. Finally W. Green proved the general case in \cite{gr}. Green's proof uses the full machinery of toroidal compactifications of the moduli stack of principally polarized abelian varieties. Our alternative proof seems to be somewhat more elementary and reduces the general case of abelian schemes to the special case of jacobians. We do not use any compactifications of semiabelian varieties or of the moduli space of abelian varieties. For the jacobian case, we find ourselves reduced to a study of local height jumps at non-archimedean primes of $K$, where the required bounds are furnished by our general bounds on the coefficients of the height jump divisor.  

\subsection{Outline of the paper}
In Sections \ref{prelims} to \ref{connection} we describe the main results of the paper, and some applications. These results are proven in Sections \ref{proofmain} to \ref{nef}. Sections \ref{resistive} to \ref{basedim1} build up the tools we will need to carry out these proofs. In the appendix we develop some combinatorial tools needed in the proof of the results in Section \ref{basedim1}. 

\section{Preliminaries} \label{prelims}

In this section we collect a number of basic definitions and results. The section can be used as a reference chart for the remainder of the paper, and may be safely skipped at first reading. Our main reference for the material on Poincar\'e biextensions is \cite{mb}, and for the material on Deligne's pairing we refer to \cite{de}.
\begin{definition}[Rigidified line bundles]
Let $U$ be a scheme, let $X \ra U$ be a morphism, and $p \in X(U)$ a section. The category $\on{PicRig}(X,p)$ has as objects pairs $(\ca{L}, \psi)$ where $\ca{L}$ is a line bundle on $X$ and $\psi$ is an isomorphism of line bundles 
\[ \psi\colon p^*\ca{L} \ra O_U \, .  \] 
Morphisms $(\ca{L}_1, \psi_1) \ra (\ca{L}_2, \psi_2)$ are isomorphisms of line bundles $f\colon \ca{L}_1 \isom \ca{L}_2$ such that
$$  (p^*f) \circ \psi_1^{-1}= \psi_2^{-1} \, . $$
\end{definition}
\begin{definition}[The Poincar\'e biextension for abelian schemes]
Let $U$ be a scheme and $A \to U$ an abelian scheme with unit section $e \in A(U)$. The dual abelian scheme $A^\vee$ represents the functor 
\[ \on{PicRig}^0_{A/U, e}\colon  \on{Sch}_U \ra \on{Groups} \]
sending a scheme $T/U$ to the group of isomorphism classes of fibrewise algebraically trivial line bundles on $T \times_U A$ rigidified along the unit section $T \times_U e$. We then define the Poincar\'e bundle $\pp(A)$ on $A \times_U A^\vee$ to be the element corresponding to the identity map in $\on{PicRig}^0_{A/U, e}(A^\vee) = A^\vee(A^\vee)$.
 It is a line bundle on $A \times_U A^\vee$, rigidified along the unit section. It is unique up to unique isomorphism. Moreover, it is an object of $\on{Biext}(A, A^\vee; \bb{G}_m)$, the category of $\bb{G}_m$-biextensions on $A \times_U A^\vee$. 
\end{definition}
\begin{definition}[Admissible metric] \label{def:adm_metric}
Let $U$ be of finite type over $\bb{C}$, let $A \to U$ be an abelian scheme, and let $\ca{L}$ be a line bundle on $A$. An admissible hermitian metric on $\ca{L}$ is a $C^\infty$ hermitian metric on $\ca{L}(\cc)$ whose curvature form is translation invariant in every fiber of $A(\cc) \to U(\cc)$.
\end{definition}  
\begin{prop} \label{prop:adm_metric}
Let $U$ be of finite type over $\bb{C}$, and let $A \to U$ be an abelian scheme. Then the Poincar\'e bundle $\pp(A)$ on $A \times_U A^\lor$ carries a unique admissible metric compatible with its rigidification.
\end{prop}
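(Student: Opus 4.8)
The plan is to establish uniqueness and existence separately: uniqueness will follow from the rigidity of solutions of the Laplace equation on the compact torus fibres, and for existence I would write down an explicit metric via the analytic uniformization of the family.

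\textbf{Uniqueness.} Suppose $\|\cdot\|_1$ and $\|\cdot\|_2$ are admissible metrics on $\pp(A)(\cc)$, both compatible with the rigidification along $e_A\times_U A^\vee$. Their ratio is a positive $C^\infty$ function $f$ on $(A\times_U A^\vee)(\cc)$. Fix $u\in U(\cc)$, restrict to the fibre $B_u:=(A\times_U A^\vee)_u$, a compact complex torus, and write $f_u$ for the restriction of $f$. The two curvature forms restrict on $B_u$ to translation-invariant $(1,1)$-forms, and their difference equals $-\deldelbar\log f_u$; being translation invariant and exact on a complex torus, this form vanishes, since translation-invariant forms on a complex torus inject into de Rham cohomology. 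Hence $\deldelbar\log f_u=0$, so on the compact connected K\"ahler manifold $B_u$ the function $\log f_u$ is harmonic, hence constant. Compatibility with the rigidification forces $f\equiv1$ along $e_A\times_U A^\vee$; since this locus meets every fibre $B_u$ and $\log f_u$ is constant on $B_u$, we get $f_u\equiv1$ for every $u$, and therefore $f\equiv1$. Thus there is at most one admissible metric compatible with the rigidification.

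\textbf{Existence.} Over $U(\cc)$ there is a canonical analytic uniformization $A(\cc)=V/\Lambda$, where $V=\Lie(A/U)$ is a holomorphic vector bundle and $\Lambda=\ker(\exp_{A/U})\subset V$ is the relative period lattice, a local system; dually $A^\vee(\cc)=\overline{V}^*/\Lambda^\vee$ via the Hodge-theoretic identification $\Lie(A^\vee/U)\cong\overline{V}^*$, where $\Lambda^\vee=\{\ell\in\overline{V}^*:\Im\langle\ell,\lambda\rangle\in\zz\text{ for all }\lambda\in\Lambda\}$ and $\langle\cdot,\cdot\rangle\colon\overline{V}^*\times V\to\cc$ is the tautological pairing. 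On the universal cover $V\times_U\overline{V}^*$ the pullback of $\pp(A)$ is canonically trivialized by a section $s_0$ that is compatible with the rigidification along $\{0\}\times_U\overline{V}^*$, and $\pp(A)$ is recovered from the classical automorphy factor attached to this data. I would endow this trivialization with the $C^\infty$ metric given, for the standard normalization of $s_0$, by $\|s_0(v,w)\|^2=\exp\!\bigl(-\pi\langle w,v\rangle-\pi\overline{\langle w,v\rangle}\bigr)$. One then verifies: (i) this expression is invariant under the automorphy factor of $\pp(A)$, so the metric descends to $\pp(A)(\cc)$; (ii) it is manifestly $C^\infty$ on the total space and restricts to the trivial metric along $\{0\}\times_U\overline{V}^*$, hence is compatible with the rigidification; and (iii) its curvature form is $-\deldelbar\log\|s_0\|^2=\pi\,\deldelbar\!\bigl(\langle w,v\rangle+\overline{\langle w,v\rangle}\bigr)$, which in the linear (translation-invariant) coordinates on the fibres of $V\times_U\overline{V}^*$ has constant coefficients, hence is translation invariant in every fibre of $(A\times_U A^\vee)(\cc)\to U(\cc)$. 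Thus the metric is admissible and compatible with the rigidification. Since the construction is intrinsic it does not depend on the auxiliary trivializations used in the verifications; alternatively one may carry it out locally on $U(\cc)$ and glue the pieces using the uniqueness established above.

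\textbf{Main obstacle.} The crux is the existence half, and within it the demand that a single metric be $C^\infty$ on the total space while restricting to a translation-invariantly curved metric on each compact torus fibre: the fibrewise conditions alone do not pin down the behaviour in the $U$-direction, and it is the explicit uniformization that supplies a uniform choice. Granting the formula for $\|s_0\|$, steps (i)--(iii) are routine manipulations with automorphy factors and constant-coefficient forms, and uniqueness is then immediate from the maximum principle on the compact fibres.
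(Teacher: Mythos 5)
Your proof is correct, and it takes a genuinely different route from the paper: the paper's proof of Proposition \ref{prop:adm_metric} is a bare citation (Theorem 3.1 and Definition 3.7 of \cite{mb}), while you supply a self-contained analytic argument. Your uniqueness step is the standard one --- the difference of the fibrewise curvature forms is exact and translation-invariant, hence zero (translation-invariant forms on a compact complex torus inject into de Rham cohomology), so $\log f_u$ is pluriharmonic, hence constant on the compact K\"ahler fibre, and the rigidification locus $e_A\times_U A^\lor$ meets every fibre and normalizes the constant to $1$. For existence you write down the explicit Appell--Humbert metric $\|s_0(v,w)\|^2=\exp(-2\pi\,\Re\langle w,v\rangle)$ on $V\times_U\overline{V}^*$ and check (modulo the verifications you reasonably defer) descent, smoothness, compatibility with the rigidification, and constant-coefficient fibrewise curvature. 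Moret-Bailly's treatment, which the paper leans on, is cohomological and built directly on the cubist/biextension formalism, so compatibility of the canonical metric with the biextension structure and with base change comes for free and interfaces cleanly with the rest of the paper; your route is more elementary and makes the metric visibly the one underlying the archimedean N\'eron pairing. Two small points to make explicit if you flesh this out: since $\pp(A)$ is a biextension, the admissible metric is also compatible with the second rigidification along $A\times_U e_{A^\lor}$ and with both partial group laws --- your formula satisfies this, but you invoke only one rigidification --- and the coherence of the relative uniformization data over $U(\cc)$ ($V=\Lie(A/U)$ a holomorphic vector bundle, $\Lambda$ a local system, $s_0$ and the automorphy factor holomorphic in $u$) is precisely the content behind the remark ``since the construction is intrinsic'' and deserves a sentence rather than a parenthetical, as it is what lets the local descents glue.
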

\begin{proof} See Theorem 3.1 and Definition 3.7 of \cite{mb}. 
\end{proof}
Let $U$ again be any scheme. For a smooth proper curve $C \to U$, and $D$, $E$ two divisors of relative degree zero on $C \to U$, we denote by $\pair{D,E}$ their Deligne pairing \cite{de}. We recall that this is a line bundle on $U$, depending in a bi-additive manner on $D$, $E$. One particularly useful way of thinking about the Deligne pairing is via its connection with the Poincar\'e bundle, as described in the next proposition. For this we need the notation of the jacobian of a smooth proper curve, which we define to be the fiberwise connected component of identity in the relative Picard algebraic space. The jacobian is in fact a scheme, see \cite[Proposition 4.3]{ldg}. 

\begin{prop} \label{delignepoincare}
Let $C \to U$ be a proper smooth curve with jacobian $J \to U$. Let $\mu \colon J \isom J^\lor$ be the canonical principal polarization. Let $D$, $E$ be divisors of relative degree zero on $C \to U$, and write, by a slight abuse of notation, $(D,\mu E)$ for the induced section of $J \times_U J^\vee$.  Then we have a canonical isomorphism
\[ \pair{D,E} \isom (D,\mu E)^* \pp(J)^{\otimes -1} \, , \]
of line bundles on $U$, where $\pp(J)$ is the Poincar\'e bundle on $J \times_U J^\lor$. 
\end{prop}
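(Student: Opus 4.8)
The plan is to reduce the statement to the well-known description of the Poincar\'e bundle on $J \times_U J^\vee$ in terms of the universal line bundle on $\Pic^0_{C/U}$, together with the standard autoduality of the jacobian. First I would recall the Abel--Jacobi setup: a divisor $D$ of relative degree zero on $C \to U$ defines a morphism $U \to J$, which I also denote $D$, and under the canonical principal polarization $\mu \colon J \isom J^\vee$ the divisor $E$ likewise gives a morphism $\mu E \colon U \to J^\vee$. By the defining (representability) property of $J^\vee$ recalled in the second definition above, a morphism $U \to J^\vee$ is the same datum as a fibrewise algebraically trivial, rigidified line bundle on $U \times_U J = J$; the point of autoduality is that the morphism $\mu E \colon U \to J^\vee$ classifies (up to rigidified isomorphism) the line bundle $t_E^* \ca{P}_0 \otimes \ca{P}_0^{\otimes -1}$ on $J$, where $\ca{P}_0$ is a Poincar\'e-type bundle on $J \times_U J$ and $t_E$ is translation by $E$; equivalently, it classifies $\oo_J(\Theta_E - \Theta)$ for a symmetric theta divisor. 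From this, $(D,\mu E)^*\ca P(J)$ is just the fibre of that line bundle at the point $D \in J(U)$.

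Next I would bring in the Deligne pairing. The key input is the compatibility of the Deligne pairing with the theta divisor / the "Weil pairing = intersection pairing" formalism: for divisors $D$, $E$ of relative degree zero, $\pair{D,E}$ is canonically isomorphic to the pullback along $(D,E)$ of the biextension line bundle on $J \times_U J$ associated to the Poincar\'e class, i.e. to $D^*\bigl(t_E^*\ca P_0 \otimes \ca P_0^{\otimes -1}\bigr)$, with a sign. Concretely one can see this by writing $D = \sum a_i (x_i) - \sum b_j (y_j)$, $E = \sum c_k (z_k) - \sum d_l (w_l)$ with sections $x_i,\dots$ of $C \to U$, using the bi-additivity of both sides to reduce to the case $D = (x) - (y)$, $E = (z) - (w)$, and then using Deligne's canonical isomorphism $\pair{(x)-(y),(z)-(w)} \isom \big( (x)-(y)\big)^*\oo_C\big((z)-(w)\big)$ together with the description of the Poincar\'e bundle via the difference of theta divisors. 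Matching the two expressions identifies $\pair{D,E}$ with $(D,\mu E)^*\ca P(J)^{\otimes -1}$.

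I would organize the write-up as: (1) recall the autoduality isomorphism and the description of $\ca P(J)$ pulled back to $J \times_U J$; (2) reduce both sides to the bi-additive "atomic" case via the bi-additivity of the Deligne pairing (already noted in the text) and of biextensions; (3) in the atomic case, invoke Deligne's functorial isomorphism and compare. The main obstacle I expect is purely bookkeeping of rigidifications and signs: one must check that the canonical rigidification of $\ca P(J)$ along the unit section, Deligne's normalization of $\pair{\cdot,\cdot}$, and the sign in the principal polarization $\mu$ all line up so that the isomorphism is canonical (not merely up to a unit or a twist), and in particular that the inverse (the $\otimes -1$) genuinely appears. A clean way to sidestep the worst of this is to phrase everything in the category $\on{Biext}(J,J^\vee;\bb G_m)$: both $D \mapsto \pair{D,E}$ and $D \mapsto (D,\mu E)^*\ca P(J)^{\otimes -1}$ define biextensions of $J \times_U J$ by $\bb G_m$ (using autoduality to move the second variable from $J^\vee$ to $J$), and a biextension is rigid, so it suffices to produce the isomorphism after restricting to the "atomic" sections coming from points of $C$, where Deligne's theory gives it directly; rigidity then propagates it canonically to all of $J \times_U J$.
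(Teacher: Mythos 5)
Your proposal takes a genuinely different route from the paper's: the paper's proof is a one-line citation ("Combine Equation 2.9.3 and Corollaire 4.14.1 from \cite{mb}"), whereas you sketch a first-principles argument that essentially unwinds what those Moret--Bailly references contain. Your architecture (autoduality identification of $(D,\mu E)^*\pp(J)$ with the fibre at $D$ of the line bundle $\oo_J(\Theta_E-\Theta)$, biadditive reduction to the atomic case $D=(x)-(y)$, $E=(z)-(w)$, Deligne's explicit isomorphism $\pair{(x)-(y),(z)-(w)}\cong\big((x)-(y)\big)^*\oo_C\big((z)-(w)\big)$, then comparison via the theta-divisor description) is a reasonable plan, and the sign bookkeeping you worry about is indeed where the $\otimes(-1)$ comes from.

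However, the biextension shortcut in your last paragraph is not quite a shortcut. Two issues. First, it is not automatic that $D \mapsto \pair{D,E}$ descends from relative degree-zero divisors on $C$ to a biextension of $J\times_U J$ by $\gm$: one must show the pairing depends only on the linear equivalence class, is compatible over varying $T/U$, and that the resulting line bundles glue into a biextension --- and establishing exactly this is more or less equivalent to the proposition being proved (it is the content of Moret-Bailly's Chapter 4). Second, even granting both sides are biextensions, ``biextensions are rigid'' means they have no nontrivial automorphisms; it does not by itself let you propagate an isomorphism given only over the image of $C\times C$ under the Abel--Jacobi map, since that is a proper subscheme of $J\times J$. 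The clean version of the endgame you want is the classification $\on{Biext}(J,J^\vee;\gm)\cong\zz$, generated by $\pp(J)$ (SGA 7 / Moret-Bailly). Once you know the Deligne pairing defines a biextension, it must be $\pp(J)^{\otimes n}$ for some $n\in\zz$, and then a single atomic computation pins down $n=-1$. With that substitution your plan is sound; filling in the details fully would just reconstitute the two references the paper cites.
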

\begin{proof} Combine Equation 2.9.3 and Corollaire 4.14.1 from \cite{mb}.
\end{proof}
\begin{definition}[Canonical hermitian metric on the Deligne pairing]
\label{metriconNeronpairing} Suppose that $U \times_\bb{Z} \on{Spec}\bb{C}$ is of finite type over $\bb{C}$. Note that by Propositions \ref{prop:adm_metric} and \ref{delignepoincare} we have a canonical metric on $\pair{D,E}$ over $U(\bb{C})$.
\end{definition}
\begin{definition}[Prolonging the Poincar\'e bundle] \label{def:prolong}
Let $S$ be a scheme, and $U \subset S$ a dense open subscheme. Let $G$, $H / S$ be smooth commutative group schemes with connected geometric fibers such that $G_U$ and $H_U$ are dual abelian schemes.
Let $\pp$ be the Poincar\'e biextension on $G_U \times_U H_U$. 
A \emph{Poincar\'e prolongation on $G \times_S H$} is a biextension $\bar{\pp} \in \on{Biext}(G, H; \bb{G}_m)$ extending $\pp$. Such a biextension is unique (up to a unique isomorphism) if it exists.
\end{definition}
\begin{prop}\label{PoincareProlongations}
In the above notation, a Poincar\'e biextension on $G \times_S H$ exists in both of the two following cases:
\begin{enumerate}
\item[(1)] $S$ is a Dedekind scheme;
\item[(2)] $S$ is normal and noetherian, and $G$ and $H$ are semiabelian.
\end{enumerate}
\end{prop}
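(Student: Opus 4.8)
The plan hinges on the uniqueness built into Definition~\ref{def:prolong}: a Poincar\'e prolongation is unique when it exists, so the problem is local on $S$ for the Zariski topology, local prolongations glue automatically, and the same uniqueness lets one reduce to simpler bases. I would treat the two cases separately, reducing case~(1) to the spectrum of a discrete valuation ring and case~(2) to case~(1) together with a codimension-one argument.

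For~(1), after localizing (and using a standard spreading-out argument to pass from a local ring back to an open neighbourhood) I may assume $S=\Spec R$ with $R$ a discrete valuation ring; put $K=\on{Frac}(R)$ and $A=G_K$, so that $H_K=A^\vee$ and $\pp$ is the Poincar\'e biextension on $A\times_K A^\vee$. Since $G\to S$ is smooth, separated and has connected fibres, the defining property of the N\'eron model (recalled in the introduction) applied to $\mathrm{id}\colon A\to A$ yields an $S$-homomorphism $G\to\nm(A,S)$; because the special fibre of $G$ is connected this factors through the fibrewise connected component $\nmo(A,S)$, and likewise $H\to\nmo(A^\vee,S)$. It therefore suffices to prolong $\pp$ to a $\gm$-biextension of $\nmo(A,S)\times_S\nmo(A^\vee,S)$ and to pull it back along $G\times_S H\to\nmo(A,S)\times_S\nmo(A^\vee,S)$; this pullback restricts to $\pp$ over $K$ by construction. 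That the Poincar\'e biextension extends over the connected N\'eron models, in \emph{every} reduction type, is classical: it is contained in Grothendieck's SGA~7 and is also treated in \cite{mb}.

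For~(2), $S$ is normal and noetherian and $G,H$ are semiabelian, hence in particular smooth with connected fibres. Let $W\subseteq S$ be the largest open subscheme over which $\pp$ prolongs; this is well defined because prolongations are unique, and $W\supseteq U$. If $s\in S\setminus U$ has codimension one in $S$ then $\oo_{S,s}$ is a discrete valuation ring, so case~(1) yields a prolongation over $\Spec\oo_{S,s}$; as $G$, $H$ and the biextension data are of finite presentation over the noetherian scheme $S$, this prolongation spreads out over an open neighbourhood of $s$ and, by uniqueness, agrees there with $\pp$ on the dense open it meets inside $U$. Hence $W$ contains every codimension-one point of $S$, so that $S\setminus W$ has codimension $\ge 2$. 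To cross this locus I would pass to a Cartier-dual description: for $H$ semiabelian over the normal noetherian scheme $S$ the sheaf $\mathcal{E}xt^1_S(H,\gm)$ is representable by a semiabelian $S$-scheme, which I denote $H^\vee$ since its restriction to $U$ is the dual abelian scheme $(H_U)^\vee=G_U$; and---up to the contribution of $\mathcal{H}om_S(H,\gm)$, controlled by Grothendieck's exact sequence relating $\on{Biext}$, $\Hom$ and $\mathcal{E}xt$---a prolongation of $\pp$ is the same datum as an extension of the tautological isomorphism $G_U\isom H^\vee|_U$ to an $S$-homomorphism $G\to H^\vee$. Since homomorphisms of semiabelian schemes over a normal noetherian base are determined by, and extend from, their restriction to a dense open, the homomorphism $G_W\to H^\vee|_W$ underlying the prolongation over $W$ extends to $G\to H^\vee$, and this furnishes the required prolongation over $S$, compatible with $\pp$ over $U$ by construction.

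The formal scaffolding---uniqueness, gluing, spreading out, the reduction to a discrete valuation ring, the descent to codimension one---is routine; the genuine content sits in two inputs. First, in case~(1), one needs the classical fact that the Poincar\'e biextension extends over the \emph{connected} N\'eron models in all reduction types, not only the semistable one; this is the crux of the Dedekind case. Second, in case~(2), one needs the rigidity of homomorphisms---equivalently, of biextensions---of semiabelian schemes over a normal base, and this is precisely where both the normality of $S$ and the semiabelian hypothesis are indispensable: it cannot be replaced by a na\"ive attempt to extend the underlying line bundle of $\pp$, because $G\times_S H$ is in general only normal rather than locally factorial, so a line bundle given on a dense open need not prolong even across a closed subset of codimension two. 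Once the biextension itself is produced, its compatibility with $\pp$ over $U$ is forced by uniqueness.
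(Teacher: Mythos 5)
The paper's own proof is one sentence: it asserts that the restriction functor
\[
\operatorname{res}\colon  \on{Biext}(G,H;\gm) \ra \on{Biext}(G_U, H_U;\gm)
\]
is an equivalence of categories, citing Proposition~2.8.2 of \cite{mb} for case~(1) and Definition~II.1.2.7 together with Theorem~II.3.6 of \cite{pinc} for case~(2). You have instead sketched arguments that would re-prove the content of those citations. This is legitimate, but your two halves are not equally sound.

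For case~(1), what you outline is correct but takes a detour. You map $G$ and $H$ into the connected N\'eron models via the N\'eron mapping property and then invoke the classical extension of the Poincar\'e biextension over N\'eron models. However, Proposition~2.8.2 of \cite{mb} already applies directly to any pair of smooth separated $S$-group schemes with connected fibres over a Dedekind $S$; there is no need to pass through the N\'eron model at all, and the ``classical fact'' you cite at the end is essentially the same statement you are trying to establish, so the detour does not buy you anything. The localization/spreading-out framing is fine but adds length without reducing the dependence on the same external input.

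For case~(2), the codimension-one step is sound: passing to the local rings at height-one points reduces to case~(1), and the limit/spreading-out argument shows the maximal open of prolongation has complement of codimension at least two. The gap is in how you propose to cross that locus. You assert that $\mathcal{E}xt^1_S(H,\gm)$ is representable by a semiabelian $S$-scheme $H^\vee$ and that a prolongation amounts to extending $G_U\isom H^\vee|_U$ to $G\to H^\vee$. This representability claim is false in the degenerate situation that is precisely the point of the proposition. If $0\to T\to H\to B\to 0$ is the toric filtration over a closed point, the long exact sequence for $\mathcal{H}om/\mathcal{E}xt$ against $\gm$ gives $\mathcal{E}xt^1(H,\gm)\cong B^\vee/X(T)$, a quotient of an abelian scheme by the \'etale lattice of characters of $T$. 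This fppf quotient is not a scheme, let alone a semiabelian one; the genuine dual semiabelian scheme in Raynaud's degeneration picture is an extension \emph{of} $B^\vee$ by a torus, not a quotient of $B^\vee$, and it is not produced by this $\mathcal{E}xt^1$ construction. So the reduction to rigidity of homomorphisms of semiabelian schemes, while in the right spirit, does not go through as written. The correct argument (the content of the cited Theorem~II.3.6 of \cite{pinc}, or alternatively of Chapter~I of \cite{fc}) works with the Grothendieck exact sequence for $\on{Biext}$ at the level of sheaves without asserting representability of $\mathcal{E}xt^1(H,\gm)$, and uses rigidity of biextensions of semiabelian schemes directly. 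Your closing remark about the line bundle not extending because $G\times_S H$ is merely normal is a good observation and shows you see where the difficulty lies, but the route you chose to get around it does not work.
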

\begin{proof}
In both situations the restriction functor 
\begin{equation}
\on{res}\colon  \on{Biext}(G,H;\bb{G}_m) \ra \on{Biext}(G_U, H_U;\bb{G}_m)
\end{equation}
is an equivalence of categories. For the first, see Proposition 2.8.2 of \cite{mb}. For the second, see Definition II.1.2.7 and Theorem II.3.6 of \cite{pinc}. 
\end{proof}
\begin{definition}[Semistable curves]\label{def:semistable_curve}
A curve $C$ over a separably
algebraically closed field $k$ will be called \emph{semistable} if $C$ is
connected, reduced, and projective, and each singular point of $C$ is an
ordinary double point. A morphism of schemes $p \colon C \to S$ is a
\emph{semistable curve} if $p$ is proper, flat and finitely presented, and
if all geometric fibers of $p$ are semistable curves. 

For a semistable curve $C \to S$, we write $\mathrm{Sing}(C/S)$ for the
locus where $C \ra S$ is not smooth. More precisely, $\on{Sing}(C/S)$ is cut
out by the first Fitting ideal of the sheaf of relative differentials of $C
\ra S$. We have that $\on{Sing}(C/S)$ is a closed subscheme of $C$ and is
finite unramified over $S$. 

We say that a semistable curve $C \ra S$ is \emph{quasisplit} if
$\mathrm{Sing}(C/S) \to S$ is source-Zariski-locally an immersion and for
every field valued fiber $C_k$ of $C\to S$, every irreducible component of
$C_k$ is geometrically irreducible. Suppose that $S$ is integral, noetherian and regular, and $C \ra S$ is a generically smooth semistable curve. Then there exists a surjective \'etale morphism $S' \to S$ such that the semistable curve $C \times_S S' \to S'$ is quasisplit. This is not hard to prove; it is enough to show that every geometric point $\bar{s} \ra S$ factors via an \'etale map $S' \ra S$ with $C \times_S S' \to S'$ quasisplit. It is clear that this latter property is satisfied after base change to the spectrum of the \'etale local ring of $S$ at $\bar{s}$. By writing the spectrum of the \'etale local ring as a filtered limit of \'etale covers one obtains the result by a finite presentation argument - details can be found in \cite[Lemma 4.3]{ho2}. Later (in Proposition \ref{lem:quasi_split_after_alteration}) we will see that the
same holds for some alteration $S' \ra S$ (by `alteration', we mean a proper surjective generically finite morphism). This will be important in the reduction steps in Section \ref{variation}. 

\end{definition}
\begin{definition}[$\qq$-line bundles] When $S$ is a scheme, the category of $\qq$-line bundles on $S$ has as
objects pairs $(m,\ll)$ with $m$ a positive integer and $\ll$ a line bundle
on $S$. The hom-set $\Hom\left((m_1,\ll_1),(m_2,\ll_2)\right)$ is to be the set of equivalence classes of pairs $(a,f)$
with $a$ a positive integer, and $f \colon \ll_1^{\otimes
am_2} \to \ll_2^{\otimes am_1}$ a homomorphism of line bundles on $S$. The equivalence relation is generated by setting $(a,f) \sim (an,f^{\otimes n})$ for each $n\in\zz_{>0}$.
For a class $[(a,f)]$ in
$\Hom((m_1,\ll_1),(m_2,\ll_2))$ and $[(b,g)]$ in
$\Hom((m_2,\ll_2),(m_3,\ll_3))$ the composition $[(b,g)] \circ [(a,f)]$ is
defined as the class of the pair $(abm_2,h)$ where $h \colon \ll_1^{\otimes
abm_2m_3} \to \ll_3^{\otimes abm_1m_2}$ is the composition $g^{\otimes am_1}
\circ f^{\otimes bm_3}$. One verifies that this composition law is associative and independent of the choice of representatives, and that $[(1,\mathrm{id}_{\ll})]$ acts as the identity morphism for $(m, \ll)$. 

A global section of a $\qq$-line bundle $(m,\ll)$ is to be a morphism from $(1,O_S)$ to $(m,\ll)$, represented by a pair $(a,s)$ with $s$ a global section of $\ll^{\otimes a}$. A rational section of $(m,\ll)$ is a global section of $(m,\ll_U)$ for some open dense subscheme $U \subset S$. Isomorphisms are morphisms with two-sided inverses; for example, for non-zero integers $m, n$ the $\qq$-line bundles $(m,\ll)$ and $(mn,\ll^{\otimes n})$ are canonically isomorphic. We will often use the notation $\ll^{\otimes 1/m}$ to denote the $\qq$-line bundle $(m,\ll)$.

We leave it to the reader to verify the following facts. There is an (obvious) notion of tensor product of $\qq$-line bundles; in fact, the set of isomorphism classes of $\qq$-line bundes on $S$ is naturally a $\qq$-vector space, and is canonically isomorphic to $\qq$ tensored with the abelian group of isomorphism classes of ordinary line bundles on $S$.
There is an (obvious) notion of pullback of $\qq$-line bundles. 
A $\qq$-Cartier divisor $D$ on $S$ naturally gives rise to a $\qq$-line bundle $O_S(D)$ on $S$: if $m$ is a non-zero integer such that $mD$ is a Cartier divisor, then one sets $O_S(D)=(m,O_S(mD))$. Vice versa, to a rational section of a $\qq$-line bundle one can naturally associate its divisor; this is a $\qq$-Cartier divisor on $S$.
\end{definition}

\section{Admissible pairing and the height jump divisor}

The purpose of this section is to introduce the main objects of our work: the admissible pairing, and the height jump divisor.

Let $S$ be an integral, noetherian, regular and separated scheme, and let $p \colon C \to S$ be a semistable curve. We assume that $p$ is generically smooth; let $U \subset S$ be the largest open subscheme such that the restriction $C_U \to U$ of $C$ to $U$ is smooth. Let $J_U \to U$ be the jacobian scheme associated to the smooth curve $C_U \to U$. Denote by $\pp=\pp(J_U)$ the Poincar\'e bundle on $J_U \times_U J_U^\lor$ and denote by $\mu \colon J_U \isom J^\lor_U$ the canonical principal polarization.

A first important ingredient in our construction is the following result.
\begin{thm} \label{existenceV} 
\leavevmode
\begin{enumerate}[label=(\alph*)]
\item
There exists a maximal open subscheme $V \subset S$ with the following properties: (a) $U \subset V$; (b) the jacobian scheme $J_U \to U$ extends into a N\'eron model $
\nm(J_U) \to V$ over $V$. Moreover, the codimension of the complement of $V$ in $S$ is at least two.
\item Let $\sigma\colon V \ra \nm(J_U)$ be a section. Then there exists an integer $n > 0$ and an open subset $U \subset V_\sigma \subset V$ such that over $V_\sigma$ the section $n\sigma$ is contained in the identity component of the N\'eron model $\nm(J_U)$, and the codimension of the complement of $V_\sigma$ in $S$ is at least two. 
\end{enumerate}
\end{thm}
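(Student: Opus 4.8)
The plan is to take part (a) from \cite{ho1} and to deduce part (b) from it by an elementary argument. For (a) the maximality statement is formal: a N\'eron model, when it exists, is unique up to unique isomorphism and its formation commutes with restriction to open subschemes of the base, so if $J_U$ admits N\'eron models over opens $V_1,V_2\supseteq U$ of $S$ they agree canonically over $V_1\cap V_2$ and glue to one over $V_1\cup V_2$; since $S$ is noetherian every open is quasicompact, so the union of all opens over which $J_U$ has a N\'eron model is again one, and it is the required maximal $V$ (nonempty, as $U$ itself works). The real content is that $S\setminus V$ has codimension $\ge 2$, and here I would invoke \cite{ho1}: the obstruction to the existence of a N\'eron model for $J_U$ is the failure of the ``alignment'' condition on the labelled dual graphs of the fibres of $C\to S$, the non-alignment locus is closed of codimension at least two, and over its complement \cite{ho1} constructs an explicit smooth separated group scheme extending $J_U$ which satisfies the N\'eron mapping property. (This is consistent with the classical theory: a codimension-one point $s$ has $\oo_{S,s}$ a discrete valuation ring, over which a N\'eron model exists by N\'eron--Raynaud, so $s$ cannot lie in the obstruction locus.) This codimension-two bound is the one genuinely non-formal ingredient.

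Granting (a), I would prove (b) as follows. Write $\nmo:=\nmo(J_U)\subseteq\nm(J_U)$ for the open subgroup scheme of fibrewise identity components, and for $n\ge 1$ put $V_n:=(n\sigma)^{-1}(\nmo)$, an open subscheme of $V$. Since $\nmo$ is a subgroup functor one has $V_n\subseteq V_{kn}$ for every $k$, so $V_{1!}\subseteq V_{2!}\subseteq\cdots$; let $V_\infty:=\bigcup_m V_{m!}=\bigcup_{n\ge 1}V_n$. The first step is to check that $V_\infty$ contains every point of $V$ of codimension $\le 1$: the generic point of $S$ lies in $U$ (since $C\to S$ is generically smooth and smoothness is open), and over $U$ the model $\nm(J_U)$ is the abelian scheme $J_U$, whose fibres are connected, so $U\subseteq V_1$; while if $s\in V$ has $\dim\oo_{V,s}=1$, the fibre $\nm(J_U)_s$ is a group scheme of finite type over the field $\kappa(s)$, hence has finitely many connected components, so the component to which $\sigma(s)$ belongs has some finite order $N$ in the (finite) component group, $N\sigma(s)$ lies in the identity component, and thus $s\in V_N\subseteq V_\infty$. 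The second step is that $V\setminus V_\infty$ is the intersection of the decreasing chain of closed subsets $V\setminus V_{m!}$ of the noetherian topological space $V$, hence stabilises: $V\setminus V_{m_0!}=V\setminus V_\infty$ for some $m_0$. Taking $n:=m_0!$ and $V_\sigma:=V_{m_0!}=V_\infty$, the section $n\sigma$ lies in $\nmo$ over $V_\sigma$ by construction, $U\subseteq V_1\subseteq V_\sigma$, and $V\setminus V_\sigma$ has codimension $\ge 2$ in $V$; combined with $\codim_S(S\setminus V)\ge 2$ from part (a) this yields $\codim_S(S\setminus V_\sigma)\ge 2$, as required.

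The main obstacle is part (a), specifically the codimension-two bound, which I would take wholesale from \cite{ho1} rather than reprove. Once that is granted, part (b) is routine; the only point requiring a moment's thought is the appeal to noetherianity of the underlying space of $V$ in order to collapse the a priori infinite union $V_\infty=\bigcup_n V_n$ down to a single $V_n$.
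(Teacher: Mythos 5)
Your approach matches the paper's in outline, but there is a genuine gap in part (b). You assert that at a codimension-one point $s\in V$ the fibre $\nm(J_U)_s$ is ``a group scheme of finite type over $\kappa(s)$, hence has finitely many connected components.'' This is not automatic. A N\'eron model in the sense used here is required to be smooth and separated, but \emph{not} quasi-compact, so its fibres are a priori only locally of finite type and could have infinitely many components (think of a constant group scheme on $\zz$). To conclude finiteness of the component group at $s$, one must first show that formation of the N\'eron model commutes with pullback to $\Spec\oo_{V,s}$ --- the N\'eron mapping property over $\Spec\oo_{V,s}$ involves test schemes that need not spread out to $V$, so this requires an approximation/limiting argument, which is precisely what the paper's proof invokes --- after which $\nm(J_U)|_{\Spec\oo_{V,s}}$ is a N\'eron model over a discrete valuation ring, hence of finite type by the classical theory \cite[1.2]{blr}. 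Your write-up simply asserts the conclusion of this step.

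Once that is repaired, the rest of (b) is sound, and your noetherian-chain argument to collapse $V_\infty=\bigcup_n V_n$ to a single $V_{m_0!}$ is a correct alternative to the paper's more direct count: $U$ has only finitely many boundary divisors, at the generic point of each the component of $\sigma$ has finite order in the (finite) component group, and one takes $n$ to be the least common multiple. Part (a) is, as you say, imported wholesale from \cite{ho1}; your formal uniqueness-and-gluing argument for the existence of a maximal $V$ is fine but not strictly necessary, since \cite[Corollary 1.3]{ho1} already produces the maximal $V$ together with the codimension-two bound.
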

In fact the open $V_\sigma$ may be taken to equal $V$ since the N\'eron model can be shown to be of finite type, but we will not need this here. 
\begin{proof}
Part (a) is \cite[Corollary 1.3]{ho1}. For part (2), by a limiting argument we find that formation of the N\'eron model commutes with pullback to the spectrum of the local ring at the generic point of a boundary divisor of $U$ in $S$. Such a local ring is Dedekind, so the N\'eron model over it is of finite type (\cite[1.2]{blr}). Since $U$ has finitely many boundary components we can find a positive integer $n$ such that $n\sigma$ lies in the fiberwise connected component of the identity $\nm^0$ of $\nm\coloneqq \nm(J_U)$ over the generic point of every boundary divisor of $U$ in $S$. Now let $V_\sigma = (n\sigma)^{-1}\nm^0$, then $V_\sigma$ is open in $S$ and contains the generic point of every boundary divisor of $U$ in $S$, and hence the codimension of $V_\sigma$ in $S$ is at least two. 
\end{proof}

In view of the isomorphism $\mu \colon J_U \isom J^\lor_U$ we also have a N\'eron model $\nm(J_U^\lor) \to V$ of $J_U^\lor$ over $V$, and by the N\'eron mapping property the isomorphism $\mu$ extends into an isomorphism $\bar{\mu} \colon \nm(J_U) \to \nm(J^\lor_U)$ over $V$.
Let $\nm^0(J_U)$ be the fiberwise connected component of the N\'eron model $
\nm(J_U) \to V$ furnished by Theorem \ref{existenceV}, and define $\nm^0(J^\lor_U)$ in a similar way. As $p \colon C \to S$ is semistable we have that $\nm^0(J_U)$ and $\nm^0(J_U^\lor)$ are semiabelian schemes over $V$. Let $\bar{\pp}$ be the unique biextension line bundle on the product $
\nm^0(J_U) \times_V \nm^0(J_U^\lor)$ extending the Poincar\'e bundle $\pp$ on $J_U \times_U J_U^\lor$ (cf.\ Proposition \ref{PoincareProlongations}). 

We fix two relative Cartier divisors $D, E$ on $C$ which are of relative degree zero over $S$. The divisors $D, E$ give rise to two sections $D_U$ resp. $\mu E_U$ of $J_U/U$ resp $J^\lor_U/U$. By the N\'eron mapping property, both $D_U$ and $ \mu E_U$ extend as sections - which we shall denote by $D, \bar{\mu} E$ - of $\nm(J_U)$ resp. $\nm(J_U^\lor)$ over $V$. By part (b) of Theorem \ref{existenceV}, after shrinking $V$ there exist $m, n \in \zz_{>0}$ such that $mD_U, n\mu E_U$ extend as sections $mD$ resp. $n\bar{\mu} E$ of $
\nm^0(J_U)$ resp $\nm^0(J^\lor_U)$ over $V$. Take such $m, n \in \zz_{>0}$. Then we define the line bundle 
\[ \ll(m,n;D,E)=(mD,n\bar{\mu} E)^*\bar{\pp}^{\otimes -1} \] 
on $V$. As the complement of $V$ has codimension at least two in $S$, and as $S$ is regular, the line bundle $\ll(m,n;D,E)$ extends uniquely as a line bundle over $S$. We will denote this line bundle by $\bar{\ll}(m,n;D,E)$. 
\begin{definition}[Admissible pairing] We define $\pair{D,E}_\a$ to be the $\qq$-line bundle $\bar{\ll}(m,n;D,E)^{\otimes 1/mn}$ on $S$. Note that as $\bar{\pp}$ is a biextension, different choices of $m$ and $n$ in
the definition of $\pair{D,E}_\a$ lead to canonically isomorphic $\qq$-line bundles. Furthermore, the formation of $\pair{D,E}_\a$ is biadditive on divisors $D, E$ of relative degree zero. We call $\pair{D,E}_\a$ the \emph{admissible pairing} associated to $D, E$ on $S$.
\end{definition}
The terminology `admissible pairing'  will be justified later (Section \ref{basedim1}) when we show that in the case when $S$ has dimension one, it coincides with S. Zhang's admissible pairing \cite{zh}. 

As explained in the introduction, we are interested in the extent to which the formation of $\pair{D,E}_\a$ is compatible with base change. Let $f \colon T \to S$ be a morphism of schemes with $T$ also integral, noetherian, regular and separated, and such that $f^{-1}U$ is dense in $T$. We call such morphisms \emph{non-degenerate}. Let $C_T = C \times_S T \to T$ be the pullback of $p \colon C \to S$ along $f$; note that this is a semistable curve over $T$. Applying the construction above to the pulled back divisors $f^*D, f^*E$ on $C_T$, we obtain a natural $\qq$-line bundle $\pair{f^*D,f^*E}_\a$ associated to $D, E$ and $f$ on $T$. When restricted to $f^{-1}U$, the $\qq$-line bundles $f^*\pair{D,E}_\a$ and $\pair{f^*D,f^*E}_\a$ are canonically isomorphic. Hence we obtain a canonical non-zero rational section $\sigma(f;D,E)$ of the $\qq$-line bundle
\[ f^*\pair{D,E}_\a^{-1} \otimes \pair{f^*D,f^*E}_\a \]
on $T$ supported on $T \setminus f^{-1}U$.
\begin{definition}[Height jump divisor] We define $J=J(f;D,E)$ to be the divisor of $\sigma(f;D,E)$ on $T$. It is a $\qq$-Cartier divisor on $T$. We call $J(f;D,E)$ the \emph{height jump divisor} associated to $D, E$ and $f$.
\end{definition}
In many cases, the height jump divisor is trivial.
\begin{prop} \label{mostlytrivial}
Assume $f \colon T \to S$ is a flat non-degenerate morphism. Then the formation of $\pair{D,E}_\a$ is compatible with base change along $f$.
%Assume $f \colon T \to S$ is a non-degenerate morphism such that $f^{-1}(S\setminus V)$ has codimension at least 2 in $T$. Then the formation of $\pair{D,E}_\a$ is compatible with base change along $f$.
\end{prop}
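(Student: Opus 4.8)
The content of the proposition is that the canonical rational section $\sigma(f;D,E)$ of $f^*\pair{D,E}_\a^{-1}\otimes\pair{f^*D,f^*E}_\a$ is in fact an isomorphism, equivalently that the height jump divisor $J(f;D,E)$ is the zero divisor. Since $T$ is integral, noetherian and regular, a $\qq$-Cartier divisor on $T$ vanishes as soon as its order is $0$ at every codimension-one point; and $\sigma(f;D,E)$ is by construction already an isomorphism over the dense open $f^{-1}U$. So the plan is to fix a codimension-one point $t\in T$ with $t\notin f^{-1}U$ and to prove $\ord_t\sigma(f;D,E)=0$.

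The first step is to reduce this to a statement over discrete valuation rings, and this is where flatness of $f$ is used. Writing $s=f(t)$, flatness gives $\dim\oo_{T,t}=\dim\oo_{S,s}+\dim(\oo_{T,t}/\mathfrak{m}_s\oo_{T,t})$, so a codimension-one $t$ with $t\notin f^{-1}U$ forces $s$ to be a codimension-one point of $S$ lying in $S\setminus U$; the same computation shows more generally that $f^{-1}(S\setminus V_\sigma)$ has codimension $\ge 2$ in $T$, so that the extension of $\ll(m,n;D,E)$ from $V_\sigma$ to $S$ is compatible with pullback. Since $S\setminus V_\sigma$ has codimension $\ge 2$ (Theorem \ref{existenceV}), the point $s$ lies in $V_\sigma$; moreover $\overline{\{s\}}$ is a component of $S\setminus U$ and $\overline{\{t\}}$ a component of $T\setminus f^{-1}U$, and $\oo_{S,s}\to\oo_{T,t}$ is a flat local homomorphism of discrete valuation rings inducing an inclusion of function fields, under which the generic jacobian of $C$ pulls back to the generic jacobian of $C_T$ (jacobians commute with base change).

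The second step is to check that everything entering the construction of $\pair{\cdot,\cdot}_\a$ is preserved under the base change $\oo_{S,s}\to\oo_{T,t}$. By the limiting argument in the proof of Theorem \ref{existenceV}, the formation of the N\'eron model commutes with localisation at the generic point of a boundary divisor; so over $\oo_{S,s}$ the connected N\'eron model of the generic jacobian is $\nmo(J_U)\times_V\Spec\oo_{S,s}$, and likewise over $\oo_{T,t}$ (where a N\'eron model exists automatically, $\oo_{T,t}$ being a DVR) the connected N\'eron model of the generic jacobian of $C_T$ is the restriction of the corresponding object produced by Theorem \ref{existenceV} for $C_T\to T$, which is defined near $t$ because it has codimension-$\ge 2$ complement. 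All these connected N\'eron models are semiabelian because $C\to S$ is semistable, and the pullback of a semiabelian scheme is semiabelian; since a semiabelian scheme over a discrete valuation ring is uniquely determined by its abelian generic fibre (see \cite{blr}), the pullback $\nmo(J_U)\times_V\Spec\oo_{T,t}$ is canonically identified, compatibly on generic fibres, with the connected N\'eron model of the generic jacobian of $C_T$ over $\oo_{T,t}$; the same applies to the duals. Uniqueness of the Poincar\'e prolongation over the Dedekind scheme $\Spec\oo_{T,t}$ (Definition \ref{def:prolong}, Proposition \ref{PoincareProlongations}) then identifies the pullback of $\bar\pp$ with the prolonged Poincar\'e bundle used for $C_T$, and the N\'eron mapping property identifies the pullbacks of the extended sections $mD$, $n\bar\mu E$ with the extended sections $m\,f^*D$, $n\,\overline{\mu f^*E}$. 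Combining these identifications yields a canonical isomorphism
\[ \pair{f^*D,f^*E}_\a\big|_{\oo_{T,t}}\;\isom\;\bigl(f^*\pair{D,E}_\a\bigr)\big|_{\oo_{T,t}} \]
compatible with the canonical isomorphism over the generic point of $T$, whence $\ord_t\sigma(f;D,E)=0$. As $t$ was an arbitrary codimension-one point of $T$ (the case $t\in f^{-1}U$ being trivial), $J(f;D,E)=0$.

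I expect the genuine obstacle to be the first step: without flatness, a codimension-one point of $T$ may map to a point of $S$ of codimension $\ge 2$, and there the formation of the N\'eron model need not commute with base change — this is exactly the mechanism producing the height jump. Once one is reduced to an extension of discrete valuation rings, the relevant objects (connected N\'eron models of semiabelian jacobians, Poincar\'e prolongations, N\'eron extensions of sections) are rigid enough to be stable under arbitrary base change, so no jumping can occur; the remaining points — compatibility of N\'eron formation with localisation at a boundary divisor, stability of semiabelian reduction and uniqueness of the connected N\'eron model over a DVR, and compatibility of all the identifications with the canonical isomorphism over $f^{-1}U$ (which comes down to base-change compatibility of the Deligne pairing and of the Poincar\'e bundle for smooth curves and abelian schemes) — are routine or cited from \cite{blr}, \cite{mb}.
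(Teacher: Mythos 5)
Your proof is correct, and it rests on the same two pillars as the paper's: flatness controls codimensions (so that the preimage of the bad locus $S\setminus V$ still has codimension $\geq 2$ in $T$), and uniqueness of semiabelian prolongations (together with uniqueness of the Poincar\'e prolongation as a biextension and the N\'eron mapping property) forces the two constructions to agree over the good locus. The paper, however, argues directly over $f^{-1}(V)$: once the image of $f$ is assumed to lie in $V$, it simply observes that $f^*\nm^0(J_U)$ is semiabelian, that the sections and the rigidified Poincar\'e prolongation pull back, and then invokes uniqueness of semiabelian prolongations over a normal noetherian base (Theorem 1.2 of \cite{ldg}) to conclude in one stroke. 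You instead reduce first to the DVR $\oo_{T,t}$ at a codimension-one point $t$, and then invoke uniqueness of semiabelian prolongations over a DVR. This works, but it forces you to verify in addition that formation of both admissible pairings commutes with localization to $\Spec\oo_{S,s}$ and $\Spec\oo_{T,t}$ respectively — a special case of the very proposition being proved, which you correctly handle via the spreading-out/limiting argument already used in the proof of Theorem \ref{existenceV}, but which the paper's more global argument absorbs for free. So: same essential mechanism, with your version trading a stronger version of the uniqueness-of-prolongations input (normal noetherian vs.\ Dedekind) for a longer chain of reductions and localization-compatibility checks. Your closing remark correctly identifies that flatness is exactly what prevents a codimension-one point of $T$ from landing in codimension $\geq 2$ in $S$, which is where jumping can occur.
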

\begin{proof} Recall that $f$ being non-degenerate implies that $T$ is integral, noetherian, regular and separated. The formation of  $\pair{D,E}_\a$ is compatible with base change along $f$ if and only if the jump divisor $J(f;D,E)$ is trivial. The triviality of a divisor can be checked on the complement of a codimension 2 subscheme, so we may assume that the image of $f$ is contained in $V$. 
Now the sections $mD$ and $nE$ of the N\'eron model $
\nm(J_U)$ are contained in the fibrewise connected component of identity $\nm^0(J_U)$, by assumption. The pullback of $\nm^0(J_U)$ along $f$ is again semiabelian, and prolongs the pullback of the jacobian. Moreover, the sections $mD$ and $nE$ pull back to sections of $f^*\nm^0(J_U)$, and the rigidified extension of the Poincar\'e bundle pulls back to a rigidified extension of the Poincar\'e bundle. The result then follows from the uniqueness of semiabelian prolongations, cf.\ Theorem 1.2 in \cite{ldg}.
\end{proof}
More generally, one can compute the multiplicity of the height jump divisor $J$ along a prime divisor $Z$ in $T$ using the `test curve' which is the canonical map from the spectrum of the local ring of $T$ at the generic point of $Z$ to $T$ itself. It follows that in order to study height jumping we can mostly restrict ourselves to the case where $T$ is the spectrum of a discrete valuation ring (i.e., a trait). In this case, the height jump divisor is a rational multiple $J = j \cdot [t]$ of the closed point $t$ of $T$. 
%\begin{cor} Assume that $J_U$ extends into a finite-type N\'eron model over $S$. Then for all non-degenerate morphisms $f \colon T \to S$, the height jump divisor on $T$ is trivial.
%\end{cor}
%In a forthcoming article we shall see that the sufficient condition of the corollary is also essentially a necessary condition. 

\section{Statement of the main results} \label{sec:main}

As above, let $S$ be an integral, noetherian, regular, separated scheme, and let $p \colon C \to S$ be a semistable curve. Let $U \subset S$ be the largest open subscheme over which $p$ is smooth, and assume $U$ is dense in $S$. Denote by $Z$ the reduced closed subscheme of $S$ determined by the complement of $U$ in $S$. As $S$ is noetherian $Z$ has only finitely many irreducible components; we write $Z = \cup_{i=1}^r Z_i$ for the decomposition of $Z$ into its irreducible components. Note that each $Z_i$ is a (prime) divisor, as one can see from the local structure of semistable curves. 

\subsection{Effectivity of the height jump divisor}

Our first result states that in the case where $D$ and $E$ are chosen to be equal, the height jump divisor $J(f;D,E)$ is effective.
\begin{thm} \label{maineffective}
Let $D$ be a divisor of relative degree zero on $C \to S$, with support contained in the smooth locus $\mathrm{Sm}(C/S)$ of $C \to S$. Let $s \in S$ be a point. Then for all non-degenerate morphisms of pointed schemes $ f \colon (T,t) \to (S,s)$ with $(T,t)$ a trait, the height jump divisor $J(f;D,D)$ is an effective divisor on $T$.
\end{thm}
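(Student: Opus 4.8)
The plan is to reduce the statement, via the combinatorial formula of Theorem~\ref{formulahtjump}, to a concavity property of the Green's function of the dual graph regarded as a resistive network, and then to prove that concavity by an electrical-network variational argument.

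\textbf{Reductions.} By the discussion following Proposition~\ref{mostlytrivial} we are already reduced to the case that $T$ is a trait; write $j=\operatorname{ord}_t J(f;D,D)$, so that $J(f;D,D)=j\cdot[t]$ and we must show $j\geq 0$. If $s\in U$ there is nothing to prove, since the jacobian is an abelian scheme near $s$ and the jump vanishes; so assume $s\in Z$, lying on the components $Z_i$ for $i$ in some index set. The whole construction (N\'eron models, the open $V$, Poincar\'e prolongations, the Deligne pairing, hence $\pair{\,\cdot\,,\cdot\,}_\a$ and the jump) is compatible with \'etale base change, and an \'etale morphism of traits is unramified, hence preserves $\operatorname{ord}_t$ of pulled-back divisors; so, using the last paragraph of Definition~\ref{def:semistable_curve} and lifting $f$ through a point of $T\times_S S'$ over $t$, we may assume $C\to S$ is quasisplit. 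Finally, localising $S$ at $s$ we may assume $S$ is regular local (in particular factorial), each $Z_i=\{z_i=0\}$ is principal, and $f^*z_i=(\text{unit})\cdot\varpi^{a_i}$ with $a_i:=\operatorname{ord}_t f^*Z_i\in\zz_{\geq 0}$, not all zero.

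\textbf{The combinatorial formula.} Let $\Gamma$ be the dual graph of the geometric fibre $C_s$; since $C_s=C_t$ as curves, $\Gamma$ is also the dual graph of $C_t$. For an edge $e$ and an index $i$ put $\ell_{e,i}:=\operatorname{ord}_{\eta_i}(g_e)\in\zz_{\geq0}$, where $\eta_i$ is the generic point of $Z_i$ and $g_e$ a local equation of the corresponding node; this is the labelling of $\Gamma$ in the sense of \cite{ho1}, and one computes $\operatorname{ord}_t$ of the thickness of $e$ in $C_t$ to be $\ell^t_e:=\sum_i a_i\ell_{e,i}$, linear in the $a_i$. Because $D$ has support in $\mathrm{Sm}(C/S)$, its restriction to $C_s$ is supported on smooth points, each on a unique component, so $D$ determines a divisor $\bar D$ on the vertex set of $\Gamma$, of total degree $0$. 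For $\ell\in\rr_{\geq0}^{E(\Gamma)}$ let $g_{\Gamma,\ell}(\bar D,\bar D)$ denote the Green's function pairing of $\bar D$ with itself on the metric graph whose edge $e$ has length $\ell_e$ (edges of length $0$ being contracted), in the normalisation for which it is nonnegative (the Dirichlet energy of the associated potential). Then Theorem~\ref{formulahtjump}, together with the identification in Section~\ref{basedim1} of $\pair{\,\cdot\,,\cdot\,}_\a$ over a trait with the admissible pairing, yields
\[
 j \;=\; g_{\Gamma,\ell^{t}}(\bar D,\bar D)\;-\;\sum_i a_i\,g_{\Gamma,\ell^{(i)}}(\bar D,\bar D),
 \qquad \ell^{(i)}_e:=\ell_{e,i},
\]
where $g_{\Gamma,\ell^{t}}$ is computed on the dual graph of $C_t$ and each $g_{\Gamma,\ell^{(i)}}$ on the dual graph of $C_{\eta_i}$ with the push-forward of $\bar D$; these coincide with the displayed expressions by continuity of the Green's function under degeneration of edge lengths to $0$.

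\textbf{Reduction to concavity, and its proof.} Set $F(\ell):=g_{\Gamma,\ell}(\bar D,\bar D)$. As $\ell^t=\sum_i a_i\ell^{(i)}$, the formula reads $j=F(\sum_i a_i\ell^{(i)})-\sum_i a_iF(\ell^{(i)})$, so it suffices to show that $F$ is positively homogeneous of degree $1$ and concave on $\rr_{\geq0}^{E(\Gamma)}$; these two properties force $F(\sum_i a_i\ell^{(i)})\geq\sum_i a_iF(\ell^{(i)})$ for all $a_i\geq 0$. For this I would invoke Thomson's principle for the resistive network with edge resistances $\ell_e$: for a degree-$0$ vertex divisor $\bar D$,
\[
 F(\ell)\;=\;\min_{\theta}\ \sum_{e\in E(\Gamma)}\ell_e\,\theta_e^{2},
\]
the minimum over the nonempty affine set of edge-flows $\theta$ with divergence $\bar D$; the minimiser is the electrical current determined by $\bar D$, and the minimum value is its dissipated energy, which equals $g_{\Gamma,\ell}(\bar D,\bar D)$ in the chosen normalisation. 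The set of admissible flows does not depend on $\ell$, and for fixed $\theta$ the map $\ell\mapsto\sum_e\ell_e\theta_e^2$ is linear; hence $F$, a pointwise infimum of linear functions, is concave, and it is manifestly positively homogeneous of degree $1$. On the boundary locus $\{\ell_e=0\}$ the same formula is valid and computes the Green's function on the corresponding contracted graph, which is exactly how the terms $g_{\Gamma,\ell^{(i)}}(\bar D,\bar D)$ were defined. This gives $j\geq 0$.

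\textbf{Main obstacle.} The concavity step above is short once isolated; the real work is assembling the precise formula of Theorem~\ref{formulahtjump} in the correct sign and normalisation. This needs the labelled-graph machinery of \cite{ho1} to control how $D$, the boundary divisors $Z_i$ and the N\'eron models transform under the base change $f$, the identification of $\pair{\,\cdot\,,\cdot\,}_\a$ with the classical admissible pairing along the traits $\operatorname{Spec}\mathcal{O}_{S,\eta_i}\subset V$ and $T$, and the compatibility of Green's functions with edge contractions; some of the underlying combinatorial identities, expressing the Green's function through weighted spanning trees and forests, are the subject of the appendix, and they also provide an alternative, purely combinatorial route to the concavity via the all-minors matrix--tree theorem.
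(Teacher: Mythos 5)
Your proof is correct and follows essentially the same route as the paper: pass to a quasisplit model via \'etale base change and Proposition~\ref{mostlytrivial}, apply the combinatorial formula of Theorem~\ref{formulahtjump}, and conclude from the concavity of the Green's function, which the paper proves in the appendix (Proposition~\ref{basicgreenfacts}(\ref{concave})) by exactly the Jeans/Thomson energy-minimization argument you outline. The only cosmetic difference is that you state concavity and degree-one homogeneity separately before combining them, whereas the paper's Proposition~\ref{basicgreenfacts}(\ref{concave}) packages the two into a single superadditivity inequality.
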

We give a proof of Theorem \ref{maineffective} in Section \ref{proofmain}.

\subsection{Bounds for the height jump divisor}

Assuming that the semistable curve $p \colon C \to S$ is quasisplit, the next result describes the behavior of the height jump divisor as the test morphism $f \colon T \to S$ varies. There turns out to be a uniform description of the height jumps in terms of a certain rational function $\Phi$, homogeneous of weight one, associated to the base point $s \in S$ and the divisors $D, E$. Moreover, we can control the ``growth behavior'' of this function $\Phi$ in terms of a more manageable homogeneous weight one function. 
\begin{thm} \label{mainquasisplit}
Assume that $p \colon C \to S$ is quasisplit. Let $D, E$ be two divisors of relative degree zero on $C \to S$ with support contained in $\mathrm{Sm}(C/S)$. Let $s \in S$ be a point. Whenever $f \colon (T,t) \to (S,s)$ is a non-degenerate morphism of schemes with $(T,t)$ a trait, we write $m_i$ for $\ord_{t} f^*Z_i$ for $i=1,\ldots,r$.
\begin{enumerate}[label=$(\alph*)$]
\item There exists a unique rational function $\Phi \in \qq(x_1,\ldots,x_r)$ such that for all non-degenerate morphisms of pointed schemes $ f \colon (T,t) \to (S,s)$ with $(T,t)$ a trait, the equality
\[ \ord_t J(f;D,E) = \Phi(m_1,\ldots,m_r) \]
holds. The function $\Phi$ is homogeneous of weight one and has no linear part in the sense that for each $i=1,\ldots,r$ we have $\Phi(0,\ldots,0,1,0,\ldots,0)=0$ where the $1$ is placed at the $i$-th spot. 
\item There exists a constant $c$ such that for all non-degenerate morphisms of pointed schemes $ f \colon (T,t) \to (S,s)$ with $(T,t)$ a trait, the bound\[ | \ord_{t} J(f;D,E) | \leq c \cdot \min_{i=1,\ldots,r} (\sum_{j \neq i} m_j) \]
holds.
\end{enumerate}
\end{thm}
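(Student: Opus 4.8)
The plan is to read off both statements from the explicit combinatorial formula for the height jump furnished by Theorem~\ref{formulahtjump}. Since $p$ is quasisplit, the special fibre $C_s$ has a genuine dual graph $\Graph$ --- vertices the irreducible components of $C_s$, edges the nodes --- and, together with its edge labelling, $\Graph$ is unchanged by any non-degenerate base change $(T,t)\to(S,s)$ of the kind considered. Each edge $e$ carries a label $(a_{e,1},\dots,a_{e,r})\in\zz_{\geq 0}^r$ recording the contribution of each $Z_i$ to the smoothing of the corresponding node, and $D$, $E$ specialise to divisors $\bar D$, $\bar E$ of degree zero on $\vertices(\Graph)$. Regard $\Graph$ as a resistive network in which edge $e$ has resistance $\ell_e(x_1,\dots,x_r)=\sum_{i=1}^r a_{e,i}x_i$, and write $\green_\Graph(\bar D,\bar E;\mathbf x)$ for the associated Green's-function pairing of $\bar D$ and $\bar E$. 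Then Theorem~\ref{formulahtjump} gives, with $\mathbf m=(m_1,\dots,m_r)$ and $\basis_i$ the $i$-th standard basis vector,
\[
\ord_t J(f;D,E)\;=\;\green_\Graph(\bar D,\bar E;\mathbf m)\;-\;\sum_{i=1}^r m_i\,\green_\Graph(\bar D,\bar E;\basis_i),
\]
the linear term being $\ord_t$ of the pulled-back admissible pairing $f^*\pair{D,E}_\a$: up to codimension two $\pair{D,E}_\a$ agrees near the generic point of $Z_i$ with $\green_\Graph(\bar D,\bar E;\basis_i)\cdot Z_i$, and the Deligne-pairing parts of $\pair{D,E}_\a$ and $\pair{f^*D,f^*E}_\a$ cancel because Deligne pairings commute with base change. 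We take the right-hand side as the definition of $\Phi$.

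Part~(a) is now essentially formal. By the weighted matrix--tree theorem (Kirchhoff's formulas for a resistive network), $\green_\Graph(\bar D,\bar E;\vec\ell)$ is a ratio of two polynomials in the resistances $\vec\ell=(\ell_e)_{e\in\edges(\Graph)}$ which are homogeneous of degrees $b_1(\Graph)+1$ and $b_1(\Graph)$ respectively, the denominator being the spanning-tree polynomial $\sum_{T}\prod_{e\notin T}\ell_e$. Substituting the integral linear forms $\ell_e(\mathbf x)$ shows $\green_\Graph(\bar D,\bar E;\mathbf x)\in\qq(x_1,\dots,x_r)$, hence $\Phi\in\qq(x_1,\dots,x_r)$; and being a quotient of homogeneous polynomials of degrees differing by one, $\Phi$ is homogeneous of weight one. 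Evaluating at $\basis_i$, the two terms of the displayed formula cancel, so $\Phi(\basis_i)=0$, which is the ``no linear part'' assertion. Uniqueness is immediate: the displayed right-hand side exhibits $\Phi$ as an explicit rational function, and any two rational functions agreeing on the Zariski-dense set of realisable tuples $(\ord_t f^*Z_i)_i\subset\qq^r$ coincide.

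For part~(b) we must bound the size of $\Phi$. Since $\Phi$ and the function $\mathbf m\mapsto\min_i\sum_{j\neq i}m_j$ are both homogeneous of weight one and both vanish on the coordinate axes (for $\Phi$ by part~(a)), it suffices to show that $\Phi$ is Lipschitz on the orthant $\rr_{\geq 0}^r$ for the $\ell^1$-norm: granting this with constant $c$, for every $i$ we get $|\Phi(\mathbf m)|=\bigl|\Phi(\mathbf m)-\Phi(m_i\basis_i)\bigr|\leq c\,\|\mathbf m-m_i\basis_i\|_1=c\sum_{j\neq i}m_j$, using $\Phi(m_i\basis_i)=m_i\Phi(\basis_i)=0$, and minimising over $i$ gives the bound. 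Now $\Phi$ is the sum of $\mathbf x\mapsto\green_\Graph(\bar D,\bar E;\mathbf x)$ and a linear form, and $\mathbf x\mapsto(\ell_e(\mathbf x))_e$ is $\ell^1$-Lipschitz, so the claim reduces to the Green's-function pairing $\vec\ell\mapsto\green_\Graph(\bar D,\bar E;\vec\ell)$ being Lipschitz on $\rr_{\geq 0}^{\edges(\Graph)}$ with constant depending only on $\bar D$, $\bar E$. By polarisation it is enough to treat the diagonal pairing, and by Thomson's principle $\green_\Graph(\bar D,\bar D;\vec\ell)=\min_I\sum_e\ell_e I_e^2$, the minimum over unit flows $I$ realising $\bar D$; hence $\green_\Graph(\bar D,\bar D;\cdot)$ is concave, and by the envelope theorem its partial derivative in $\ell_e$ equals $I_e(\vec\ell)^2$, which is at most $\bigl(\tfrac12\sum_v|\bar D(v)|\bigr)^2$ since the energy-minimising flow has no directed cycles and so carries at most the total mass of $\bar D$ across any edge. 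A concave function on a convex set with uniformly bounded partial derivatives on the interior is $\ell^1$-Lipschitz there, hence on all of $\rr_{\geq 0}^{\edges(\Graph)}$ by continuity.

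The main obstacle is this last point: controlling the Green's function of the network as the edge resistances degenerate to the boundary of the orthant. When some $\ell_e\to 0$ the network changes combinatorial type --- the edge is short-circuited, i.e.\ contracted --- and the denominator in Kirchhoff's formula can vanish; one must check that $\green_\Graph(\bar D,\bar E;\cdot)$ nonetheless extends continuously, and with uniformly bounded increments, across every such wall. Continuity follows from Rayleigh monotonicity (shortening an edge only decreases effective resistances), and the uniform bound is the current-flow estimate above, but organising these estimates over the whole boundary stratification of the resistance orthant is the technical heart of the argument --- exactly the ``careful analysis of the Green's function of such networks'' promised in the introduction, carried out in Section~\ref{resistive} and the sections following it. (The quasisplit hypothesis enters only to guarantee that $\Graph$ with its labelling is an honest labelled graph, insensitive to the base change $T\to S$; the general case is reduced to this one by an \'etale base change.)
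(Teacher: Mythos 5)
Your proposal is correct and follows the same overall strategy as the paper: everything is deduced from the combinatorial height-jump formula of Theorem~\ref{formulahtjump} together with the analytic properties of the Green's function. For part~(a) the argument is essentially identical to the paper's --- homogeneity and rationality come from Kirchhoff's weighted matrix--tree formula (Proposition~\ref{prop:tree-formula} in the appendix), and the vanishing at $\basis_i$ is the observation that $\ell_s$ and $\ell_{s,i}$ coincide when $m_j=0$ for $j\neq i$. For part~(b) you take a somewhat different route: you recast the desired inequality as an $\ell^1$-Lipschitz estimate for $\Phi$ on $\rr_{\geq 0}^r$, reduce by polarization to the diagonal case, and then deduce the Lipschitz bound from Thomson's principle via the envelope theorem ($\partial\green/\partial\ell_e = I_e^2$) plus the flow-decomposition bound $|I_e|\leq\|\divD\|$. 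The paper instead applies the explicit subadditivity estimate of Proposition~\ref{bound}, proved by Jeans' Least Power Theorem followed by Cauchy--Schwarz. These are close cousins --- both hinge on the variational characterisation of $\green(\cdot;\divD,\divD)$ and on the uniform bound $I_e^2\leq\|\divD\|^2$ --- but your version is phrased as a smoothness/Lipschitz statement on the interior of the orthant extended by continuity, whereas the paper's Proposition~\ref{bound} gives the inequality directly in a way that also cleanly produces the effective constant $c'=\|\dd\|\|\ee\|$. You correctly flag that the extension across the boundary of the orthant (Proposition~\ref{continuous}) is where the nontrivial combinatorial work lies and defer to the appendix for it, which is consistent with the paper's structure. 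The only small looseness is that ``continuity follows from Rayleigh monotonicity'' is not quite the paper's argument for Proposition~\ref{continuous} (monotonicity gives existence of the limit but not its identification with the Green's function of the contracted graph), but since you are not reproving that proposition this does not affect the validity of the proof.
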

The homogeneous weight one function $\Phi$ from part (a) of the theorem will be made explicit in terms of the combinatorics of the dual graph and the singularities of $C$ lying above $s$; see Theorem \ref{formulahtjump} below. The constant $c$ from part (b) will also be made effective. Part (b) shows in particular that if $s$ lies in at most one of the $Z_i$, all height jumps are trivial. In other words, under the conditions of the theorem, the height jump divisor $J(f;D,E)$ is supported on $f^{-1}Z'$, where $Z'=\cup_{i\neq j} Z_i \cap Z_j$ is the union of the mutual intersections of the $Z_i$. Our proof of Theorem \ref{mainquasisplit} will also be given in Section \ref{proofmain}.

\subsection{Variation of the canonical height}

We would like to discuss two applications of Theorems \ref{maineffective} and \ref{mainquasisplit} above. Our first one, as already amply discussed in the introduction, concerns the variation of the canonical height of a section of a family of polarized abelian varieties. 

Let $K$ be a number field or the function field of a curve over a field, and fix an algebraic closure $K \subset \bar{K}$ of $K$. W. Green proved the following theorem in \cite{gr} in the number field case.
\begin{thm} \label{greenlocal} Let $S$ be a smooth projective geometrically connected curve over $K$, with generic point $\eta$. Let $U$ be an open dense subscheme of $S$ together with an abelian scheme $A \to U$ over $U$ and a section $P \in A(U)$. Let $\xi$ be a symmetric relatively ample divisor class on $A \to U$. Then there exists a $\qq$-line bundle $L$ on $S$ such that $ \deg_S L = \hat{\on{h}}_{\xi_\eta}(P_\eta) $ and such that the function $U(\bar{K}) \to \rr$ given by $ u \mapsto \hat{\on{h}}_{\xi_u}(P_u)$ extends into a Weil height on $S(\bar{K})$ with respect to $L$. 
\end{thm}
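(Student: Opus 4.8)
\emph{Overview and reductions.} The plan is to reduce the assertion, in several steps, to the case of jacobians of semistable curves, where Theorem \ref{mainquasisplit} is available, and then to recognise the canonical height of $P_u$ as a sum of local terms whose deviation from the naive height of a suitable $\qq$-line bundle is controlled, place by place, by the height jump divisor. For the geometric reductions: by Zarhin's trick we may replace $(A,\xi)$ by the principally polarized $\big((A\times_U A^\vee)^4,\xi_Z\big)$ and $P$ by $(P,0,\dots,0)$, since the closed immersion $A\hookrightarrow(A\times_U A^\vee)^4$ pulls $\xi_Z$ back to $\xi$ and hence preserves canonical heights; writing $\phi\colon A\isom A^\vee$ for the resulting polarization isomorphism, $\hat{\on{h}}_{\xi_u}(P_u)=\tfrac12\pair{P_u,\phi(P_u)}$ in terms of the Néron–Tate pairing on $A_u$ attached to the Poincaré bundle, so it suffices to study $u\mapsto\pair{P_u,Q_u}$ for arbitrary sections $P\in A(U)$ and $Q\in A^\vee(U)$. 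Embedding $A/U$ by a multiple of $\xi$ and cutting down by general hyperplane sections, we obtain, after shrinking $U$ and passing to a connected finite cover $U'\to U$, a smooth proper curve $\mathcal C\to U'$ with a surjection $\pi\colon\mathrm{Jac}(\mathcal C/U')\twoheadrightarrow A_{U'}$, whose dual is a closed immersion $\pi^\vee\colon A^\vee_{U'}\hookrightarrow\mathrm{Jac}(\mathcal C/U')^\vee$. Functoriality of the Poincaré biextension along $\pi$ and $\pi^\vee$ rewrites $\pair{P_u,Q_u}$ as the Néron–Tate pairing of two points of $\mathrm{Jac}(\mathcal C/U')_u$, one of them a lift of $P$; enlarging $U'$ so that this lift spreads out to a section, and shrinking $U'$ so that both sections are represented by relative degree-zero Cartier divisors $D,E$ supported in $\mathrm{Sm}(\mathcal C/U')$, we reduce — since Weil heights, canonical heights and the admissible pairing are functorial under finite covers, $\qq$-line bundles descending along them after passing to a Galois cover — to proving the analogue of the theorem for $(S',\mathcal C,D,E)$, the relevant function being $u'\mapsto\pair{D_{u'},E_{u'}}$, the Néron–Tate pairing of the fibrewise jacobian.

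\emph{Construction of $L$.} Spread $S'$ out to a regular, proper, flat model $\mathcal S\to B$ extending $\mathcal C\to U'$, where $B=\Spec\mathcal O_K$ in the number field case and $B$ is the smooth projective model of the base curve otherwise; after replacing $\mathcal S$ by successive alterations — which only affect the finite-cover bookkeeping above — we may assume, by Proposition \ref{lem:quasi_split_after_alteration}, that $\mathcal C\to\mathcal S$ is a quasisplit semistable curve, with $D,E$ extended to relative degree-zero Cartier divisors on $\mathcal C$ supported in $\mathrm{Sm}(\mathcal C/\mathcal S)$. Then $\mathcal S$ is integral, noetherian, regular and separated of dimension two, so the construction of Section \ref{sec:main} yields the admissible pairing $\qq$-line bundle $\pair{\mathcal D,\mathcal E}_\a$ on $\mathcal S$, and we take $L$ to be the appropriate $\qq$-power of its restriction to $S'$ (the exponent absorbing the sign and the factor $\tfrac12$ from the reductions). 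The generic-fibre inclusion $S'=\mathcal S\times_B\Spec K\hookrightarrow\mathcal S$ is flat and non-degenerate, so by Proposition \ref{mostlytrivial} this restriction is canonically the admissible pairing formed directly over the one-dimensional base $S'$; invoking the identification of $\pair{\cdot,\cdot}_\a$ with Zhang's admissible pairing over a one-dimensional base (Section \ref{basedim1}) and the classical formula for the Néron–Tate height over a function field in terms of Zhang's pairing gives $\deg_{S'}L=\hat{\on{h}}_{\xi_\eta}(P_\eta)$, hence, unwinding the reductions, $\deg_S L=\hat{\on{h}}_{\xi_\eta}(P_\eta)$.

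\emph{The Weil height property.} Fix $u\in U'(\bar K)$, defined over a finite extension $L_0$ of $K$, and let $\tilde u\colon\Spec\mathcal O_{L_0}\to\mathcal S$ be its extension, supplied by properness of $\mathcal S$. The Néron–Tate pairing $\pair{D_u,E_u}$ decomposes as a sum of Néron local pairings over the places $w$ of $L_0$; at the archimedean places these are the admissible archimedean pairings read off from the canonical metric on $\pair{\mathcal D,\mathcal E}_\a$ (Definition \ref{metriconNeronpairing}), and at a finite $w$ with $\tilde u(w)$ in the smooth-and-good locus $\mathcal U'$ the term coincides with the corresponding local contribution to a model Weil height $h_{S',L}$. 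Therefore $\pair{D_u,E_u}-h_{S',L}(u)$ equals, up to a bounded archimedean error, the sum over the finitely many $w$ with $\tilde u(w)$ in the boundary $Z=\mathcal S\setminus\mathcal U'$ of the local height jumps $\ord_w J(\tilde u;\mathcal D,\mathcal E)$ for the test curves $\tilde u$. By Theorem \ref{mainquasisplit}(b), each such term is bounded by $c\cdot\min_i\sum_{j\ne i}\ord_w\tilde u^*Z_i$, and in particular vanishes unless $\tilde u(w)$ lies on $Z'=\bigcup_{i\ne j}Z_i\cap Z_j$; since $\mathcal S$ is a surface, $Z'$ is a finite set of closed points. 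Because $\tilde u$ is a regular one-dimensional test curve, it is tangent to at most one of the components $Z_i$ through a given point of $Z'$, so each surviving local jump is bounded by a constant depending only on $(\mathcal S,Z)$; summing over the finitely many places of $L_0$ above the primes underlying $Z'$ and dividing by $[L_0:K]$ as in the normalisation of heights, we find that $|\pair{D_u,E_u}-h_{S',L}(u)|$ is bounded uniformly in $u\in U'(\bar K)$. Hence $u\mapsto\pair{D_u,E_u}$, and so $u\mapsto\hat{\on{h}}_{\xi_u}(P_u)$, extends into a Weil height with respect to $L$.

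\emph{The main obstacle.} The crux is the uniform boundedness in the last step: the naive bound for the jump at a place where the test curve meets the bad locus involves the intersection multiplicities $\ord_w\tilde u^*Z_i$, which are not bounded as $u$ varies over $U'(\bar K)$. What makes the argument work is the estimate of Theorem \ref{mainquasisplit}(b), which replaces these by the much smaller $\min_i\sum_{j\ne i}\ord_w\tilde u^*Z_i$ and thereby confines the jump to the codimension-two locus $Z'$; as $\mathcal S$ is a surface this locus is zero-dimensional, and a one-dimensional regular test curve can accumulate only a bounded total contribution there.
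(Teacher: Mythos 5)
Your overall architecture matches the paper's: reduce to jacobians, build a quasisplit semistable model $\ca{C}\to\ca{S}$ over a regular arithmetic surface with condition (*) via Proposition~\ref{lem:quasi_split_after_alteration} and Proposition~\ref{lem:alteration_to_nice}, decompose the canonical height and the height with respect to $L=\pair{\ca{D},\ca{E}}_\a^{\otimes-1}$ into local terms, and invoke Theorem~\ref{mainquasisplit}(b) to control the difference at the finitely many bad primes. (Zarhin's trick is an unnecessary detour, since the paper already works via the biextension pairing with an abstract $Q\in A^\lor(U)$; and your lift of $P$ to a section of the jacobian is not automatic from a generic Bertini curve --- the paper chooses the auxiliary curve through both $e_K$ and $P_K$ using the strong Bertini theorem precisely to make $P$ lift.)

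The genuine gap is in the final uniformity step. You assert that ``because $\tilde u$ is a regular one-dimensional test curve, it is tangent to at most one of the components $Z_i$ through a given point of $Z'$, so each surviving local jump is bounded by a constant.'' This is false. Regularity of the source $T=\on{Spec}\ca{O}_{L_0,v}$ does not bound $\min_i\sum_{j\neq i}\ord_v\tilde u^*Z_j$: in local coordinates $x,y$ at $s\in Z_1\cap Z_2$ with $Z_1=V(x)$, $Z_2=V(y)$, the local homomorphism $\ca{O}_{\ca{S},s}\to R$ given by $x\mapsto t^a$, $y\mapsto t^b$ has $R$ a DVR (hence regular) with both multiplicities $a,b$ arbitrarily large; such maps occur exactly when the test curve is highly ramified over $B$. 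So the local jump at a bad place is not bounded by a constant, and your subsequent summation cannot get started. What rescues the argument --- and this is where you arrange but never exploit condition (*) --- is that through each point of $Z'$ pass exactly two components of $Z$, one of which, say $Z_1$, is vertical. Then $\ord_v\tilde u^*Z_1=\ord_v\tilde u^*F=e(v/b)$ is the ramification index of $v$ over the place $b$ of $K$, so Theorem~\ref{mainquasisplit}(b) bounds the local jump by $c\cdot e(v/b)$, not by a constant. This \emph{non-uniform} bound is precisely what the normalisation absorbs: $\sum_{v\mid b}e(v/b)\log Nv=[L_0:K]\log Nb$, so summing over the finite set $N$ of bad $b$ and dividing by $[L_0:\qq]$ yields a bound independent of $L_0$ and $u$. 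Without invoking verticality of one boundary component (or some equivalent device), the passage from $c\cdot\min_i\sum_{j\neq i}m_j$ to a uniform bound is unjustified.
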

Using our bound in Theorem \ref{mainquasisplit}(b), we are able to give an alternative proof of Green's theorem. Our proof of Theorem \ref{greenlocal} will be given in Section \ref{variation}.

\subsection{A nefness result on the moduli of pointed stable curves} \label{sub:nef}In Section \ref{nef} we will prove the following theorem. 
\begin{thm} \label{nefnessproperty}
Assume that $S$ is of finite type over a field $k$. Let $D$ be a divisor of relative degree zero on $C \to S$ with support contained in $\mathrm{Sm}(C/S)$. Let $T$ be a smooth projective geometrically connected curve over $k$, and let $f \colon T \to S$ be a non-degenerate $k$-morphism. Then the $\qq$-line bundle $f^*\pair{D,D}_\a^{\otimes -1}$ has non-negative degree on $T$.
\end{thm}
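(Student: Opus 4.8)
The plan is to decompose $\deg_T f^*\pair{D,D}_\a^{\otimes -1}$ into two non-negative contributions, one coming from effectivity of the height jump and one from the negativity of the admissible self-pairing on a curve. Since $T$ is a smooth projective curve over $k$ it is in particular a Dedekind scheme, and since $f$ is non-degenerate the base change $C_T\to T$ is again a generically smooth semistable curve and $f^*D$ is a relative degree zero divisor on it with support in $\mathrm{Sm}(C_T/T)$. By the definition of the height jump divisor there is a canonical isomorphism of $\qq$-line bundles on $T$
\[ f^*\pair{D,D}_\a^{\otimes -1}\;\cong\;\pair{f^*D,f^*D}_\a^{\otimes -1}\otimes O_T\!\bigl(J(f;D,D)\bigr), \]
so $\deg_T f^*\pair{D,D}_\a^{\otimes -1}=\deg_T J(f;D,D)-\deg_T\pair{f^*D,f^*D}_\a$. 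It therefore suffices to prove that $J(f;D,D)$ is effective and that $\deg_T\pair{f^*D,f^*D}_\a\le 0$.

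\textbf{Effectivity of the jump.} For the first point I would argue locally on $T$. The $\qq$-Cartier divisor $J(f;D,D)$ is supported on the finitely many closed points of $T$ lying over $S\setminus U$; for such a point $z$, the canonical morphism from the trait $(\Spec\oo_{T,z},z)$ to $(S,s)$ (the composite with $f$) is non-degenerate, and by the test-curve description of the multiplicities of the height jump recalled after Proposition \ref{mostlytrivial} — using that $\Spec\oo_{T,z}\to T$ is flat, hence has trivial jump by Proposition \ref{mostlytrivial} — the multiplicity of $J(f;D,D)$ at $z$ equals $\ord_z$ of the height jump divisor of that trait. By Theorem \ref{maineffective} this multiplicity is $\ge 0$, so $J(f;D,D)$ is effective and $\deg_T J(f;D,D)\ge 0$.

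\textbf{Negativity of the self-pairing.} For the second point I would use the interpretation of the admissible pairing over a one-dimensional base through intersection theory on an arithmetic surface, i.e. its identification with S. Zhang's admissible pairing (Section \ref{basedim1}). Let $\widetilde{C_T}\to C_T$ be a resolution of singularities (the total space of a semistable curve over a regular one-dimensional base has at worst $A_n$-singularities over the nodes, and these lie off the horizontal divisor $f^*D$ since its support is in the relative smooth locus). This is a smooth projective surface fibered over $T$; let $\mathcal D$ be the strict transform of $f^*D$, a horizontal divisor of relative degree zero. Then $\deg_T\pair{f^*D,f^*D}_\a=(\overline{\mathcal D})^2$, where $\overline{\mathcal D}=\mathcal D+\mathcal V$ is the admissible extension, i.e. the unique $\qq$-divisor with $\mathcal V$ vertical and $\overline{\mathcal D}$ orthogonal, for the intersection form on $\widetilde{C_T}$, to every vertical divisor. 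In particular $\overline{\mathcal D}\cdot F=0$ for a fibre $F$; since $F^2=0$ and, by the Hodge index theorem, the intersection form on $\widetilde{C_T}$ has exactly one positive eigenvalue, the form is negative semidefinite on the orthogonal complement of $F$, whence $(\overline{\mathcal D})^2\le 0$ (Zariski's lemma applied to the class $\overline{\mathcal D}$, equivalently the negative semidefiniteness of Zhang's pairing on degree zero classes). Combined with the first point, $\deg_T f^*\pair{D,D}_\a^{\otimes -1}\ge 0$, as asserted.

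\textbf{Main obstacle.} The delicate step is the second one: one needs the precise and correctly-normalized dictionary between $\pair{\cdot,\cdot}_\a$ over a Dedekind base and the classical picture — Zhang's admissible pairing, or the self-intersection of the flat/admissible extension on a regular model of $C_T$ — so that the sign of $\deg_T\pair{f^*D,f^*D}_\a$ can be read off from the Hodge index theorem. By contrast, the reduction via the height jump divisor and the appeal to Theorem \ref{maineffective} for its effectivity are essentially formal once the preceding sections are in place.
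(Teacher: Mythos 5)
Your proof is correct, and it shares the first half of the argument with the paper's: you reduce via the isomorphism $f^*\pair{D,D}_\a^{\otimes -1}\cong O_T(J(f;D,D))\otimes\pair{f^*D,f^*D}_\a^{\otimes -1}$ to the effectivity of $J(f;D,D)$ (deduced from Theorem \ref{maineffective} and the test-curve description of multiplicities of the jump, exactly as the paper does implicitly) together with the non-positivity of $\deg_T\pair{f^*D,f^*D}_\a$. For this last inequality the paper takes a different route: it invokes Proposition \ref{poincare_height} to identify $\deg_T\pair{f^*D,f^*D}_\a^{\otimes -1}$ with the canonical N\'eron--Tate height $\hat{\on{h}}_{\pp(J_\eta)}(D_\eta,\mu D_\eta)$ of the point determined by $D$ in the jacobian of the generic fiber of $f^*C\to T$, which is non-negative because the polarization is principal and symmetric. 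You instead pass to a regular model $\widetilde{C_T}\to T$, use the identification from Section \ref{basedim1} (Propositions \ref{moretbailly} and \ref{neronpairingregularC}) to read $\deg_T\pair{f^*D,f^*D}_\a$ as the self-intersection $(\overline{\mathcal D})^2$ of the flat extension of $f^*D$, and conclude $(\overline{\mathcal D})^2\le 0$ from the Hodge index theorem (since $\overline{\mathcal D}\cdot F=0=F^2$ forces $\overline{\mathcal D}$ into the negative semidefinite part of the N\'eron--Severi lattice). Both are valid; your route stays within surface intersection theory and is arguably more elementary, whereas the paper's route via the Poincar\'e biextension and Proposition \ref{poincare_height} has the advantage of treating Theorem \ref{nefnessproperty} and its Arakelov-theoretic companion Theorem \ref{nefnessproperty_Arakelov} in a single uniform stroke (your geometric Hodge index argument would have to be replaced by the Faltings--Hriljac arithmetic Hodge index theorem in the number field case). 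You correctly flagged the precise dictionary between $\pair{\cdot,\cdot}_\a$ over a Dedekind base and the classical picture as the delicate step; that dictionary is indeed supplied by Section \ref{basedim1}, so your proof is complete as it stands.
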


As a special case we find that a certain line bundle on the moduli stack $\overline{\mm}_{g,n}$ of $n$-pointed stable curves of genus $g$ has non-negative degree on all complete curves that do not lie in the boundary divisor (i.e. a weak form of nefness). This issue is also discussed in \cite{hain_normal}. Our result is related with the discussion following Conjecture 14.5 in \cite[Section 14]{hain_normal}, as will follow from our next Section \ref{connection} on the connection of our work with Hain's.

\begin{cor}  Let $k$ be a field, and let $\overline{\mm}_{g,n}$ be the moduli stack of $n$-pointed stable curves of genus $g$ over $k$. Let $(p \colon \overline{C}_{g,n} \to \overline{\mm}_{g,n},(x_1,\ldots,x_n))$ be the universal pointed stable curve, and let $D = \sum m_i x_i$ be a relative degree zero divisor supported on the $x_i$. Let $f \colon T \to \overline{\mm}_{g,n}$ be a non-degenerate morphism (with respect to the universal curve) with $T$ a smooth projective curve over $k$. Then the $\qq$-line bundle $f^*\pair{D,D}_\a^{\otimes -1}$ has non-negative degree on $T$. 
\end{cor}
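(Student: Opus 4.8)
The plan is to deduce the corollary directly from Theorem \ref{nefnessproperty}; the only genuine work is translating the stacky setup into the scheme-level hypotheses of that theorem. First I would pull back the universal pointed stable curve $(\overline{C}_{g,n},(x_1,\ldots,x_n))$ along $f$ to obtain a stable — hence semistable — curve $\pi \colon C_T \to T$ equipped with disjoint sections $x_{1,T},\ldots,x_{n,T}$ through the smooth locus $\mathrm{Sm}(C_T/T)$ (the markings of a pointed stable curve are sections through smooth points), and I would observe that $D_T = f^*D = \sum_i m_i\, x_{i,T}$ is then a relative degree zero divisor on $C_T/T$ with support in $\mathrm{Sm}(C_T/T)$. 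Since $f$ is non-degenerate with respect to the universal curve, $\pi$ is smooth over a dense open subscheme of $T$, and $T$ — being a smooth projective curve over $k$ — is noetherian, regular, separated and of finite type over $k$, and integral after replacing it by a connected component if necessary.

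Next I would identify the $\qq$-line bundle $f^*\pair{D,D}_\a$ on $T$ with the admissible pairing $\pair{D_T,D_T}_\a$ formed on $T$ from the data $(C_T/T, D_T)$. To make sense of the notation $f^*\pair{D,D}_\a$ in the first place one can note that the construction of the admissible pairing commutes with flat non-degenerate base change by Proposition \ref{mostlytrivial}, and that an \'etale cover of $\overline{\mm}_{g,n}$ by a scheme is flat and non-degenerate because $\mm_{g,n}$ is dense in $\overline{\mm}_{g,n}$; hence the construction is \'etale-local on the base, descends to a $\qq$-line bundle on the stack $\overline{\mm}_{g,n}$, and pulls back along $f$ to $\pair{D_T,D_T}_\a$. (Alternatively one may simply read $f^*\pair{D,D}_\a$ as a name for $\pair{D_T,D_T}_\a$.) With this in hand I would invoke Theorem \ref{nefnessproperty} with base scheme $T$, curve $C_T \to T$, divisor $D_T$, and the non-degenerate $k$-morphism $\mathrm{id}_T \colon T \to T$ as test morphism; it yields that $\pair{D_T,D_T}_\a^{\otimes -1} = f^*\pair{D,D}_\a^{\otimes -1}$ has non-negative degree on $T$, which is exactly the assertion.

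Finally I would tidy up the connectedness bookkeeping left open above: if $T$ is disconnected one argues on each connected component separately, and if $T$ is connected but not geometrically connected one lets $k'$ be the algebraic closure of $k$ in the function field of $T$, views $T$ as a smooth projective geometrically connected curve over $k'$, and applies Theorem \ref{nefnessproperty} over $k'$ (again with $\mathrm{id}_T$); since $\deg_{T/k} = [k':k]\,\deg_{T/k'}$ with $[k':k]>0$, non-negativity over $k'$ gives it over $k$. The substantive input here — the non-negativity asserted in Theorem \ref{nefnessproperty} — is proved in Section \ref{nef}, so from the present vantage point the corollary is essentially formal; the one place requiring care, and thus the main obstacle, is precisely this stack-to-scheme translation: correctly interpreting the pullback $f^*\pair{D,D}_\a$ and verifying that the identity morphism of $T$ legitimately plays the role of the test morphism in Theorem \ref{nefnessproperty}.
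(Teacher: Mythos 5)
The proposal has a genuine gap at the point where you assert that the stack-level $\qq$-line bundle $\pair{D,D}_\a$ ``pulls back along $f$ to $\pair{D_T,D_T}_\a$.'' This identification is exactly what fails in general, and the failure is the height jump divisor --- the central object of the paper. Descent to the stack is legitimate: because $\pair{\cdot,\cdot}_\a$ commutes with \emph{flat} non-degenerate base change (Proposition~\ref{mostlytrivial}), the construction glues over an \'etale scheme cover and gives a well-defined $\qq$-line bundle on $\overline{\mm}_{g,n}$, so $f^*\pair{D,D}_\a$ makes sense. But ``\'etale-local on the base'' does not entail compatibility with arbitrary pullbacks. The morphism $f\colon T\to\overline{\mm}_{g,n}$ is typically not flat (a curve mapping to a higher-dimensional stack), and the two bundles $f^*\pair{D,D}_\a$ and $\pair{D_T,D_T}_\a$ differ by the $\qq$-divisor $J(f;D,D)$ supported on $T\setminus f^{-1}\mm_{g,n}$. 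Consequently, your invocation of Theorem~\ref{nefnessproperty} with $S=T$ and test morphism $\mathrm{id}_T$ --- for which the jump is trivially zero --- only proves $\deg_T\pair{D_T,D_T}_\a^{\otimes -1}\geq 0$, not the assertion about $f^*\pair{D,D}_\a^{\otimes -1}$.

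The missing ingredient is precisely Theorem~\ref{maineffective}. One should argue: the coefficient of $J(f;D,D)$ at each closed point $t\in T\setminus f^{-1}\mm_{g,n}$ can be computed after passing to an \'etale neighbourhood $S'\to\overline{\mm}_{g,n}$ of $f(t)$ with $S'$ a scheme, lifting $\Spec O_{T,t}\to\overline{\mm}_{g,n}$ to a trait mapping to $S'$, and applying the scheme-level theory; Theorem~\ref{maineffective} then gives that this coefficient is non-negative, so $J(f;D,D)$ is effective. Combining $\deg_T f^*\pair{D,D}_\a^{\otimes -1}=\deg_T\pair{D_T,D_T}_\a^{\otimes -1}+\deg_T J(f;D,D)$ with the non-negativity of both terms yields the corollary. (Equivalently, one can avoid the $\mathrm{id}_T$ detour and apply Theorem~\ref{nefnessproperty} with $S$ a suitable scheme model over $\overline{\mm}_{g,n}$ and $f$ itself --- or an \'etale lift of it --- as the test morphism, which packages the effectivity of the jump internally.) The remaining parts of your write-up, namely that $(C_T/T,D_T)$ satisfies the hypotheses of Theorem~\ref{nefnessproperty} and the reduction to geometrically connected $T$, are fine.
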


With the same methods it is possible to prove an analogue of Theorem \ref{nefnessproperty} in Arakelov geometry:
\begin{thm}\label{nefnessproperty_Arakelov}
Assume that $S$ is proper and flat over $\on{Spec} \bb{Z}$. Let $D$ be a divisor of relative degree zero on $C/S$ with support contained in $\on{Sm}(C/S)$. Let $T=\on{Spec} \ca{O}$ with $\ca{O}$ the ring of integers in a number field, and let $f \colon T \ra S$ be morphism, non-degenerate with respect to $C \ra S$. Then the hermitian $\bb{Q}$-line bundle $f^*\pair{D,D}_a^{-1}$ on $T$ has non-negative Arakelov degree.
\end{thm}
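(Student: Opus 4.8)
The plan is to mimic, in the arithmetic setting, the proof of Theorem~\ref{nefnessproperty}: trade the degree on a projective curve for the Arakelov degree $\widehat{\deg}$ on $\Spec\ca O$, and carry the canonical archimedean metrics through the argument. Write $T=\Spec\ca O$, let $K=\on{Frac}(\ca O)$, and put $C_T=C\times_S T$, a semistable curve over the Dedekind scheme $T$. First I would note that, since $f$ is non-degenerate and $T$ is irreducible with generic point $\Spec K$, the morphism $f$ carries $\Spec K$ into $U$; hence each complex place $\sigma\colon K\hookrightarrow\cc$ composes with $f$ to a point of $U(\cc)$, so that the canonical admissible metric of Definition~\ref{metriconNeronpairing} makes sense at all archimedean places on both $f^*\pair{D,D}_\a$ and $\pair{f^*D,f^*D}_\a$. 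The key observation is that these two metrized $\qq$-line bundles agree \emph{isometrically} over $f^{-1}U$: both metrics are pulled back from the admissible metric on the Deligne pairing of the smooth curve $C_U\to U$, and the formation of that metrized pairing commutes with arbitrary base change by the uniqueness clause of Proposition~\ref{prop:adm_metric}. It follows that the canonical rational section $\sigma(f;D,D)$ of $f^*\pair{D,D}_\a^{-1}\otimes\pair{f^*D,f^*D}_\a$ has norm $1$ at every archimedean place, so that, computing $\widehat{\deg}$ from this section (whose divisor is $J(f;D,D)$),
\[ \widehat{\deg}\,\pair{f^*D,f^*D}_\a-\widehat{\deg}\,f^*\pair{D,D}_\a=\sum_{\mathfrak p}\bigl(\ord_{\mathfrak p}J(f;D,D)\bigr)\,\log\#k(\mathfrak p). \]

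Next I would interpret each term on the right as a local height jump: $\ord_{\mathfrak p}J(f;D,D)$ is the height jump of the test trait $\Spec\ca O_{\mathfrak p}\to T\xrightarrow{\,f\,}S$ at its closed point (the reduction discussed after Proposition~\ref{mostlytrivial}), which is $\ge 0$ by Theorem~\ref{maineffective} applied with $E=D$. This yields $\widehat{\deg}f^*\pair{D,D}_\a\le\widehat{\deg}\pair{f^*D,f^*D}_\a$ and reduces the theorem to the inequality $\widehat{\deg}\pair{f^*D,f^*D}_\a\le 0$ for the semistable curve $C_T$ over the Dedekind base $T$. To establish this I would use that over a Dedekind base no extension step is needed: writing $J$ for the Jacobian of $C_{f^{-1}U}/f^{-1}U$, the Poincar\'e bundle prolongs to a biextension $\bar\pp$ on $\nm^0(J)\times_T\nm^0(J^\lor)$ (Proposition~\ref{PoincareProlongations}(1)), metrized at the archimedean places by the admissible metric (Proposition~\ref{prop:adm_metric}), and for suitable $m,n\in\zz_{>0}$ one has $\pair{f^*D,f^*D}_\a=\bigl((m\,f^*D,\,n\,\bar\mu f^*D)^*\bar\pp^{\otimes-1}\bigr)^{\otimes 1/mn}$ as hermitian $\qq$-line bundles on $T$. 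Then I would invoke the standard fact that the Arakelov degree of the pullback of the admissibly metrized Poincar\'e biextension along N\'eron-model sections computes the canonical (N\'eron--Tate) height pairing $\hat{\on{h}}_\pp$ on $J(K)\times J^\lor(K)$ attached to $\pp$ --- precisely the feature the admissible metric is designed to provide; see \cite{mb} and \cite{pinc}. Using biadditivity of $\hat{\on{h}}_\pp$ to clear $m$ and $n$, and the fact that $\hat{\on{h}}_\pp(x,\mu x)$ is (twice) the Néron--Tate height of $x$ with respect to a symmetric ample class inducing the principal polarization $\mu$, hence non-negative, one gets
\[ \widehat{\deg}\,\pair{f^*D,f^*D}_\a=-\hat{\on{h}}_\pp\bigl([f^*D_K],\,\mu[f^*D_K]\bigr)\le 0, \]
where $[f^*D_K]\in J(K)$ is the class of the restriction of the relative degree-zero divisor $f^*D$ to the generic fibre. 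Equivalently, one may cite the identification of $\pair{\,\cdot\,,\,\cdot\,}_\a$ over a Dedekind base with S.~Zhang's admissible pairing \cite{zh}, to be made in Section~\ref{basedim1}, together with the negative semidefiniteness of the admissible self-pairing of a degree-zero divisor.

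I expect the main difficulty to be archimedean bookkeeping rather than anything new of a global nature. The two points to check with care are: that the canonical metrics on $f^*\pair{D,D}_\a$ and on $\pair{f^*D,f^*D}_\a$ match under the comparison isomorphism over $f^{-1}U$ --- so that $\sigma(f;D,D)$ contributes nothing at infinity and the reduction to the Dedekind-base case is clean --- and that the admissible metric on the prolonged Poincar\'e biextension is exactly the one for which the Arakelov degree of a N\'eron-model section equals the N\'eron--Tate height pairing, i.e.\ that the admissible pairing deserves its name arithmetically as well. Both follow from the uniqueness statements in Propositions~\ref{prop:adm_metric} and~\ref{PoincareProlongations} combined with the standard description of the N\'eron--Tate height through metrized biextensions; granting these, the proof runs formally parallel to that of Theorem~\ref{nefnessproperty}.
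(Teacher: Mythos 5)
Your proposal is correct and follows essentially the same route as the paper: the paper's joint proof of Theorems \ref{nefnessproperty} and \ref{nefnessproperty_Arakelov} invokes Theorem \ref{maineffective} for effectivity of $J(f;D,D)$ and then Proposition \ref{poincare_height} to identify $\widehat{\deg}\,\pair{f^*D,f^*D}_\a^{\otimes -1}$ with the non-negative canonical height of the point determined by $D$. Your write-up simply spells out the archimedean bookkeeping (that the comparison section has norm $1$ at infinity and that the admissible metric on the prolonged Poincar\'e biextension computes N\'eron--Tate) which the paper treats as implicit in Proposition \ref{poincare_height}.
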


\section{Connection with Hain's work}  \label{connection}

Before continuing, we would like to point out the relation with Hain's work \cite{hain_normal}, which takes place in a Hodge theoretic context. The material from this section will not be used in what follows. Our main references for this section are \cite{hainbiext} and \cite{hain_normal}.

Let $U$ be an integral, regular and separated scheme of finite type over $\cc$. Let $(\vv,\mu)$ be an admissible variation of polarized Hodge structures of weight $-1$ over  $U(\cc)$, and $(\vv^\lor,\mu^\lor)$ its dual. Let $J(\vv) \to U(\cc)$ be the intermediate jacobian fibration associated to $\vv$, with dual $J(\vv^\lor)$. Then the torus fibration $J(\vv) \times_{U(\cc)} J(\vv^\lor)$ carries a canonical biextension line bundle $\pp=\pp(J(\vv))$, equipped with a canonical (admissible) $C^\infty$ hermitian metric.

Now suppose we have a section $\nu \colon U(\cc) \to J(\vv)\times_{U(\cc)} J(\vv^\lor)$. We then obtain a $C^\infty$ hermitian line bundle $\ll=\nu^* \pp(J(\vv))$ on $U(\cc)$. Now assume $S$ is a partial compactification of $U$ such that $S \setminus U$ is a normal crossings divisor $Z = \sum_{i=1}^r Z_i$ and $\vv$ has unipotent monodromy around each of the $Z_i$. In his PhD thesis \cite{lear}, D. Lear shows that there exists a unique $\qq$-line bundle $\bar{\ll}$ on $S$ extending the line bundle $\ll$ on $U$ in such a way that the canonical metric on $\ll$ extends into a continuous metric on the restriction of $\bar{\ll}$ to $S \setminus Z^{\mathrm{sing}}$. We call $\bar{\ll}$ the \emph{Lear extension} of $\ll$ over $S$. 

In \cite{hain_normal} Hain studies and computes the Lear extension in a number of examples related to moduli spaces of pointed curves. For $f \colon T \to S$ a non-degenerate morphism, with $T$ a smooth projective curve over $\cc$, one can compare the pullback $f^*\bar{\ll}$ of the Lear extension with the Lear extension on $T$ obtained from the pullback section $f^*\nu$ and the pullback variation of Hodge structures $f^*\vv$, leading to a height jump divisor $J=J(f;\nu)$ supported on $T \setminus f^{-1}U$. Hain conjectures in \cite[Section 14]{hain_normal} that the height jump divisor should be effective in the ``diagonal'' case where $\nu$ maps into the graph of the given polarization $\mu \colon J(\vv) \to  J(\vv^\lor)$.

The special case connected to the theme of the present paper is the case where $(\vv,\mu)$ is the polarized variation of Hodge structures on $U(\cc)$ associated to a semistable curve $p \colon C \to S$, where $U$ is the locus where $p$ is smooth. That is, the fibre of $\bb{V}$ at a point $u \in U$ is $\on{H}_1(C_u)$ endowed with its canonical principal polarisation. The intermediate jacobian fibration associated to $\vv$ is then the analytification of the jacobian $J_U \to U$ of $C_U \to U$. The section $\nu \colon U(\cc) \to J(\vv)\times_{U(\cc)} J(\vv^\lor)$ is the section $(D,\mu E)$ determined by a pair of relative degree zero divisors $D, E$ on $C \to S$ with support contained in $\mathrm{Sm}(C/S)$. 

Let $\nm(J_U) \to V$ be the N\'eron model of $J_U \to U$ furnished by Theorem \ref{existenceV}. It follows from \cite[Section 4]{gr} that the Poincar\'e prolongation $\bar{\pp}=\bar{\pp}(J_U)$ on $
\nm^0(J_U) \times_U \nm^0(J_U^\lor)$ can be endowed with a continuous hermitian metric extending the canonical (admissible) $C^\infty$ hermitian metric on $\pp(J_U)$ (cf.\ Definition \ref{def:adm_metric} and Proposition \ref{prop:adm_metric}). 

We deduce from this that our admissible pairing and Lear's extension coincide.
\begin{thm} Let $p \colon C \to S$ be a generically smooth semistable curve, and let $U \subset S$ be the locus where $p$ is smooth. Let $D, E$ be two divisors on $C \to S$ of relative degree zero, and $\pair{D,E}_\a$ be their admissible pairing on $S$. Then $\pair{D,E}_\a^{\otimes -1}$ coincides with the Lear extension of the $C^\infty$-hermitian line bundle $\nu^*\pp(J_U)$ on $U$, where $\nu \colon U \to J_U \times_U J_U^\lor$ is the section determined by the restriction of the pair $(D,\mu E)$ to $U$.
\end{thm}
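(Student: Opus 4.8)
\emph{Proof proposal.} Write $\ll=\nu^*\pp(J_U)$, carrying its canonical metric (Definition \ref{metriconNeronpairing}), and $M=\pair{D,E}_\a^{\otimes -1}$; the goal is to identify the $\qq$-line bundle $M$ on $S$ with the Lear extension $\bar\ll$ of $\ll$. The first thing to observe is that $M$ restricts to $\ll$ on $U$: over $U$ the N\'eron model $\nm^0(J_U)$ is $J_U$ itself, the prolongation $\bar\pp$ is $\pp$, and the section $(mD,n\bar\mu E)$ restricts to $(mD,n\mu E)$, so from the construction of the admissible pairing $M|_U=\big((mD,n\mu E)^*\pp\big)^{\otimes 1/mn}$, which by bi-additivity of the biextension $\pp$ is canonically $(D,\mu E)^*\pp=\nu^*\pp=\ll$ (equivalently, over $U$ the admissible pairing is the Deligne pairing, cf.\ Proposition \ref{delignepoincare}). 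Hence $M$ and $\bar\ll$ are $\qq$-line bundles on $S$ agreeing on the dense open $U$, so their difference is $O_S(D')$ for a $\qq$-Cartier divisor $D'$ supported on $Z=\bigcup_i Z_i$, and it suffices to prove that the multiplicity of $D'$ along each $Z_i$ is zero.

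The next step is to produce a continuous metric on $M$ over $V$. By the result of Green quoted above (\cite[Section 4]{gr}), the prolongation $\bar\pp$ on $\nm^0(J_U)\times_V\nm^0(J_U^\lor)$ carries a continuous hermitian metric extending the admissible metric on $\pp$. Pulling it back along the section $(mD,n\bar\mu E)$ and extracting an $mn$-th root equips $M|_V$ with a continuous metric $h_M$ whose restriction to $U$ is, using the bi-additivity of the admissible Poincar\'e bundle together with its metric, exactly the canonical metric on $\ll$. On the other side, the Lear extension by definition carries a continuous metric $h_{\bar\ll}$ on $\bar\ll|_{S\setminus Z^{\mathrm{sing}}}$, again restricting to the canonical metric on $\ll|_U$.

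Finally I would compare the two over $W=V\cap(S\setminus Z^{\mathrm{sing}})$. Since $S\setminus V$ and $Z^{\mathrm{sing}}$ both have codimension at least two in $S$, whereas the generic point $\eta_i$ of $Z_i$ has codimension one and lies in no other $Z_j$, we have $\eta_i\in W$ for all $i$, so $W$ is open dense and meets every $Z_i$. Over $W$ both $M$ and $\bar\ll$ carry continuous metrics ($h_M$, $h_{\bar\ll}$) restricting to the canonical metric on $\ll|_{W\cap U}$; hence the canonical section of $M\otimes\bar\ll^{\otimes -1}$ over the dense open $W\cap U$ has norm identically $1$ for the continuous metric $h_M\otimes h_{\bar\ll}^{\otimes -1}$. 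After raising to a power to clear denominators this becomes a rational section of some line bundle $O_W(ND')$ whose norm for a continuous metric is identically $1$ off $\mathrm{Supp}(ND')$; reading this in a local frame near the generic point of a component of $\mathrm{Supp}(ND')$ forces the multiplicity of $D'$ there to vanish. As $W$ meets every $Z_i$ we conclude $D'=0$ and $M\cong\bar\ll$.

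The main obstacle I anticipate is the metric matching in the second step: one has to verify that pulling back Green's continuous metric on $\bar\pp$ along the \emph{multiple} section $(mD,n\bar\mu E)$ and then taking an $mn$-th root recovers, over $U$, precisely the canonical metric on $\ll=\nu^*\pp$, which rests on the compatibility of the admissible metric on $\pp$ with the biextension structure and with passage to multiples. The other two steps are essentially formal: the first is bookkeeping with biextensions and $\qq$-line bundles, and the third is the elementary fact that a continuously metrized line bundle admits no nontrivial metric-preserving twist by a divisor. A reduction to the one-dimensional setting of Section \ref{basedim1}, by localizing at each $\eta_i$ (legitimate by Proposition \ref{mostlytrivial}, localization being flat), would replace the third step, but does not appear to shorten the argument.
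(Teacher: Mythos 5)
Your approach is essentially the same as the paper's (which gives only a terse deduction rather than a formal proof): invoke Green's result that the Poincar\'e prolongation $\bar\pp$ on $\nm^0(J_U)\times_V\nm^0(J_U^\lor)$ carries a continuous metric extending the admissible one, pull this back along $(mD,n\bar\mu E)$ and extract a root to get a continuous metric on $\pair{D,E}_\a^{\otimes-1}|_V$ extending the canonical one, and then conclude by the uniqueness characterizing the Lear extension (which you re-derive rather than cite). You correctly identify the one point that actually requires care — that the admissible/biextension metric on $\pp$ is compatible with passage to multiples and roots, so that the metric on $M|_U$ really is the canonical one on $\nu^*\pp$ — and correctly flag that this follows from uniqueness of the admissible metric compatible with the rigidification and biextension structure (Moret-Bailly). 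Your codimension bookkeeping ($S\setminus V$ and $Z^{\mathrm{sing}}$ both of codimension $\geq2$, so $W$ contains every $\eta_i$) and the conclusion $D'=0$ are fine. In short: correct, and faithful to the paper's argument, just written out in full.
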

Using this equivalence, one sees that Theorem \ref{maineffective} proves and generalizes a conjecture of Hain about the effectivity of the height jump divisor for Lear extensions in \cite[Section 14]{hain_normal}. Furthermore,  one can now also see that the result in Theorem \ref{mainquasisplit}(a) is an algebraic version of an analytic result due to G. Pearlstein \cite[Theorems 5.19 and 5.37]{pearl}, if the latter is specialised to variations of Hodge structures of type $(-1,0), (0,-1)$. Finally, referring back to our results in subsection \ref{sub:nef}, Theorem 11.5 of \cite{hain_normal} calculates, for a given tuple $(m_1,\ldots,m_n)$ of integers such that $\sum_i m_i =0$, the admissible pairing $\pair{D,D}_\a^{\otimes -1}$ on $\overline{\mm}_{g,n}$ with $D = \sum_i m_i x_i$. In Section 9 of \cite{hdj} one finds an alternative calculation, more in the spirit of our ``algebraic'' approach.

\section{Resistive networks and Green's functions} \label{resistive}

Our proofs of Theorems \ref{maineffective} and \ref{mainquasisplit} rely on an explicit formula for the height jump divisor in the quasisplit case. The objective of the next sections will be to develop the necessary preliminary results in order to state this formula (Theorem \ref{formulahtjump}) and to prove this formula (Section \ref{explicit}).

An important tool is the Green's function on a resistive network. 
This will be the subject of the present section. 
In the next section we will recall from \cite{ho1} and \cite{ho2} the notion of a labelled dual graph for a point in the base $S$ of a semistable curve $C \to S$, and state Theorem \ref{formulahtjump} in terms of these labelled dual graphs. 
%Our proof of Theorem \ref{formulahtjump} then needs two more preparatory sections. 
%The first one considers the admissible pairing in the special situation where $S$ is one-dimensional (Section \ref{basedim1}); the second one considers various properties of the Green's function of a resistive network in more detail (Section \ref{}).

%We start by recalling the Green's function on a resistive network.
\begin{definition}[Graphs]
A \emph{graph} is a triple $(V, E, \epoints)$, where $V$ and $E$ are sets (the set of \emph{vertices} an the set of \emph{edges}, respectively), and $\epoints: E\to (V\times V)/\mathrm{S}_2$ is a function sending each edge to its unordered pair of endpoints.  
Thus we allow parallel edges (multiple edges sharing the same set of endpoints) and loops (edges whose ``two'' endpoints are equal). An \emph{orientation} of an edge is an ordering of its endpoints. An \emph{oriented edge} is an edge equipped with an orientation. If $e$ is an edge with $\epoints(e)=[(i,j)]$, we refer to its orientations as $e:i\to j$ and $e:j\to i$. Given a graph $\Graph$, we refer to its set of vertices by $\vertices(\Graph)$, and its set of edges by $\edges(\Graph)$. 
\end{definition}

\begin{definition}
A \emph{resistive network} is to be a pair $(\Graph,\resist)$ where $\Graph$ is a graph with finite sets of vertices and edges, and $\resist \in \rr_{\geq 0}^{\edges(\Graph)}$ is a function assigning a nonnegative real number to each edge of $\Graph$.
We say that an edge $e$ of a resistive network $(\Graph,\resist)$ has a \emph{resistance} of $\resist(e)$.  
In case the resistance of each edge is strictly positive, we say that the resistive network is \emph{proper}; a resistive network where some of the edges have zero resistance is called \emph{improper}.
\end{definition}
\begin{definition}Each proper resistive network $(\Graph,\resist)$ has an associated \emph{Laplacian} $\laplace=\laplace[(\Graph, \resist)]$, a linear map $\rr^{\vertices(\Graph)}\to\rr^{\vertices(\Graph)}$.  Given a vector $v=(v_i)_{i\in \vertices(\Graph)}\in \rr^{\vertices(\Graph)}$,
%interpreted as an assignment of voltage $v_i$ to each vertex $i$,
the $i$th component of $\laplace v$ is given by 
\[(\laplace v)_i = \sum_{j\in\vertices(\Graph)}\sum_{\substack{\text{edges}\\e:i\to j}}\frac{v_i - v_j}{\resist(e)} \, .\]
It is straightforward to check that $\laplace$ is self-adjoint (i.e.\ $\transpose{\laplace}=\laplace$), that the kernel of $L$ consists of vectors which are constant on each connected component of $\Graph$, and that the image of $\laplace$ consists of vectors that sum to zero on each connected component.
\end{definition}
%Recalling Ohm's law, we interpret the ratio $(v_i-v_j)/\resist(e)$ as the amount of \emph{current} flowing along the edge $e$ from vertex $i$ to vertex $j$, denoted $\current{e}{i}{j}$.
%Thus $(\laplace v)_i$ measures the total current flowing out of vertex $i$ into the rest of the network.
%We say that the vector $\divD = \laplace v$ is the current assignment \emph{induced by} the voltage assignment $v$. Note that $\rr^{\vertices(\Graph)}$ is
%serving as both the vector space of voltage assignments and as the vector
%space of current assignments.
%
%Given an assignment $\divD\in\rr^{\vertices(\Graph)}$ of a total current to each
%vertex of $\Graph$, we may ask for a voltage assignment $v$ that induces
%$\divD$.  This is possible if and only if $\divD$ sums to zero on each
%connected component of $\Graph$.  Furthermore, two voltage assignments $v$
%and $v'$ induce the same total currents at each vertex if and only if $v-v'$
%is constant on each connected component.

\begin{remark}
 The connection with electrical resistances is to interpret a vector $v\in\rr^{\vertices(\Graph)}$ as an assignment of a real-valued \emph{voltage} to each vertex in $\Graph$.
 Then for each edge $e:i\to j$, the quantity $(v_i-v_j)/\resist(e)$ is interpreted by Ohm's law as the \emph{current} flowing along edge $e$.
 The above formula for $(\laplace v)_i$, then, calculates the total current flowing out of vertex $i$ into the rest of the network.
 We say that $\laplace v$ is the \emph{(vertex) current assignment} induced by $v$.
 We will typically denote vectors in $\rr^{\vertices(\Graph)}$ by small italic letters $v$, $w$, etc.\ if they are to be interpreted as voltage assignments, or by large calligraphic letters $\divD$, $\divE$, etc.\ if they are to be interpreted as current assignments.
\end{remark}

For the remainder of this section, we will consider only those resistive networks which are connected, that is, which have exactly one connected component.
Then the kernel of $\laplace$ consists of the constant vectors, and the image of $\laplace$ consists of the vectors $\divD$ whose sum is zero.
Thus by dimensional considerations, $\laplace$ restricts to a linear automorphism of the vector space of zero-sum vectors in $\rr^{\vertices(\Graph)}$.
Its inverse extends uniquely to a linear endomorphism of $\rr^{\vertices(\Graph)}$ whose kernel also consists of the constant vectors; this endomorphism $\laplace^+$ is called the \emph{Moore-Penrose pseudoinverse} of $\laplace$, and it is also self-adjoint.
Given a vector $\divD$ in $\rr^{\vertices(\Graph)}$, we may compute  $\laplace^+\divD$ by first adding a constant vector to $\divD$ to make the sum of its entries vanish, then finding a voltage assignment $v$ inducing that zero-sum current assignment, and finally adding a constant vector to $v$ to make the sum of \emph{its} entries vanish.
If the sum of the entries of $\divD$ already vanishes, we may omit the first step, and if we are only interested in the differences between entries of $\laplace^+\divD$, then the last step may be omitted as well.
In this way, we can speak of voltage differences induced by a current assignment $\divD$: if $v$ is any vector with $Lv=\divD$, then $v_i-v_j = (\laplace^+\divD)_i - (\laplace^+\divD)_j$.
%It is again easily checked that $\laplace^+$ is self-adjoint; it is in fact the \emph{Moore-Penrose pseudoinverse} to $\laplace$.

Given two vertices $i$ and $j$ in a resistive network $(\Graph, \resist)$, the \emph{effective resistance} $r_\mathrm{eff}(i,j)$ from $i$ to $j$ is the voltage difference between vertices $i$ and $j$ when a current of $+1$ is imposed at vertex $i$ and $-1$ is imposed at vertex $j$ (and $0$ everywhere else).  
Denoting by $\basis_k$ the vector with $1$ in the $k$th place and $0$ everywhere else, we can write this current assignment as $\basis_i-\basis_j$.
Then $\laplace^+(\basis_i-\basis_j)$ is a voltage assignment inducing such a current, and $r_\mathrm{eff}(i,j)=\transpose{(\basis_i-\basis_j)}\laplace^+(\basis_i-\basis_j)$ is the resulting voltage difference from vertex $i$ to vertex $j$.
More generally, given two zero-sum vectors $\divD,\divE\in\rr^{\vertices(\Graph)}$, we define the Green's function for $(\Graph,\resist)$ as follows:
\begin{definition}
 Let $(\Graph, \resist)$ be a proper resistive network whose underlying graph $\Graph$ has exactly one connected component, and let $\divD$ and $\divE$ be two zero-sum vectors in $\rr^{\vertices(\Graph)}$.
 Then the \emph{Green's function} of $(\Graph, \resist)$ at $\divD$ and $\divE$ is defined as
\[\green(\Graph, \resist; \divD, \divE):=\transpose{\divD}\laplace^+\divE\]
where $\laplace^+$ is the
Moore-Penrose pseudoinverse to the Laplacian $\laplace=\laplace[(\Graph,\resist)]$.
Then for fixed $\divD$ and $\divE$, we may consider $\green(\Graph,\cdot\,; \divD, \divE)$ to be a function $\rr_{>0}^{\edges(\Graph)}\to \rr$.  
Alternatively, we may fix $\resist$ and consider $\green(\Graph,\resist; \cdot\,, \cdot\,)$ as a symmetric bilinear form.
\end{definition}

In the appendix we will use the techniques of resistor networks to prove that the Green's function extends continuously to improper networks:

\begin{prop} \label{continuous}
Let $\Graph$ be a connected graph, and $\divD$ and $\divE$ two zero-sum vectors in $\rr^{\vertices(\Graph)}$. The Green's function $g(\Graph,\cdot\,;\divD,\divE)$ extends continuously to a function $\rr_{\geq 0}^{\edges(\Graph)}\to\rr$.
\end{prop}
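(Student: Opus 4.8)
\emph{Proof strategy.}
The plan is to reinterpret $\green(\Graph,\resist;\divD,\divE)$ via Thomson's energy‑minimization principle, a description that makes sense verbatim for improper networks, and then to prove that the resulting a priori extension is continuous by a compactness argument on the space of flows. First, since $\laplace^+$ is self-adjoint the Green's function satisfies the polarization identity
\[
\green(\Graph,\resist;\divD,\divE) = \tfrac14\big(\green(\Graph,\resist;\divD+\divE,\divD+\divE) - \green(\Graph,\resist;\divD-\divE,\divD-\divE)\big)
\]
on $\rr_{>0}^{\edges(\Graph)}$, so it suffices to treat the diagonal case $\divD=\divE$ and then transfer.

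For a zero-sum vector $\divD$, call a \emph{flow with divergence $\divD$} an assignment of a real number to each oriented edge, odd under reversal, whose net outflow at every vertex agrees with $\divD$; such flows exist since $\Graph$ is connected and $\divD$ sums to zero. For $\resist\in\rr_{\geq 0}^{\edges(\Graph)}$ put $\mathrm{E}_\resist(\theta)=\sum_e \resist(e)\theta(e)^2$ and define
\[
\bar{\green}(\Graph,\resist;\divD):=\min_{\theta}\mathrm{E}_\resist(\theta),
\]
the minimum over flows $\theta$ with divergence $\divD$. This minimum is attained, because $\mathrm{E}_\resist$ depends only on the restriction of $\theta$ to the edges of positive resistance, on which it is a positive-definite, hence coercive, quadratic form, and the set of such restrictions is a closed affine subspace. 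Thomson's principle gives $\bar{\green}(\Graph,\resist;\divD)=\transpose{\divD}\laplace^+\divD=\green(\Graph,\resist;\divD,\divD)$ for $\resist\in\rr_{>0}^{\edges(\Graph)}$: if $\laplace v=\divD$, then $\theta^*(e)=(v_i-v_j)/\resist(e)$ is a flow with divergence $\divD$ of energy $\transpose v\laplace v=\transpose\divD\laplace^+\divD$, and any other such flow differs from $\theta^*$ by a cycle flow, which is $\mathrm{E}_\resist$-orthogonal to $\theta^*$ because $(v_i-v_j)$ is a coboundary. Hence $\bar{\green}$ extends $\green$ to $\rr_{\geq 0}^{\edges(\Graph)}$ in the diagonal case, and through the polarization identity this defines the candidate extension in general; it remains to prove that $\resist\mapsto\bar{\green}(\Graph,\resist;\divD)$ is continuous.

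Upper semicontinuity is immediate, $\bar{\green}(\Graph,\cdot\,;\divD)$ being an infimum of the affine, hence continuous, functions $\resist\mapsto\mathrm{E}_\resist(\theta)$. For lower semicontinuity I would fix $\resist_0\in\rr_{\geq 0}^{\edges(\Graph)}$, set $S_0=\{e:\resist_0(e)=0\}$, take $\resist_n\to\resist_0$ in $\rr_{>0}^{\edges(\Graph)}$ realizing the $\liminf$, and let $\theta_n$ minimize $\mathrm{E}_{\resist_n}$ among flows with divergence $\divD$. Comparing with one fixed tree flow $\tau_0$ shows $\mathrm{E}_{\resist_n}(\theta_n)\leq\|\resist_n\|_\infty\cdot\mathrm{const}$ is bounded, and since $\resist_n(e)$ is bounded below for $e\notin S_0$ once $n\gg0$, this bounds $\theta_n$ on the edges outside $S_0$. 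Now subtract from $\theta_n$ the component of $\theta_n-\tau_0$ lying in the space of cycle flows supported on $S_0$; this alters neither the divergence nor the values outside $S_0$, and the resulting flows $\theta_n'$ are bounded, since the restriction map from the cycle space of $\Graph$ to $\rr^{\edges(\Graph)\setminus S_0}$ has kernel exactly the cycle flows supported on $S_0$ and so is injective on a complement of the latter. Finally $\mathrm{E}_{\resist_n}(\theta_n')$ and $\mathrm{E}_{\resist_n}(\theta_n)$ differ only in the $S_0$-terms, which are at most $\|\resist_n|_{S_0}\|_\infty\|\theta_n'\|^2\to0$; passing to a subsequence with $\theta_n'\to\theta_\infty$ then yields $\bar{\green}(\Graph,\resist_0;\divD)\leq\mathrm{E}_{\resist_0}(\theta_\infty)=\lim_n\mathrm{E}_{\resist_n}(\theta_n')\leq\liminf_n\bar{\green}(\Graph,\resist_n;\divD)$.

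The crux is this last paragraph: near a boundary point whose zero-resistance edges contain a cycle, the energy-minimizing flows genuinely fail to be bounded, since cycle flows along the zero-resistance part carry no energy and may be scaled arbitrarily, so one cannot take a limit of the $\theta_n$ directly; the surgery that strips off this uncontrolled cycle-flow component while perturbing the energy by only $o(1)$ is what makes the compactness argument succeed. A more computational alternative would bypass Thomson's principle: by the weighted all-minors matrix–tree theorem one can write $\green(\Graph,\resist;\divD,\divE)$ as a ratio of two spanning-forest polynomials in the $\resist(e)$, with denominator $\sum_{\tree}\prod_{e\notin\tree}\resist(e)$ having non-negative coefficients, and one is then left to show this ratio extends continuously across the hypersurface where the denominator vanishes; doing so seems to require essentially the same contraction input, which is presumably why the combinatorial apparatus of the appendix is organized around spanning forests.
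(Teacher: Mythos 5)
Your proposal is correct, and it takes a genuinely different route from the paper. The paper proves Proposition~\ref{continuous} by first establishing the spanning-forest formula of Proposition~\ref{prop:tree-formula}, writing $\green(\Graph,\resist;\basis_i-\basis_j,\basis_k-\basis_\ell)$ as a ratio of forest polynomials, dividing numerator and denominator by the quantity $\Sigma=\sum_{M}\resist(S\setminus M)$ over maximal forests $M$ of $\Graph|_S$, and then using Proposition~\ref{max-forest} to show that both quotients converge to the corresponding forest polynomials for the contracted graph $\Graph/S$. This not only proves continuity but also produces the explicit limiting value of Equation~\eqref{limiting-value}, namely $\green\bigl(\Graph/S,\resist_0|_{\edges(\Graph/S)};[\divD],[\divE]\bigr)$; that identification is used substantively later, in the proof of Proposition~\ref{genericpoints}. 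Your argument instead polarizes to the diagonal case, uses Thomson's energy-minimization principle to extend $\green$ a priori to $\rr_{\geq 0}^{\edges(\Graph)}$, gets upper semicontinuity for free as an infimum of functions linear in $\resist$ (which incidentally also yields concavity), and obtains lower semicontinuity by a compactness argument whose only subtlety---the unboundedness of minimizing flows along zero-resistance cycles---you handle correctly by stripping off the $S_0$-supported cycle component and observing this changes the energy by $o(1)$. This buys a more conceptual and self-contained proof of continuity that does not rely on the matrix-tree apparatus; what it does not deliver directly (though it could be extracted by noting that the improper minimization problem factors through $\Graph/S$) is the explicit contraction formula~\eqref{limiting-value}, which is the form of the statement the rest of the paper actually invokes. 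You correctly identify the forest-polynomial approach as the alternative at the end; the paper chooses it precisely because it needs the limit in closed form.
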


We can even write down what the Green's function is for an improper network $(\Graph, \resist_0)$.
Let $S\subset\edges(\Graph)$ be the set of edges $e$ whose resistances $\resist_0(e)$ vanish, and let $\Graph/S$ be the graph obtained from contracting the edges in $S$ (i.e.\ identifying the two endpoints of each edge in $S$ and then removing those edges).
Thus the edges of $\Graph/S$ are naturally identified with the edges of $\Graph$ not in $S$, so by restricting $\resist_0$ we obtain a proper resistance network structure on $\Graph/S$.
Each vertex of $\Graph/S$ corresponds to an equivalence class of vertices of $\Graph$, so we have a surjection $[~\cdot~]:\vertices(\Graph)\to\vertices(\Graph/S)$ sending each vertex $i$ to its equivalence class. 
This surjection extends to an $\rr$-linear map $[~\cdot~]:\rr^{\vertices(\Graph)}\to\rr^{\vertices(\Graph/S)}$ via $[\basis_i]=\basis_{[i]}$.
We prove Proposition~\ref{continuous} by showing that the Green's function on this new graph is precisely the limit of the Green's function on the original:
\begin{equation} \label{limiting-value}
 \lim_{\substack{\resist\to\resist_0\\\resist\text{ proper}}}\green(\Graph,\resist; \divD, \divE) = \green\bigl(\Graph/S,\resist_0|_{\edges(\Graph/S)};[\divD],[\divE]\bigr) 
\end{equation}
In particular, the limit on the left-hand side exists, so $\green(\Graph, \cdot \,; \divD,\divE)$ extends continuously to all of $\rr_{\geq 0}^{\edges(\Graph)}$.

We also prove the following facts about Green's functions:

\begin{prop}\label{basicgreenfacts}
Let $\Graph$ be a connected graph, and let $\divD$ and $\divE$ be zero-sum elements of $\rr^{\vertices(\Graph)}$. 
\begin{enumerate}[label=(\alph*)]
\item \label{homogeneous}
The Green's function $\green(\Graph, \cdot\,;\divD,\divE)$ is \emph{homogeneous} of weight one; that is, the equality 
\[
\green(\Graph,a\, \mu;\divD,\divE)=a \, \green(\Graph,\mu;\divD,\divE)
\]
holds for all $a \in \rr_{\geq0}$ and for all $\mu \in \rr_{\geq0}^{\mathrm{Ed}(\Gamma)}$. 

\item \label{concave}
In the case $\divD=\divE$, the Green's function is \emph{concave}.
 Given homogeneity, this amounts to the inequality
\[
 \green\left(\Graph,\sum_{i=1}^n \mu_i;\divD,\divD\right) \geq \sum_{i=1}^n \green(\Graph,\mu_i;\divD,\divD)
\]
 for all $\mu_1,\ldots,\mu_n \in \rr_{\geq 0}^{\edges(\Graph)}$.

\item \label{monotonic}
 The Green's function is also \emph{monotonic} in the resistances: let $\resist,\resist'\in\rr_{\geq 0}^{\edges(\Graph)}$ be two resistance functions with $\resist(e)\leq\resist'(e)$ for all $e\in\edges(\Graph)$. Then 
 \[
 \green(\Graph, \resist; \divD,\divD)\leq \green(\Graph, \resist'; \divD,\divD).
 \]
 If equality holds and $(\Graph,\resist')$ is proper, then for each edge $e:i\to j$ in $\Graph$, either $\resist(e)=\resist'(e)$ or no current flows along edge $e$ when current assignment $\divD$ is induced on $(\Graph,\resist')$.
\end{enumerate}
\end{prop}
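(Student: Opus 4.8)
The plan is to derive part (a) from a simple rescaling of the Laplacian, and parts (b) and (c) from one variational description of the Green's function (Thomson's principle), handling improper networks by reduction to proper ones through Proposition~\ref{continuous} (equivalently, through \eqref{limiting-value}). For part (a), note that for a proper network $\laplace[(\Graph,a\resist)]=a^{-1}\laplace[(\Graph,\resist)]$ directly from the definition of the Laplacian, so $\laplace[(\Graph,a\resist)]^{+}=a\,\laplace[(\Graph,\resist)]^{+}$ on zero-sum vectors and hence $\green(\Graph,a\resist;\divD,\divE)=a\,\green(\Graph,\resist;\divD,\divE)$ for all $a>0$. For an improper $\resist$ the vanishing-set of $a\resist$ is that of $\resist$ whenever $a>0$, so this case follows from the proper case applied to the contracted graph via \eqref{limiting-value}; and for $a=0$ the contraction of all edges of $\Graph$ is a single vertex, onto which the zero-sum vectors $\divD,\divE$ map to their coordinate sums $0$, so $\green(\Graph,0;\divD,\divE)=0=0\cdot\green(\Graph,\resist;\divD,\divE)$.

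For parts (b) and (c) the key step is Thomson's principle for a proper connected network. Call a \emph{flow} an antisymmetric assignment $\theta(e:i\to j)=-\theta(e:j\to i)$ of real numbers to oriented edges, with \emph{divergence} at a vertex $i$ equal to $\sum_{e:i\to j}\theta(e:i\to j)$ summed over oriented edges leaving $i$. I claim
\[
\green(\Graph,\resist;\divD,\divD)=\min_{\theta}\sum_{e\in\edges(\Graph)}\resist(e)\,\theta(e)^{2},
\]
where $\theta$ runs over flows of divergence $\divD$, the minimum being attained exactly at the electrical flow $\theta_{0}(e:i\to j)=(v_{i}-v_{j})/\resist(e)$ with $v=\laplace^{+}\divD$. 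This is the standard orthogonality computation: $\theta_{0}$ has divergence $\laplace v=\divD$ and energy $\transpose{v}\laplace v=\transpose{\divD}\laplace^{+}\divD$; any other admissible flow has the form $\theta_{0}+\psi$ with $\psi$ of divergence $0$, the cross-term $\sum_{e}\resist(e)\theta_{0}(e)\psi(e)=\sum_{e:i\to j}(v_{i}-v_{j})\psi(e)$ equals, by summation by parts, the pairing of $v$ with the divergence of $\psi$ and so vanishes, and therefore the energy of $\theta_{0}+\psi$ is $\transpose{\divD}\laplace^{+}\divD+\sum_{e}\resist(e)\psi(e)^{2}$, minimal exactly when $\psi=0$ since $(\Graph,\resist)$ is proper.

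Granting this, parts (b) and (c) are formal. For (b): the displayed formula writes $\green(\Graph,\cdot\,;\divD,\divD)$ as a pointwise infimum of the linear functions $\resist\mapsto\sum_{e}\resist(e)\theta(e)^{2}$, hence it is concave, and together with the homogeneity of part (a) this yields the superadditivity inequality (the case of two summands from concavity plus homogeneity, then induction; or directly, since the minimum of a sum of energies is at least the sum of the minimal energies). For (c): if $\resist\le\resist'$ pointwise then $\sum_{e}\resist(e)\theta(e)^{2}\le\sum_{e}\resist'(e)\theta(e)^{2}$ for every flow $\theta$, and taking minima gives $\green(\Graph,\resist;\divD,\divD)\le\green(\Graph,\resist';\divD,\divD)$; for the equality clause, let $\theta_{0}$ be the electrical flow of the proper network $(\Graph,\resist')$ for $\divD$ (unique, by properness), so that
\[
\green(\Graph,\resist;\divD,\divD)\le\sum_{e}\resist(e)\theta_{0}(e)^{2}\le\sum_{e}\resist'(e)\theta_{0}(e)^{2}=\green(\Graph,\resist';\divD,\divD),
\]
and equality of the outer terms forces $\sum_{e}(\resist'(e)-\resist(e))\theta_{0}(e)^{2}=0$, i.e.\ for each edge $e$ either $\resist(e)=\resist'(e)$ or $\theta_{0}(e)=0$, which is exactly the assertion that no current flows along $e$ when $\divD$ is imposed on $(\Graph,\resist')$. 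All of these proper-network statements extend to arbitrary $\resist\in\rr_{\ge0}^{\edges(\Graph)}$ by continuity of $\green(\Graph,\cdot\,;\divD,\divD)$ (Proposition~\ref{continuous}); for the equality clause one only needs the inequality $\green(\Graph,\resist;\divD,\divD)\le\sum_{e}\resist(e)\theta_{0}(e)^{2}$, which is the limit of its proper instances with the (fixed, proper) $\resist'$ unchanged.

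The main obstacle is Thomson's principle itself: one must check that the minimum in the variational formula above is genuinely attained and equals $\transpose{\divD}\laplace^{+}\divD$, which amounts to the orthogonality computation together with some care over the bookkeeping of flows, divergences and energies (and, if one prefers to prove the variational formula directly for improper networks rather than passing through \eqref{limiting-value}, over the lifting of flows along edge contractions). Once that is in place, parts (a)--(c) follow with no further input, and this material fits naturally in the appendix alongside the proof of Proposition~\ref{continuous}.
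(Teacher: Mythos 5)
Your proposal is correct. The core of parts (b) and (c) is exactly the paper's route: the paper proves a variational characterization of $\green(\Graph,\resist;\divD,\divD)$ as the minimal power dissipated over flows with prescribed divergence (Jeans' Least Power Theorem, Theorem~\ref{jeans}, which is precisely your Thomson's principle), deduces concavity by comparing a single flow's energy across different resistances, and deduces monotonicity the same way; the extension to improper networks is by continuity (Corollary~\ref{homogeneousimproper}). Your proof of the equality clause is a small variant: you extend the one-sided Thomson inequality $\green(\Graph,\resist;\divD,\divD)\le\sum_e\resist(e)\theta_0(e)^2$ to improper $\resist$ directly by a limiting argument with $\resist'$ fixed, whereas the paper's Corollary~\ref{homogeneousimproper} introduces the proper midpoint $\rho=(\resist+\resist')/2$ and squeezes; both work, and yours is arguably slightly more streamlined.

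Where you genuinely differ is part (a). The paper derives homogeneity from the tree-and-two-forest formula (Proposition~\ref{prop:tree-formula}), reading off that the numerator and denominator of that rational expression are homogeneous polynomials in the edge resistances of degrees differing by exactly one. Your argument is more elementary: from the definition $\laplace[(\Graph,a\resist)]=a^{-1}\laplace[(\Graph,\resist)]$ immediately, so the Moore--Penrose pseudoinverse scales by $a$, and homogeneity of the quadratic form $\transpose{\divD}\laplace^{+}\divE$ follows with no combinatorial input. This is a cleaner proof of homogeneity in isolation. The paper pays no real cost for its heavier approach, though, because the tree formula is already developed to prove the continuity statement (Proposition~\ref{continuous} and Equation~\eqref{limiting-value}), on which you too rely as a black box for the passage to improper resistances; so the overall dependency structure of the two write-ups ends up quite similar.
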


Our final result is a bound on how nonlinear the Green's function can be in the edge resistances, which will be useful for proving Theorem \ref{mainquasisplit}(b).   
 We introduce some norms for resistance functions and current assignments:
 \begin{itemize}
 \item Given $\resist \in \rr_{\geq 0}^{\edges(\Graph)}$, we let $|\resist|_1 = \sum_{e \in \edges(\Graph)} \resist(e)$.  
 \item For any zero-sum vector $\divD\in \rr^{\vertices(\Graph)}$, write $\|\divD\|$ for $\sum_{i\in \vertices(\Graph)} \max\{0, \divD_i\}$.
 If we think of $\divD$ as a current assignment, then $\|\divD\|$ is the total amount of current flowing into (and therefore out of) the network.
 \end{itemize}
 
\begin{prop} \label{bound} Let $\Graph$ be a connected graph with $\divD$ and $\divE$ two zero-sum vectors in $\rr^{\vertices(\Graph)}$.  
Then for all 
$\resist_1,\ldots,\resist_n \in \rr_{\geq 0}^{\mathrm{Ed}(\Gamma)}$
we have
\[ \left|\green\left(\Graph,\sum_{i=1}^n \resist_i;\divD,\divE\right)-\sum_{i=1}^n \green(\Graph,\resist_i;\divD,\divE)\right| \leq \|\divD\|\|\divE\| \min_{i\in\{1,\ldots,n\}} \sum_{j \neq i} |\mu_j|_1. \]
\end{prop}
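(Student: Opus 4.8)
The plan is to pass to the flow/energy description of the Green's function, to isolate one elementary pointwise bound on electrical currents, and then to deduce the diagonal case from a single application of Thomson's principle and the bilinear case from Cauchy--Schwarz. First I would reduce to proper networks: by Proposition~\ref{continuous} both sides of the asserted inequality depend continuously on $(\resist_1,\dots,\resist_n)\in(\rr_{\geq0}^{\edges(\Graph)})^n$, and tuples in which every $\resist_i$ is proper are dense, so it suffices to treat that case; then $R:=\sum_{i=1}^n\resist_i$ is proper as well. For a proper resistance function $\resist$ and a zero-sum vector $\divD$, let $I_\divD$ be the harmonic current: the flow with divergence $\divD$ given by $I_\divD(e)=(v_i-v_j)/\resist(e)$ for $e\colon i\to j$, where $\laplace v=\divD$. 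A standard computation from $\green(\Graph,\resist;\divD,\divD)=\transpose{\divD}\laplace^{+}\divD$, using that the $\resist$-weighted energy pairing of $I_\divD$ with any divergence-free flow vanishes, yields Thomson's principle:
\[\green(\Graph,\resist;\divD,\divD)=\sum_{e\in\edges(\Graph)}\resist(e)\,I_\divD(e)^2=\min\Bigl\{\textstyle\sum_e\resist(e)J(e)^2 \ :\ J\text{ a flow with divergence }\divD\Bigr\},\]
the minimum being attained only at $J=I_\divD$, and bilinearly $\green(\Graph,\resist;\divD,\divE)=\sum_e\resist(e)I_\divD(e)I_\divE(e)$. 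The crucial lemma will be the pointwise bound $|I_\divD(e)|\leq\|\divD\|$ for every edge $e$.

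To prove this lemma, note that $\divD\mapsto I_\divD$ is linear and that $\divD$ can be written as $\sum\frac{\divD_s(-\divD_t)}{\|\divD\|}(\basis_s-\basis_t)$, the sum running over pairs $(s,t)$ with $\divD_s>0$ and $\divD_t<0$ and having nonnegative coefficients summing to $\|\divD\|$; hence it is enough to show $|I_{\basis_s-\basis_t}(e)|\leq1$ for the unit current from $s$ to $t$. Orient each edge carrying nonzero current in the direction of its current; along any such edge the voltage strictly decreases, so these edges form a directed acyclic graph in which, by Kirchhoff's law, $s$ is the unique vertex with positive net outflow (equal to $1$) and $t$ the unique vertex with positive net inflow. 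A flow decomposition in this acyclic graph expresses the current as a sum of directed $s$-to-$t$ paths of total weight $1$, so every edge carries current at most $1$.

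Now choose $i_0$ maximizing $|\resist_{i_0}|_1$, so that $\sum_{j\neq i_0}|\resist_j|_1=\min_i\sum_{j\neq i}|\resist_j|_1$, and let $I:=I_\divD$ be the harmonic current of $(\Graph,\resist_{i_0})$. Feeding $I$ into Thomson's principle for $(\Graph,R)$, and using $\sum_e\resist_{i_0}(e)I(e)^2=\green(\Graph,\resist_{i_0};\divD,\divD)$ together with the lemma, gives
\[\green(\Graph,R;\divD,\divD)\leq\sum_e R(e)I(e)^2=\green(\Graph,\resist_{i_0};\divD,\divD)+\sum_{j\neq i_0}\sum_e\resist_j(e)I(e)^2\leq\green(\Graph,\resist_{i_0};\divD,\divD)+\|\divD\|^2\sum_{j\neq i_0}|\resist_j|_1.\]
Since $\green(\Graph,\resist_i;\divD,\divD)\geq0$ for each $i$, subtracting $\sum_i\green(\Graph,\resist_i;\divD,\divD)$ yields the upper bound in
\[0\leq\green(\Graph,R;\divD,\divD)-\sum_{i=1}^n\green(\Graph,\resist_i;\divD,\divD)\leq\|\divD\|^2\min_i\sum_{j\neq i}|\resist_j|_1,\]
the lower bound being concavity (Proposition~\ref{basicgreenfacts}\ref{concave}, or Thomson's principle once more). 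Finally, put $B(\divD,\divE):=\green(\Graph,R;\divD,\divE)-\sum_{i=1}^n\green(\Graph,\resist_i;\divD,\divE)$, a symmetric bilinear form on zero-sum vectors; it is positive semidefinite with $B(\divD,\divD)\leq\|\divD\|^2\min_i\sum_{j\neq i}|\resist_j|_1$, so the Cauchy--Schwarz inequality $|B(\divD,\divE)|\leq\sqrt{B(\divD,\divD)}\,\sqrt{B(\divE,\divE)}$ for positive semidefinite forms gives precisely the bound of Proposition~\ref{bound}. The only genuine work is the current lemma of the second paragraph; everything else is formal manipulation of Thomson's principle. That lemma is also where the sharpness of the constant resides: the directed-acyclic-graph argument gives $|I_\divD(e)|\leq\|\divD\|$ with no loss, and routing the bilinear estimate through Cauchy--Schwarz rather than through polarization is what transmits the constant $\|\divD\|\,\|\divE\|$ without introducing spurious factors.
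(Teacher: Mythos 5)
Your proof is correct, and the overall strategy coincides with the paper's: reduce to proper resistances by continuity, bound the harmonic edge currents by $\|\divD\|$, feed the $\resist_{i_0}$-harmonic current into Jeans'/Thomson's principle for $R=\sum_i\resist_i$ to obtain the diagonal estimate $\green(\Graph,R;\divD,\divD)-\sum_i\green(\Graph,\resist_i;\divD,\divD)\leq\|\divD\|^2\min_i\sum_{j\neq i}|\resist_j|_1$, and then pass to the bilinear case via Cauchy--Schwarz on the positive semidefinite form $B$. The one genuine point of departure is your proof of the unit-current bound $|I_{\basis_s-\basis_t}(e)|\leq 1$: you orient each edge in the direction of its current, observe that the voltage strictly decreases along such edges (so the positive-current edges form a directed acyclic graph with $s$ the unique net source and $t$ the unique net sink), and invoke a path decomposition of this flow; the $s$-to-$t$ paths have total weight $1$, so each edge carries current at most $1$. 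The paper instead reads the same bound off Equation~\eqref{current-average}, which expresses the edge current as a weighted average over spanning trees of the quantities $I_{\Graph|_\tree}(e:i\to j)\in\{-1,0,1\}$. Your version is more elementary and does not depend on the Kirchhoff-type tree formula of Proposition~\ref{prop:tree-formula}; the paper's version is essentially free given that formula, which it has already set up for the proof of continuity. Both yield the sharp constant $\|\divD\|\,\|\divE\|$.
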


\section{Labelled graphs} \label{labelled}

The purpose of this section is to recall the notion of a labelled dual graph, and to state our key formula for the height jump divisor, Theorem \ref{formulahtjump}.

Suppose for a moment that $S$ is an integral, noetherian, regular and separated scheme, and $p \colon C \to S$ is a generically smooth semistable curve. Let $U \subset S$ be the largest open subscheme such that the restriction $C_U \to U$ of $C$ over $U$ is smooth and let $D, E$ be two relative divisors of relative degree zero on $C \to S$, whose support is contained in the smooth locus $\mathrm{Sm}(C/S)$ of $C \to S$. Let $f \colon T \to S$ with $T$ an integral, noetherian, regular and separated scheme be a non-degenerate morphism. Building further upon \cite{hdj} we will express both pairings $\pair{f^*D,f^*E}_\a$ and $f^*\pair{D,E}_\a$ in terms of the geometry of the fibers of $C \to S$. Assume that the morphism $C \to S$ is \emph{quasisplit} semistable. Then at each $s \in S$, the dual graph $\Gamma_s$ (we take the definition from \cite[10.3.17]{liu}) of the fiber of $C \to S$ at $s$ is well-defined. Furthermore, the combinatorics of the singularities of the fibers is captured by the notion of  \emph{labelled} dual graph, due to second author. We will describe the admissible pairings in terms of these labelled graphs, whose definition we will now recall. We will temporarily work in slightly greater generality than in this paragraph. 

\begin{definition}
Let $\Gamma$ be a graph with finite set of edges $\mathrm{Ed}(\Gamma)$ and finite set of vertices $\mathrm{Vert}(\Gamma)$. Let $M$ be a monoid. Then an \emph{$M$-labelling} of $\Gamma$ is to be any map $\ell \colon \mathrm{Ed}(\Gamma) \to M$. Let $(\Gamma, \ell)$ be an $M$-labelled graph. A morphism $q \colon M \to N$ of monoids yields an $N$-labelled graph $(\Gamma, q\ell)$ with labelling $\mathrm{Ed}(\Gamma) \to N$ given by the composite $c \mapsto q(\ell(c))$ for any edge $c$ of $\Gamma$. For example, when the monoid of values is the additive monoid $\rr_{\geq 0}$, we reobtain the notion of a resistive network as discussed in Section \ref{resistive}.
\end{definition}

In this section we follow \cite{ho2}, in particular Remark 4.2. Let $p \colon C \to S$ be a quasisplit semistable curve over a locally noetherian scheme and $s \in S$ a point. To $s \in S$ we associate a canonical labelled graph $(\Gamma_s,\ell_s)$. The underlying graph is the dual graph $\Gamma_s$ of $C$ at $s$; it has a vertex for each irreducible component of $C_s$ and an edge for each singular point, the edge running between the vertices corresponding to components on which it lies (cf. \cite[10.3.17]{liu}). The labels take values in the multiplicative monoid $\mathrm{Princ}(O_{S,s})$ of principal ideals of the (Zariski) local ring $O_{S,s}$ of $S$ at $s$. Note that, since $S$ is integral, $\mathrm{Princ}(O_{S,s})$ coincides with the quotient $O_{S,s}/(O_{S,s})^{\times}$. The construction is as follows: let $c$ be an edge of the dual graph $\Gamma_s$ of the fiber $C_s$ of $C \to S$ at $s$, corresponding to a singular point $c \in C_s$. Then we define the label $\ell_s(c)\coloneqq(\alpha)$ for $\alpha\in \ca{O}_{S,s}$ such that the completed local ring $\widehat{O}_{C,c}$ of $C$ at $c$ is isomorphic as an $\widehat{O}_{S,s}$-algebra to $\widehat{O}_{S,s}[[x,y]]/(xy-\alpha)$. If $C \to S$ is assumed to be generically smooth then the ideal $(\alpha)$ of $O_{S,s}$ is not the zero ideal. %, so the labels takes values in $\on{Princ}'(O_{S,s}) \coloneqq \mathrm{Princ}(O_{S,s})\setminus \{(0)\}$. 
Note that it is never the unit ideal. In particular, if $S$ is Dedekind, the labeled graph corresponds naturally to a metrised graph. 

\begin{example}\label{running_example_1}
Let $S = \on{Spec}\bb{C}[[u,v]]$, and let $C\to S$ be the curve in weighted projective space $\mathbb{P}_S(1,1,2)$ cut out by the affine equation
\begin{equation*}
y^2 = \bigl((x-1)^2-u\bigr)\bigl((x+1)^2-v\bigr). 
\end{equation*}
Then $C \to S$ is a quasisplit semistable curve, and is smooth over the dense open subscheme $U = D(uv) \subset S$. The labelled graph over the generic point of $S$ is a single vertex with no edges, and the labelled graph over the closed point of $S$ is a 2-gon, with edges labelled $(u)$ and $(v)$. The graph over the generic point of the closed subscheme $u=0$ (resp.\ $v=0$) is a 1-gon with label $(u)$ (resp.\ $(v)$). 
\end{example}
Canonical labelled graphs behave well with respect to pullback and specialization.

\begin{prop} \label{fiberproduct} Let $T$ be an integral noetherian scheme and let $f \colon T \to S$ be any morphism. Let $t$ be a point of $T$ and set $s =f(t) \in S$. Let $f^\# \colon O_{S,s} \to O_{T,t} $ be the induced local homomorphism. Then the labelled dual graph $(\Gamma_t,\ell_t)$ at $t$ of the base change $C \times_S T \to T$ has underlying graph $\Gamma_t=\Gamma_s$, and the labelling is given by $\ell_t=f^\#\ell_s$.
\end{prop}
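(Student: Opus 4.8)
The statement is purely local at the point $t \in T$ and its image $s = f(t) \in S$, so the plan is to work with the completed local rings throughout. First I would recall that the dual graph of a quasisplit semistable curve at a point is unchanged under passing to the completion of the base local ring, and that it only depends on the geometric fiber over that point; since $C \times_S T$ has fiber over $t$ equal to the base change of $C_s$ to the residue field $\kappa(t)$, and $\kappa(s) \to \kappa(t)$ induces a bijection on irreducible components and singular points (using quasisplitness, so that each component is geometrically irreducible and $\mathrm{Sing}(C/S) \to S$ is source-locally an immersion), the identification $\Gamma_t = \Gamma_s$ of underlying graphs follows. The content of the proposition is therefore entirely in the identification of the labels.

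For the labels, fix an edge $c$ of $\Gamma_s$, corresponding to a singular point of $C_s$, and let $\alpha \in O_{S,s}$ be an element with $(\alpha) = \ell_s(c)$, so that $\widehat{O}_{C,c} \cong \widehat{O}_{S,s}[[x,y]]/(xy - \alpha)$ as $\widehat{O}_{S,s}$-algebras. There is a unique point $c'$ of $C \times_S T$ above $c$ and above $t$ (again using quasisplitness to see that the singular locus does not split); I would then compute the completed local ring of $C \times_S T$ at $c'$. The key input is that completion commutes with the relevant base change here: $\widehat{O}_{C \times_S T, c'}$ is obtained from $\widehat{O}_{C,c} \,\widehat{\otimes}_{\widehat{O}_{S,s}} \widehat{O}_{T,t}$ (a completed tensor product), which by the explicit presentation above is
\[
\widehat{O}_{T,t}[[x,y]]/(xy - f^\#(\alpha)).
\]
This exhibits $c'$ as an ordinary double point with parameter $f^\#(\alpha)$, so by definition $\ell_t(c') = (f^\#(\alpha)) = f^\#(\ell_s(c))$, which is exactly the claim $\ell_t = f^\# \ell_s$. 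The compatibility of completion with this base change is standard — $\widehat{O}_{C,c}$ is a finite-type (indeed formally finite-type) algebra situation and one can invoke the usual flatness and finiteness properties of completion for noetherian local rings — but it should be stated carefully, and I would cite \cite{ho2}, Remark 4.2, where this labelled-graph formalism and its pullback behavior are set up, rather than redo it from scratch.

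\textbf{Main obstacle.} The only genuine subtlety is the bookkeeping around completed local rings: one must be sure that forming $\widehat{O}_{C,c}$ and then base-changing to $\widehat{O}_{T,t}$ really does produce the completed local ring of the base-changed curve at $c'$, i.e.\ that $\widehat{O}_{C \times_S T, c'} \cong \widehat{O}_{C,c} \,\widehat{\otimes}_{\widehat{O}_{S,s}} \widehat{O}_{T,t}$, and that the resulting ring is visibly of the form $\widehat{O}_{T,t}[[x,y]]/(xy - f^\#(\alpha))$ rather than merely abstractly isomorphic to such. This is where the noetherian hypothesis on $T$ and the quasisplit hypothesis on $C \to S$ (ensuring $c'$ is the unique relevant point and that the local ring $O_{C \times_S T, c'}$ is what one expects) are used; once this identification is in hand the proposition is immediate. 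Everything else — the invariance of the underlying graph, the non-vanishing and non-unit properties of the labels — is formal and follows as in the already-cited references.
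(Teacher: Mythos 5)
Your proof is correct and, in substance, the same as the paper's: the paper disposes of this proposition in one line by deferring to \cite[Remark 2.11]{ho1} (``almost immediate from the definition''), and what you have written out is precisely the verification being left implicit there. Your two steps — (i) the underlying graph is unchanged because quasisplitness makes each component of $C_s$ geometrically irreducible and each node a $\kappa(s)$-rational point, so base changing to $\kappa(t)$ induces a bijection on components and nodes; (ii) the label is computed by passing to the completed local ring at the unique node $c'$ over $c$ and $t$ and using $\widehat{O}_{C\times_S T,c'}\cong\widehat{O}_{T,t}[[x,y]]/(xy-f^{\#}(\alpha))$, which follows from the completed-tensor-product identity since $O_{C,c}$ is (flat and) essentially of finite type over $O_{S,s}$ and $T$ is noetherian — are exactly the content. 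The only cosmetic difference is that you cite \cite[Remark 4.2]{ho2} where the paper cites \cite[Remark 2.11]{ho1}, but this section of the paper refers to both sources interchangeably, so that is immaterial.
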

\begin{proof} This is almost immediate from the definition (see \cite[Remark 2.11]{ho1}).
\end{proof} 
\begin{prop} \label{specialization} 
Assume $s, t$ are points of $S$ such that $t$ specializes to $s$, i.e.\ $s \in \overline{ \{ t \} }$. Let $\mathrm{sp} \colon O_{S,s} \hookrightarrow O_{S,t}$ be the canonical (injective) map. Then the canonical labelled graph $(\Gamma_t,\ell_t)$ at $t$ can be obtained from the canonical labelled graph $(\Gamma_s,\ell_s)$ by endowing each edge $c$ of $\Gamma_s$ with the label $\mathrm{sp}( \ell_s(c) ) \in \mathrm{Princ}(O_{T,t})$, and contracting those edges $c$ whose new label $\ell_t(c) = \mathrm{sp}(\ell_{s}(c))$ is the unit ideal of $O_{T,t}$.
\end{prop}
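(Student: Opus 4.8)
The plan is to reduce to the case where $S=\Spec O_{S,s}$ is local and then read off everything from the standard complete-local model of $C$ at each node of the fibre $C_s$. First I would apply Proposition~\ref{fiberproduct} to the canonical morphism $g\colon \Spec O_{S,s}\to S$: since $g$ is the identity on local rings at $s$, it changes neither $(\Gamma_s,\ell_s)$ nor — applying the proposition at the point of $\Spec O_{S,s}$ lying over $t$ — the labelled graph $(\Gamma_t,\ell_t)$. So we may assume $S=\Spec R$ with $R$ local, noetherian and regular, $s$ the closed point, $t$ the point of a prime $\mathfrak p\subset R$, and $\mathrm{sp}\colon R=O_{S,s}\hookrightarrow R_{\mathfrak p}=O_{S,t}$ the localisation map, injective since $R$ is a domain.

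Next I would pin down the edges of $\Gamma_t$ and their labels. For a node $c\in C_s$ write $(\alpha_c)=\ell_s(c)$, so that $\widehat O_{C,c}\cong \widehat O_{S,s}[[x,y]]/(xy-\alpha_c)$. A Fitting-ideal computation on this model shows that the first Fitting ideal of $\Omega_{C/S}$ is locally $(x,y)$, so that $\on{Sing}(C/S)$ coincides, in a Zariski neighbourhood of $c$, with the closed subscheme $V(\alpha_c)\subset S$; here quasisplitness is used, to pass from the completed statement to an honest Zariski-local one, and since $\on{Sing}(C/S)\to S$ is finite the neighbourhoods attached to distinct nodes of $C_s$ are disjoint. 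Pulling back to $t$, the nodes of $C_t$ are the points of $\on{Sing}(C/S)$ over $t$, and via these local models they correspond exactly to the nodes $c$ with $t\in V(\alpha_c)$, i.e. $\alpha_c\in\mathfrak p$, i.e. $\mathrm{sp}(\ell_s(c))\ne(1)$. For such a $c$, a ``base change commutes with completion'' argument of the same flavour as the one behind Proposition~\ref{fiberproduct} identifies the complete local ring of $C\times_S\Spec O_{S,t}$ at the corresponding point with $\widehat O_{S,t}[[x,y]]/(xy-\mathrm{sp}(\alpha_c))$, so that its label is $\mathrm{sp}(\ell_s(c))$. This matches the statement at the level of edges and labels.

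The remaining step — matching the vertices — is the one I expect to be the main obstacle. Locally, near a node $c$ with $\alpha_c\notin\mathfrak p$, an \'etale-local model $\Spec R[x,y]/(xy-\alpha_c)$ for $C$ has fibre over $t$ equal to $\Spec k(t)[x,y]/(xy-\bar\alpha_c)\cong\Spec k(t)[x,x^{-1}]$, which is irreducible, so the two branches of $C_s$ at $c$, a priori on distinct components, get merged in $C_t$. The real work is to promote this local picture to the global statement that the irreducible components of $C_t$ are precisely the classes of $\vertices(\Gamma_s)$ under the equivalence generated by ``linked by an edge $c$ with $\alpha_c\notin\mathfrak p$'', with no further merging or splitting — equivalently, that $\Gamma_t$ is the contraction of $\Gamma_s$ along those edges. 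This follows from the edge bijection already obtained together with the standard fact that a union of components of a nodal curve over a field is connected if and only if the corresponding induced subgraph of its dual graph is, applied to $C_s$ and $C_t$ simultaneously; regularity of $S$ enters to guarantee (via Stein factorisation) that the fibres of $C$ and of its irreducible components over $\Spec R$ stay connected, and quasisplitness keeps the relevant components geometrically irreducible so that the dual graphs really have these components as vertices. A more streamlined treatment of this bookkeeping is already carried out in \cite{ho1} (see also \cite{ho2}). Combining the three steps yields exactly the recipe of the statement: relabel each edge of $\Gamma_s$ via $\mathrm{sp}$, and contract every edge whose new label becomes the unit ideal.
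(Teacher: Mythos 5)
The paper's ``proof'' of this proposition is just a citation to \cite[Section 5]{ho2}, so there is no in-paper argument to compare against; I'll assess your sketch on its own terms.

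Your reduction to the local case and your analysis of the edges are in good shape. Using the complete-local model $\widehat O_{S,s}[[x,y]]/(xy-\alpha_c)$, one sees that the first Fitting ideal of $\Omega_{C/S}$ at $c$ is $(x,y)$ and that $\on{Sing}(C/S)$ near $c$ is (via quasisplitness) a closed subscheme of $S$ cut out by $(\alpha_c)$; hence the nodes of $C$ over $t$ are in bijection with the edges $c$ of $\Gamma_s$ with $\alpha_c\in\mathfrak p$, with labels $\mathrm{sp}(\ell_s(c))$ once one checks (as you indicate, by a completion/\'etale--local argument) that the local structure at the generic point of the corresponding component of $\on{Sing}(C/S)$ is what it should be. This is the same strategy as Proposition~\ref{fiberproduct}, and it is sound.

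The vertex-matching step is where I see a real gap. You argue that the surjection $\vertices(\Gamma_s)\to\vertices(\Gamma_t)$ factors through the contraction and that the fibres of this map are connected because (a) ``a union of components of a nodal curve is connected iff the induced subgraph of the dual graph is'', and (b) Stein factorisation guarantees that the special fibres of the components stay connected. But (b) is delicate here: the relevant proper morphisms are the closures of components of $C_t$ over $S'=\overline{\{t\}}=\Spec(O_{S,s}/\mathfrak p)$, and $S'$ need not be normal even if $S$ is regular, so Stein factorisation does not immediately yield connectedness of their closed fibres. More seriously, even granting that each $Y'_s$ is connected, (a) only tells you that $Y'_s$ corresponds to a connected induced subgraph of $\Gamma_s$ with respect to \emph{all} edges, not just the contracted ones; surviving edges internal to $Y'_s$ are perfectly possible (they become self-loops of $Y'$ in $\Gamma_t$), so connectedness of $Y'_s$ does not by itself show that each $Y'$ captures exactly one contraction class rather than several glued along surviving nodes. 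Ruling that out requires a finer argument — for example, working with a partial normalisation of $C_{S'}$ along the closure of $\on{Sing}(C_t)$ and controlling its $\pi_0$ over $s$ versus over $t$, or an arithmetic-genus count. As written, the deduction ``edge bijection $+$ dual-graph connectedness $+$ Stein'' does not close this step; you implicitly acknowledge this by deferring to \cite{ho1, ho2}, which is also the only thing the paper itself does.
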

\begin{proof} See \cite[Section 5]{ho2}.
\end{proof}

\begin{example}\label{running_example_2}
Continuing Example \ref{running_example_1}, we find that the specialisation map from the graph over the closed point of $S$ to the graph over the generic point of $u=0$ simply contracts the edge labelled $(v)$. 
\end{example}

At this point we return to the setting from the introduction to this section, in particular $C/S$ is generically smooth. Fix a point $s \in S$. Let $(T,t)$ be a trait, with $t$ the closed point of $T$, and let $f \colon T \to S$ be a non-degenerate morphism with $f(t)=s$. Let $O_{T,t}$ be the local ring of $T$ at $t$, and let $\ord_t \colon O_{T,t} \to \zz_{\geq 0}\cup \{\infty\}$ be the normalized discrete valuation associated to $T$. Applying Proposition \ref{fiberproduct}, pulling back along $f$ gives a natural morphism of monoids $\ord_t f^\# \colon \mathrm{Princ}(O_{S,s}) \to \mathrm{Princ}(O_{T,t}) \to \zz_{\geq 0}\cup \{\infty\}$. Let $(\Gamma_s,\ell_s \colon \mathrm{Ed}(\Gamma_s) \to\mathrm{Princ}(O_{S,s}))$ be the canonical labelled graph associated to $C \to S$ at $s$. Since $f$ is non-degenerate we obtain from $f$ a $\zz_{\geq 0}$-labelled graph $(\Gamma_s,\ord_t f^\# \ell_s)$. Actually the labelling $\ord_t f^\# \ell_s$ takes values in $\zz_{>0}$ as $f^\#$ is a local homomorphism. 

We can now write down our formula for the height jump. Assume $z_i$ is a local equation in $O_{S,s}$ for the irreducible component $Z_i$ of the boundary divisor $Z=S \setminus U$. Since $\ca{O}_{S,s}$ is a regular local ring (hence a UFD), for each edge $c$ of $\Gamma_s$, the label $\ell_s(c)$ of $c$ can be written as $(z_1^{a_1}\cdots z_r^{a_r})$ for some uniquely determined $(a_1,\ldots,a_r)\in \zz_{\geq 0}^r$. 
\begin{definition}
For each $i=1,\ldots, r$ we define $\ell_{s,i}$ to be the $\mathrm{Princ}(O_{S,s})$-labelled graph obtained from $(\Gamma_s,\ell_s)$ by replacing the label $(z_1^{a_1}\cdots z_r^{a_r})$ of the edge $c$ by the principal ideal $(z_i^{a_i})$ of $O_{S,s}$. As before, bringing $f$ into the game we obtain a $\zz_{\geq 0}$-labelled graph $(\Gamma_s,\ord_t f^\# \ell_{s,i})$ from $(\Gamma_s,\ell_{s,i})$. Note that in this case, some of the labels can actually be zero, i.e.\ we have a potentially improper resistive network.
\end{definition}

Let $g(\Gamma_s,\ord_t f^\# \ell_s)$ resp.\ $g(\Gamma_s,\ord_t f^\# \ell_{s,i})$ be the Green's function of the $\zz_{\geq 0}$-labelled graphs $(\Gamma_s,\ord_t f^\# \ell_s)$ resp.\ $(\Gamma_s,\ord_t f^\# \ell_{s,i})$, using Proposition~\ref{continuous} to define the Green's function in case of an improper network. 

\begin{definition}\label{def:associated_combinatorial_divisor}
Suppose $D$ is a relative divisor on $C/S$ having support in the smooth locus $\mathrm{Sm}(C/S)$ of $C \to S$. We define a divisor $\dd$ on the dual graph of $C_s$ (i.e. $\dd \in \qq^{\mathrm{Vert}(\Gamma_s)}$) by setting, if $Y$ is an irreducible component of $C_s$, the value of $\dd(Y)$ to be the degree of the pullback of $D$ to $Y$. We call $\dd$ the `combinatorial divisor associated to $D$'. 
\end{definition}
The condition that $D$ have support in the smooth locus implies that the degrees of $D$ and $\dd$ coincide. In general we will use calligraphic font for the combinatorial divisor associated to a divisor.

%Our condition on $D, E$ having support in the smooth locus $\mathrm{Sm}(C/S)$ of $C \to S$  implies that $D$ and $E$ induce, by specialization {David:Isolate as definition} onto the fiber above $s$, degree zero divisors (zero-sum vectors) $\dd, \ee$ in $\qq^{\mathrm{Vert}(\Gamma_s)}$ for every $s \in S$.

Our formula for the height jump is then as follows.
\begin{thm} \label{formulahtjump} Let $S$ be an integral separated regular noetherian scheme, and let $p \colon C \to S$ be a generically smooth quasisplit semistable curve. Let $s \in S$ be a point. Let $(T,t)$ be a trait and let $f \colon T \to S$ be a non-degenerate morphism with $f(t)=s$. 
let $D, E$ be two relative divisors of relative degree zero on $C \to S$, whose support is contained in the smooth locus $\mathrm{Sm}(C/S)$ of $C \to S$.
Let $\dd, \ee$ be the combinatorial divisors associated to $D$ and $E$. Let $J(f;D,E)$ be the height jump divisor on $T$ associated to $D, E$ and $f$. Then the formula
\[ \ord_t J(f;D,E) = g(\Gamma_s,\ord_t f^\# \ell_s;\dd,\ee) - \sum_{i=1}^r g(\Gamma_s,\ord_t f^\# \ell_{s,i};\dd,\ee) \]
holds.
\end{thm}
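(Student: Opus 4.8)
The plan is to reduce the computation of $\ord_t J(f;D,E)$ to a purely combinatorial statement about Deligne pairings on semistable curves over traits, and then invoke the combinatorial formulae already established in \cite{hdj}. First I would pass to the trait: by the discussion following Proposition \ref{mostlytrivial}, computing $\ord_t J(f;D,E)$ only involves the pullback curve $C_T = C\times_S T \to T$, which is a semistable curve over a discrete valuation ring. By Proposition \ref{fiberproduct}, the labelled graph $(\Gamma_t,\ell_t)$ of $C_T$ at the closed point $t$ has underlying graph $\Gamma_s$ and labelling $\ord_t f^\#\ell_s$, and similarly $\ell_{s,i}$ pulls back to $\ord_t f^\#\ell_{s,i}$. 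So the right-hand side of the formula is intrinsic to $C_T/T$ together with the data of how the boundary components $Z_i$ pull back, and it suffices to prove the identity in this Dedekind setting.

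Next I would identify the two terms. By the promised identification (Section \ref{basedim1}) of the admissible pairing $\pair{\cdot,\cdot}_\a$ over a trait with Zhang's admissible pairing \cite{zh}, the quantity $\ord_t \pair{f^*D,f^*E}_\a$ is computed by Zhang's admissible Green's function on the reduction graph of $C_T$, which is exactly the metrised graph $(\Gamma_s,\ord_t f^\#\ell_s)$; this yields the term $g(\Gamma_s,\ord_t f^\#\ell_s;\dd,\ee)$, using that the combinatorial divisors $\dd,\ee$ record the multidegrees of $D,E$ (here one uses that $D,E$ are supported in the smooth locus, so their multidegrees are unchanged by specialisation). For the other term, $f^*\pair{D,E}_\a$, one must use the definition of $\pair{D,E}_\a$ via the Néron model $\nm(J_U)$ and the prolonged Poincaré bundle $\bar\pp$ over $V\subset S$. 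The point is that the labelled graph $(\Gamma_s,\ell_s)$ encodes the component group and the monodromy pairing of the Néron model of $J_U$ at each generic point of a boundary divisor, and that $f^*\pair{D,E}_\a$ is built by first extending $D,E$ over $S$ (equivalently over $V$) and then pulling back. Because $\ell_s(c) = (z_1^{a_1}\cdots z_r^{a_r})$ factors the singularity according to the boundary components it lies over, the extension-then-restriction procedure replaces the full edge length $\sum_i a_i m_i$ (where $m_i = \ord_t f^\#(z_i)$) by the ``diagonal'' contribution — this is precisely what the graphs $(\Gamma_s,\ell_{s,i})$ isolate. Biadditivity and bilinearity of both the Deligne/admissible pairing and the Green's function let one assemble the contributions edge-by-edge and component-by-component, giving $f^*\pair{D,E}_\a$ the value $\sum_{i=1}^r g(\Gamma_s,\ord_t f^\#\ell_{s,i};\dd,\ee)$.

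Subtracting the two expressions gives the claimed formula for $\ord_t J(f;D,E)$. In carrying this out I would lean on the explicit comparison in \cite{hdj} between the admissible pairing on a one-dimensional base and Green's functions of metrised/labelled graphs, which already handles the case $S$ one-dimensional; the content of Theorem \ref{formulahtjump} over a general base $S$ is exactly the bookkeeping that relates the labelled graph of $C/S$ at $s$ (with its multi-variable labels) to the labelled graphs of the two curves $C/S$ and $C_T/T$ via the factorisation of each label into the $z_i$.

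The main obstacle I expect is the precise identification of $f^*\pair{D,E}_\a$ with $\sum_i g(\Gamma_s,\ord_t f^\#\ell_{s,i};\dd,\ee)$: one must track carefully how the prolonged Poincaré bundle $\bar\pp$ over $V$ interacts with the component groups of the Néron models at the various boundary divisors $Z_i$ through $s$, and check that the contribution of each $Z_i$ to $f^*\pair{D,E}_\a$ is the Green's function of the graph where only the $z_i$-part of each label is retained. Equivalently, this is the statement that the local intersection numbers computing $\pair{D,E}_\a$ near $Z_i$ depend only on the labelled graph at the generic point of $Z_i$, which by Proposition \ref{specialization} is $(\Gamma_s,\ell_{s,i})$ up to contracting unit-labelled edges — and contracting unit-labelled (i.e. zero-resistance) edges is exactly the operation under which the Green's function is continuous and compatible, by Proposition \ref{continuous} and equation \eqref{limiting-value}. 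Once this local-global compatibility is in hand, the rest is the formal manipulation sketched above.
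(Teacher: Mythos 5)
Your proposal is correct and follows essentially the same route as the paper: express both $\pair{f^*D,f^*E}_\a$ and $f^*\pair{D,E}_\a$ as the Deligne pairing $\pair{f^*D,f^*E}$ plus a Green's-function correction on $[t]$, using the identification with Zhang's admissible pairing over a trait for the former (Proposition \ref{neronpairingnonregularC}), and for the latter the decomposition of $\pair{D,E}_\a - \pair{D,E}$ as a divisor on $S$ supported on the $Z_i$ with coefficients computed by localizing at the generic points $z_i$ (Proposition \ref{learextension}), together with the specialization/contraction relation, continuity of the Green's function under contracting zero-resistance edges, and homogeneity (Proposition \ref{genericpoints}). Subtracting cancels the Deligne pairing (which commutes with base change) and gives the stated formula.
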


\section{Computing the admissible pairing, and the proof of Theorem \ref{formulahtjump}} \label{basedim1}\label{explicit}

In this section we will prove Theorem \ref{formulahtjump}. We begin by describing the admissible pairing more precisely in the case where the base $S$ has dimension one. We will then treat the general case, from which the theorem will follow. 
%First we discuss in somewhat more detail the case where the base scheme $S$ is an integral, noetherian, regular and separated scheme of dimension one (a connected Dedekind scheme). %As all our considerations are local on $S$, without loss of generality we assume for the remainder of this section that $S$ is a trait. 

Let $p \colon C \to S$ be a quasisplit generically smooth semistable curve over an integral separated regular noetherian scheme, and $D, E$ be relative degree zero divisors on $C$. We continue to assume that the support of both $D$ and $E$ is contained in the smooth locus $\mathrm{Sm}(C/S)$ of $C \to S$. As before (cf.\ Proposition \ref{delignepoincare}) we will freely make use of the notion of the Deligne pairing $\pair{D,E}$ as introduced in Sections 6 and 7 of \cite{de}. We leave it to the reader to verify that the Deligne pairing on relative degree zero divisors extends $\bb{Q}$-bilinearly to relative degree zero $\qq$-divisors, yielding $\qq$-line bundles on $S$. Also we recall that the Deligne pairing is compatible with arbitrary base change.
\begin{prop} \label{moretbailly}
Suppose that $C$ is regular and $S$ is a Dedekind scheme. Let $\phi_D$ be a vertical $\qq$-Cartier divisor on $C$ such that $D+\phi_D$ has zero intersection with each irreducible component of each fiber of $C \to S$. Choose $\phi_E$ in a similar way. We then have canonical isomorphisms
\[ \pair{D,E}_\a \isom \pair{D+\phi_D,E+\phi_E} \isom \pair{D,E} \otimes \pair{\phi_D,\phi_E}^{\otimes -1} \]
of $\qq$-line bundles on $S$.
\end{prop}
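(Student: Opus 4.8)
The plan is to reduce the statement to the classical compatibility properties of the Deligne pairing together with the construction of $\pair{D,E}_\a$ via prolongations of the Poincaré bundle, exploiting that on a Dedekind base the Néron model is of finite type and the situation is essentially that of a regular arithmetic surface.

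First I would unwind the definition of $\pair{D,E}_\a$ in the Dedekind case. Since $S$ is Dedekind, by Theorem \ref{existenceV} we have $V=S$ (or at least $V$ contains all codimension-one points, hence equals $S$), and the Néron model $\nm(J_U) \to S$ exists. The divisor $D$ of relative degree zero defines a section of $\Pic^0$ over $U$, hence (since $C$ is regular and $D$ is supported in the smooth locus) a section of the Néron model over $S$; the component of this section in the component group of a fiber is governed by the intersection numbers of $D$ with the fiber components, exactly as recorded combinatorially. Choosing $\phi_D$ vertical with $D+\phi_D$ orthogonal to all fiber components means precisely that $D+\phi_D$ maps into $\nm^0(J_U)$; after multiplying by a suitable $m$ we land in $\nm^0$ on the nose. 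The first isomorphism $\pair{D,E}_\a \isom \pair{D+\phi_D,E+\phi_E}$ then follows because both sides compute $(mD+m\phi_D,\,n\bar\mu E+n\bar\mu\phi_E)^*\bar\pp^{\otimes -1/mn}$, the former because adding a vertical divisor orthogonal to the fibers does not change the associated point of $\nm^0$ (it only changes the representing divisor, not the class), and the latter by Proposition \ref{delignepoincare} applied over $U$ combined with the fact that, for a section landing in $\nm^0$, the prolonged Poincaré bundle pullback agrees with the Deligne pairing — this is exactly the content needed and can be cross-checked against the one-dimensional comparison already announced in this section (coincidence with Zhang's admissible pairing). The key input is that $\bar\pp$ restricted to $\nm^0 \times_S \nm^0$ is the \emph{unique} biextension extending $\pp$, so its pullback along any pair of $\nm^0$-valued sections is determined by the generic fibre, where it is $\pair{\cdot,\cdot}^{\otimes -1}$.

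For the second isomorphism $\pair{D+\phi_D,E+\phi_E} \isom \pair{D,E}\otimes\pair{\phi_D,\phi_E}^{\otimes -1}$, I would use bi-additivity of the Deligne pairing: $\pair{D+\phi_D,E+\phi_E} = \pair{D,E}\otimes\pair{D,\phi_E}\otimes\pair{\phi_D,E}\otimes\pair{\phi_D,\phi_E}$, so it suffices to show $\pair{D,\phi_E}$ and $\pair{\phi_D,E}$ are canonically trivial, and $\pair{\phi_D,\phi_E}^{\otimes 2}$ matches the sign, i.e.\ really that $\pair{D,\phi_E} \isom \pair{\phi_D,E} \isom \pair{\phi_D,\phi_E}$ canonically (so that the four-fold product collapses to $\pair{D,E}\otimes\pair{\phi_D,\phi_E}^{\otimes -1}$ after one takes into account $\pair{D+\phi_D,\phi_E}=\pair{D,\phi_E}\otimes\pair{\phi_D,\phi_E}$ and $D+\phi_D$ orthogonal to fibres). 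Here the relevant fact is the standard one (due essentially to Deligne, and used by Moret-Bailly) that for a horizontal divisor $D$ and a vertical divisor $\psi$ on a regular arithmetic surface, the Deligne pairing $\pair{D,\psi}$ is computed by the intersection product $D\cdot\psi$ on $C$, canonically; and if $D+\phi_D$ is orthogonal to every fiber component then $(D+\phi_D)\cdot \phi_E = 0$, giving $\pair{D,\phi_E}\otimes\pair{\phi_D,\phi_E} \cong \oo_S$, i.e.\ $\pair{D,\phi_E}\cong\pair{\phi_D,\phi_E}^{\otimes -1}$, and symmetrically $\pair{\phi_D,E}\cong\pair{\phi_D,\phi_E}^{\otimes -1}$. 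Substituting into the bi-additive expansion yields exactly $\pair{D,E}\otimes\pair{\phi_D,\phi_E}^{\otimes -1}$.

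**The main obstacle** I expect is making the first isomorphism genuinely canonical rather than merely an abstract identification: one must match the rigidification/biextension structure on $\bar\pp$ with the Deligne-pairing normalization, and check that the choice of $\phi_D$ (which is far from unique) does not affect the resulting isomorphism — i.e.\ that replacing $\phi_D$ by $\phi_D + \psi$ with $\psi$ orthogonal to all fibres changes both sides compatibly via the vanishing $\pair{\psi,\text{fibre}} = 0$. This bookkeeping, and the precise invocation of the $\nm^0$-vs-Deligne-pairing comparison on a Dedekind base, is where care is needed; everything else is bi-additivity and the intersection-theoretic computation of Deligne pairings against vertical divisors on a regular surface. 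I would organize the write-up so that this comparison is isolated as the single nontrivial point, citing \cite{mb} (Equation 2.9.3 and Corollaire 4.14.1, as in Proposition \ref{delignepoincare}) for the identification over $U$ and the uniqueness of the prolongation (Proposition \ref{PoincareProlongations}) to propagate it over all of $S$.
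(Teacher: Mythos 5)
Your argument for the second isomorphism is essentially the paper's: the paper's proof just observes that $\pair{\phi_D,E+\phi_E}$ and $\pair{D+\phi_D,\phi_E}$ are canonically trivial, and a bi-additive expansion then gives $\pair{D+\phi_D,E+\phi_E}\cong\pair{D,E}\otimes\pair{\phi_D,\phi_E}^{\otimes -1}$. Your write-up reaches the same conclusion but garbles the intermediate statements: it is \emph{not} the case that $\pair{D,\phi_E}$ and $\pair{\phi_D,E}$ are canonically trivial, nor that $\pair{D,\phi_E}\isom\pair{\phi_D,\phi_E}$ --- the signs are wrong. What is true (and what your final parenthetical correctly uses) is $\pair{D,\phi_E}\isom\pair{\phi_D,\phi_E}^{\otimes -1}$ and $\pair{\phi_D,E}\isom\pair{\phi_D,\phi_E}^{\otimes -1}$, coming from the triviality of $\pair{D+\phi_D,\phi_E}$ and $\pair{\phi_D,E+\phi_E}$. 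You should state the argument directly in that form rather than through the false intermediate claims.

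For the first isomorphism there is a genuine gap in the justification. You correctly note that $D+\phi_D$ and $E+\phi_E$, being orthogonal to all fiber components, give sections of $\nm^0(J_U)$ over all of $S$, and that $\pair{D,E}_\a$ is by construction the extension across $S$ of $(D,\mu E)^*\bar\pp^{\otimes -1}$. But the step ``$\bar\pp$ is the unique biextension extending $\pp$, so its pullback along $\nm^0$-valued sections is determined by the generic fibre, where it is $\pair{\cdot,\cdot}^{\otimes -1}$'' does not establish the isomorphism: uniqueness of $\bar\pp$ as a biextension only tells you that the \emph{construction} of $\pair{D,E}_\a$ is well-defined, not that the line bundle $(D+\phi_D,\mu(E+\phi_E))^*\bar\pp^{\otimes -1}$ on $S$ coincides with the Deligne pairing $\pair{D+\phi_D,E+\phi_E}$ on $S$. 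These are two a priori different extensions to $S$ of the same line bundle over $U$; on a Dedekind scheme such extensions can differ by any divisor supported on $S\setminus U$, so an identification over $U$ (which is what \cite{mb} Eq.\ 2.9.3 and Cor.\ 4.14.1, i.e.\ Proposition \ref{delignepoincare}, provide) does not propagate for free. The actual content needed is the comparison of the two extensions over the closed points of $S\setminus U$; this is what the paper cites --- the proof of \cite[Th\'eor\`eme 6.15]{mb} --- and it is a real theorem, not a formal consequence of uniqueness of the prolongation. Your proposal should cite that theorem rather than trying to derive the first isomorphism from Propositions \ref{delignepoincare} and \ref{PoincareProlongations} alone.
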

\begin{proof} The first follows from the proof of \cite[Th\'eor\`eme 6.15]{mb}. The second follows since $\pair{\phi_D,E+\phi_E}$ and $\pair{D+\phi_D,\phi_E}$ are canonically trivial.
\end{proof}

\begin{prop} \label{neronpairingregularC} \label{neronpairingnonregularC}
Let $S$ be a local Dedekind scheme and let $(\Gamma,\ell)$ be the canonical labelled graph associated to $C \to S$ at the closed point $s$ of $S$.
Then we have an isomorphism
\[ \pair{D,E}_\a \isom \pair{D,E} \otimes O_S(g(\Gamma,\ord_s \ell;\dd,\ee) \cdot  [s]) \]
of $\qq$-line bundles on $S$ (recall that $\dd$ is the combinatorial divisor associated to $D$ and similarly for $\ee$, cf. Definition \ref{def:associated_combinatorial_divisor}). 
\end{prop}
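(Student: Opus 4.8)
The plan is to reduce to the case where the total space $C$ is regular --- where Proposition~\ref{moretbailly} is available --- and then to recognise the vertical correcting divisors it produces as solutions of a discrete Laplace equation on the dual graph, whose mutual Deligne pairing is exactly minus the Green's function. Write $S=\Spec R$ with $R$ a discrete valuation ring, uniformizer $\pi$ and closed point $s$; line bundles on $S$ are then trivial, so the $\qq$-line bundle $\pair{D,E}_\a\otimes\pair{D,E}^{\otimes-1}$, which carries a canonical trivialization over $\eta$ (both pairings restrict over $\eta$ to the Deligne pairing of the smooth curve $C_\eta$, using Proposition~\ref{delignepoincare} for $\pair{D,E}_\a$), equals $O_S(q\cdot[s])$ for a well-defined $q\in\qq$, and the claim is $q=g(\Gamma,\ord_s\ell;\dd,\ee)$. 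To reduce to $C$ regular, note each singular point $c$ of $C$ is analytically $xy=\pi^{n_c}$ over $\widehat{O}_{S,s}$ with $n_c=\ord_s\ell(c)\geq 1$, an $A_{n_c-1}$-singularity resolved by an explicit chain of blow-ups $\rho\colon\tilde C\to C$ over $S$, an isomorphism over $\eta$ and over $\mathrm{Sm}(C/S)$; then $\tilde C\to S$ is again quasisplit semistable with $\tilde C$ regular, $D,E$ pull back to relative degree zero divisors supported in $\mathrm{Sm}(\tilde C/S)$ whose combinatorial divisors extend $\dd,\ee$ by zero on the new vertices, and $\tilde\Gamma$ is obtained from $\Gamma$ by replacing each edge $c$ by a path of $n_c$ unit edges (one per exceptional $\PP^1$). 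I would then verify: (i) $\pair{\tilde D,\tilde E}_\a=\pair{D,E}_\a$, since the admissible pairing only sees the generic jacobian, its N\'eron model over $S$ and the prolonged Poincar\'e bundle, all unchanged by $\rho$; (ii) $\pair{\tilde D,\tilde E}\cong\pair{D,E}$ on $S$, compatibly with the generic trivializations, because $C$ is normal and $A_n$-singularities are rational, whence $R\rho_{*}O_{\tilde C}=O_C$ and the determinant-of-cohomology description of the Deligne pairing together with the projection formula yields the isomorphism; (iii) $g(\tilde\Gamma,\mathbf{1};\dd,\ee)=g(\Gamma,\ord_s\ell;\dd,\ee)$, since replacing an edge of resistance $n$ by $n$ series unit edges leaves the Green's function unchanged at current assignments supported off the new vertices. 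Thus it suffices to treat $C$ regular, where $\ord_s\ell\equiv1$.

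For $C$ regular I would apply Proposition~\ref{moretbailly}: there are vertical $\qq$-Cartier divisors $\phi_D,\phi_E$, necessarily supported on the only non-generic fibre $C_s$, with $(D+\phi_D)\cdot\Gamma_j=(E+\phi_E)\cdot\Gamma_j=0$ for every component $\Gamma_j$ of $C_s$, and a canonical isomorphism $\pair{D,E}_\a\isom\pair{D,E}\otimes\pair{\phi_D,\phi_E}^{\otimes-1}$. Being assembled from natural isomorphisms of Deligne pairings and from the canonical triviality of $\pair{\phi_D,\phi_E}$ over $\eta$, this isomorphism respects the trivializations over $\eta$, so it remains to show $\ord_s\pair{\phi_D,\phi_E}=-g(\Gamma,\mathbf{1};\dd,\ee)$ for the resulting generic trivialization.

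Identify $\phi_D,\phi_E$ with vectors $\vec\phi_D,\vec\phi_E\in\qq^{\vertices(\Gamma)}$. The defining conditions read $M\vec\phi_D=-\vec\dd$ and $M\vec\phi_E=-\vec\ee$, where $M=(\Gamma_i\cdot\Gamma_j)_{i,j}$ is the fibre intersection matrix of $C_s$; here I use that $D\cdot\Gamma_j=\deg(D|_{\Gamma_j})=\dd_j$ because $D$ is supported in the smooth locus. Since $C$ is regular, $M_{ij}$ for $i\neq j$ is the number of edges of $\Gamma$ between $v_i$ and $v_j$, and $\Gamma_i\cdot C_s=0$ --- because $C_s=\divisor_C(\pi)$ is principal --- forces $M_{ii}=-\sum_{j\neq i}M_{ij}$; hence $M=-L$, with $L$ the unit-weight Laplacian of $\Gamma$ (this is where, after the reduction, we are genuinely computing with the labelled graph $(\Gamma,\ord_s\ell)$). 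As $\dd$ and $\ee$ have total degree zero, because $D$ and $E$ have relative degree zero, the equations $L\vec\phi_D=\vec\dd$ and $L\vec\phi_E=\vec\ee$ are solvable, with $\vec\phi_D\equiv L^{+}\vec\dd$ and $\vec\phi_E\equiv L^{+}\vec\ee$ modulo the constant vectors.

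Finally, invoking the identity $\ord_s\pair{\Phi,\Psi}=(\Phi\cdot\Psi)_s$ for vertical divisors $\Phi,\Psi$ on the regular total space $C$ over the trait $S$ (with its trivialization over $\eta$), and that pairing against a principal vertical divisor is canonically trivial --- so the constant ambiguity in $\phi_D,\phi_E$ is harmless --- I would compute
\begin{align*}
\ord_s\pair{\phi_D,\phi_E}&=(\phi_D\cdot\phi_E)_s=\transpose{\vec\phi_D}M\vec\phi_E=-\transpose{\vec\phi_D}L\vec\phi_E\\
&=-\transpose{\vec\phi_D}\vec\ee=-\transpose{(L^{+}\vec\dd)}\vec\ee=-\transpose{\vec\dd}L^{+}\vec\ee=-g(\Gamma,\mathbf{1};\dd,\ee),
\end{align*}
using $L\vec\phi_E=\vec\ee$, self-adjointness of $L^{+}$, and the definition of the Green's function. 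Together with Proposition~\ref{moretbailly} this gives $\ord_s\bigl(\pair{D,E}_\a\otimes\pair{D,E}^{\otimes-1}\bigr)=g(\Gamma,\mathbf{1};\dd,\ee)=g(\Gamma,\ord_s\ell;\dd,\ee)$, which proves the statement for $C$ regular and hence, via (i)--(iii), in general. The main obstacle, besides carefully establishing the invariances (i)--(iii), is keeping the sign conventions consistent throughout --- in $\ord_s\pair{\Phi,\Psi}=\pm(\Phi\cdot\Psi)_s$, in $M=-L$, and in the compatibility of Proposition~\ref{moretbailly}'s isomorphism with the chosen trivializations over $\eta$ --- so that the final sign comes out as $+g$.
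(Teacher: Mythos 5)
Your proof is correct and follows essentially the same two-step strategy as the paper's: handle the regular case via Proposition~\ref{moretbailly} together with the identification of the unit-weight dual-graph Laplacian $L$ with minus the fibre intersection matrix, then reduce the general case to the regular one by resolving the $A_{n_c-1}$ singularities at the nodes, observing that the admissible pairing, the Deligne pairing (since $D,E$ avoid the exceptional locus), and the Green's function (edge subdivision) are all unchanged. The paper carries out the steps in the opposite order and leaves your points (i)--(iii) largely implicit, and its intermediate sign convention in the matrix equation ($L\cdot\phi_D=-\dd$ versus your $L\vec\phi_D=\vec\dd$) differs from yours, but as you note the sign of $\phi_D,\phi_E$ cancels in $\pair{\phi_D,\phi_E}$ so the end result agrees.
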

\begin{proof}We prove this result in two steps. We first consider the special case when $C$ is regular, and we then deduce the general case from this. 
\begin{enumerate}
\item
Assume $C$ is regular. Then every edge of $\Gamma$ has label $1$; we write $\mathbbm{1}$ for this edge labelling. We now essentially follow the arguments leading to \cite[Corollary 7.5]{hdj}. Let $F$ be the intersection matrix of the fiber of $C \to S$ at $s$, and let $L$ be the Laplacian matrix of $(\Gamma,\mathbbm{1})$. Then one easily verifies that $L=-F$. Let $\dd, \ee$ be the specializations of $D, E$ onto $\Gamma$. When viewing both $\dd$ and $\phi_D$ as elements of $\qq^{\mathrm{Vert}(\Gamma)}$ we have the matrix equation $L \cdot \phi_D = -\dd$. Letting $L^+$ be the pseudo-inverse of $L$ (see Section \ref{resistive}), we see that $\phi_D = -L^+\dd$ is a solution of the equation. Likewise we can set $\phi_E=-L^+\ee$. Now let $g(\Gamma,\mathbbm{1}; \cdot,\cdot)$ be the Green's function attached to $(\Gamma,\mathbbm{1})$, viewed as a bilinear form on the vector space of degree-zero divisors on $\Gamma$. Let $\pair{\phi_D,\phi_E}_s$ denote the local intersection multiplicity of $\phi_D$ and $\phi_E$ above $s$. We obtain that
\[ -\pair{\phi_D,\phi_E}_s = -{}^t\phi_D F \phi_E = {}^t \phi_D L \phi_E = {}^t\dd L^+LL^+ \ee = {}^t\dd L^+\ee = g(\Gamma,\mathbbm{1};\dd,\ee) \, , \]
and then the required isomorphism follows from Proposition \ref{moretbailly}. 
\item We now stop assuming that $C$ is regular. Let $C' \to C$ be the minimal desingularization of $C$ over $S$. Let $c$ be a singular point in the special fiber of $C \to S$ and assume it has label $\ell(c)=(\pi^e)$ where $\pi$ is a  uniformizer of $O_{S,s}$. Then $e \in \zz_{>0}$ is the `thickness' of the singular point, cf.\ \cite[Definition 10.3.23]{liu}, and the fiber of $C' \to C$ above $c$ consists of a chain of $e-1$ projective lines. The dual graph $\Gamma'$ of $C'$ at $s$ is hence obtained from $\Gamma$ by replacing each edge $c$ of $\Gamma$ by a chain of $\ord_s \ell(c)$ edges. It follows that $g(\Gamma,\ord_s \ell) = g(\Gamma',\mathbbm{1})$. 
We are done by step (1) once we have established a canonical isomorphism $\pair{D,E}_{C/S} \isom \pair{D,E}_{C'/S}$. But we have such a canonical isomorphism since $D, E$ do not meet the exceptional divisor of $C' \to C$.
\end{enumerate}
\end{proof}
When $S$ is Dedekind and $C$ is regular, Proposition \ref{neronpairingregularC} shows that $\pair{D,E}_\a$ coincides with the admissible pairing introduced by S. Zhang in \cite{zh}.

Now let $U \subset S $ be the largest open subscheme where $p$ is smooth, and let $Z_i$ for $ i=1,\ldots,r$ be the irreducible components of the boundary  divisor $Z=S\setminus U$. Let $V \supset U$ be the open dense subscheme of $S$ furnished by part (b) of Theorem \ref{existenceV}, and related to $D$ and $E$. 

For each $i=1,\ldots,r$ let $O_{S,z_i}$ be the local ring of $S$ at the generic point $z_i$ of the prime divisor $Z_i$. Note that $O_{S,z_i}$ is a discrete valuation ring. Let $\ord_{z_i}$ denote the normalized discrete valuation associated to $O_{S,z_i}$. Let $(\Gamma_{z_i},\ell_{z_i})$ be the canonical labelled graph of $C \to S$ at $z_i$. We can now generalise Proposition \ref{neronpairingnonregularC} to the case where $S$ is of any dimension:
\begin{prop} \label{learextension}
We have an isomorphism
\[ \pair{D,E}_\a \isom \pair{D,E} \otimes O_S\left(\sum_{i=1}^r g(\Gamma_{z_i},\ord_{z_i} \ell_{z_i};\dd,\ee)\cdot Z_i\right) \]
of $\qq$-line bundles on $S$.
\end{prop}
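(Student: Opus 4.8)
The plan is to realize both sides as $\qq$-line bundles on $S$ equipped with the \emph{same} canonical nonzero rational section --- the one coming from their common restriction to $U$ --- and then to compute the multiplicities of its divisor along each prime divisor $Z_i$ separately, by localizing at the generic point $z_i$ of $Z_i$ and invoking the Dedekind-base case already established in Proposition \ref{neronpairingnonregularC}.

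Concretely, set $\ll := \pair{D,E}_\a \otimes \pair{D,E}^{\otimes -1}$. Over $U$ the N\'eron model $\nm(J_U)$ is just $J_U$ and the prolongation $\bar{\pp}$ is the Poincar\'e bundle; combining this with the biextension structure and Proposition \ref{delignepoincare} gives a canonical isomorphism $\pair{D,E}_\a|_U \isom \pair{D,E}|_U$. Hence $\ll|_U$ is canonically trivial, so $\ll$ carries a canonical nonzero rational section $s$ whose divisor is supported on $Z = S \setminus U$. As $S$ is regular and each $Z_i$ is a prime divisor, $\divisor(s) = \sum_{i=1}^r m_i Z_i$ with $m_i = \ord_{z_i}(s) \in \qq$, and $s$ induces an isomorphism $\ll \isom O_S(\sum_i m_i Z_i)$. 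It remains to identify $m_i = g(\Gamma_{z_i}, \ord_{z_i}\ell_{z_i};\dd,\ee)$.

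Fix $i$ and let $f_i \colon \Spec O_{S,z_i} \to S$ be the canonical morphism. Its source is a local Dedekind scheme, it is flat, and its generic point maps to the generic point of $S$, which lies in $U$; so $f_i$ is a flat non-degenerate morphism. By Proposition \ref{mostlytrivial} the formation of the admissible pairing commutes with $f_i$, and the Deligne pairing always commutes with base change, so $f_i^*\ll \isom \pair{f_i^*D, f_i^*E}_\a \otimes \pair{f_i^*D, f_i^*E}^{\otimes -1}$ compatibly with the canonical generic-point trivializations. Since $f_i^*Z_j$ vanishes near the closed point for $j \neq i$ while $f_i^*Z_i = [z_i]$, the number $m_i$ equals the multiplicity at the closed point of the canonical rational section of the right-hand side. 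Now by Proposition \ref{fiberproduct} the canonical labelled graph of $C_{O_{S,z_i}} \to \Spec O_{S,z_i}$ at its closed point is $(\Gamma_{z_i}, \ell_{z_i})$ with associated valuation $\ord_{z_i}$, the divisors $f_i^*D, f_i^*E$ still have support in the smooth locus, and their combinatorial divisors coincide with $\dd, \ee$ because the fibres over $z_i$ literally agree. Hence Proposition \ref{neronpairingnonregularC}, applied over $\Spec O_{S,z_i}$, shows that the canonical rational section of $\pair{f_i^*D, f_i^*E}_\a \otimes \pair{f_i^*D, f_i^*E}^{\otimes -1}$ has divisor $g(\Gamma_{z_i}, \ord_{z_i}\ell_{z_i};\dd,\ee)\cdot[z_i]$, so $m_i = g(\Gamma_{z_i}, \ord_{z_i}\ell_{z_i};\dd,\ee)$, as desired.

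There is no serious obstacle beyond bookkeeping. The points that genuinely need care are: that the localization maps $f_i$ satisfy the hypotheses of Proposition \ref{mostlytrivial} (flatness and non-degeneracy, so that there really is no jump along them); and that the isomorphisms produced by Propositions \ref{mostlytrivial} and \ref{neronpairingnonregularC} are the canonical ones, i.e.\ they restrict to the identity over the generic point, so that the divisors of the relevant rational sections transfer faithfully under pullback. Granting these, the proposition is proved by reducing, one codimension-one point at a time, to the already-known Dedekind case.
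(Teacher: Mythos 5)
Your proof is correct and follows essentially the same route as the paper: define the "difference" $\qq$-line bundle, note it is canonically trivial over $U$, localize at each $z_i$, use flatness of the localization together with Proposition \ref{mostlytrivial} to make the admissible pairing commute with the base change, and then apply the Dedekind-base computation of Proposition \ref{neronpairingnonregularC}. The paper phrases the same argument by introducing unknown coefficients $\beta_i$ and solving for them, but the underlying reduction is identical.
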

\begin{proof}
Put $T_i = \Spec O_{S,z_i}$ and let $f_i \colon T_i \to S$ be the canonical map. Note that $f_i$ is non-degenerate. Defining $\beta_i$ to be rational numbers such that
\[ \pair{D,E}_\a \isom \pair{D,E} \otimes O_S\left(\sum_{i=1}^r \beta_i \cdot Z_i\right) \]
as $\qq$-line bundles on $S$ we find, by pulling back along $f_i$ and using that the Deligne pairing commutes with any base change, that
\[  f_i^*\pair{D,E}_\a \isom \pair{f_i^* D,f_i^*E} \otimes O_{T_i}(\beta_i \cdot [z_i])  \]
for each $i=1,\ldots,r$.
On the other hand since localisations are flat the formation of the admissible pairing is compatible with base change along $f_i$ (cf.\ Proposition \ref{mostlytrivial}). So we find
\[  f_i^*\pair{D,E}_\a \isom \pair{f_i^*D,f_i^*E}_\a \isom \pair{f_i^*D,f_i^*E} \otimes O_{T_i}\big(g(\Gamma_{z_i},\ord_{z_i} \ell_{z_i};\dd,\ee) \cdot[z_i]\big) \]
where for the latter isomorphism we invoke Proposition \ref{neronpairingnonregularC}. The equality $\beta_i = g(\Gamma_{z_i},\ord_{z_i} \ell_{z_i};\dd,\ee)$ follows for each $i=1,\ldots,r$.
\end{proof}
\begin{example}\label{running_example_3}
Continuing Example \ref{running_example_2}, we let $Z_1 : u=0$ and $Z_2: v=0$. Then $O_{S, z_i}$ is the local ring of the generic point of $Z_i$, and we find that the graph $(\Gamma_{z_i}, \ord_{z_i} \ell_{z_i})$ is just a 1-gon with label 1. 
\end{example}
Now let $(T,t)$ be a trait and let $f \colon T \to S$ be a non-degenerate morphism. Put $s=f(t) \in S$. Define non-negative integers $m_i$ via $m_i=\ord_t f^*Z_i$ for each $i=1,\ldots,r$. Recall that the labelled graph $(\Gamma_s,\ell_{s,i})$ is obtained from the labelled graph $(\Gamma_s,\ell_s)$ by replacing any label of the form $(z_1^{a_1}\cdots z_r^{a_r})$ by the principal ideal $(z_i^{a_i})$ of $O_{S,s}$. Using this, we can compute the constants appearing in the statement of Proposition \ref{learextension}:
\begin{prop} \label{genericpoints}
For each $i=1,\ldots,r$ the equality
\[ m_i \, g(\Gamma_{z_i},\ord_{z_i} \ell_{z_i};\dd,\ee) = g(\Gamma_s,\ord_t f^\# \ell_{s,i};\dd,\ee) \]
holds.  
\end{prop}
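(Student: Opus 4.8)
The plan is to derive the identity from three results established earlier: homogeneity of the Green's function (Proposition \ref{basicgreenfacts}(a)), the description of the canonical labelled graph under specialization (Proposition \ref{specialization}), and the edge-contraction formula for the Green's function on an improper network (Proposition \ref{continuous}, in particular \eqref{limiting-value}).

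I would start by writing out the two $\zz_{\geq 0}$-labellings explicitly. Fix an edge $c$ of $\Gamma_s$ with $\ell_s(c) = (z_1^{a_1(c)}\cdots z_r^{a_r(c)})$, so that $\ell_{s,i}(c) = (z_i^{a_i(c)})$. Then $\ord_t f^\# \ell_{s,i}(c) = a_i(c)\cdot\ord_t(f^\# z_i)$, and since $z_i$ is a local equation for $Z_i$ at $s$ and $f$ is non-degenerate, $f^\# z_i$ is a local equation for the effective Cartier divisor $f^*Z_i$ at $t$, whence $\ord_t(f^\# z_i)=m_i$ and $\ord_t f^\# \ell_{s,i}(c)=m_i\,a_i(c)$. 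Writing $\resist$ for the $\rr_{\geq 0}$-labelling $\resist(c)=a_i(c)$, this says $\ord_t f^\# \ell_{s,i} = m_i\cdot\resist$, so homogeneity of the Green's function (Proposition \ref{basicgreenfacts}(a), which is stated for arbitrary nonnegative resistances) gives $g(\Gamma_s,\ord_t f^\# \ell_{s,i};\dd,\ee)=m_i\cdot g(\Gamma_s,\resist;\dd,\ee)$. In particular, when $s\notin Z_i$ we have $m_i=0$ and both sides of the proposition vanish, so from now on I would assume $s\in Z_i$.

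Next I would identify the improper network $(\Gamma_s,\resist)$ with $(\Gamma_{z_i},\ord_{z_i}\ell_{z_i})$ after contracting zero-resistance edges. Because $s\in\overline{\{z_i\}}$, Proposition \ref{specialization} applies to $\mathrm{sp}\colon O_{S,s}\hookrightarrow O_{S,z_i}$; here $z_j$ becomes a unit in $O_{S,z_i}$ for $j\neq i$ while $\mathrm{sp}(z_i)$ is a uniformizer of the discrete valuation ring $O_{S,z_i}$, so $\ord_{z_i}\ell_{z_i}(c)=a_i(c)$ and the edges contracted in passing from $\Gamma_s$ to $\Gamma_{z_i}$ are precisely those with $a_i(c)=0$, i.e.\ exactly the zero-resistance edges of $(\Gamma_s,\resist)$. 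Thus \eqref{limiting-value} of Proposition \ref{continuous} yields $g(\Gamma_s,\resist;\dd,\ee)=g(\Gamma_{z_i},\ord_{z_i}\ell_{z_i};[\dd],[\ee])$, where $[\dd]$, $[\ee]$ are the images of $\dd,\ee$ under the induced surjection on vertex sets. Combining this with the previous paragraph gives the claimed formula, once one knows that $[\dd]$ (resp.\ $[\ee]$) is the combinatorial divisor attached to $D$ (resp.\ $E$) on the fibre $C_{z_i}$.

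That last point is the only step I expect to require a genuine argument rather than a citation. It should follow directly from Definition \ref{def:associated_combinatorial_divisor} together with the hypothesis that $D$ has support in $\mathrm{Sm}(C/S)$: under the contraction map of Proposition \ref{specialization} an irreducible component $Y$ of $C_{z_i}$ corresponds to the set of components of $C_s$ into which $Y$ degenerates, and since $D$ is a relative Cartier divisor whose support meets the fibres only in their smooth loci, the degree of $O_C(D)$ on $Y$ equals the sum of its degrees on those components of $C_s$ — equivalently, the combinatorial divisor of $D$ is compatible with the vertex map of Proposition \ref{specialization}. With that in hand the three displayed equalities chain together to give $g(\Gamma_s,\ord_t f^\# \ell_{s,i};\dd,\ee)=m_i\,g(\Gamma_{z_i},\ord_{z_i}\ell_{z_i};\dd,\ee)$.
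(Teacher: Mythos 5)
Your argument is correct and follows essentially the same route as the paper: split on whether $s\in Z_i$, apply Proposition \ref{specialization} to identify $(\Gamma_{z_i},\ord_{z_i}\ell_{z_i})$ with the contraction of $(\Gamma_s,\ord_t f^\#\ell_{s,i})$ along zero-resistance edges, invoke \eqref{limiting-value}, and use homogeneity to extract the factor $m_i$. The only difference is the order in which homogeneity and contraction are applied, which is immaterial. You also explicitly flag and justify a point the paper silently elides: that the pushforward $[\dd]$ of the combinatorial divisor along the vertex-contraction map agrees with the combinatorial divisor of $D$ computed directly on $C_{z_i}$ — the paper just reuses the symbols $\dd,\ee$ for both graphs without comment, so your extra paragraph is a genuine (if minor) gap-fill rather than a detour.
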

\begin{proof} Assume first that $s \in Z_i$. Let $\mathrm{sp} \colon O_{S,s} \hookrightarrow O_{S,z_i}$ be the canonical injective morphism. From Proposition \ref{specialization} we obtain that the canonical labelled graph $(\Gamma_{z_i},\ell_{z_i})$ associated to $C \to S$ at $z_i$ is precisely the labelled graph obtained from $(\Gamma_s,\mathrm{sp}\, \ell_{s,i})$ by contracting the edges labelled with the unit ideal of $O_{S,z_i}$. In particular, the resistive network $(\Gamma_{z_i},m_i \ord_{z_i} \ell_{z_i})$ is identified with the resistive network $(\Gamma_s,\ord_t f^\# \ell_{s,i})$ with all the edges with label zero contracted. By Equation \ref{limiting-value} we find
\[ g(\Gamma_{z_i}, m_i \ord_{z_i} \ell_{z_i};\dd,\ee) = 
g(\Gamma_s,\ord_t f^\# \ell_{s,i};\dd,\ee) \, . \]
The proposition then follows by homogeneity of the Green's function (cf.\ Proposition \ref{basicgreenfacts}(\ref{homogeneous})). If $s \notin Z_i$, the labelling $\ell_{s,i}$ is identically equal to the unit ideal of $O_{S,s}$ and hence the Green's function value on the right hand side of the equation vanishes. As $m_i=0$, also the left hand side of the equation vanishes.
\end{proof}
Combining these ingredients we can finally give the proof of Theorem \ref{formulahtjump}. 
\begin{proof}[Proof of Theorem \ref{formulahtjump}] 
By applying Proposition \ref{neronpairingnonregularC} to the pullback of $p$ along $f$ we find that
\[ \pair{f^*D,f^*E}_\a \isom \pair{f^*D,f^*E} \otimes O_T\big( g(\Gamma_s,\ord_t f^\# \ell_s; \dd,\ee)\cdot[t]\big) \]
as $\qq$-line bundles on $T$. On the other hand by Proposition \ref{learextension} we have
\[ f^*\pair{D,E}_\a \isom \pair{f^*D,f^*E} \otimes O_T\left(\sum_{i=1}^r m_i g(\Gamma_{z_i},\ord_{z_i} \ell_{z_i};\dd,\ee)\cdot[t]\right) \, . \]
By Proposition  \ref{genericpoints} we therefore find
\[ f^*\pair{D,E}_\a \isom \pair{f^*D,f^*E} \otimes O_T\left(\sum_{i=1}^r   g(\Gamma_s,\ord_t f^\# \ell_{s,i};\dd,\ee )\cdot[t]\right) \, . \]
Recall that the height jump divisor on $T$ is given via an isomorphism 
\[ O_T(J(f;D,E)) \isom f^*\pair{D,E}_\a^{-1} \otimes \pair{f^*D,f^*E}_\a \, . \]
We obtain
\[ \ord_t J(f;D,E) = g(\Gamma_s,\ord_t f^\# \ell_s;\dd,\ee) - \sum_{i=1}^r g(\Gamma_s,\ord_t f^\# \ell_{s,i};\dd,\ee) \]
as required.
\end{proof}
\begin{example}\label{running_example_4}
Continuing Example \ref{running_example_3}, let $s$ denote the closed point
of $S = \Spec \cc[[u,v]]$, and fix two integers $m$, $n>0$. Let $T =
\on{Spec}\bb{C}[[t]]$, and define a map $f\colon T \ra S$ by sending $u$ to
$t^m$ and $v$ to $t^n$. If $c_u$ is the edge of $\Gamma_s$ labelled by the
ideal $(u)$ (i.e.\ $\ell_s(c_u)=(u)$), and $c_v$ the other edge, then we find 
 \begin{equation*}
 \ord_t f^\# \ell_s(c_u) = m \;\; \; \text{ and } \;\; \ord_t f^\#
\ell_s(c_v) = n \, . 
 \end{equation*} 
 Considering now the $\ord_t f^\# \ell_{s,i} $, we find 
\begin{equation*}
\ord_t f^\# \ell_{s,1} (c_u) = m \;\;\; \text{ and } \;\; \ord_t f^\#
\ell_{s,1} (c_v) = 0 \, . 
\end{equation*}
Similarly, 
\begin{equation*}
\ord_t f^\# \ell_{s,2} (c_u) = 0 \;\;\;\text{ and }\;\; \ord_t f^\#
\ell_{s,2} (c_v) = n \, . 
\end{equation*}
Suppose now that we have two sections $P$ and $O$ through the smooth locus
of $C/S$, with $P$ and $O$ specialising to different irreducible components
$\pp$ and $\oo$ of the closed fibre (we can also think of $\pp$ and $\oo$ as
the vertices of $\Gamma_s$). Let both $D, E$ be the divisor $P-O$ on $C \to S$.
We will now compute the height jump divisor $J(f;D,E)$ associated to $f$, $D$ and $E$. The divisors $\dd$, $\ee$ on the graph
$\Gamma_s$ corresponding to $D$ and $E$ are both equal to $\pp - \oo$. Applying Theorem \ref{formulahtjump} we
have the formula
\[\ord_t J(f;D,E) = g(\Gamma_s,\ord_t f^\# \ell_s;\dd,\ee) - \sum_{i=1}^2
g(\Gamma_s,\ord_t f^\# \ell_{s,i};\dd,\ee) \, . \]
Denoting effective resistance by $r_\mathrm{eff}$ we find
\begin{equation*}
g(\Gamma_s,\ord_t f^\# \ell_{s,1};\dd,\ee) =
r_{\mathrm{eff}}(\Gamma_s,\ord_t f^\#\ell_{s,1};\pp,\oo) = 0
\end{equation*}
(since the graph is a 2-gon and one of the edges has resistance zero), and
similarly 
\begin{equation*}
g(\Gamma_s,\ord_t f^\# \ell_{s,2};\dd,\ee) =
r_{\mathrm{eff}}(\Gamma_s,\ord_t f^\#\ell_{s,2};\pp,\oo) = 0 \, . 
\end{equation*}
Furthermore we compute
\[ \begin{split}
 g(\Gamma_s,\ord_t f^\# \ell;\pp-\oo,\pp-\oo) = 
r_{\mathrm{eff}}(\Gamma_s,\ord_t f^\#\ell,\pp,\oo) = \frac{mn}{m+n} \, , 
\end{split} \]
from the fact that the graph is a 2-gon with one edge labelled by $m$ and the other labelled by $n$. Putting this all together we find the non-trivial height jump
\[ \ord_t J(f;P-O,P-O) = \frac{mn}{m+n} \, .\]
\end{example}

\section{Proof of Theorems \ref{maineffective} and \ref{mainquasisplit}} \label{proofmain}

In this section we deduce Theorems \ref{maineffective} and \ref{mainquasisplit} from Theorem \ref{formulahtjump}. Again, various results on Green's functions from Propsition \ref{basicgreenfacts} (proven in the appendix) will play a crucial role.
\begin{proof}[Proof of Theorem \ref{maineffective}]
By the discussion in Definition \ref{def:semistable_curve} there exists a surjective \'etale morphism $S' \to S$ such that the semistable curve $C \times_S S' \to S'$ is quasisplit. Let $f \colon (T,t) \to S$ be a non-degenerate morphism with $(T,t)$ a trait. By Proposition \ref{mostlytrivial} the formation of $\pair{D,D}_\a$ is compatible with pullback along the \'etale morphism $S' \to S$. Hence, in order to prove Theorem \ref{maineffective} we may assume that $C \to S$ is quasisplit. Then we use the formula for the height jump from Theorem \ref{formulahtjump}. The effectivity of the height jump divisor then follows from the concavity inequality in Proposition \ref{basicgreenfacts}(\ref{concave}). 
%Comparing with the concavity inequality for improper networks (cf.\ Corollary \ref{homogeneousimproper}) we deduce that the height jump divisor is an effective divisor, as required. 
\end{proof}
\begin{proof}[Proof of Theorem \ref{mainquasisplit}]
Let $S$ be an integral, noetherian regular separated scheme and $p \colon C \to S$ a generically smooth semistable curve. As in the theorem we assume that $p \colon C \to S$ is quasisplit, and that we are given two divisors $D, E$ of relative degree zero on $C \to S$ with support contained in $\mathrm{Sm}(C/S)$. Also we fix a point $s \in S$. Let $f \colon (T,t) \to (S,s)$ be a non-degenerate morphism with $(T,t)$ a trait and put $m_i = \ord_t f^*Z_i$ for $i=1,\ldots,r$. Let $c$ be an edge of $\Gamma_s$. Note that if $\ell_s(c)=(z_1^{a_{1c}}\cdots z_r^{a_{rc}})$ then $\ord_t f^\# \ell_s(c) = a_{1c}m_1+\cdots+a_{rc}m_r$ and $\ord_t f^\# \ell_{s,i} (c) = a_{ic}m_i$. By Theorem \ref{formulahtjump} we have
\[\ord_t J(f;D,E) = g(\Gamma_s,\ord_t f^\# \ell_s;\dd,\ee) - \sum_{i=1}^r g(\Gamma_s,\ord_t f^\# \ell_{s,i};\dd,\ee) \, . \]
As the Green's function of a resistive network is homogeneous of weight one in the labelling by Proposition \ref{basicgreenfacts}(\ref{homogeneous}), and as the labels $\ord_t f^\# \ell_s$ and $\ord_t f^\# \ell_{s,i}$ are linear forms in $m_1,\ldots, m_r$, we find the first statement in part (a). In the special case where $m_j =0$ for $j \neq i$ we get $\ell_s = \ell_{s,i}$ and the second statement in part (a) follows as well.

Finally, Proposition \ref{bound} yields a constant $c'$ depending only on $\Gamma_s$, $\dd$ and $\ee$ together with an inequality
\[ \left|\ord_t J(f;D,E)\right| \leq c' \min_{i=1,\ldots,r} \left(\sum_{j \neq i} m_j|a_j|_1 \right) \, . \]
Here we write $a_j=\sum_{c \in \mathrm{Ed}(\Gamma)} a_{jc}\delta_c$ (apply Proposition \ref{bound} with $\mu_j = m_j a_j$ for $j=1,\ldots,r$). We find that the bound in (b) holds with $$c=c'(r-1)\max_{i=1,\ldots,r; c \in \mathrm{Ed}(\Gamma)} a_{ic}. $$ 
\end{proof}
\begin{remark} Note that from Proposition \ref{bound} we actually get the effective constant $c' = \|\dd\| \|\ee\|$. It follows that the constant $c$ is also effective. 
\end{remark}

\section{Proof of Theorem \ref{greenlocal}} \label{variation}

Our next aim is to discuss our proof of the Tate-Silverman-Green Theorem \ref{greenlocal}. The key is to use our bounds on the height jump divisor from Theorem \ref{mainquasisplit}(b). Actually we would like to focus on the following more general statement.

%We fix a field $K$, together with an algebraic closure $K \subset \bar{K}$ of $K$. {David: change to number field or function field of a proper curve over a field. Adjust throughout this section (though leave proofs in the number field case where appropriate. )}

\begin{thm} \label{generalgreen} Let $K$ be a number field or the function field of a curve, and $\bar{K}$ an algebraic closure. Let $S$ be a smooth projective geometrically connected curve over $K$, and let $U$ be an open dense subscheme of $S$ together with an abelian scheme $A \to U$ over $U$, a section $P \in A(U)$ and a section $Q \in A^\lor(U)$. 
Then there exists a $\qq$-line bundle $L$ on $S$ such that $ \deg_S L = \hat{\on{h}}_{\pp(A_\eta)}(P_\eta,Q_\eta) $, and such that the function $U(\bar{K}) \to \rr$ given by $u \mapsto \hat{\on{h}}_{\pp(A_u)}(P_u,Q_u)$ extends into a Weil height on $S(\bar{K})$ with respect to $L$.
\end{thm}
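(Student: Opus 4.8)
The plan is to reduce the theorem to the jacobian case, re-express the two heights through the admissible pairing, identify their difference with the height jump divisor via Theorem \ref{formulahtjump}, and finally extract the required uniform bound from Theorem \ref{mainquasisplit}(b).

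\emph{Step 1: reduction to jacobians.} Since Weil heights are functorial up to $O(1)$ under finite morphisms, we are free to replace $K$ by a finite extension and $S$ by a connected finite cover. Spreading out the classical fact that every abelian variety is a quotient of a jacobian, and shrinking $U$, we then find a smooth proper curve $C\to U$ whose jacobian $J\to U$ admits a surjection $\psi\colon J\twoheadrightarrow A$; after one more finite cover the section $P$ lifts to some $P'\in J(U)$. Writing $\mu\colon J\isom J^\lor$ for the canonical principal polarisation and putting $E'\coloneqq\mu^{-1}\psi^\lor Q\in J(U)$, the compatibility of the Poincar\'e biextension with $\psi$ --- the canonical isomorphism $(\psi\times\on{id})^{*}\pp(A)\cong(\on{id}\times\psi^\lor)^{*}\pp(J)$ of biextensions --- gives $\hat{\on{h}}_{\pp(A_u)}(P_u,Q_u)=\hat{\on{h}}_{\pp(J_u)}(P'_u,\mu E'_u)$ for every $u$. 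By Proposition \ref{delignepoincare} the right-hand side is, up to sign, the canonical height attached to the Deligne pairing $\pair{D',E'}$ for relative degree zero divisors $D',E'$ on $C$ representing $P'$ and $E'$. Using semistable reduction together with Proposition \ref{lem:quasi_split_after_alteration}, we may after a further alteration assume that $C$ extends to a quasisplit semistable curve over $S$ and (after a birational modification and shrinking) that $D',E'$ are supported in $\mathrm{Sm}(C/S)$.

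\emph{Step 2: from the admissible pairing to the height jump.} Fix a regular proper flat model $\mathcal S$ of $S$ over $\Spec\mathcal O_K$ (resp.\ over the base curve) carrying a quasisplit semistable $\mathcal C\to\mathcal S$ prolonging $C$, chosen --- after finitely many blow-ups and renewed semistable reduction --- so that the boundary $Z=\mathcal S\setminus U_{\mathcal S}$ is a normal crossings divisor whose horizontal components are pairwise disjoint. Extend $D',E'$ over $\mathcal C$, still supported in the smooth locus, and give $\pair{D',E'}_\a$ on $\mathcal S$ its canonical admissible metric at the archimedean places (Proposition \ref{prop:adm_metric}). For $u\in U(\bar K)$, with $K_u$ a field of definition, $\mathcal T_u=\Spec\mathcal O_{K_u}$ and $\bar u\colon\mathcal T_u\to\mathcal S$ the extension of $u$, two descriptions present themselves. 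On the one hand, by the construction of the admissible pairing and the standard fact that the canonically metrized Poincar\'e bundle on the N\'eron model computes the N\'eron--Tate pairing (i.e.\ Proposition \ref{neronpairingregularC} over the Dedekind base $\mathcal T_u$, plus the archimedean contribution), $\hat{\on{h}}_{\pp(J_u)}(P'_u,\mu E'_u)$ equals, up to sign and normalisation, the arithmetic degree of $\pair{\bar u^{*}D',\bar u^{*}E'}_\a$ on $\mathcal T_u$. On the other hand, the arithmetic degree of $\bar u^{*}\bigl(\pair{D',E'}_\a\bigr)$, divided by $[K_u:K]$, is a Weil height $h_L$ for $L\coloneqq\pair{D',E'}_\a^{\otimes -1}|_S$, by the model description of Weil heights. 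By the definition of the height jump divisor and Theorem \ref{formulahtjump}, the two differ by exactly the jump:
\[
\hat{\on{h}}_{\pp(J_u)}(P'_u,\mu E'_u)=h_L(u)\ \pm\ \frac{1}{[K_u:K]}\sum_{v}\ord_v J(\bar u;D',E')\,\log\#\kappa(v)\ +\ O(1),
\]
the sum over the finite places $v$ of $K_u$ and the $O(1)$ absorbing the continuous archimedean metric. So everything comes down to bounding this weighted sum uniformly in $u$.

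\emph{Step 3: the crux --- bounding the jump.} By Theorem \ref{formulahtjump}, $\ord_v J(\bar u;D',E')$ vanishes unless $\bar u(v)$ lies in $\bigcup_{i\ne j}Z_i\cap Z_j$, a finite set of closed points concentrated in the fibres over a finite set $\Sigma$ of primes of $K$ (bad reduction of $A$, and collisions of horizontal boundary components); thus only places $v$ over $\Sigma$ contribute. Fix such a $v$ over $\mathfrak p\in\Sigma$, put $x=\bar u(v)$ and $m_i=\ord_v\bar u^{*}Z_i$. As $Z$ is normal crossings, $x$ lies on exactly two components; by the choice of model one of them, say $Z_2$, is a component of the reduced fibre $\mathcal S_{\mathfrak p}$, and pulling back a uniformizer of $\mathcal O_{K,\mathfrak p}$ along $\bar u$ gives $m_2\le e(v/\mathfrak p)$, while $m_i=0$ whenever $Z_i\not\ni x$. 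Hence $\min_i\sum_{j\ne i}m_j=\min(m_1,m_2)\le e(v/\mathfrak p)$, and Theorem \ref{mainquasisplit}(b) yields $|\ord_v J(\bar u;D',E')|\le c\,e(v/\mathfrak p)$ with $c$ depending only on $\mathcal C\to\mathcal S$ and $D',E'$. The shape $c\min_i\sum_{j\ne i}m_j$ of the bound is essential: a high-order tangency of $\bar u$ to a horizontal boundary component makes some $m_i$ large, but that index is precisely the one discarded by the minimum (and our arrangement of the model forbids $x$ from lying on two horizontal components). Summing over $v\mid\mathfrak p$ and using $\sum_{v\mid\mathfrak p}e(v/\mathfrak p)f(v/\mathfrak p)=[K_u:K]$ bounds the contribution of $\mathfrak p$ by $c\log\#\kappa(\mathfrak p)$; summing over the finite set $\Sigma$ gives the desired $O(1)$. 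Combined with Step 2, $u\mapsto\hat{\on{h}}_{\pp(A_u)}(P_u,Q_u)$ differs by $O(1)$ from the Weil height $h_L$, hence extends into a Weil height on $S(\bar K)$ with respect to $L$; and the equality $\deg_S L=\hat{\on{h}}_{\pp(A_\eta)}(P_\eta,Q_\eta)$ is built into the construction, since over the function field $K(S)$ the canonical height with respect to $\pp(J_{K(S)})$ is computed by the admissible pairing on the model and equals $\hat{\on{h}}_{\pp(A_\eta)}(P_\eta,Q_\eta)$ by the biextension identity of Step 1.

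The step I expect to be the main obstacle is Step 3: individual height jumps are not uniformly bounded, and the whole point is that, weighted as they occur in the N\'eron--Tate height and summed over the finitely many primes in $\Sigma$, they collapse to $O(1)$ --- which is exactly what the linear-in-the-multiplicities form of Theorem \ref{mainquasisplit}(b), together with a model whose boundary is normal crossings with disjoint horizontal part, is designed to deliver.
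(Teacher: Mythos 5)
Your proposal follows the paper's proof very closely in its overall architecture and in Steps 2 and 3, which reproduce essentially verbatim the proof of the paper's special case (Theorem \ref{generalgreenspecial}): express the N\'eron--Tate height via the Poincar\'e biextension on the N\'eron model (Proposition \ref{poincare_height}), compare with the Weil height associated to $L=\pair{D,E}_\a^{\otimes -1}$ on a regular proper model $\ca S/B$, identify the local discrepancies with coefficients of the height jump divisor via Theorem \ref{formulahtjump}, observe that they are supported over the finite set $\pi(Z')$, and crucially use the disjointness of the horizontal boundary components so that at any point of $Z'$ exactly two components meet, one of which is vertical --- making $\min_i\sum_{j\ne i}m_j\le e(v/b)$ and allowing Theorem \ref{mainquasisplit}(b) to bound the jump by $c\cdot e(v/b)$, which sums to $O(1)$ over $v\mid \mathfrak p$. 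This is precisely the paper's argument.

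Step 1 is where you deviate, and where the proposal has its only real softness. The paper (Proposition \ref{surjfromjac}) invokes the Diaz--Harbater strong Bertini theorem to choose a curve $C_0\subset A_K$ passing \emph{through both} $e_K$ and $P_K$; then $P$ is directly the image under the Albanese map of the explicit divisor class $[b_0-a_0]$, with $a_0,b_0\in C_0(K)$, so no lifting problem arises and one immediately has a degree-zero divisor $D=b-a$ built out of sections. You instead take an arbitrary surjection $\psi\colon J\twoheadrightarrow A$ and assert that ``after one more finite cover the section $P$ lifts to some $P'\in J(U)$.'' This is true --- $\psi^{-1}(P)$ is a $(\ker\psi)$-torsor over $K(U)$, a variety of finite type which acquires a rational point over some finite extension, and one then spreads out --- but the justification is missing and the route is less economical than the paper's, since you must still produce from $P'$ an actual relative degree-zero divisor on $C$ (the paper gets this for free as $b-a$, and for $Q$ invokes \cite[Proposition 8.1.4]{blr} after arranging a section of $C$). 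Similarly, your appeal to ``Weil heights are functorial up to $O(1)$ under finite morphisms'' handles the Weil-height estimate once you've passed to an alteration $S'\to S$, but on its own it does not produce the $\qq$-line bundle $L$ on $S$ itself with $\deg_S L=\hat{\on{h}}_{\pp(A_\eta)}(P_\eta,Q_\eta)$; the paper closes this by constructing $L=\sigma^*\bar\pp$ on $S$ directly from the N\'eron model over $S$ and then proving $\bar g^*L\cong L'$ via the uniqueness of biextension prolongations and the N\'eron mapping property. Both points are fixable and do not affect the core idea, but as written Step 1 leaves genuine work to the reader that the paper does explicitly.
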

We obtain Theorem \ref{greenlocal} by letting $Q \in A^\lor(U)$ be the section given by the algebraically trivial line bundle $t_P^*\xi-\xi$. Indeed, let $u$ be any point (closed or generic) of $U$, then we have $2\,\hat{\on{h}}_{\xi_u}(P_u) = \hat{\on{h}}_{\pp(A_u)}(P_u,Q_u)$. It follows that Theorem \ref{greenlocal} is a special case of Theorem \ref{generalgreen}.

\subsection{Preliminaries}
We start by recalling a few more specialized results about the Poincar\'e bundle and its prolongations as a biextension. 
\begin{prop} \label{Poincarefunctorial} Let $U$ be a scheme, let $A \to U$ and $B \to U$ be two abelian schemes, and
let $f\colon A \to B$ be a morphism of abelian schemes over $U$. Let $f^\lor \colon B^\lor \to A^\lor$ be the dual of $f$. Then we have a canonical isomorphism of rigidified line bundles
\[ \gamma_f \colon (\mathrm{id} \times f^\lor)^*\pp(A) \isom (f \times \mathrm{id})^* \pp(B) \]
on $A \times_U B^\lor$. If $U$ is of finite type over $\cc$, then $\gamma_f$ is an isometry for the $C^\infty$ metrics induced from the canonical $C^\infty$ metrics on $\pp(A)(\cc)$, $\pp(B)(\cc)$.
\end{prop}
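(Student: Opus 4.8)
The plan is to prove Proposition \ref{Poincarefunctorial} by reducing the statement to the universal property of the Poincar\'e bundle as the element of $\on{PicRig}^0$ corresponding to the identity map, and then to deduce the metric statement from the uniqueness part of Proposition \ref{prop:adm_metric}.

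First I would establish the algebraic isomorphism $\gamma_f$. The key observation is that both sides are rigidified line bundles on $A \times_U B^\lor$ which are fibrewise algebraically trivial along $A$. Interpreting $A^\lor$ and $B^\lor$ via the functors $\on{PicRig}^0_{A/U,e}$ and $\on{PicRig}^0_{B/U,e}$, the line bundle $(f \times \mathrm{id})^*\pp(B)$ on $A \times_U B^\lor$ corresponds to the morphism $B^\lor \to A^\lor$ obtained by pulling back fibrewise algebraically trivial rigidified line bundles on $B$ along $f$; but by the very definition of the dual morphism $f^\lor$, this is exactly $f^\lor$. On the other hand, $(\mathrm{id} \times f^\lor)^*\pp(A)$ corresponds to the composite $B^\lor \xrightarrow{f^\lor} A^\lor \xrightarrow{\mathrm{id}} A^\lor$, which is again $f^\lor$. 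Since both rigidified line bundles correspond to the same morphism $B^\lor \to A^\lor$ in $\on{PicRig}^0_{A/U,e}(B^\lor)$, and since the Poincar\'e bundle is unique up to unique isomorphism as a rigidified bundle, we obtain a canonical isomorphism $\gamma_f$ of rigidified line bundles. (Alternatively, one can cite \cite{mb} directly, where this functoriality of the Poincar\'e biextension is recorded; I would phrase the argument to be self-contained but point to \cite{mb} for the biextension-compatible version if needed downstream.)

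For the metric statement, suppose $U$ is of finite type over $\cc$. By Proposition \ref{prop:adm_metric}, $\pp(A)$ carries a \emph{unique} admissible hermitian metric compatible with its rigidification, and similarly for $\pp(B)$; pulling these back along $\mathrm{id} \times f^\lor$ and $f \times \mathrm{id}$ respectively gives hermitian metrics on the two sides of $\gamma_f$. The point is that the pullback of an admissible metric under a morphism of abelian schemes is again admissible: the defining condition (Definition \ref{def:adm_metric}) is that the curvature form be translation invariant on each fibre, and this property is preserved under pullback along the fibrewise-homomorphism $f$ (respectively under pullback along $f^\lor$), since pullback of the curvature form by a homomorphism of complex tori sends translation-invariant forms to translation-invariant forms. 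Moreover both pulled-back metrics are compatible with the rigidification along the unit section, because $\gamma_f$ is an isomorphism of \emph{rigidified} line bundles and the original metrics are rigidification-compatible. Hence, transporting one metric across $\gamma_f$, both the metric obtained from $\pp(A)$ and the one obtained from $\pp(B)$ are admissible metrics on $(f \times \mathrm{id})^*\pp(B)$ compatible with its rigidification; by the uniqueness in Proposition \ref{prop:adm_metric} (applied to $B$, using that $(f \times \mathrm{id})^*$ of the canonical metric is admissible) they agree, so $\gamma_f$ is an isometry.

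The main obstacle I anticipate is not conceptual but bookkeeping: one must be careful that ``admissible'' is checked on $A(\cc) \times_{U(\cc)} B^\lor(\cc)$ fibrewise over $U(\cc)$, and that the relevant translation invariance is with respect to translations in the $A$-direction (for $\pp(A)$-type bundles), so one needs the standard fact that the Poincar\'e bundle's admissible metric has curvature that is also invariant in the $A^\lor$-direction in an appropriate sense — in other words, one should verify the curvature of $\pp(A)$ is translation-invariant on fibres of $A \times_U A^\lor \to U$ in both factors, which is exactly what the biextension structure plus Proposition \ref{prop:adm_metric} provide. Once that symmetry is in hand, pullback along either $\mathrm{id} \times f^\lor$ or $f \times \mathrm{id}$ manifestly preserves the admissibility condition and the argument closes. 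I would also remark that since admissible metrics compatible with rigidification are unique, the isometry $\gamma_f$ is the \emph{unique} isomorphism of rigidified line bundles, so there is no ambiguity in the identification.
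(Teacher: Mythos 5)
Your proposal is correct and takes essentially the same approach as the paper: the algebraic isomorphism comes down to the definition of the dual morphism $f^\lor$ (you phrase it at the level of the moduli functor $\on{PicRig}^0_{A/U,e}$, whereas the paper unwinds it on $T$-points $(P,Q)$, but the content is identical), and the isometry statement follows from admissibility and rigidification-compatibility being preserved under pullback along a homomorphism together with the uniqueness in Proposition \ref{prop:adm_metric}. The ``obstacle'' you flag is not really one: admissibility for $\pp(A)$ is defined via translation invariance on fibres of the abelian scheme $A \times_U A^\lor \to U$, so the two-sided invariance is already built in and pullback along $\mathrm{id}\times f^\lor$ or $f\times\mathrm{id}$ preserves it directly.
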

\begin{proof} Let $T \to U$ be a morphism of schemes, and let $P \in A_T(T)$, $Q \in B_T^\lor(T)$. We need to show that we have a canonical isomorphism of line bundles
\[ (P,f^\lor(Q))^*\pp(A_T) \isom (f(P),Q)^*\pp(B_T)  \]
on $T$. We view $Q$ as a line bundle on $B_T$ and $f^\lor(Q)$ as a line bundle on $A_T$. The left hand side is identified with the line bundle $P^*(f^\lor(Q))$ on $A_T$, the right hand side is identified with the line bundle  $f(P)^*(Q)$ on $T$. These are equal by the construction of the dual morphism. 

Suppose now that $U$ is of finite type over $\cc$. The given metrics are translation-invariant on the fibres, and the rigidification maps are isometries. These properties are stable under pull-backs along morphisms of abelian schemes, and moreover metrics with these properties are unique, so $\gamma_f$ is an isometry. 
\end{proof}
\begin{prop} \label{extensiongamma}
Let $S$ be a Dedekind scheme, and let $U \subset S$ be an open dense subscheme of $S$. Let $A \to U$ and $B \to U$ be two abelian schemes, and
let $f\colon A \to B$ be a morphism of abelian schemes over $U$. Let $f^\lor \colon B^\lor \to A^\lor$ be the dual of $f$. Denote by $\nm^0(-)$ the connected component of the N\'eron model over $S$ of an abelian scheme $-$ over $U$. Let $\bar{f} \colon \nm^0(A) \to \nm^0(B)$ and $\bar{f}^\lor \colon \nm^0(B^\lor) \to \nm^0(A^\lor)$ be the unique extensions of $f$ resp.\ $f^\lor$ furnished by the N\'eron mapping property and the connectedness of $\nm^0(-)$. Let $\bar{\pp}(A)$ resp.\ $\bar{\pp}(B)$ be the Poincar\'e prolongations of $\pp(A)$ resp.\ $\pp(B)$. Then the canonical isomorphism of rigidified line bundles $\gamma_f$ from Proposition \ref{Poincarefunctorial} extends uniquely into an isomorphism of rigidified line bundles
\[ \bar{\gamma}_f \colon (\mathrm{id} \times \bar{f}^\lor)^*\bar{\pp}(A) \isom (\bar{f} \times \mathrm{id})^* \bar{\pp}(B)  \]  
on $\nm^0(A) \times_S \nm^0(B^\lor)$.
\end{prop}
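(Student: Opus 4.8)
The plan is to recast the statement inside the category of $\gm$-biextensions: I will exhibit both sides of the asserted isomorphism as $\gm$-biextensions of $(\nm^0(A),\nm^0(B^\lor))$ over $S$ whose restrictions to $U$ are the source and target of $\gamma_f$, and then extend $\gamma_f$ by invoking the full faithfulness of the relevant restriction functor on biextensions. To set things up: since $S$ is Dedekind the N\'eron models $\nm(A),\nm(A^\lor),\nm(B),\nm(B^\lor)$ exist, their fibrewise identity components $\nm^0(-)$ are smooth separated $S$-group schemes with connected geometric fibres restricting over $U$ to $A,A^\lor,B,B^\lor$; the homomorphisms $\bar f$ and $\bar f^\lor$ exist by the N\'eron mapping property together with the fact that a homomorphism of $S$-group schemes carries fibrewise identity components to fibrewise identity components; and the Poincar\'e prolongations $\bar\pp(A)$ on $\nm^0(A)\times_S\nm^0(A^\lor)$ and $\bar\pp(B)$ on $\nm^0(B)\times_S\nm^0(B^\lor)$ are furnished by Proposition~\ref{PoincareProlongations}(1).

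Next I would use that the pullback of a $\gm$-biextension along a homomorphism of $S$-group schemes in either variable is again a $\gm$-biextension, so that $\mathcal B_1:=(\mathrm{id}\times\bar f^\lor)^*\bar\pp(A)$ and $\mathcal B_2:=(\bar f\times\mathrm{id})^*\bar\pp(B)$ are both objects of $\on{Biext}(\nm^0(A),\nm^0(B^\lor);\gm)$. Since $\bar\pp(A),\bar\pp(B),\bar f,\bar f^\lor$ restrict over $U$ to $\pp(A),\pp(B),f,f^\lor$ and restriction commutes with pullback, $\mathcal B_1|_U=(\mathrm{id}\times f^\lor)^*\pp(A)$ and $\mathcal B_2|_U=(f\times\mathrm{id})^*\pp(B)$ are exactly the source and target of $\gamma_f$. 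I would also record that $\gamma_f$ is an isomorphism of biextensions: this is part of the functoriality of the Poincar\'e biextension, and in any case it is automatic, since a $\gm$-biextension of abelian schemes has no nontrivial automorphisms as a rigidified line bundle (there are no nonconstant homomorphisms from an abelian scheme to $\gm$, and global units of a proper $U$-scheme are detected on any section), so any isomorphism of the underlying rigidified line bundles respects the biextension structure.

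It then remains to show that the restriction functor $\on{res}\colon\on{Biext}(\nm^0(A),\nm^0(B^\lor);\gm)\to\on{Biext}(A,B^\lor;\gm)$ is fully faithful. Granting this, the induced bijection $\Hom(\mathcal B_1,\mathcal B_2)\isom\Hom(\mathcal B_1|_U,\mathcal B_2|_U)$ produces a unique morphism $\bar\gamma_f\colon\mathcal B_1\to\mathcal B_2$ with $\bar\gamma_f|_U=\gamma_f$; running the same argument on $\gamma_f^{-1}$ and using injectivity on $\Hom$-sets to identify the two composites with the respective identities shows $\bar\gamma_f$ is an isomorphism, hence in particular an isomorphism of rigidified line bundles, and uniqueness is built into full faithfulness. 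This $\bar\gamma_f$ is the desired extension.

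The main obstacle is the full faithfulness of $\on{res}$: since $A$ and $B^\lor$ need not be mutually dual abelian schemes, Proposition~\ref{PoincareProlongations}(1) does not apply verbatim. The cleanest resolution is to note that the proof of Proposition~\ref{PoincareProlongations}(1)---via Proposition~2.8.2 of \cite{mb}---uses only that $S$ is Dedekind and that the two group schemes are smooth with connected geometric fibres, not that their generic fibres be dual; so $\on{res}$ is in fact an equivalence in our situation as well. Should one prefer to use Proposition~\ref{PoincareProlongations}(1) exactly as stated, one can instead apply it to $G=\nm^0(A)\times_S\nm^0(B)$ and $H=\nm^0(A^\lor)\times_S\nm^0(B^\lor)$, whose generic fibres $A\times_U B$ and $A^\lor\times_U B^\lor$ are dual; using that connected N\'eron models and Poincar\'e prolongations commute with finite products, and that the projection $\pi\colon G\times_S H\to\nm^0(A)\times_S\nm^0(B^\lor)$ is a homomorphism admitting a homomorphic section $\iota$, one pulls $\gamma_f$ back along $\pi|_U$, extends the result over $S$ by the equivalence for $(G,H)$, and restricts along $\iota$ to obtain $\bar\gamma_f$.
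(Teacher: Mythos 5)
Your proposal is correct and takes essentially the same route as the paper: the paper's proof is a one-line appeal to the equivalence of categories $\on{Biext}(\nm^0(A),\nm^0(B^\lor);\gm)\to\on{Biext}(A,B^\lor;\gm)$ from Proposition 2.8.2 of \cite{mb}, which you correctly identify (including the observation that this equivalence does not require the two group schemes to be mutually dual, only that $S$ be Dedekind with the groups smooth and fibrewise connected) and then spell out in greater detail.
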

\begin{proof} This follows directly from the fact that the restriction functor \[ \mathrm{res} \colon \mathrm{Biext}(\nm^0(A),\nm^0(B^\lor);\mathbb{G}_m) \to \mathrm{Biext}(A,B^\lor;\mathbb{G}_m) \] is an equivalence of categories, cf.\ Proposition 2.8.2 of \cite{mb}.
\end{proof}
From now on we fix an integral Dedekind scheme $B$, either finite over $\Spec \zz$ or proper over a field. Write $K$ for the field of rational functions on $B$, and choose an algebraic closure $\bar{K}$. 
\begin{definition}
If $K$ is the function field of a curve (so that $B$ is a connected smooth projective curve over a field), and $L$ is a line bundle on $B$, we denote by $\deg_B L$ the usual degree of $L$ on $B$. If $K$ is a number field (so that $B$ is the spectrum of the ring of integers of $K$), and $L$ is a hermitian line bundle on $B$, we denote by $\deg_B L$ the Arakelov degree of $L$.
\end{definition}
\begin{prop} \label{poincare_height} 
Let $U \subset B$ be an open dense subscheme of $B$. Let $A \to U$ be an abelian scheme, with dual abelian scheme $A^\lor \to U$. Let $\nm(A) \to B$ resp.\ $\nm(A^\lor) \to B$ denote the N\'eron models of $A$ resp.\ $A^\lor$ over $B$, and $\nm^0(A)$ resp.\ $\nm^0(A^\lor)$ their fiberwise connected components. Let 
$\pp(A)$ denote the Poincar\'e bundle on $A \times_U A^\lor$ and let
$\bar{\pp}(A)$ be its unique prolongation as a biextension over $\nm^0(A) \times_B \nm^0(A^\lor)$. Let $x \in \nm^0(A)(B)$ and $y \in \nm^0(A^\lor)(B)$ be sections. Then the equality
\[ \hat{\on{h}}_{\pp(A_K)}(x,y) = \deg_B (x,y)^* \bar{\pp}(A) \]
holds. 
\end{prop}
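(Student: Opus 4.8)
The plan is to recognize $\deg_B (x,y)^*\bar\pp(A)$ as the restriction, to sections over $B$, of a $\zz$-bilinear Weil height for the Poincar\'e bundle $\pp(A_K)$ on $A_K\times_K A_K^\lor$, and then to invoke the characterization of the N\'eron--Tate pairing as the unique such function. Concretely, first I would extend the right-hand side to all algebraic points. For sections $x\in\nm^0(A)(B)$, $y\in\nm^0(A^\lor)(B)$ set $\phi(x,y)=\deg_B(x,y)^*\bar\pp(A)$; this is $\zz$-bilinear because $\bar\pp(A)$ is a $\gm$-biextension of $\nm^0(A)\times_B\nm^0(A^\lor)$ and $\deg_B$ is additive on (hermitian) line bundles. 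For arbitrary $P\in A_K(\bar K)$, $Q\in A_K^\lor(\bar K)$, pass to a finite extension $K'/K$ over which they are defined, let $B'$ be the normalization of $B$ in $K'$, and (using the N\'eron mapping property over the Dedekind scheme $B'$ together with finiteness of the component groups, which only matters at the finitely many places of bad reduction) choose $N>0$ so that $NP$, $NQ$ extend to sections of $\nm^0(A_{K'})$, $\nm^0(A_{K'}^\lor)$ over $B'$. By uniqueness of biextension prolongations over Dedekind schemes (Proposition \ref{PoincareProlongations}(1)) the prolongation over $B'$ is compatible with $\bar\pp(A)$, so $\phi(P,Q):=\tfrac{1}{N^2[K':K]}\deg_{B'}(NP,NQ)^*\bar\pp(A_{K'})$ is independent of all choices and defines a $\qq$-bilinear pairing on $A_K(\bar K)\times A_K^\lor(\bar K)$ restricting to the original $\phi$.

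Next I would check that $\phi$ is a Weil height for $\pp(A_K)$. Spread $A_K$ out to an abelian scheme $\mathcal A\to U$ over a dense open $U\subset B$, with dual $\mathcal A^\lor$; then $\nm^0(A)|_U=\mathcal A$, $\nm^0(A^\lor)|_U=\mathcal A^\lor$ and $\bar\pp(A)|_U$ is the Poincar\'e bundle of $\mathcal A$. Choose a projective model of $A_K\times_K A_K^\lor$ over $B$ extending $\mathcal A\times_U\mathcal A^\lor$, together with a line bundle extending $\pp(A_K)$, equipped at the archimedean places with the canonical $C^\infty$ metric of Proposition \ref{prop:adm_metric}. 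Then $\phi$ differs from the Weil height of this model by a finite sum of local contributions supported at the places of $B\setminus U$ (the archimedean places being accounted for by the canonical metric), and each such contribution is bounded on $A_K(\bar K)\times A_K^\lor(\bar K)$ because at the corresponding discrete valuation ring the two models share their generic fibre and the relevant line bundle carries a $\gm$-biextension structure, so the local symbol it defines is bounded. Hence $\phi$ differs from a Weil height for $\pp(A_K)$ by a bounded function, i.e.\ it is itself a Weil height for $\pp(A_K)$.

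Finally, the canonical height pairing $\hat{\on{h}}_{\pp(A_K)}$ is $\zz$-bilinear and is a Weil height for $\pp(A_K)$, so $\phi-\hat{\on{h}}_{\pp(A_K)}$ is a bounded $\zz$-bilinear function on $A_K(\bar K)\times A_K^\lor(\bar K)$; such a function vanishes identically, since $\phi(P,Q)-\hat{\on{h}}_{\pp(A_K)}(P,Q)=\tfrac{1}{m^2}\bigl(\phi(mP,mQ)-\hat{\on{h}}_{\pp(A_K)}(mP,mQ)\bigr)$ tends to $0$ as $m\to\infty$. Therefore $\phi=\hat{\on{h}}_{\pp(A_K)}$, and restricting to $x\in\nm^0(A)(B)$, $y\in\nm^0(A^\lor)(B)$ gives the asserted identity. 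I expect the main obstacle to be the boundedness claim in the previous paragraph: matching the local symbol of the biextension prolongation at the bad places with a genuine Weil height. This can be organized using the functoriality of the Poincar\'e bundle and of its prolongation (Propositions \ref{Poincarefunctorial} and \ref{extensiongamma}) to reduce to a normal form; alternatively one can bypass the global argument and compare $\deg_B(x,y)^*\bar\pp(A)$ place-by-place with N\'eron's decomposition of $\hat{\on{h}}_{\pp(A_K)}$ into local height symbols, using that $x$ and $y$ land in $\nm^0$, so that the component-group correction terms in the local symbols vanish and the remaining part is exactly the intersection against $\bar\pp(A)$.
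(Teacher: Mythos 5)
The paper gives no argument for this proposition beyond a citation: Th\'eor\`eme~5.4 of \cite{mb} for the number field case and Section III.3 of \cite{pinc} for the function field case (plus the remark that the archimedean structure is supplied by Proposition~\ref{prop:adm_metric}). Your reconstruction is essentially the route taken in those references: use the biextension structure to get exact $\zz$-bilinearity of $\deg_B(x,y)^*\bar{\pp}(A)$, extend to all algebraic points via normalizations of $B$ and uniqueness of biextension prolongations over Dedekind schemes, verify that the resulting pairing is a Weil height for $\pp(A_K)$, and then conclude from the standard characterization of the N\'eron--Tate pairing as the unique bilinear such height (the $m\to\infty$ averaging argument). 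The one place your write-up is thin is exactly the one you flag: the boundedness of the difference from a projective-model Weil height at the finitely many bad non-archimedean places. As written (``the local symbol it defines is bounded'') this is more of a pointer than a proof; the clean way to close it is your ``alternative'' route, namely N\'eron's decomposition of the canonical pairing into local symbols, together with the fact that on $\nmo$-integral sections the biextension-theoretic intersection number equals N\'eron's local symbol with no component-group correction. With that step made precise, the argument is complete, and it is the same argument as in the paper's cited sources.
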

\begin{proof} In \cite[Th\'eor\`eme 5.4]{mb} one finds the number field case of this result. The proof of the function field case is similar, see for example \cite[Section III.3]{pinc}. Note that if $K$ is a number field, the line bundle $(x,y)^* \bar{\pp}(A)$ is equipped with a canonical structure of hermitian line bundle by Proposition \ref{prop:adm_metric}.
\end{proof}
Let $S$ be an integral regular scheme of dimension one and of finite type over $\cc$. Let $p \colon C \to S$ be a semistable curve smooth over a dense open subscheme $U \subseteq S$. We recall from Definition \ref{metriconNeronpairing} that the Deligne pairing $\pair{D,E}$ has a canonical hermitian metric over $U(\mathbb{C})$. The following result will allow us, in the situation of Theorem \ref{generalgreen}, to construct a Weil height with respect to $L$ using a structure of continuous hermitian line bundle on an extension $\ca{L}$ of $L$ over a suitable surface $\ca{S}$ extending $S$.   
\begin{prop} \label{ctsmetric}
The $\qq$-line bundle $\pair{D,E}_\a$ on $S(\cc)$ can be equipped with a unique continuous hermitian metric extending the canonical hermitian metric on the restriction of $\pair{D,E}$ to $U(\cc)$.
\end{prop}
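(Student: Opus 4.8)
The plan is to build the metric directly from the Poincar\'e biextension: since $S$ is one-dimensional, the N\'eron model and its biextension prolongation live over all of $S$, and the admissible metric on the Poincar\'e bundle, once it is known to extend continuously to the prolongation, pulls back to the desired metric on $\pair{D,E}_\a$. Uniqueness is immediate and can be disposed of first: $U(\cc)$ is dense in $S(\cc)$, and a continuous hermitian metric on a line bundle --- equivalently, via a suitable tensor power, on a $\qq$-line bundle --- is determined by its restriction to any dense subset, so only existence requires an argument.

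For existence, I would first record the setup. Because $S$ is a regular integral scheme of dimension one, every closed subset of codimension at least two is empty; hence in the notation of Theorem \ref{existenceV} we have $V=S$, and likewise $V_\sigma=S$ in part (b). Thus the N\'eron model $\nm(J_U)\to S$, its fibrewise connected component $\nmo(J_U)$ --- which is semiabelian since $p$ is semistable, hence of finite type over $S$ --- and the Poincar\'e prolongation $\bar{\pp}$ of $\pp=\pp(J_U)$ on $\nmo(J_U)\times_S\nmo(J_U^\lor)$ are all defined over $S$, and all these spaces are of finite type over $\cc$. By Theorem \ref{existenceV}(b) choose $m,n\in\zz_{>0}$ such that $mD$ and $n\bar{\mu} E$ extend to sections of $\nmo(J_U)$ resp.\ $\nmo(J_U^\lor)$ over $S$; then, by the very definition of the admissible pairing, $\pair{D,E}_\a^{\otimes mn}\isom(mD,n\bar{\mu} E)^*\bar{\pp}^{\otimes-1}$ as honest line bundles on $S$.

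The heart of the matter --- and the step I expect to be the main obstacle --- is to show that the admissible $C^\infty$ metric on $\pp(\cc)$ furnished by Proposition \ref{prop:adm_metric} extends to a \emph{continuous} hermitian metric on $\bar{\pp}(\cc)$ over $\nmo(J_U)(\cc)\times\nmo(J_U^\lor)(\cc)$, compatibly with the rigidification. This is the complex-analytic counterpart of the non-archimedean statement Proposition \ref{neronpairingnonregularC}, and it can be extracted from \cite[Section 4]{gr} (cf.\ the discussion in Section \ref{connection}); concretely it amounts to a local analysis, near each point of $S(\cc)\setminus U(\cc)$, of the degenerating family of abelian varieties $J_U$ together with the translation-invariant metric on $\pp$, showing that in a trivialization over a puncture this metric only picks up a bounded error after the biextension prolongation. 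Granting this, one pulls the metric back along the holomorphic sections $mD$ and $n\bar{\mu} E$ to obtain a continuous hermitian metric on the line bundle $\pair{D,E}_\a^{\otimes mn}$ on $S(\cc)$, and hence, after taking an $mn$-th root, a continuous hermitian metric on the $\qq$-line bundle $\pair{D,E}_\a$. That over $U(\cc)$ this coincides with the canonical metric of Definition \ref{metriconNeronpairing} follows from Propositions \ref{delignepoincare} and \ref{prop:adm_metric}, since over $U$ the sections factor through $J_U\times_U J_U^\lor$ and $\bar{\pp}$ restricts to $\pp$, and the admissible metric is compatible with the biextension structure, which makes the canonical isomorphism $(mD,n\mu E)^*\pp\isom\big((D,\mu E)^*\pp\big)^{\otimes mn}$ an isometry.

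Alternatively, one can phrase the local step entirely in terms of divisors: after an \'etale base change making $C\to S$ quasisplit (harmless, as \'etale maps are local analytic isomorphisms and $\pair{D,E}_\a$ commutes with \'etale base change by Proposition \ref{mostlytrivial}), Proposition \ref{learextension} gives $\pair{D,E}_\a\isom\pair{D,E}\otimes O_S(g_s\cdot[s])$ near a boundary point $s$, with $g_s=g(\Gamma_s,\ord_s\ell_s;\dd,\ee)\in\qq_{\geq 0}$, and then the existence of the extension becomes equivalent to the asymptotic estimate $\log\|\tau\|^2_{\mathrm{can}}=-2g_s\log|t|+O(1)$, with continuously extending remainder, for a local generator $\tau$ of $\pair{D,E}$ and a local coordinate $t$ at $s$ --- that is, the precise logarithmic degeneration of the admissible (Zhang/Arakelov) metric on the Deligne pairing. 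Either way, the real content is this archimedean degeneration statement, and the cleanest route to it is the continuity of the metric on $\bar{\pp}$ used above, which mirrors the non-archimedean Proposition \ref{neronpairingnonregularC} exactly.
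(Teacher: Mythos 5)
The paper's own proof of this proposition is a single citation to \cite[Theorem 2.2]{hdj}; your sketch is a correct expansion of the argument behind that citation, and it correctly isolates the real content --- that the admissible $C^\infty$ metric on $\pp(\cc)$ extends to a continuous hermitian metric on the Poincar\'e prolongation $\bar{\pp}(\cc)$ --- which is precisely what Section 5 of the paper attributes to \cite[Section 4]{gr}. In particular your observations that $\dim S = 1$ forces $V = S$ (so the N\'eron model, its identity component, and $\bar{\pp}$ all live over all of $S$, and $mD$, $n\bar{\mu}E$ extend to sections of $\nmo$), and that uniqueness reduces to density of $U(\cc)$, are exactly what make the pullback argument close, so your proposal is essentially the same approach the cited reference takes.
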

\begin{proof} See \cite[Theorem 2.2]{hdj}.
\end{proof}
The proof of Theorem \ref{generalgreen} will be broken up into two subsections. Subsection \ref{sub:nice_curve_case} is a proof of the theorem in a rather special situation involving jacobians of curves with particularly nice integral models. In subsection \ref{sub:reduction} we will then show how to reduce the general case to this special case. 

\subsection{Proof of Theorem \ref{generalgreen} for suitable jacobians}\label{sub:nice_curve_case}
Let $B$ and $K$ be as above, so $K$ is a number field or the function field of a curve. Suppose we are given: 
\begin{enumerate}
\item a proper flat scheme $\ca{S}$ of relative dimension $1$ over $B$, regular and with connected geometric fibres;
\item a quasisplit semistable curve $\ca{C} \to \ca{S}$, smooth over a dense open subscheme of $\ca{S}$ (write $\ca{U} \subset \ca{S}$ for the largest such open);
\item horizontal divisors $\ca{D}$, $\ca{E}$ on $\ca{C} \to \ca{S}$ supported on the smooth locus $\mathrm{Sm}(\ca{C}/\ca{S})$ and both of relative degree zero, 
\end{enumerate}
such that
\begin{enumerate}
\item[] (*) the $B$-horizontal irreducible components of the complement $\ca{S} \setminus \ca{U}$ are disjoint.
\end{enumerate}
Denote the base changes of $\ca{S}$, $\ca{U}$, $\ca{C}$, $\ca{D}$ resp.\ $\ca{E}$ to $K$ by $S$, $U$, $C$, $D$ resp.\ $E$. Let $J_U$ be the jacobian of $C_U$ over $S$ and denote by $\mu \colon J_U \isom J_U^\lor$ the canonical principal polarization of $J_U$. 
\begin{thm} \label{generalgreenspecial}
Under the above assumptions, the conclusion of Theorem \ref{generalgreen} holds for the jacobian scheme $J_U \to U$ and the sections $P$ resp.\ $Q$ of $J_U$ resp.\ $J^\lor_U$ corresponding to $D$ resp.\ $\mu E$.
\end{thm}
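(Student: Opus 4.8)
The plan is to take for $L$ the restriction to the generic fibre $S=\ca{S}_K$ of the admissible $\qq$-line bundle $\ca{L}\coloneqq\pair{\ca{D},\ca{E}}_\a^{\otimes -1}$ on $\ca{S}$, that is, $L=\pair{D,E}_\a^{\otimes -1}$. When $K$ is a number field I would equip $L$ at the archimedean places with the canonical continuous metrics of Proposition~\ref{ctsmetric}; then $\ca{L}$ together with these metrics defines a Weil height $h_{\ca{L}}$ on $S(\bar{K})$ with respect to $L$, given by $h_{\ca{L}}(u)=\tfrac{1}{[K_u:K]}\deg_{B_u}\tilde{u}^{*}\ca{L}$, where $K_u$ is the residue field of $u$, $B_u$ the normalisation of $B$ in $K_u$, and $\tilde{u}\colon B_u\to\ca{S}$ the unique lift of $u$ (which exists since $\ca{S}/B$ is proper); in the function-field case no metrics are needed and $h_{\ca{L}}$ is the usual model height. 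I would then deduce the theorem from two assertions: (i) $\deg_S L=\hat{\on{h}}_{\pp(A_\eta)}(P_\eta,Q_\eta)$, and (ii) the function $u\mapsto\hat{\on{h}}_{\pp(A_u)}(P_u,Q_u)$ differs from $h_{\ca{L}}$ by a \emph{bounded} function on $U(\bar{K})$. Granting these, since $S\setminus U$ consists of finitely many closed points the bounded difference extends (boundedly) over $S(\bar{K})$, and adding it back to $h_{\ca{L}}$ exhibits $u\mapsto\hat{\on{h}}_{\pp(A_u)}(P_u,Q_u)$ as the restriction to $U(\bar{K})$ of a Weil height on $S(\bar{K})$ attached to $L$.

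For (i): as $S$ has dimension one, the open $V$ of Theorem~\ref{existenceV} equals $S$, so $L\cong\bigl((mP_\eta,nQ_\eta)^{*}\bar{\pp}(J_U)\bigr)^{\otimes 1/mn}$ for suitable $m,n$, and Proposition~\ref{poincare_height} applied with base the Dedekind scheme $S$ (proper over the field $K$), together with the bi-additivity of the canonical height, yields the claimed equality. For (ii), fix $u\in U(\bar{K})$; since $u$ lies in $U\subseteq\ca{U}$ the lift $\tilde{u}$ is non-degenerate and $B_u$ is Dedekind, so the same computation over $B_u$, applied to $\ca{C}\times_{\ca{S}}B_u$ (whose generic fibre has jacobian $A_u$), gives $\tfrac{1}{[K_u:K]}\deg_{B_u}\pair{\tilde{u}^{*}\ca{D},\tilde{u}^{*}\ca{E}}_\a^{\otimes -1}=\hat{\on{h}}_{\pp(A_u)}(P_u,Q_u)$, the canonical metric on the admissible pairing matching, on $\Spec K_u$, the one induced by $\bar{\pp}(A_u)$. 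On the other hand, the definition of the height jump divisor gives a canonical isomorphism $\tilde{u}^{*}\ca{L}\isom\pair{\tilde{u}^{*}\ca{D},\tilde{u}^{*}\ca{E}}_\a^{\otimes -1}\otimes O_{B_u}\!\bigl(J(\tilde{u};\ca{D},\ca{E})\bigr)$, an isometry over $\Spec K_u$. Taking degrees I obtain $\hat{\on{h}}_{\pp(A_u)}(P_u,Q_u)=h_{\ca{L}}(u)-\tfrac{1}{[K_u:K]}\deg_{B_u}O_{B_u}\!\bigl(J(\tilde{u};\ca{D},\ca{E})\bigr)$, so (ii) reduces to bounding $\tfrac{1}{[K_u:K]}\deg_{B_u}O_{B_u}(J(\tilde{u};\ca{D},\ca{E}))$ uniformly in $u$.

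This uniform bound is the heart of the argument, and the step I expect to be the main obstacle; it is exactly where hypothesis~(*) and the precise form of Theorem~\ref{mainquasisplit}(b) enter. Writing $Z=\ca{S}\setminus\ca{U}=\bigcup_iZ_i$ and $Z'=\bigcup_{i\neq j}(Z_i\cap Z_j)$, the set $Z'$ is finite (distinct prime divisors on the two-dimensional $\ca{S}$ meet in dimension $\le 0$) and lies over a finite set $\Sigma$ of closed points of $B$. For each closed $t\in B_u$ I would apply Theorem~\ref{mainquasisplit}(b) to the trait $\Spec O_{B_u,t}\to\ca{S}$: the local jump $\ord_tJ(\tilde{u};\ca{D},\ca{E})$ vanishes unless $s=\tilde{u}(t)\in Z'$, and otherwise is at most $c_s\min_i\sum_{j\ne i}m_j$ with $m_j=\ord_t\tilde{u}^{*}Z_j$ and $c_s$ one of the finitely many constants attached to points of $Z'$, hence bounded by some $c_0$. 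The key point is that by~(*) at most one component of $Z$ through $s$ is horizontal, so choosing \emph{that} index in the minimum discards precisely the uncontrollable term --- the intersection multiplicity of the test curve with a horizontal boundary component --- and leaves only the vertical components $Z_j\subseteq\ca{S}_\mathfrak{p}$, for which $m_j\le\ord_t\tilde{u}^{*}\ca{S}_\mathfrak{p}=e(t/\mathfrak{p})$. Since fibres of $\ca{S}\to B$ have boundedly many components and $\tilde{u}(t)\in Z'$ forces $t$ to lie over a prime in $\Sigma$, summing over the closed points of $B_u$ and using $\sum_{t\mid\mathfrak{p}}e(t/\mathfrak{p})[k(t):k(\mathfrak{p})]=[K_u:K]$ produces a bound depending only on $\ca{S},\ca{D},\ca{E}$ and not on $u$. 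Thus the difficulty is not a single hard lemma but recognising that the $\min$ in Theorem~\ref{mainquasisplit}(b), combined with~(*), converts the wild tangency of the test curve to a horizontal boundary divisor into the harmless ramification indices $e(t/\mathfrak{p})$ at the finitely many bad primes; the remaining ingredients are Propositions~\ref{poincare_height} and~\ref{ctsmetric} and the definition of the height jump divisor.
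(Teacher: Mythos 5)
Your proposal is correct and takes essentially the same approach as the paper: take $L$ to be the generic fibre of the admissible pairing $\pair{\ca D,\ca E}_\a^{\otimes-1}$ on $\ca S$, use Proposition~\ref{poincare_height} to compute $\deg_S L$, express the discrepancy between the model height and the fibrewise canonical height as a (sum of local) height jump divisor(s), observe that these jumps are supported on the finite set $Z'=\bigcup_{i\neq j}Z_i\cap Z_j$ lying over finitely many primes of $B$, and use hypothesis~(*) to discard the one possible horizontal index in the $\min$ of Theorem~\ref{mainquasisplit}(b) so that the remaining $m_j$ are bounded by the ramification index, giving a uniform $O(1)$ bound after summing over places. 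One very minor difference: the paper asserts that ``precisely two'' components of $Z$ pass through each $s\in Z'$, whereas you only use that at most one is horizontal and that fibres have boundedly many components, which is the weaker (and cleaner) statement that is actually needed.
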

The idea of the proof of Theorem \ref{generalgreenspecial} is as follows: we find a suitable continuous hermitian  
$\qq$-line bundle $\ll$ on $\ss$, together with a real number $j$, such that for all $u
\in S(\bar{K})$ the difference between the height of $u$ with respect to
$\ll$ and the N\'eron-Tate height pairing between $D_u$ and $E_u$ on  
$J_u$ is bounded by $j$. The constant $j$ will be a bound on a sum of local height jumps indexed by a finite set of non-archimedean primes of $K$. 
This finite set of primes depends only on the data in (1), (2) and (3); a bound for the local height jump at each given prime is provided by Theorem \ref{mainquasisplit}(b).

To simplify the notation (in particular the normalisations of valuations), we will assume for the remainder of this section that $K$ is a number field. The argument in the function field case is similar. 
\begin{proof}[Proof of Theorem \ref{generalgreenspecial}]
Let $\ca{U} \subseteq \ca{V}\subseteq \ca{S}$ be an open subscheme with complement of codimension at least two, and over which some positive multiples of the sections $P$ and $Q$ extend to the identity components of the N\'eron models of $J_U$ and $J_U^\lor$ respectively, cf. part (b) of Theorem \ref{existenceV}. Write $Z=\sum_{i=1}^r Z_i$ for the decomposition of $Z=\ss \setminus \uu$ into irreducible components. We then have that $\ss \setminus \VV \subset \cup_{i \neq j} Z_i \cap Z_j$ is a finite union of closed points in $\ss$.  

Let $\ll$ be the $\qq$-line bundle $\pair{\dd,\ee}_\a^{\otimes -1}$ on $\ss$, i.e.\ the dual of the admissible pairing between $\dd$ and $\ee$ over $\ss$. The $\qq$-line bundle $\ll$ restricts to the line bundle $\pair{D,E}_\a^{\otimes -1}$ on $S=\ss_K$, and hence Proposition \ref{ctsmetric} implies that $\ll$ has a canonical structure of continuous hermitian $\qq$-line bundle on $\ss$. Write $L=\pair{D,E}_\a^{\otimes -1}$. We claim that $L$ satisfies the conclusion of the theorem.
By Proposition \ref{poincare_height} we have $\deg_S L = \hat{\on{h}}_{\pp(J_\eta)}(D_\eta, \mu E_\eta)$, so this takes care of part of the theorem already. 

To prove the statement about the variation of $\hat{\on{h}}_{\pp(J_u)}$, we define a suitable collection (ranging over all finite subextensions $K \subset K' \subset \bar{K}$ and over all places $v$ of $K'$) of local heights associated to $L$, using as input the model $\ss$ of $S$ and a non-zero rational section $\ell$ of the continuously metrized line bundle $\ll$ over $\ss$, which we fix from now on. We proceed as follows. Let $f \colon T \to \ss$ with $T$ regular, integral be a finite flat quasi-section over $B$. Write $K'$ for the function field of $T$, and $u \colon \Spec K' \to S$ for the restriction of $f$ to the generic point of $T$. Without loss of generality we may assume that the image of $u$ is disjoint from the support of the divisor of $\ell_K$ (which is a finite union of closed points of $S$).

Consider first of all a non-archimedean prime $v$ of $K'$. Then let $T_v \to T$ be the localisation of $T$ at $v$ and let $f_v \colon T_v \to \ss$ be the composite of $f$ with $T_v \to T$. We then put $\lambda_{L,v}(u) = \ord_v f_v^* \divisor_\ss \ell$. For $v \colon \Spec \cc \to T$ an archimedean prime of $K'$ we put $\lambda_{L,v}(u) = -\log \| f_v^*\ell \|$, where now $f_v$ is the composite of $f \colon T \to \ss$ with the morphism $v \colon \Spec \cc \to T$. Here the norm $\|\cdot \|$ is the norm on $\pair{D,E}_\a^{\otimes -1}$ over $U_v(\cc)$ furnished by Definition \ref{metriconNeronpairing}. We now note that 
\[ [K':\qq]h_L(u) = \sum_{v \in M_{K'}} \lambda_{L,v}(u) \log Nv \] 
(with $Nv$ the `cardinality' of the local residue fields) for $u \in U(K')$  gives, by varying $K'$ over the finite subextensions of $K \subset \bar{K}$, the height on $S(\bar{K})$ associated to the continuously metrized line bundle $\ll$ and its rational section $\ell$ on $\ss$. 

Likewise, we can decompose $\hat{\on{h}}_{\pp(J_u)}(D_u,\mu E_u)$ into local contributions at all places $v \in M_{K'}$, using Proposition \ref{poincare_height}. Thus we view $f_v^*\ell$ as a non-zero rational section of the line bundle $\pair{f_v^*\dd,f_v^*\ee}_\a^{\otimes -1}$ on $T_v$. This allows us to define a local contribution $\hat{\lambda}_{\pp(J_u),v}(D_u, \mu E_u) $ to the canonical height as the local multiplicity $\ord_v \divisor_{T_v} f_v^* \ell$, in the non-archimedean case, and by putting (exactly similar to the earlier archimedean contribution) $\hat{\lambda}_{\pp(J_u),v}(D_u, \mu E_u)   =  -\log \| f_v^*\ell \|$ in the archimedean case. We find that for each $u \in S(K')$, the identity 
\[ [K':\qq]\hat{\on{h}}_{\pp(J_u)}(D_u, \mu E_u) = \sum_{v \in M_{K'}} \hat{\lambda}_{\pp(J_u),v}(D_u, \mu E_u) \log Nv \]
holds.

We are done once we show that there exists a constant $c$ and a finite set of places $N$ of $B$ such that the difference 
\[ \left|\hat{\lambda}_{\pp(J_u),v}(D_u,\mu E_u) - \lambda_{L,v}(u) \right| \] with $u$ ranging over $S(K')$, and $v$ ranging over the primes of $K'$, and $K'$ ranging over all finite subextensions of $K \subset \bar{K}$, is non-zero only for $v$ lying above $N$, and moreover for such $v$ is bounded by $c \cdot e(v/b)$, where $b$ is the prime of $N$ lying under $v$ and $e(v/b)$ is the ramification index of $v$ over $b$. 

First of all, the required difference is zero for archimedean $v$ so we can restrict our attention to the closed points $v$ of $T$. For closed points $v$ of $T$ the required difference is precisely the coefficient of the height jump divisor $J(f_v;\dd,\ee)$ on $T_v$ associated to $f_v$, $\dd$ and $\ee$. It follows that we need only restrict our attention to those closed points $v$ whose image is contained in the finite subset $Z'=\cup_{i \neq j} Z_i \cap Z_j$ of $\ss$ since for the other closed points, the jump in the height vanishes. We take $N$ to be the image $\pi(Z')$ of the finite closed subset $Z'$ in $B$.

Let $q$ be the given map $T \to B$, let $v$ be a closed point of $T$ and write $b=q(v)$. Let $F$ be the closed fiber of $\pi \colon \ss \to B$ above $b$. As divisors on $T$ we have $f^*F = q^*b$ hence $\ord_v f^*F = \ord_v q^*b= e(v/b)$, the ramification index of $v$ over $b$. Assume that $s =f(t) \in Z'$.
By condition (*) on $\ss$, precisely two components $Z_1, Z_2$ of $Z$ pass through $s$, and at least one is vertical. Assuming $Z_1$ is vertical, we have $\ord_v f^* Z_1 =\ord_v f^*F = e(v/b)$. By part (b) of Theorem \ref{mainquasisplit} there exists a constant $c$ depending only on $\dd$, $\ee$ and the morphism $\CC \to \ss$ such that the bound $|\ord_v J(f;\dd,\ee)| \leq c \cdot e(v/b)$ holds. This finishes the proof of Theorem \ref{generalgreenspecial}. 
\end{proof}

\subsection{Reduction of Theorem \ref{generalgreen} to the case of jacobians} \label{sub:reduction}
We will now show that Theorem \ref{generalgreenspecial} implies Theorem \ref{generalgreen}. We proceed in two steps: first, a reduction from abelian schemes to jacobians, and then a reduction from jacobians to jacobians of the special kind introduced in subsection \ref{sub:nice_curve_case} and for which we know that the conclusion of Theorem \ref{generalgreen} holds. This section is essentially a long sequence of fairly standard reduction steps. We start by relating our abelian scheme $A \to U$ and the point $P \in A(U)$ to a divisor on some jacobian. 
\begin{prop} \label{surjfromjac} Let $U$ be an integral scheme with infinite field of fractions, $A \to U$ an abelian scheme, $P \in A(U)$ a section. Then, possibly after replacing $U$ by a dense open subset, there exist a smooth projective curve $C \to U$ with geometrically connected fibers, a morphism $g \colon J \to A$ over $U$, where $J=\mathrm{Pic}^0(C/U) \to U$ is the jacobian of $C \to U$, and a pair of sections $a$, $b \in C(U)$ such that, viewing $D = b-a$ as a divisor on $C$ of relative degree zero over $U$, we have $P = g([O_C(D)])$.
\end{prop}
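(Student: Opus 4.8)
The plan is to build all of the required data first over the generic point $\eta = \Spec K$ of $U$, where $K$ is the given infinite field, and then to spread it out over a dense open subset. At the outset I would dispose of the trivial cases: if $\dim A = 0$, or more generally if $P_\eta$ equals the unit section $e$, then $P = e$ on all of $U$ by separatedness, and we may take $C$ to be any smooth projective curve with geometrically connected fibres admitting a section, $a = b$, $D = 0$, and $g$ the zero homomorphism; and if $\dim A = 1$ we may take $C = A$. So assume $\dim A \geq 2$ and $P_\eta \neq e$.

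Over the generic fibre $A_\eta$, which is an abelian variety over $K$, I would fix a very ample line bundle $M$ and, for $n$ large, consider the linear subsystem of $|M^{\otimes n}|$ of divisors passing through the two $K$-points $e$ and $P_\eta$; this subsystem still separates points and tangent vectors away from $\{e, P_\eta\}$, so Bertini's theorem over the infinite field $K$ applies: intersecting $A_\eta$ with $\dim A_\eta - 1$ general members of the subsystem produces a closed subscheme $C_\eta \subseteq A_\eta$ which is a smooth curve containing $e$ and $P_\eta$, and which is geometrically connected by the Lefschetz-type connectedness theorem for complete intersections of ample divisors in an abelian variety of dimension $\geq 2$. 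Writing $\iota \colon C_\eta \hookrightarrow A_\eta$ for the inclusion, I would choose $a_\eta, b_\eta \in C_\eta(K)$ with $\iota(a_\eta) = e$ and $\iota(b_\eta) = P_\eta$, and invoke the universal property of the jacobian $J_\eta = \mathrm{Pic}^0(C_\eta/K)$ as the Albanese variety of $C_\eta$: the morphism $\iota$, which sends the base point $a_\eta$ to $0$, factors uniquely as $\iota = g_\eta \circ j_{a_\eta}$ through a homomorphism $g_\eta \colon J_\eta \to A_\eta$, where $j_{a_\eta} \colon x \mapsto [O_{C_\eta}(x - a_\eta)]$ is the Abel--Jacobi map. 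Then $g_\eta([O_{C_\eta}(b_\eta - a_\eta)]) = g_\eta(j_{a_\eta}(b_\eta)) = \iota(b_\eta) = P_\eta$, which is the generic-fibre version of the desired identity.

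Finally I would spread out. All the objects $C_\eta$, $a_\eta$, $b_\eta$, $g_\eta$, $J_\eta = \mathrm{Pic}^0(C_\eta/K)$ and the identity relating them are of finite presentation, so after replacing $U$ by a dense open $U'$ they extend: $C \to U'$ becomes a smooth projective curve with geometrically connected fibres (smoothness and geometric connectedness of the fibres being open conditions on the base), its relative jacobian $J = \mathrm{Pic}^0(C/U')$ is a scheme by \cite[Proposition 4.3]{ldg}, the points $a_\eta, b_\eta$ extend to sections $a, b \in C(U')$, and $g_\eta$ extends to a homomorphism $g \colon J \to A|_{U'}$ of abelian schemes by the standard fact that a homomorphism of abelian schemes over $K$ extends over a dense open of $U$. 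Both $P$ and $g([O_C(b - a)])$ are then sections of the separated morphism $A \to U'$ which agree at the generic point of the integral scheme $U'$, hence agree on all of $U'$; renaming $U'$ as $U$ finishes the proof. The one genuinely non-formal ingredient, and the step I expect to require the most care, is the Bertini argument over the non-closed (but infinite) field $K$ with the two base points imposed, retaining both smoothness and geometric connectedness of the linear section; the Albanese universal property and the spreading-out are routine.
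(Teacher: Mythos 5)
Your outline matches the paper's proof almost step for step: work over the generic fibre, carve out a smooth geometrically connected curve in $A_\eta$ through $e$ and $P_\eta$, invoke the Albanese property of the jacobian with base point $a_\eta$ to factor the inclusion through $g_\eta \colon J_\eta \to A_\eta$, then spread out. Your treatment of the Albanese factorisation, the reduction of the identity $P = g([O_C(D)])$ to the generic point via separatedness, and the spreading-out are all fine and are exactly what the paper does.

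The one gap is the one you yourself flagged: the Bertini argument with imposed base points. You correctly note that the linear subsystem of members of $|M^{\otimes n}|$ through $e$ and $P_\eta$ separates points and tangent vectors \emph{away} from $\{e, P_\eta\}$, so ordinary Bertini over the infinite field $K$ gives smoothness of a general linear section only off the base locus; but $e$ and $P_\eta$ lie in the base locus, and you need the curve to be smooth at those two points as well (indeed you want $a_\eta, b_\eta \in C_\eta(K)$ to make sense as smooth rational points, and you need $C_\eta$ smooth everywhere for $\on{Pic}^0(C_\eta/K)$ to be an abelian variety). In characteristic zero this is a standard refinement, but over a general infinite field — in particular for the function-field case of Theorem \ref{generalgreen}, where $\on{char}\,K > 0$ is allowed — it is genuinely delicate. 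The paper handles this by citing the ``strong Bertini'' theorem of Diaz--Harbater \cite[Theorem 3.1]{Diaz1991Strong-Bertini-}, which is stated precisely to produce smooth connected linear sections through prescribed smooth points over an infinite field, with the remarks on pp.~80--81 of that paper giving connectedness and rationality over $K$; you should replace your appeal to generic Bertini plus Lefschetz by a citation of that result (or prove the required strengthening).

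A secondary remark: the paper's proof also arranges, via the Lefschetz hyperplane theorem, that $\on{H}^1_{\mathrm{et}}(A_{\bar K},\zz_\ell) \hookrightarrow \on{H}^1_{\mathrm{et}}(C_{0,\bar K},\zz_\ell)$, whence $g$ is \emph{surjective}. The statement of Proposition \ref{surjfromjac} as written does not actually require surjectivity, and the downstream use in Corollary \ref{heightcomparison} only uses the functoriality of the Poincar\'e biextension under $g$ and $g^\lor$, which holds for any homomorphism; so your omission of this point is not a logical defect for the statement as quoted. Still, the label ``surjfromjac'' signals the authors' intent, and keeping the $\on{H}^1$-injectivity costs nothing once you invoke Diaz--Harbater and Lefschetz, so it is worth recording.
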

\begin{proof} We slightly strengthen the argument given in \cite[Section 2]{ldg} so that we can lift the section $P$. We start with the following remark. Let $K$ be an infinite field with separable closure $\bar{K}$, and $l\in \zz$ a prime number unequal to the characteristic of $K$. Let $X\subset \PP^n_K$ be a smooth projective connected scheme, and $p$, $q \in X(K)$ be distinct points. Then there exists a smooth connected projective curve $C_0 \subset X$ such that the following two conditions hold:
\begin{itemize}
\item[(1)] $p, q \in C_0(K)$;
\item[(2)] the canonical map $\mathrm{H}^1_{et}(X\times_K\bar{K}, \zz_l) \to
\mathrm{H}^1_{et}(C_0\times_K\bar{K}, \zz_l)$ is injective.
\end{itemize}
This follows from \cite[Theorem 3.1]{Diaz1991Strong-Bertini-} and the Lefschetz hyperplane theorem. Note that remark (a) on page 80 of \cite{Diaz1991Strong-Bertini-} shows that the curve constructed in \cite[Theorem 3.1]{Diaz1991Strong-Bertini-} is connected, and the remark at the end of page 81 of \cite{Diaz1991Strong-Bertini-} shows that the curve can be assumed to be defined over $K$, since $K$ is infinite.  

Turning now to the situation of the proposition, write $\eta =\Spec K$ for the generic point of $U$. Then $A_K$ is an
abelian variety over an infinite field, with two marked points, the
identity $e_K$ and the base-change $P_K$ of the section $P$. By what we said above there exists a smooth connected projective curve $C_0 \subset A_K$ such that $e_K$ and $P_K$ both lie in $C_0(K)$, and such that the canonical map $\mathrm{H}^1_{et}(A\times_K\bar{K}, \zz_l) \to
\mathrm{H}^1_{et}(C_0\times_K\bar{K}, \zz_l)$ is injective. Let $a_0$, $b_0 \in C_0(K)$ be the points corresponding to $e_K$ and $P_K$ respectively. 

Write $J_0 = \mathrm{Pic}^0_{C_0/K}$ for the Jacobian of $C_0$, and
\[
\alpha\colon  C_0  \to J_0 \, , \quad q  \mapsto [q-a_0] \\
\]
for the Abel-Jacobi map with base point $a_0$. Then $\alpha\colon  C_0 \to J_0$ is the Albanese of $(C_0, a_0)$, and so the inclusion $C_0 \to A_K$ induces a map $g\colon J_0 \to A_K$ with $\alpha(a_0) = e_K$ and $\alpha(b_0) = P_K$. Moreover, the injectivity of the map on cohomology ensures that this map on abelian varieties is surjective. 

Finally, applying arguments from \cite[\S 8]{EGAIV3}, we can ``spread out'' the curve $C_0$, the sections $a_0$ and $b_0$ and the map $g$ so that they are all defined over some open neighbourhood of $\eta$ in $U$. %This finishes the proof of Lemma \ref{surjfromjac}.
\end{proof}

\begin{cor} \label{heightcomparison}
Let $K$ be a number field or the function field of a curve. Let $S$ be a smooth projective geometrically connected curve over $K$, and let $U$ be an open dense subscheme of $S$ together with an abelian scheme $A \to U$ over $U$, a section $P \in A(U)$ and a section $Q \in A^\lor(U)$. Then, possibly after replacing $U$ by a dense open subset, there exist a smooth curve $C \to U$ with geometrically connected fibers and divisors $D, E$ of relative degree zero on $C \to U$ such that
\[ \hat{\on{h}}_{\pp(A_\eta)}(P_\eta,Q_\eta) =
\hat{\on{h}}_{\pp(J_\eta)}(D_\eta,\mu E_\eta) \, ,\quad 
\hat{\on{h}}_{\pp(A_u)}(P_u,Q_u) =
\hat{\on{h}}_{\pp(J_u)}(D_u,\mu E_u) \]
for all finite subextensions $K \subset K' \subset \bar{K}$, and all $u \in U(K')$. Here $J \to U$ is the jacobian of $C \to U$ with principal polarization $\mu \colon J \isom J^\lor$. 
\end{cor}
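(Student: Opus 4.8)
The plan is to deduce this from Proposition~\ref{surjfromjac} together with the functoriality of the Poincar\'e biextension, after first enlarging the output of Proposition~\ref{surjfromjac} so that it also accounts for the dual section $Q$. First I would apply Proposition~\ref{surjfromjac} to the abelian scheme $A \to U$ and the section $P$; its hypothesis is met since $U$, being dense open in a curve over a field, has infinite function field. Shrinking $U$ if necessary, this produces a smooth projective curve $C \to U$ with geometrically connected fibres, its Jacobian $J = \mathrm{Pic}^0(C/U) \to U$ (an abelian scheme over $U$ with canonical principal polarization $\mu \colon J \isom J^\lor$), a morphism $g \colon J \to A$ of abelian schemes over $U$, and sections $a, b \in C(U)$ such that, writing $D = b - a$ and $R = [O_C(D)] \in J(U)$, one has $P = g(R)$.

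Second I would manufacture the divisor $E$ from $Q$. Let $g^\lor \colon A^\lor \to J^\lor$ be the dual of $g$ and put $N := \mu^{-1}(g^\lor(Q)) \in J(U) = \mathrm{Pic}^0_{C/U}(U)$, i.e.\ a line bundle on $C$ of relative degree zero (well defined up to a twist pulled back from $U$). To represent $N$ by an honest relative Cartier divisor I would twist by a multiple $d\cdot a$ of the section $a$, with $d$ larger than twice the genus of the fibres: then $N\otimes O_C(d\cdot a)$ is relatively very ample, its first higher direct image along $C \to U$ vanishes, and its direct image is locally free of positive rank with formation commuting with base change; shrinking $U$ to an affine over which this direct image is free, a general global section cuts out a divisor $E^+$ which, after a further shrinking of $U$, is an effective relative Cartier divisor of relative degree $d$. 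Setting $E := E^+ - d\cdot a$ gives a divisor of relative degree zero on $C \to U$ with $O_C(E) \cong N$, hence $\mu([O_C(E)]) = g^\lor(Q)$ as sections of $J^\lor$; since $C \to U$ is smooth, $D$ and $E$ automatically have support in the relative smooth locus.

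Third, the height comparison. For any $u \in U(K')$ (and likewise for the generic point, taking $K' = K$, $u = \eta$) all of the data base-changes, and since the formation of duals of abelian schemes, of $\mathrm{Pic}^0$, and of the canonical polarization commutes with base change, one gets $P_u = g_u(R_u)$ and $\mu_u([O_{C_u}(E_u)]) = g_u^\lor(Q_u)$ over the global field $K'$. It then suffices to establish the functoriality of the canonical height pairing of the Poincar\'e bundle under the morphism $g_u \colon J_u \to A_u$, namely
\[ \hat{\on{h}}_{\pp(A_u)}(g_u(R_u), Q_u) = \hat{\on{h}}_{\pp(J_u)}(R_u, g_u^\lor(Q_u)) = \hat{\on{h}}_{\pp(J_u)}(D_u, \mu E_u), \]
which is precisely the asserted identity. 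To prove this with the tools at hand I would spread $A_u$, $J_u$, the morphism $g_u$ and the sections out over the Dedekind base $B'$ with fraction field $K'$, pass to the connected components of their N\'eron models after multiplying all sections by a suitable positive integer $m$ (legitimate by bi-additivity of $\hat{\on{h}}_{\pp}$), and then compare the Arakelov (resp.\ geometric) degrees of the two pullbacks via Proposition~\ref{poincare_height}, identifying them through the extension $\bar\gamma_g$ of $\gamma_g$ to the N\'eron models furnished by Proposition~\ref{extensiongamma} (using that $\gamma_g$ is an isometry in the archimedean case, Proposition~\ref{Poincarefunctorial}); dividing back by $m^2$ then gives the equality.

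The only genuinely non-formal step is the second one, the construction of $E$ as a relative Cartier divisor lying in the class $\mu^{-1}g^\lor(Q) \in \mathrm{Pic}^0_{C/U}(U)$; this is a routine ample-twist-and-general-section argument, but it does use the freedom to shrink $U$, which the statement permits. Everything else is bookkeeping with duality of abelian schemes and the functoriality of the Poincar\'e biextension already established.
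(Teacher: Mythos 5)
Your proposal is correct and follows essentially the same route as the paper: reduce to jacobians via Proposition~\ref{surjfromjac}, produce $E$ representing $\mu^{-1}g^\lor(Q)$ using the section $a$ of $C\to U$, and deduce the height identities by combining Propositions~\ref{extensiongamma} and~\ref{poincare_height}. The only cosmetic difference is in the construction of $E$: the paper invokes \cite[Proposition 8.1.4]{blr} to realize the class $\mu^{-1}g^\lor(Q)$ by a genuine line bundle $M$ on $C$ and then takes a nonzero rational section, whereas you perform a more hands-on ample twist by $d\cdot a$ and take a general global section before subtracting $d\cdot a$ again; both are valid and use the same freedom to shrink $U$.
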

\begin{proof} By Proposition \ref{surjfromjac} we can assume there exists a smooth curve $C \to U$ and a morphism $g \colon J \to A$ where $J \to U$ is the jacobian of $C \to U$, and a divisor $D = b-a$ of relative degree zero on $C \to U$ such that $P=g([O_C(D)])$. Then take $E$ a divisor of relative degree zero on $X \to U$ such that $\mu[O_X(E)]=g^\lor(Q)$. To see that such a divisor $E$ exists, note that $X \to U$ has a section (e.g. $a$) so by \cite[Proposition 8.1.4]{blr} there exists a line bundle $M$ on $X \to U$ representing $\mu^{-1}g^\lor(Q)$. Note that $M$ has relative degree zero. Choosing a non-zero rational section of $M$ then gives $E$. To obtain the identities, combine Propositions \ref{extensiongamma} and \ref{poincare_height}.
\end{proof}
From Corollary \ref{heightcomparison} it follows that it suffices to prove the special case of Theorem \ref{generalgreen} dealing with jacobians.

Hence, in view of Theorem \ref{generalgreenspecial}, our final step is to reduce the case of jacobians to the case of jacobians of quasisplit semistable curves satisfying the various other hypotheses at the beginning of subsection \ref{sub:nice_curve_case}. First, we show that a semistable curve becomes quasisplit after a suitable alteration of the base.

\begin{prop}\label{lem:quasi_split_after_alteration}
Let $f\colon C \ra S$ be a semistable curve over an integral noetherian excellent scheme. Then there exists an alteration $S' \ra S$ such that the morphism $C' = C \times_S S' \ra S'$ is quasi-split. 
\end{prop}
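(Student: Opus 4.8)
The plan is to reduce the statement, via standard limit and approximation arguments, to the two facts already available in the paper: that semistable curves become quasisplit after an \emph{\'etale} base change to a strictly henselian local ring (the discussion in Definition~\ref{def:semistable_curve}, with reference to \cite[Lemma~4.3]{ho2}), and de Jong's theory of alterations. There are two obstructions to overcome in passing from ``quasisplit after \'etale cover of a local ring'' to ``quasisplit after an alteration of $S$'': first, the components of the geometric fibres of $C \to S$ may fail to be geometrically irreducible, and second, $\on{Sing}(C/S) \to S$ may fail to be a Zariski-local immersion on the source (the singular points of a given fibre may not be separated by the base). Both are resolved by a single finite base change followed by normalisation, as I now sketch.

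First I would handle the components. Since $S$ is integral, noetherian and excellent, the normalisation of $S$ in a suitable finite extension of its function field is again excellent and finite over $S$; more precisely, there is a finite extension $K'/K(S)$ such that, setting $S_1$ to be the normalisation of $S$ in $K'$, every irreducible component of every geometric fibre of $C \times_S S_1 \to S_1$ becomes geometrically irreducible over its image. Concretely one takes $K'$ large enough to split the (finitely many) fields of definition of the components of the generic fibre and of the finitely many ``bad'' special fibres in a suitable stratification; one then shrinks and spreads out using a noetherian induction on $S$, replacing $S$ by the closed strata where new components appear and repeating. The key input here is that over a geometric point the components \emph{are} geometrically irreducible, so the phenomenon is governed by finitely many residue-field extensions, which can be captured by a single finite (hence alteration) cover after normalising. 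This is routine but slightly tedious; I would organise it as a noetherian induction as in the proof of properness of the quasisplit locus.

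Next, the separation of singular points. The scheme $\on{Sing}(C/S)$ is finite and unramified over $S$; after the finite cover $S_1$ above (and shrinking/blowing up as needed) I want each connected component of $\on{Sing}(C/S_1)$ to map to $S_1$ by a closed immersion locally on the source. Here I would pass to the strictly henselian local rings: by the cited result of \cite{ho2}, over $\on{Spec}\oo_{S_1,\bar s}^{\mathrm{sh}}$ the curve is quasisplit, which in particular means the singular locus decomposes into disjoint sections. By writing this strictly henselian local ring as a filtered colimit of \'etale neighbourhoods and using a finite-presentation argument, the decomposition descends to some \'etale neighbourhood of $\bar s$; but an \'etale cover of $S_1$ is an alteration, and one can then pass to a \emph{connected} such cover. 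Glueing these \'etale neighbourhoods over the (quasicompact) $S_1$ and taking a common refinement, one produces an alteration $S' \to S_1 \to S$ over which both defects have been repaired, so that $C' = C \times_S S' \to S'$ is quasisplit. The composite $S' \to S$ is proper (finite $\circ$ \'etale-with-compactification, or more cleanly: de Jong's theory guarantees alterations can be taken proper), surjective and generically finite, hence an alteration.

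The main obstacle I anticipate is purely bookkeeping: ensuring that the finite cover chosen to fix the components is compatible with, and does not destroy, the \'etale-local structure used to fix the singular points, and that everything can be done with a \emph{single} proper generically-finite morphism rather than an infinite tower. I would resolve this by first performing the finite normalisation step \emph{globally} on $S$, then performing the \'etale-local step on the already-normalised $S_1$ (where the component problem no longer obstructs the argument of \cite{ho2}), and finally invoking quasicompactness of $S_1$ to extract a finite \'etale refinement; replacing it by its Stein factorisation or by a connected component recovers an honest alteration. No deep new idea is needed beyond what is cited; the content is in assembling \cite[Lemma~4.3]{ho2} with a noetherian induction and a finite-presentation limit argument.
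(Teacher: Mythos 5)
Your route is genuinely different from the paper's, and it has a real gap at the final assembly step. The paper's proof does not go through any henselian/\'etale--local argument at all: it invokes de Jong's theorems \cite[6.2, 6.3]{alt} to produce, after one alteration, finitely many \emph{sections} $\sigma_1,\ldots,\sigma_n\in C(S)$ that meet every connected component of $C^{\mathrm{sm}}_{\bar s}$ and pass through every singular point of every geometric fibre. Those sections then do everything at once: a component with a smooth rational point is geometrically irreducible, the singular points are rational because they lie on sections, and the source--local--immersion property of $\on{Sing}(C/S)\to S$ is extracted by observing that each connected component $X=\Spec A$ of $\on{Sing}(C/S)$ is finite unramified over the local base, has a single point in the closed fibre (because it is a union of images of sections, hence of closed subschemes), so $A/\mathfrak m A=R/\mathfrak m$ and Nakayama forces $R\twoheadrightarrow A$. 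Your proposal never engages with this last point; you assert that "the decomposition descends to some \'etale neighbourhood", which is true locally, but it does not by itself produce the immersion statement over a non-strictly-henselian base.

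The genuine gap is in the sentence "Glueing these \'etale neighbourhoods over the (quasicompact) $S_1$ and taking a common refinement, one produces an alteration $S'\to S_1\to S$." A finite collection of \'etale surjections $U_i\to S_1$ that jointly cover $S_1$ does not, in any straightforward way, yield a single proper surjective generically finite $S'\to S_1$ over which the pullback is quasisplit. The fibre product $U_1\times_{S_1}\cdots\times_{S_1}U_n$ may fail to be surjective (or even nonempty), and the disjoint union $\bigsqcup U_i$ is \'etale surjective but not proper and not irreducible; passing to a connected component or a Stein factorisation does not restore surjectivity onto $S_1$. To make this line of argument work you would need an additional mechanism --- for instance, showing the quasisplit locus of $S_1$ is open and then doing a noetherian induction on the complement with further alterations, or showing that any \'etale cover can be refined by a finite cover after shrinking to a dense open and then taking a suitable closure and normalisation. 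None of this is in your sketch, and it is precisely the step the paper sidesteps by using de Jong's section-producing alteration theorems instead of strictly henselian descent. (A smaller point: the \'etale-cover statement you lean on from Definition~\ref{def:semistable_curve} is stated there under a regularity hypothesis on the base which is not assumed in the present proposition; your normalisation $S_1$ is normal but need not be regular, so you would at minimum need to re-justify that input.)
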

\begin{proof}
After replacing $S$ by an alteration, we may assume that there exist
sections $\sigma_1,\ldots, \sigma_n \in C(S)$ such that 
\begin{enumerate}
\item for every geometric point $\bar{s}$ of $S$ and every connected
component $X$ of $C^{\on{sm}}_{\bar{s}}$, there exists $i$ such that
$\sigma_i(\bar{s}) \in X$ (by \cite[6.2]{alt}). 
\item
for every geometric point $\bar{s}$ of $S$ and every singular point
$\bar{x}$ of $C_{\bar{s}}$, there is an $i$ such that $\bar{x} \in
\sigma_i(\bar{s})$ (by \cite[6.3]{alt}).
\end{enumerate}
It suffices to show that $C/S$ is quasisplit. Condition (1) immediately
implies that all irreducible components of all fibres are geometrically
irreducible, as an irreducible scheme over a field which has a smooth
rational point is geometrically irreducible. It is also clear from condition
(2) that every singular point in every fibre is rational. It remains to
check that the morphism $\on{Sing}(C/S) \ra S$ is source-Zariski-locally an
immersion. 

Let $x \in \on{Sing}(C/S)$ be a point, let $s$ denote the image of $x$ in
$S$, and let $R = O_{S,s}$. We may and do assume that $S = \on{Spec}R$.
Let $X$ be the connected component of $\on{Sing}(C/S)$ which contains $x$.
Then $X \ra S$ is finite hence affine, so write $X = \on{Spec}A$, for some
$R$-algebra $A$. Note that $A$ is connected, and is finite and unramified as
an $R$-algebra. We need to show that $A$ is a quotient of $R$. 

By condition (2), we know that every irreducible component of
$\on{Sing}(C/S)$ is contained in the image of some section in $C(S)$ (in
general different sections for different components). As such, there exist
closed subschemes $Z_1, \ldots, Z_n$ of $S$ such that $X$ can be written (as
a topological space over $S$) as a union of the $Z_i$. Since $X \ra S$ is
separated, these $Z_i$ are closed subschemes of $X$, and so in particular
their intersections are also closed. Since $S$ is local, this implies that
the fibre of $X$ over the closed point of $S$ is (as a topological space) a
single point. 

Now $A$ is finite and unramified over $R$, so it follows that (writing $\mathfrak{m}$
for the maximal ideal of $R$) we have $A/\mathfrak{m}A = R/\mathfrak{m}$.  Then by Nakayama's lemma we find that $R$ surjects onto $A$ and we are done. 
\end{proof}
We are now ready to show that, after several further alterations, we can arrange the situation at the beginning of \ref{sub:nice_curve_case}. Recall that we start out with a number field $K$, a smooth projective geometrically connected curve $S/K$, a dense open $U \subset S$ and a smooth proper curve $C \to U$ with connected geometric fibers. Let $B$ be the spectrum of the ring of integers of $K$. Let $J \to U$ denote the jacobian of $C \to U$, and $J^\lor$ its dual. We assume sections $P \in J(U)$ and $Q \in J^\lor(U)$ are given, represented by the classes of $D$ resp.\ $\mu E$ (cf. Corollary \ref{heightcomparison}), where $\mu \colon J \isom J^\lor$ is the canonical principal polarization of $J \to U$ and $D$, $E$ are divisors of relative degree zero on $C \to U$. 
\begin{prop} \label{lem:alteration_to_nice}
There exist 
\begin{enumerate}
\item
a proper flat morphism $\pi\colon \ca{S} \ra B$ with connected geometric fibres;
\item a quasisplit semistable curve $p\colon \ca{C} \ra \ca{S}$ smooth over a dense open subscheme of $\ca{S}$ (we write $\ca{U}\subset \ca{S}$ for the largest such open subscheme);% $\ca{U} \subset \ca{S}$;
\item an open immersion $f\colon V \ra \ca{S} \times_B K$;
\item a finite flat morphism $g\colon V \ra U$;
\item an isomorphism $h\colon(\ca{C}\times_B K) \times_{(\ca{S} \times_B K)}V \ra C\times_U V$;
\item two horizontal divisors $\ca{D}$, $\ca{E}$ of relative degree zero on $\ca{C}$ supported on the smooth locus of $\ca{C} \to \ca{S}$;
\end{enumerate}
satisfying
\begin{enumerate}
\item writing $Z = \ca{S} \setminus \ca{U}$, we have that $Z$ has its $B$-horizontal irreducible components disjoint;
\item the image of $\ca{D}$ (resp.\ $\ca{E}$) under $h$ is equal to the pullback of $D$ (resp.\ $E$) along $C\times_S V \ra C$. 
\end{enumerate}
\end{prop}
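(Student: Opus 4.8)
My plan is to obtain the statement as a chain of standard reduction steps. Throughout I would use that we are free to shrink $U$ — for a dense open $U'\subset U$ the set $U(\bar K)\setminus U'(\bar K)$ is finite, so the conclusion of Theorem \ref{generalgreen} for $U'$ gives it for $U$ — and that we are free to replace $K$ by a finite extension and $B$ by its normalisation in it: if the function in question is a Weil height on $S(\bar K)=S_{K'}(\bar K)$ with respect to a $\qq$-line bundle on $S_{K'}$ of the correct degree, then it is one with respect to any $\qq$-line bundle on $S$ of that degree, two such differing by a bounded function. If $C\to U$ has genus zero the Jacobian vanishes and everything is trivial, so I would assume positive genus. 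First I would bring $D$ and $E$ into ``section form'': write $D=b-a$ with $a,b\in C(U)$ (Proposition \ref{surjfromjac}), and, after passing to a finite flat cover of $U$ and shrinking, write $E=\sum_i n_i e_i$ with $e_i\in C(U)$, of relative degree zero. This gives a smooth proper pointed curve $(C\to U;a,b,e_1,\ldots)$, whose marked points automatically lie in the smooth locus. Then I would spread this out to a proper flat model $\ca{S}_0\to B$ with $\ca{S}_{0,K}=S$, a dense open $\ca{U}_0\subset\ca{S}_0$ with $\ca{U}_{0,K}=U$, and a smooth proper pointed curve $(\ca{C}_0\to\ca{U}_0;\bar a,\bar b,\ldots)$ restricting to the above over $K$.

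Next I would apply semistable reduction. By the theory of stable reduction and alterations (de Jong, cf.\ \cite{alt}) there is an alteration $\ca{S}_1\to\ca{S}_0$ and a semistable pointed curve $(\ca{C}_1\to\ca{S}_1;\bar a,\bar b,\ldots)$, smooth over a dense open, restricting over the preimage of $\ca{U}_0$ to the pullback of $\ca{C}_0$, and with all marked sections landing in $\on{Sm}(\ca{C}_1/\ca{S}_1)$; after a finite extension of $K$ (Stein factorisation of $\ca{S}_1\to B$) we may also assume $\ca{S}_1\to B$ has geometrically connected fibres. By Proposition \ref{lem:quasi_split_after_alteration} a further alteration makes the curve quasisplit semistable, the marked sections still passing through the smooth locus. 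The base is now integral, excellent, of dimension at most two, and flat proper over $B$; resolution of singularities for excellent two-dimensional schemes then produces a proper birational regular modification, along which the curve pulls back to a quasisplit semistable curve — the conditions defining quasisplitness (being an immersion on the singular locus, and geometric irreducibility of the fibre components) are preserved under base change — still smooth over a dense open, still with marked sections through the smooth locus. I denote the outcome $\ca{C}_3\to\ca{S}_3$.

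It then remains to separate the horizontal boundary and extract the generic-fibre data. Let $\ca{U}\subset\ca{S}_3$ be the largest open over which $\ca{C}_3$ is smooth and $Z=\ca{S}_3\setminus\ca{U}$; two distinct $B$-horizontal prime components of $Z$ meet in finitely many closed points of the regular surface $\ca{S}_3$. Blowing these points up repeatedly — each blow-up keeps the base regular, keeps (by base change) the quasisplit semistable curve with its marked sections through the smooth locus, adds to $Z$ only fibral exceptional divisors, and strictly lowers the (nonnegative, integral) sum of the intersection numbers of the strict transforms of the horizontal components — I reach after finitely many steps a regular $\ca{S}$, proper and flat over $B$ with connected geometric fibres, a quasisplit semistable curve $\ca{C}\to\ca{S}$ smooth over a dense open $\ca{U}$ with the $B$-horizontal components of $Z=\ca{S}\setminus\ca{U}$ pairwise disjoint, and marked sections $\bar a,\bar b,\bar e_i$ through $\on{Sm}(\ca{C}/\ca{S})$. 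I then set $\ca{D}=\bar b-\bar a$ and $\ca{E}=\sum_i n_i\bar e_i$, horizontal divisors of relative degree zero supported on $\on{Sm}(\ca{C}/\ca{S})$. Finally, the composite $\ca{S}\times_B K\to S$ is proper and generically finite between smooth curves over $K$; shrinking $U$ so that it is finite over $U$, it is finite flat there, and I take $V$ to be the preimage of $U$ in $\ca{S}\times_B K$ (an open subscheme), $f$ its inclusion, and $g\colon V\to U$ the induced finite flat morphism. Over $V$ — which maps into $U\subset\ca{U}_{0,K}$ — the curve $\ca{C}$ and the divisors $\ca{D},\ca{E}$ were, at every stage of the construction, literally the pullbacks of $C$ and of $D,E$, which gives the required isomorphism $h$ and the identities of divisors under $h$.

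The hard part will be the semistable pointed reduction step over the two-dimensional base $\ca{S}_0$: this genuinely requires alterations of the base, not merely blow-ups or finite covers, and it requires the stronger, pointed form in which the marked sections extend through the smooth locus — this is the one place where a substantial external input (de Jong's theory of alterations) is used. The surface resolution and the blow-ups separating the horizontal boundary components are routine, but they demand the checks indicated above that quasisplitness and the marked sections survive them.
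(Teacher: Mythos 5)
Your proposal is correct and achieves the same chain of reductions as the paper (spread out over $B$; semistabilize by alteration while keeping marked sections in the smooth locus; make quasisplit via Proposition~\ref{lem:quasi_split_after_alteration}; separate the horizontal boundary by blow-ups; extract the generic-fibre data $V$, $f$, $g$, $h$, $\ca{D}$, $\ca{E}$). The one genuine methodological difference is the semistabilization step: the paper takes the moduli map $m \colon \ca{U}_0 \to \ca{M}_{g,n}$ of the pointed smooth family, uses that $\overline{\ca{M}}_{g,n}$ admits a finite surjective generically \'etale cover from a scheme (\cite{Laumon2000Champs-algebriq}), and alters $\ca{S}_0$ by taking the closure of the graph of $m$ in that cover; this automatically gives disjoint sections through the smooth locus of a stable model. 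You instead invoke de Jong's pointed semistable reduction by alterations (\cite{alt}) directly on the curve. Both are standard routes to the same output; the moduli-space argument packages the marked-point book-keeping into the definition of $\overline{\ca{M}}_{g,n}$, while your route avoids any mention of the stack. You are also more explicit than the paper's terse ``blowing up $\ca{S}_1$ more'' about the last step: you first resolve the excellent two-dimensional base to a regular surface and then run the standard intersection-number termination argument for separating the horizontal components. This added care is not superfluous, since the subsequent Theorem~\ref{generalgreenspecial} requires $\ca{S}$ regular even though the proposition as stated does not say so; your resolution step secures it, and you correctly note that quasisplitness and the location of the marked sections are stable under the base changes involved.
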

Note that the finite flat map $g\colon V \ra U$ automatically extends to a finite flat map $\bar{g}\colon \ca{S}\times_B K \ra S$, since the source is a
smooth proper curve. 
\begin{proof}
This proof consists of a number of standard arguments (mostly from \cite{alt}), so we omit some details. We begin by constructing proper flat models $\ca{S}_0$, $\ca{C}_0$ of $S$ and $C$ over $B$, together with divisors $\ca{D}_0$ and $\ca{E}_0$. We will then apply repeated alterations to these objects to get them into the required form. 

Over some dense open subscheme $\ca{U}_0$ of $\ca{S}_0$ we find that $\ca{D}$ and $\ca{E}$ are horizontal, and taking a finite flat cover we can even assume they can be written as formal sums taken from a finite set $\Sigma$ of sections. Perhaps shrinking $\ca{U}_0$ we can assume that $\ca{C}_0$ is smooth and proper over $\ca{U}_0$, and that the sections in $\Sigma$ are disjoint over $\ca{U}_0$, so that the pair $(\ca{C}_0|_{\ca{U}_0}, \Sigma)$ induces a moduli map $m\colon \ca{U}_0 \ra \ca{M}_{g, n}$ where $g$ is the genus of the fibres of $\ca{C}_0$ and $n$ is the cardinality of $\Sigma$. Now the compactified moduli stack $\overline{\ca{M}}_{g, n}$ admits a finite surjective generically \'etale map from a scheme (see \cite[Th\'eor\`eme 16.6]{Laumon2000Champs-algebriq}), so by taking the closure of the graph of the map $m$ we obtain an alteration $\ca{S}_1 \ra \ca{S}_0$ such that $\ca{C}_0$ admits a semistable model $\ca{C}_1$ over $\ca{S}_1$, and such that the divisors $\ca{D}_0$ and $\ca{E}_0$ pull back to divisors $\ca{D}_1$ and $\ca{E}_1$ whose supports are unions of disjoint sections through the smooth locus of $\ca{C}_1$ over $\ca{S}_1$ (this is a slight variant of one of the main arguments of \cite{ldg}). 

By Proposition \ref{lem:quasi_split_after_alteration}, after further alteration of $\ca{S}_1$ we can arrange that $\ca{C}_1/\ca{S}_1$ is quasisplit. Write $\ca{U}_1 \subset \ca{S}_1$ for the largest open subscheme over which $\ca{C}_1$ is smooth. Blowing up $\ca{S}_1$ more, we may assume that the $B$-horizontal irreducible components of $\ca{S}_1\setminus\ca{U}_1$ are disjoint. 
\end{proof}
\begin{proof}[Proof of Theorem \ref{generalgreen}]
By Theorem \ref{generalgreenspecial} and Proposition \ref{lem:alteration_to_nice} 
we are left to show that if $\bar{g} \colon S' \to S$ is an alteration, and  Theorem \ref{generalgreen} holds for the pullbacks of $C \to U \subset S$ and the sections $P$ and $Q$ along $\bar{g}$, then Theorem \ref{generalgreen} holds for $C \to U \subset S$ and the sections $P$ and $Q$ themselves. 

As above, to reduce the amount of notation, we write $\nmo(-)$ for the fibrewise connected component of the N\'eron model of an abelian variety. Then we find that $P$ and $Q$ extend (after taking some multiples, which we will suppress in the notation) to elements of $\nmo(J)(S)$. Write $\bar{\ca{P}}$ for the Poincar\'e prolongation on $\nmo(J) \times_S \nmo(J^\lor)$. Writing $\sigma$ for the section $(P, Q)$ of $\nmo(J) \times_S \nmo(J^\lor)$, we set $L = \sigma^*\bar{\ca{P}}$, a line bundle on $S$. We claim that this line bundle fulfils the conclusion of Theorem \ref{generalgreen}; we now need to prove the corresponding statements about heights. 

Write $J'$ for the jacobian of $\bar{g}^*C$ over $\bar{g}^{-1}(U)$, and $J'^\lor$ for its dual. We again have a N\'eron model component  $\nmo(J')$, and the same (suppressed) multiples of the sections $\bar{g}^*P$, $\bar{g}^*Q$ extend to a section $\tau$ of $\nmo(J') \times_{S'} \nmo(J'^\lor)$. Set $L' = \tau^*\bar{\ca{P}}'$, where $\bar{\ca{P}}'$ is the Poincar\'e prolongation on $\nmo(J') \times_{S'} \nmo(J'^\lor)$. 

We will now show $\bar{g}^*L \cong L'$. For this, note first that the N\'eron mapping property (and the fact that the image of a connected scheme is connected) yields a unique $S'$-morphism
\begin{equation*}
f\colon \bar{g}^*\left( \nmo(J) \times_S \nmo(J^\lor) \right)  \ra \nmo(\bar{g}^*J) \times_{S'} \nmo(\bar{g}^*J^\lor)  
\end{equation*}
extending the canonical isomorphism over $\bar{g}^{-1}U$. 
Writing $\sigma'$ for the pullback along $\bar{g}$ of $\sigma$, and recalling that $\tau$ denotes the extension of $(\bar{g}^*P, \bar{g}^*Q)$ to a section in $ \left(\nmo(\bar{g}^*J) \times_{S'} \nmo(\bar{g}^*J^\lor)\right)(S')$, we have $f(\sigma') = \tau$, by the uniqueness part of the N\'eron mapping property. Finally we have that $f^*\bar{\ca{P}}' = \bar{g}^*\bar{\ca{P}}$, by the uniqueness of extensions of rigidified bundles. This implies that $\bar{g}^*L = L'$. 

Let $\on{h}_L$ resp.\ $\on{h}_{L'}$ be Weil heights with respect to $L$ on $S$ resp.\ $L'$ on $S'$. Then the quantity
\begin{equation*}
\abs{\on{h}_{L}(\bar{g}(u)) - \on{h}_{L'}(u)}
\end{equation*}
is bounded uniformly as $u$ runs over $S'(\bar{K})$. Next we can assume that 
\[ \abs{\hat{\on{h}}_{\pp(J')}((\bar{g}^*P)_u,(\bar{g}^*Q)_u) - \on{h}_{L'}(u)} \]
is bounded on $S'(\bar{K})$ by Theorem \ref{generalgreenspecial}. We are then done by the triangle inequality, noting that 
\[ \hat{\on{h}}_{\pp(J')}((\bar{g}^*P)_u,(\bar{g}^*Q)_u)  = \hat{\on{h}}_{\pp(J)}(P_{\bar{g}(u)},Q_{\bar{g}(u)}) \]
for all $u \in S'(\bar{K})$. 
%
%, and we would be done by the triangle inequality since we can assume that 
%\[ \abs{\hat{\on{h}}_{\pp(J')}((\bar{g}^*P)_u,(\bar{g}^*Q)_u) - \on{h}_{L'}(u)} \]
%is bounded on $S'(\bar{K})$, and we are done. 
\end{proof}

\section{Proof of Theorems \ref{nefnessproperty} and \ref{nefnessproperty_Arakelov}} \label{nef}

The proofs of Theorems \ref{nefnessproperty} and \ref{nefnessproperty_Arakelov} can be given both at the same time. Note that by Theorem \ref{maineffective} the height jump divisor $J(f;D,D)$ is an effective divisor. It therefore suffices to show that $\pair{f^*D,f^*D}^{\otimes -1}_\a$ has non-negative (Arakelov) degree on $T$. But by Proposition \ref{poincare_height} the latter is precisely the canonical height, with respect to the principal polarization of the jacobian of the generic fiber of $f^*C \to T$, of the point determined by $D$. As such canonical heights are non-negative, we obtain the result.

%We finish by giving the proof of Theorems \ref{nefnessproperty} and \ref{nefnessproperty_Arakelov}. 
%As by Theorem \ref{maineffective} the height jump divisor $J(f;D,D)$ is an effective divisor, it suffices to show that $\pair{f^*D,f^*D}^{\otimes -1}_\a$ has non-negative degree on $T$. But by Proposition \ref{poincare_height} the latter is precisely the canonical height, with respect to the principal polarization of the jacobian of the generic fiber of $f^*C \to T$, of the point corresponding to $D$. As such canonical heights are non-negative, we obtain the result. 

\appendix

\section{Results on Green's functions} \label{resultsongreens}

Our goal in this appendix is to study the Green's function on a network in somewhat more detail using the techniques of resistor networks.  Specifically, we will prove that the Green's function for proper networks is homogeneous, concave, and monotonic in the manner described in Proposition \ref{basicgreenfacts}, and then we will prove that the Green's function extends continuously with these properties to improper networks, as described in Proposition \ref{continuous}.
 Finally, we prove that the nonlinear part of the Green's function is bounded in the manner described in Proposition \ref{bound}.  

For the proofs of homogeneity and continuity, we will first develop a graphical calculus for evaluating the Green's function, so we establish more notation and terminology.  
Given a resistive network $(\Graph, \resist)$, we extend $\resist$ to a real-valued function on the collection of subsets of $\edges(\Graph)$ by setting, for each subset $S\subset\edges(\Graph)$,
\[\resist(S):= \prod_{e\in S}\resist(e).\]

Recall that a \emph{subgraph} of a graph $\Graph=(V,E,\epoints)$ is a graph $(V', E', \epoints')$ where $V'$ and $E'$ are subsets of $V$ and $E$, and $\epoints'$ is the restriction of $\epoints$ to $E'$.  
We will only be considering subgraphs for which $V'=V$, and which are thus entirely determined by the set of edges they include.
We denote the subgraph $(\vertices(\Graph), S, \epoints|_{S})$ associated to a set $S\subset\edges(\Graph)$  by $\Graph|_S$.

A \emph{walk} in a graph $\Graph$ is a finite sequence of vertices of $\Graph$, together with a choice of an edge of $\Graph$ from each vertex to the next. 
A \emph{path} is a walk where all vertices are distinct, except possibly the start and end vertices. 
A \emph{cycle} is a path with at least one edge, for which the start and end vertex are equal.

Recall that a \emph{tree} is a graph which is connected and cycle-free.  
A \emph{spanning tree} for a graph $\Graph$ is a subgraph of $\Graph$ which is a tree and contains all the vertices of $\Graph$.  
We denote the collection of sets of edges $\tree\subset\edges(\Graph)$ such that $\Graph|_\tree$ is a spanning tree for $\Graph$ by $\trees(\Graph)$.
More generally, if a graph is merely cycle-free it is called a \emph{forest}.  
We denote by $\forests{n}(\Graph)$ the collection of subsets $\forest\subset\edges(\Graph)$ such that $\Graph|_\forest$ is a forest and has $n$ connected components.
Informally, we refer to elements of $\trees(\Graph)$ as spanning trees, and to elements of $\forests{n}(\Graph)$ as $n$-forests.

One more ingredient is needed for our formula for the Green's function.  Given $\forest\in\forests{2}(\Graph)$ and two pairs $(i,j)$ and $(k,\ell)$ of vertices of $\Graph$, we define $\permute{i}{j}{k}{\ell}{\forest}$ as follows:
\[\permute{i}{j}{k}{\ell}{\forest} := \begin{cases}
+1 & \text{ if one component of $\Graph|_\forest$ contains $i$ and $k$ } \\
 & \text{     and the other contains $j$ and $\ell$;}\\
-1 & \text{ if one component of $\Graph|_\forest$ contains $i$ and $\ell$} \\ 
 & \text{     and the other contains $j$ and $k$;}\\
\ 0 & \text{ if $i$ and $j$ or $k$ and $\ell$ are in the same component of $\Graph|_\forest$.}
\end{cases}\]

\begin{prop} \label{prop:tree-formula}
Let $(\Graph, \resist)$ be a proper resistive network with $\Graph$ connected, and let $i$, $j$, $k$, and $\ell$ be vertices of $\Graph$.  
Then 
\begin{equation}\label{tree-formula}
 \green(\Graph, \resist; \basis_i-\basis_j, \basis_k-\basis_\ell) = \frac{\sum_{\forest\in\forests{2}(\Graph)}\permute{i}{j}{k}{\ell}{\forest}\resist(\edges(\Graph)\setminus\forest)}{\sum_{\tree\in \trees(\Graph)}\resist(\edges(\Graph)\setminus\tree)}.
\end{equation}
\end{prop}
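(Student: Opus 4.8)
The plan is to recognise \eqref{tree-formula} as an instance of the weighted all-minors matrix--tree theorem, proved by Cauchy--Binet. Fix an auxiliary orientation of the edges and let $\partial\in\rr^{\vertices(\Graph)\times\edges(\Graph)}$ be the associated signed incidence matrix: for an edge $e\colon a\to b$ the column of $e$ has entry $+1$ in row $a$, $-1$ in row $b$, and $0$ elsewhere, while a loop contributes a zero column. With $C=\mathrm{diag}(1/\resist(e))_{e}$ the diagonal matrix of conductances, one checks directly against the defining formula that $\laplace=\partial C\transpose{\partial}$. Fix a reference vertex $q$ and write $L_q$, $\partial_q$ for the matrices obtained by deleting the row of $q$; since $\Graph$ is connected, $L_q=\partial_q C\transpose{\partial_q}$ is invertible. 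As $\laplace^+\divE$ is the unique zero-sum vector $w$ with $\laplace w=\divE$, and $\green$ only sees differences of the entries of $w$, one obtains for zero-sum $\divD,\divE$ that $\green(\Graph,\resist;\divD,\divE)=\transpose{(\divD')}L_q^{-1}\divE'$, where $(\cdot)'$ denotes deletion of the $q$-th coordinate (so that $\basis_q'=0$). Taking $\divD=\basis_i-\basis_j$, $\divE=\basis_k-\basis_\ell$ and writing $L_q^{-1}=(\det L_q)^{-1}\mathrm{adj}(L_q)$ exhibits the Green's function as a ratio: the denominator is $\det L_q$, and the numerator is a signed combination (with at most four terms) of cofactors of $L_q$, each of which is, up to sign, an $(n-2)\times(n-2)$ minor of $\laplace$, where $n=\#\vertices(\Graph)$.

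The denominator is exactly the weighted matrix--tree theorem: Cauchy--Binet gives $\det L_q=\sum_{\#S=n-1}(\det\partial_q[{\cdot},S])^2\prod_{e\in S}\resist(e)^{-1}$, and total unimodularity of $\partial$ together with the usual tree argument shows that $(\det\partial_q[{\cdot},S])^2$ equals $1$ if $\Graph|_S\in\trees(\Graph)$ and $0$ otherwise, so $\det L_q=\sum_{\tree\in\trees(\Graph)}\prod_{e\in\tree}\resist(e)^{-1}$. For the numerator I would again substitute $\laplace=\partial C\transpose{\partial}$ and apply Cauchy--Binet to each minor of $\laplace$ that occurs: it becomes a sum over $(n-2)$-element edge sets $S$ of a product of two rectangular minors of $\partial$ times $\prod_{e\in S}\resist(e)^{-1}$, and unimodularity forces this product to vanish unless $\Graph|_S$ is a $2$-forest $\forest$ with the two ``extra'' deleted rows (beyond $q$) sitting in distinct components. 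Assembling the four signed contributions, the numerator takes the form $\sum_{\forest\in\forests{2}(\Graph)}\varepsilon(\forest)\prod_{e\in\forest}\resist(e)^{-1}$ for a coefficient $\varepsilon(\forest)\in\{-1,0,1\}$.

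The heart of the matter is to identify $\varepsilon(\forest)$ with $\permute{i}{j}{k}{\ell}{\forest}$: that it vanishes precisely when $i,j$ or $k,\ell$ lie in one component of $\Graph|_\forest$, and is $+1$ or $-1$ according to whether $\{i,k\}$ or $\{i,\ell\}$ is the pair lying in a single component --- and that this is independent of the auxiliary orientation and of the choice of $q$. Carrying out this sign bookkeeping (the cofactor signs, the signs introduced by $\divD=\basis_i-\basis_j$ and $\divE=\basis_k-\basis_\ell$, and the values $\pm1$ of the rectangular incidence minors) is the main obstacle; it can be done by hand by ordering the vertices compatibly with the components of $\Graph|_\forest$ so that the incidence minors become triangular, or one may simply quote the all-minors matrix--tree theorem. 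Once $\varepsilon(\forest)=\permute{i}{j}{k}{\ell}{\forest}$ is in place, multiplying numerator and denominator by $\resist(\edges(\Graph))=\prod_{e\in\edges(\Graph)}\resist(e)$ converts each $\prod_{e\in\tree}\resist(e)^{-1}$ into $\resist(\edges(\Graph)\setminus\tree)$ and each $\prod_{e\in\forest}\resist(e)^{-1}$ into $\resist(\edges(\Graph)\setminus\forest)$, which is \eqref{tree-formula}.
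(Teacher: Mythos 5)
Your approach is correct but genuinely different from the paper's. The paper proceeds by guess-and-verify in the spirit of electrical networks: it writes down an explicit voltage distribution
\[v_i := \frac{\sum_{\forest\in\mathcal{F}_{k\ell}}\indicator{i}(\forest)\resist(\edges(\Graph)\setminus\forest)}{\sum_{\tree\in \trees(\Graph)}\resist(\edges(\Graph)\setminus\tree)}\]
(with $\mathcal{F}_{k\ell}$ the set of $2$-forests separating $k$ from $\ell$ and $\indicator{i}(\forest)\in\{0,1\}$ recording which side $i$ falls on), observes that $v_i-v_j$ equals the right-hand side of \eqref{tree-formula} by definition of $\permute{i}{j}{k}{\ell}{\forest}$, and then verifies that $v$ actually induces the current assignment $\basis_k-\basis_\ell$ by showing that the edge current along $e\colon i\to j$ is a spanning-tree-weighted average of the elementary ``tree currents'' $I_{\Graph|_\tree}(e:i\to j)\in\{-1,0,1\}$. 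You instead write $\laplace=\partial C\transpose{\partial}$, reduce to the deleted Laplacian $L_q$, and invoke Cauchy--Binet together with the total unimodularity of the incidence matrix, i.e.\ the weighted all-minors matrix--tree theorem. Both are sound; the trade-off is that your route is systematic but back-loads the combinatorics into the cofactor sign bookkeeping $\varepsilon(\forest)=\permute{i}{j}{k}{\ell}{\forest}$ (which you correctly flag as the main obstacle, and which, if you simply quote the all-minors theorem, makes the argument non-self-contained), while the paper's route is entirely elementary and, as a byproduct, delivers the weighted-average formula \eqref{current-average} for edge currents, which the paper reuses later (in the proof of Proposition \ref{bound}) to bound $|I_\resist(e)|\leq\|\divD\|$. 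If you adopted your approach you would need to derive that bound separately.

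One small point to be explicit about: in identifying $\green(\Graph,\resist;\divD,\divE)=\transpose{(\divD')}L_q^{-1}\divE'$, you should note that this uses $\divD$ being zero-sum (so that $\transpose{\divD}$ annihilates constants and one may replace the zero-sum solution $\laplace^+\divE$ by the solution vanishing at $q$), and $\divE$ being zero-sum (so that $\laplace w=\divE$ is solvable at all). You gesture at this but it is worth spelling out, since without it the reduction to $L_q$ does not go through.
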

\begin{proof}
 This amounts to finding a voltage distribution that induces a current of $+1$ at vertex $k$, a current of $-1$ at vertex $\ell$, and a current of $0$ elsewhere, and then checking the voltage difference between vertices $i$ and $j$.
 
 Let $\mathcal{F}_{k\ell}\subset\forests{2}(\Graph)$ be the set of those $2$-forests $\forest$ such that $k$ is in one component of $\Graph|_\forest$ and $\ell$ is in the other.  
 For each vertex $i$ of $\Graph$, define a function $\indicator{i}:\mathcal{F}_{k\ell}\to\{0,1\}$ by
 \[\indicator{i}(\forest)=\begin{cases}\text{$1$ if $i$ is in the component of $\Graph|_\forest$ containing $k$}\\\text{$0$ if $i$ is in the component of $\Graph|_\forest$ containing $\ell$.}\end{cases}\]
 Then define a voltage distribution $v\in \rr^{\vertices(\Graph)}$ by 
 \[v_i := \frac{\sum_{\forest\in\mathcal{F}_{k\ell}}\indicator{i}(\forest)\resist(\edges(\Graph)\setminus\forest)}{\sum_{\tree\in \trees(\Graph)}\resist(\edges(\Graph)\setminus\tree)}.\]
 This voltage distribution is so designed that the voltage difference $v_i-v_j$ is the quantity in Equation \eqref{tree-formula}:
 \begin{align*}
  %\green(\Graph, \resist; \basis_i-\basis_j, \basis_k-\basis_\ell) &= 
  v_i-v_j &= \frac{\sum_{\forest\in\mathcal{F}_{k\ell}}(\indicator{i}(\forest)-\indicator{j}(\forest))\resist(\edges(\Graph)\setminus\forest)}{\sum_{\tree\in \trees(\Graph)}\resist(\edges(\Graph)\setminus\tree)}\\
  &= \frac{\sum_{\forest\in\forests{2}(\Graph)}\permute{i}{j}{k}{\ell}{\forest}\resist(\edges(\Graph)\setminus\forest)}{\sum_{\tree\in \trees(\Graph)}\resist(\edges(\Graph)\setminus\tree)},
  \end{align*}
 since $\permute{i}{j}{k}{\ell}{\forest}$ vanishes unless $F\in\mathcal{F}_{k\ell}$, in which case $\permute{i}{j}{k}{\ell}{\forest}$ equals $\indicator{i}(\forest)-\indicator{j}(\forest)$.
 Then all that remains is to show that $v$ induces the current assignment $\basis_k-\basis_\ell$, since then $v$ differs from $\laplace^+(\basis_k-\basis_\ell)$ by an overall constant, and hence
 \[\green(\Graph, \resist; \basis_i-\basis_j, \basis_k-\basis_\ell) = \laplace^+(\basis_k-\basis_\ell)_i-\laplace^+(\basis_k-\basis_\ell)_j = v_i-v_j\]
 as desired.

 We now prove that $v$ induces the correct currents.  
 First, let $e:i\to j$ be an edge. Then the current along this edge is 
 \[\current{e}{i}{j} = (v_i-v_j)/\resist(e) = \frac{\sum_{\forest\in\mathcal{F}_{k\ell}}\permute{i}{j}{k}{\ell}{\forest}\resist(\edges(\Graph)\setminus\forest)/\resist(e)}{\sum_{\tree\in \trees(\Graph)}\resist(\edges(\Graph)\setminus\tree)}.\]
 Now for each 2-forest $\forest$ with $\permute{i}{j}{k}{\ell}{\forest}$ nonzero, we find that $\forest\cup\{e\}$ is a spanning tree since $i$ and $j$ belong to different components of $\forest$. 
 A given spanning tree $\tree$ will arise in this way if and only if the path in $\tree$ from $k$ to $\ell$ contains the edge $e$; the coefficient $\permute{i}{j}{k}{\ell}{\forest}$ is then $+1$ if that path traverses $e$ in the direction from $i$ to $j$, or $-1$ if the direction is from $j$ to $i$. Below we will call this quantity $I_{\Graph|_\tree}(e:i\to j)$, since it is the current that would flow along edge $e$ if the current distribution $\basis_k-\basis_\ell$ were imposed on $\Graph|_\tree$ instead of $\Graph$. Moreover, if a spanning tree $T$ such that the path in $T$ from $k$ to $\ell$ contains the edge $e:i \to j$ corresponds in the above manner to a $2$-forest $\forest$ with $\permute{i}{j}{k}{\ell}{\forest}$ nonzero, then the identity
\[ \resist(\edges(\Graph) \setminus \tree) = \resist(\edges(\Graph)\setminus \forest)/\mu(e) \]
holds. We find
 \begin{equation}\label{current-average}
  \current{e}{i}{j} = \frac{\sum_{\tree\in\trees(\Graph)}\resist(\edges(\Graph)\setminus\tree)I_{\Graph|_\tree}(e:i\to j)}{\sum_{\tree\in \trees(\Graph)}\resist(\edges(\Graph)\setminus\tree)} \, . 
 \end{equation}
 In particular, the total current out of a given vertex $i$ is
 \[\sum_j \sum_{e:i\to j} \current{e}{i}{j} = \frac{\sum_{\tree\in\trees(\Graph)}\resist(\edges(\Graph)\setminus\tree)\left(\sum_j \sum_{e:i\to j}I_{\Graph|_\tree}(e:i\to j)\right)}{\sum_{\tree\in \trees(\Graph)}\resist(\edges(\Graph)\setminus\tree)}.\]
Now $\sum_j \sum_{e:i\to j}I_{\Graph|_\tree}(e:i\to j) = (\basis_k-\basis_\ell)_i$, and so is independent of $\tree$.
%That inner sum turns out to depend only on $i$, and not on $\tree$:
%\[\sum_j \sum_{e:i\to j}I_{\Graph|_\tree}(e:i\to j) = \begin{cases}+1 & \text{if $i=k$}\\
%-1 & \text{if $i=\ell$}\\
%\ 0 & \text{otherwise.}\end{cases}\]
%Indeed, the sum vanishes trivially if $i$ is not on the path in $\tree$ from $k$ to $\ell$.  
%It also vanishes if $i$ in the interior of the path, since the contributions from the edge following $i$ and the edge preceding $i$ cancel.
%But if $i=k$, then only the edge following $i$ contributes, and if $i=\ell$, there is only the edge preceding $i$.
Therefore we have 
\[\sum_j \sum_{e:i\to j}\current{e}{i}{j} = (\basis_k-\basis_\ell)_i\]
as well, so the voltage distribution $v$ induces the current distribution $(\basis_k-\basis_\ell)$ as desired.
\end{proof}

As a consequence, we have the following homogeneity property of the Green's function:

\begin{prop} \label{proper-homogeneous} Let $\Graph$ be a connected graph, and let $\divD$ and $\divE$ be zero-sum elements of $\rr^{\vertices(\Graph)}$. The Green's function $\green(\Graph, \cdot\,;\divD,\divE)$ is homogeneous of weight one; that is, the equality 
\begin{equation}\label{homog-eq}
\green(\Graph,a\, \mu;\divD,\divE)=a \, \green(\Graph,\mu;\divD,\divE)
\end{equation}
holds for all $a \in \rr_{>0}$ and for all $\mu \in \rr_{>0}^{\mathrm{Ed}(\Gamma)}$. 
\end{prop}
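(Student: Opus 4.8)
The plan is to reduce to the case of basis differences and then read off the result directly from the explicit tree formula of Proposition~\ref{prop:tree-formula}. Recall that for fixed $\resist$ the Green's function $\green(\Graph,\resist;\cdot\,,\cdot\,)$ is a symmetric bilinear form on the space of zero-sum vectors in $\rr^{\vertices(\Graph)}$, and that every such zero-sum vector is an $\rr$-linear combination of vectors of the form $\basis_i-\basis_j$. Hence it suffices to prove \eqref{homog-eq} in the case $\divD=\basis_i-\basis_j$ and $\divE=\basis_k-\basis_\ell$ for vertices $i,j,k,\ell$ of $\Graph$, and then conclude the general case by bilinearity.

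The key combinatorial observation is a uniform edge count, which uses connectedness of $\Graph$ in an essential way: every spanning tree $\tree\in\trees(\Graph)$ has exactly $|\vertices(\Graph)|-1$ edges, and every $2$-forest $\forest\in\forests{2}(\Graph)$ (i.e.\ a spanning forest with two connected components) has exactly $|\vertices(\Graph)|-2$ edges. Writing $N=|\edges(\Graph)|$, it follows that $|\edges(\Graph)\setminus\tree|=N-|\vertices(\Graph)|+1$ for every $\tree\in\trees(\Graph)$ and $|\edges(\Graph)\setminus\forest|=N-|\vertices(\Graph)|+2$ for every $\forest\in\forests{2}(\Graph)$. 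Also, since $\Graph$ is connected it admits at least one spanning tree, and since $(\Graph,\mu)$ is proper each term $\resist(\edges(\Graph)\setminus\tree)$ is strictly positive, so the denominator in \eqref{tree-formula} does not vanish and replacing $\mu$ by $a\,\mu$ with $a\in\rr_{>0}$ keeps the network proper.

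The last step is to substitute $a\,\mu$ for $\mu$ in formula~\eqref{tree-formula}. By definition $\resist(S)=\prod_{e\in S}\resist(e)$, so scaling $\resist$ by $a$ scales $\resist(S)$ by $a^{|S|}$. By the edge count above, every summand in the numerator of \eqref{tree-formula} acquires the common factor $a^{N-|\vertices(\Graph)|+2}$ and every summand in the denominator the common factor $a^{N-|\vertices(\Graph)|+1}$; these factor out of the respective sums, and their ratio is $a$. This gives $\green(\Graph,a\,\mu;\basis_i-\basis_j,\basis_k-\basis_\ell)=a\,\green(\Graph,\mu;\basis_i-\basis_j,\basis_k-\basis_\ell)$, and bilinearity then yields \eqref{homog-eq} for arbitrary zero-sum $\divD,\divE$. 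There is no serious obstacle in this argument; the only point requiring a little attention is verifying the uniform edge count for spanning trees and $2$-forests, which is exactly where the hypothesis that $\Graph$ be connected is used.
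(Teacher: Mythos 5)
Your proposal is correct and follows essentially the same route as the paper's own proof: reduce to $\divD=\basis_i-\basis_j$, $\divE=\basis_k-\basis_\ell$ by bilinearity, then read off the homogeneity from the tree formula of Proposition~\ref{prop:tree-formula} using the fixed cardinalities of spanning trees and $2$-forests. Your added remarks about non-vanishing of the denominator and explicit scaling are fine but not a departure in method.
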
 
\begin{proof}
 Since $\divD$ and $\divE$ are each zero-sum vectors, they are linear combinations of vectors of the form $\basis_i-\basis_j$, so it suffices to show the homogeneity property in the special case of $\green(\Graph, \cdot\,; \basis_i-\basis_j, \basis_k-\basis_\ell)$.
 But this follows from Proposition~\ref{prop:tree-formula}, since spanning trees and 2-forests have fixed cardinalities, and these cardinalities differ by one.
 More precisely, every spanning tree for $\Graph$ contains $\#\vertices(\Graph)-1$ edges, and every $2$-forest contains $\#\vertices(\Graph)-2$ edges.
 Then the numerator of the formula in Equation \eqref{tree-formula} is a homogeneous polynomial of degree $\#\edges(\Graph)-\#\vertices(\Graph)+2$ in the resistances of the edges, and the denominator is homogeneous of degree $\#\edges(\Graph)-\#\vertices(\Graph)+1$.
 The ratio, therefore, is a homogeneous degree-$1$ rational function in the components of $\resist$.
\end{proof}

We will also show that in case $\divD=\divE$, the Green's function becomes \emph{concave}:
\begin{prop} \label{proper-concave}
Let $\Graph$ be a connected graph, and let $\divD$ be a zero-sum vector in $\rr^{\vertices(\Graph)}$. 
Then for all $\mu_1,\ldots,\mu_n \in \rr_{> 0}^{\edges(\Graph)}$ we have the inequality
\begin{equation}\label{concave-eq}
 \green\left(\Graph,\sum_{i=1}^n \mu_i;\divD,\divD\right) \geq \sum_{i=1}^n \green(\Graph,\mu_i;\divD,\divD).
\end{equation}
\end{prop}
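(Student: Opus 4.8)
The plan is to obtain the inequality \eqref{concave-eq} from \emph{Thomson's variational principle}, which expresses $\green(\Graph,\mu;\divD,\divD)$ as a minimum of flow energies, together with the degree-one homogeneity already established in Proposition~\ref{proper-homogeneous}.

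First I would set up the flow formalism. Call an antisymmetric function $\phi$ on the oriented edges of $\Graph$ a \emph{$\divD$-flow} if, for every vertex $i$, the net amount of $\phi$ flowing out of $i$ equals $\divD_i$; a $0$-flow will be called a \emph{circulation}. For a proper resistance function $\mu$, equip the space of antisymmetric edge functions with the inner product $\langle\phi,\psi\rangle_\mu=\sum_{e\in\edges(\Graph)}\mu(e)\,\phi(e)\psi(e)$, which is positive definite since all $\mu(e)>0$. Let $v\in\rr^{\vertices(\Graph)}$ satisfy $\laplace v=\divD$ (such $v$ exists because $\divD$ is zero-sum and $\Graph$ is connected), and let $I$ be the induced current, $I(e\colon i\to j)=(v_i-v_j)/\mu(e)$. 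Then $I$ is a $\divD$-flow, and reorganizing the relevant sums over edges gives, on the one hand, $\langle I,I\rangle_\mu=\transpose{v}\laplace v=\transpose{\divD}\laplace^+\divD=\green(\Graph,\mu;\divD,\divD)$, and on the other hand, for any circulation $\gamma$, $\langle I,\gamma\rangle_\mu=\sum_{e\colon i\to j}(v_i-v_j)\gamma(e)=\sum_{i}v_i(\operatorname{div}\gamma)_i=0$. Since any $\divD$-flow $\phi$ differs from $I$ by the circulation $\phi-I$, we get $\langle\phi,\phi\rangle_\mu=\langle I,I\rangle_\mu+\langle\phi-I,\phi-I\rangle_\mu\geq\langle I,I\rangle_\mu$, so
\[
\green(\Graph,\mu;\divD,\divD)=\min_{\phi\ \text{a}\ \divD\text{-flow}}\ \sum_{e\in\edges(\Graph)}\mu(e)\,\phi(e)^2 .
\]

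Second, I would deduce concavity. For each fixed $\divD$-flow $\phi$ the function $\mu\mapsto\sum_{e}\mu(e)\phi(e)^2$ is linear in $\mu\in\rr_{>0}^{\edges(\Graph)}$; hence $\green(\Graph,\cdot\,;\divD,\divD)$, being a pointwise minimum of linear functions, is concave on $\rr_{>0}^{\edges(\Graph)}$. Combining concavity with the homogeneity of Proposition~\ref{proper-homogeneous}, for $\mu_1,\ldots,\mu_n\in\rr_{>0}^{\edges(\Graph)}$ we obtain
\[
\green\Bigl(\Graph,\textstyle\sum_{i=1}^n\mu_i;\divD,\divD\Bigr)=n\cdot\green\Bigl(\Graph,\tfrac1n\textstyle\sum_{i=1}^n\mu_i;\divD,\divD\Bigr)\geq n\cdot\tfrac1n\textstyle\sum_{i=1}^n\green(\Graph,\mu_i;\divD,\divD)=\sum_{i=1}^n\green(\Graph,\mu_i;\divD,\divD),
\]
which is precisely \eqref{concave-eq}.

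The main point to get right is the variational characterization, i.e.\ that the harmonic current $I$ minimizes $\langle\phi,\phi\rangle_\mu$ among $\divD$-flows; this rests on the two identities above ($\langle I,I\rangle_\mu=\transpose{\divD}\laplace^+\divD$ and $\langle I,\gamma\rangle_\mu=0$ for circulations $\gamma$), both routine once sums are reorganized over edges. An alternative route would prove directly that $\mu\mapsto\transpose{\divD}\laplace(\mu)^+\divD$ is concave using operator-convexity of $A\mapsto A^{-1}$ on the positive-definite part of the zero-sum subspace, but phrasing the argument through flows keeps it within the resistor-network language used elsewhere in this appendix. I note also that the same minimum representation, being taken over a $\mu$-independent set of flows, feeds directly into the proofs of monotonicity and of the bound in Proposition~\ref{bound}.
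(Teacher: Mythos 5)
Your proof is correct and rests on the same variational principle as the paper's: the paper invokes Jeans' Least Power Theorem, which is exactly the Thomson minimization you derive from scratch, and it records the identity $\green(\Graph,\resist;\divD,\divD)=\sum_{e}\resist(e)I_\resist(e)^2$ in Equation~\eqref{green-power}. The one genuine (if minor) difference is in how superadditivity is deduced. You prove concavity abstractly, noting that a pointwise minimum of functions linear in $\mu$ is concave, and then invoke the homogeneity of Proposition~\ref{proper-homogeneous} to upgrade mid-point concavity to \eqref{concave-eq}. The paper instead applies the variational principle directly $n$ times: with $\resist=\sum_i\resist_i$ and $I_\resist$ the optimal flow,
\[
\green(\Graph,\resist;\divD,\divD)=\sum_{e}\resist(e)I_\resist(e)^2=\sum_{i=1}^n\sum_{e}\resist_i(e)I_\resist(e)^2\geq\sum_{i=1}^n\sum_{e}\resist_i(e)I_{\resist_i}(e)^2=\sum_{i=1}^n\green(\Graph,\resist_i;\divD,\divD),
\]
which yields superadditivity with no appeal to homogeneity. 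Your route is perfectly valid (homogeneity is already in hand from Proposition~\ref{proper-homogeneous}, so nothing is circular), and the observation that $\green$ is a pointwise minimum of linear functionals is a nice general statement; the paper's version is just a touch more self-contained for this particular inequality and also sets up the strict-equality analysis used in the monotonicity statement that follows.
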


In case $\divD=\basis_k-\basis_\ell$, Proposition~\ref{proper-concave} expresses the concavity of the effective resistance $r_\mathrm{eff}(k,\ell)$ as a function of the edge resistivities.  We will follow and generalize the proofs in \cite{me} and \cite{sh} of this special case.  The proofs use Jeans' Least Power Theorem, restated below in our terminology; see Theorem 357 on page 322 of \cite{je} for Jeans' original statement and proof.  The idea is that among all current patterns that have the correct totals at each vertex, the pattern that corresponds to some valid voltage assignment can be detected by virtue of its minimizing a quantity called the ``power dissipated''.

\begin{thm}[Jeans' Least Power Theorem]\label{jeans}  Let $(\Graph, \resist)$ be a proper resistive network, and let $\divD\in \rr^{\vertices(\Graph)}$. 
Let $I$ be a real-valued function on the set of oriented edges of $\Graph$ such that:
 \begin{enumerate}%[label=\emph{(\arabic*)}]
  \item For all edges $e$ with endpoints $i$ and $j$, we have $I(e:i\to j) = -I(e:j\to i)$.
  \item For all vertices $i$, we have $\sum_{j}\sum_{e:i\to j} I(e:i\to j) = \divD_i$.
 \end{enumerate}
 Then the following are equivalent:
 \begin{enumerate}[label=\roman*.]
  \item There is a voltage assignment $v\in\rr^{\vertices(\Graph)}$ such that \[I(e:i\to j)=(v_i-v_j)/\resist(e),\] the current along edge $e$ from $i$ to $j$ induced by $v$.
  \item The function $I$ minimizes the \emph{power dissipated}
  \[\sum_{e\in\edges(\Graph)}\resist(e)I(e)^2,\]
  among all $I$ satisfying (1) and (2), where for an edge $e$ with endpoints $i$ and $j$, we write $I(e)^2$ for the quantity $I(e:i\to j)^2=I(e:j\to i)^2$.
 \end{enumerate}
\end{thm}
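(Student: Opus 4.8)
The plan is to recognise Theorem~\ref{jeans} as the statement that a positive-definite quadratic form restricted to an affine subspace has a unique minimiser, and that this minimiser is characterised by a single orthogonality condition which is exactly Ohm's law. First I would fix, once and for all, an orientation of each edge of $\Graph$; a function $I$ satisfying condition~(1) is then determined by its values on these oriented edges, so the collection of all such $I$ is identified with $\rr^{\edges(\Graph)}$. Condition~(2) is an inhomogeneous system of linear equations, so it cuts out an affine subspace $\mathcal{A}\subseteq\rr^{\edges(\Graph)}$, which is nonempty by hypothesis. Its direction space is the linear subspace $\mathcal{Z}$ of \emph{circulations}: those antisymmetric $I$ with $\sum_j\sum_{e:i\to j}I(e:i\to j)=0$ at every vertex $i$. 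Since $\resist(e)>0$ for every edge, the power functional $P(I):=\sum_{e\in\edges(\Graph)}\resist(e)I(e)^2$ (the power dissipated) is a positive-definite quadratic form on $\rr^{\edges(\Graph)}$, hence strictly convex, and therefore attains a unique minimum on the affine subspace $\mathcal{A}$.

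The key computation is a discrete Green's identity. If $I_v$ denotes the current induced by a voltage assignment $v\in\rr^{\vertices(\Graph)}$, so that $\resist(e)I_v(e:i\to j)=v_i-v_j$ for every oriented edge $e:i\to j$, then the vertex sums of $I_v$ are the components of $\laplace v$, so $I_v\in\mathcal{A}$ precisely when $\laplace v=\divD$. For any circulation $\kappa\in\mathcal{Z}$ one has, expanding the sum over oriented edges by means of the antisymmetry of $\kappa$,
\[ \sum_{e\in\edges(\Graph)}\resist(e)\,I_v(e)\,\kappa(e) \;=\; \sum_{i\in\vertices(\Graph)} v_i\Bigl(\sum_{j}\sum_{e:i\to j}\kappa(e:i\to j)\Bigr) \;=\; 0, \]
the last equality because $\kappa$ is a circulation. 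Consequently, writing an arbitrary $I\in\mathcal{A}$ as $I=I_v+\kappa$ with $\kappa=I-I_v\in\mathcal{Z}$, we obtain
\[ P(I) \;=\; P(I_v) \;+\; 2\sum_{e\in\edges(\Graph)}\resist(e)\,I_v(e)\,\kappa(e) \;+\; P(\kappa) \;=\; P(I_v)+P(\kappa) \;\geq\; P(I_v), \]
with equality if and only if $\kappa=0$. Thus any voltage-induced element of $\mathcal{A}$, should one exist, is the unique minimiser of $P$ on $\mathcal{A}$.

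It remains to produce a voltage-induced element of $\mathcal{A}$. Summing condition~(2) over the vertices of a fixed connected component of $\Graph$ and using antisymmetry to kill the contribution of every edge lying in that component, one finds that $\divD$ sums to zero on each connected component; hence $\divD$ lies in the image of $\laplace$, and there is a $v$ with $\laplace v=\divD$, whose induced current $I_v$ then lies in $\mathcal{A}$. Putting the pieces together: (i)$\Rightarrow$(ii) is immediate from the displayed inequality (if $I=I_v$, then $P(I')\geq P(I)$ for all admissible $I'$), while (ii)$\Rightarrow$(i) follows because $I_v$ is a minimiser and the minimiser is unique, forcing any minimiser $I$ to equal $I_v$ and hence to be voltage-induced. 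The only delicate point is the bookkeeping with edge orientations in the Green's identity of the second paragraph (together with the harmless remark that the hypotheses already force $\divD$ to be balanced on components); beyond that the argument is the standard strict-convexity principle and I anticipate no genuine obstacle. This is a transcription of Jeans' original argument; see \cite{je}.
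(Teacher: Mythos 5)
The paper itself does not prove Theorem~\ref{jeans}: it defers to Theorem~357 of Jeans' book \cite{je}. So there is no internal proof to compare against, and your task amounts to supplying a self-contained argument. Your proof is correct and is the standard strict-convexity/orthogonality argument: identify antisymmetric edge functions with $\rr^{\edges(\Graph)}$ after fixing orientations, observe that condition~(2) cuts out an affine subspace $\mathcal{A}$ whose direction space $\mathcal{Z}$ is the space of circulations, note that $P(I)=\sum_e\resist(e)I(e)^2$ is a positive-definite quadratic form (properness is exactly what you need here), and establish the discrete Green's identity $\sum_e\resist(e)I_v(e)\kappa(e)=\sum_i v_i\bigl(\sum_j\sum_{e:i\to j}\kappa(e:i\to j)\bigr)=0$ for any voltage-induced $I_v$ and any circulation $\kappa$, from which $P(I_v+\kappa)=P(I_v)+P(\kappa)$ and both implications follow at once. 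The existence step — that $\divD$ is balanced on each component (from~(2) and antisymmetry) and hence in the image of $\laplace$ — closes the loop so that (ii)$\Rightarrow$(i) is not vacuous. The bookkeeping with orientations in the Green's identity is correctly handled (loops contribute nothing since antisymmetry forces $\kappa(e:k\to k)=0$). This is exactly the kind of proof the paper intends the reader to look up; including it here makes the appendix self-contained.
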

This theorem is relevant to us, because the minimal power dissipated $\sum_e\resist(e) I(e)^2$ consistent with a vertex current assignment $\divD$ is precisely the Green's function $\green(\Graph,\resist; \divD, \divD)$. 
Indeed, if $v=\laplace^+\divD$ and $I(e:i\to j)=(v_i-v_j)/\resist(e)$, then we have
\begin{align*}
 \green(\Graph,\resist;\divD,\divD) &= \transpose{\divD}\laplace^+\divD= \transpose{\divD}\,v\\
 &= \sum_{i\in\vertices(\Graph)}\divD_iv_i\\
 &= \sum_{i,j\in\vertices(\Graph)}\sum_{e:i\to j}I(e:i\to j) v_i,
\end{align*}
while we may write the power dissipated similarly:
\begin{align*}
 \sum_{e\in\edges(\Graph)} \resist(e)I(e)^2 &= \frac12\sum_{i,j\in\vertices(\Graph)}\sum_{e:i\to j}\resist(e)I(e:i\to j)^2\\
 &= \frac12\sum_{i,j\in\vertices(\Graph)}\sum_{e:i\to j}I(e:i\to j)(v_i-v_j)\\
 &= \frac12\sum_{i,j\in\vertices(\Graph)}\sum_{e:i\to j}I(e:i\to j) v_i - \frac12\sum_{i,j\in\vertices(\Graph)}\sum_{e:i\to j}I(e:i\to j) v_j.
\end{align*}
But by interchanging the dummy variables $i$ and $j$ in the second sum, we find that the second sum is the negative of the first.
Thus we obtain
\begin{equation}\label{green-power}
 \green(\Graph,\resist;\divD,\divD)=\sum_{e\in\edges(\Graph)}\resist(e)I(e)^2
\end{equation}
as desired.
We can now prove the concavity property of $\green(\Graph, \cdot\,; \divD, \divD)$.

\begin{proof}[Proof of Proposition \ref{proper-concave}]
 Given a resistance function $\resist$, denote by $I_\resist$ the edge current assignment consistent with vertex totals $\divD$ that minimizes the power dissipated $\sum_e \resist(e)I_\resist(e)^2$.  Then from the resistance functions $\resist_1,\dots,\resist_n$, we obtain $n$ current assignments $I_{\resist_1},\dots,I_{\resist_n}$, as well as the current assignment $I_\resist$ corresponding to the total $\resist:= \sum_{i=1}^n\resist_i$.  By Jeans' Least Power Theorem, we obtain the inequalities
 \[\sum_{e\in\edges(\Graph)}\resist_i(e) I_\resist(e)^2 \geq \sum_{e\in\edges(\Graph)}\resist_i(e) I_{\resist_i}(e)^2 = \green(\Graph,\resist_i; \divD,\divD)\]
 for each $i\in\{1,\dots,n\}$.  Summing over $i$, we obtain the inequality
 \[\green(\Graph,\resist;\divD,\divD) = \sum_{e\in\edges(\Graph)}\resist(e) I_\resist(e)^2 =\sum_{i=1}^n\sum_{e\in\edges(\Graph)}\resist_i(e) I_\resist(e)^2\geq \sum_{i=1}^n\green(\Graph,\resist_i;\divD,\divD),\]
 as desired.
\end{proof}

Jeans' Least Power Theorem also allows us to prove monotonicity, in the following sense:

\begin{prop}\label{proper-monotonic}
 Let $\Graph$ be a connected graph, and let $\divD$ be a zero-sum vector in $\rr^{\vertices(\Graph)}$. Let $\resist,\resist'\in\rr_{>0}^{\edges(\Graph)}$ be two resistance functions with $\resist\leq\resist'$. Then 
 \begin{equation}\label{monotonic-eq}
 \green(\Graph, \resist; \divD,\divD)\leq \green(\Graph, \resist'; \divD,\divD),
 \end{equation}
 with equality if and only if for each edge $e$, either $\resist(e)=\resist'(e)$ or no current flows along $e$ (in both $(\Graph,\resist)$ and $(\Graph,\resist')$). 
\end{prop}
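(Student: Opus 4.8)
The plan is to deduce both the inequality and the equality criterion from Jeans' Least Power Theorem (Theorem~\ref{jeans}) together with the identification~\eqref{green-power} of the Green's function with a minimal power dissipation. Fix an orientation of each edge, so that current patterns satisfying conditions~(1) and~(2) of Theorem~\ref{jeans} with vertex totals $\divD$ form a nonempty affine subspace of $\rr^{\edges(\Graph)}$. For a proper resistance function $\resist$ the power $I\mapsto\sum_e\resist(e)I(e)^2$ is a positive-definite quadratic form on this affine subspace, so it attains a minimum at some pattern $I_\resist$, and this minimizer is unique: if $I_1,I_2$ both attained the minimum $P$, their average $\tfrac12(I_1+I_2)$ would again lie in the affine subspace and would have power $P-\tfrac14\sum_e\resist(e)(I_1(e)-I_2(e))^2\le P$, forcing $I_1=I_2$ since every $\resist(e)>0$. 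By Theorem~\ref{jeans} and~\eqref{green-power}, $I_\resist$ is the current pattern induced by the voltage $\laplace[(\Graph,\resist)]^+\divD$ and $\green(\Graph,\resist;\divD,\divD)=\sum_e\resist(e)I_\resist(e)^2$; likewise for $\resist'$.

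The inequality~\eqref{monotonic-eq} then comes out by using $I_{\resist'}$ as a trial pattern on $(\Graph,\resist)$. Since $I_{\resist'}$ satisfies conditions~(1) and~(2), minimality of $I_\resist$ gives
\[
\green(\Graph,\resist;\divD,\divD)=\sum_e\resist(e)I_\resist(e)^2\le\sum_e\resist(e)I_{\resist'}(e)^2\le\sum_e\resist'(e)I_{\resist'}(e)^2=\green(\Graph,\resist';\divD,\divD),
\]
the last step because $\resist(e)\le\resist'(e)$ and $I_{\resist'}(e)^2\ge0$ for every edge $e$.

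For the equality statement I would trace this chain backwards in one direction and construct a voltage in the other. If equality holds in~\eqref{monotonic-eq}, both inequalities above are equalities: equality of the middle two terms reads $\sum_e(\resist'(e)-\resist(e))I_{\resist'}(e)^2=0$ with all summands nonnegative, so for each edge $e$ either $\resist(e)=\resist'(e)$ or $I_{\resist'}(e)=0$; equality of the first two terms says $I_{\resist'}$ is also a minimizer of the $\resist$-power, so by the uniqueness above $I_\resist=I_{\resist'}$, and ``no current flows along $e$'' has the same meaning for $(\Graph,\resist)$ as for $(\Graph,\resist')$. Conversely, assuming that for each edge $e$ either $\resist(e)=\resist'(e)$ or no current flows along $e$ in $(\Graph,\resist)$, let $v=\laplace[(\Graph,\resist)]^+\divD$, so $I_\resist(e:i\to j)=(v_i-v_j)/\resist(e)$ on each oriented edge; I claim $v$ induces the same pattern on $(\Graph,\resist')$. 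On edges with $\resist(e)=\resist'(e)$ this is clear; on edges with $I_\resist(e)=0$ the voltage drop $v_i-v_j=\resist(e)I_\resist(e:i\to j)$ vanishes, so the $\resist'$-current there is also $0$. Hence by Theorem~\ref{jeans}, $I_\resist$ is the $\resist'$-power minimizer, and as $\resist(e)=\resist'(e)$ on every edge carrying current we get $\sum_e\resist'(e)I_\resist(e)^2=\sum_e\resist(e)I_\resist(e)^2$, i.e.\ equality in~\eqref{monotonic-eq}.

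The only nonroutine ingredient --- and thus the main obstacle --- is the uniqueness of the power-minimizing current pattern, which Theorem~\ref{jeans} does not assert but which follows from strict convexity of the power functional on the affine space of admissible patterns; this uniqueness is precisely what lets one upgrade ``$I_{\resist'}(e)=0$'' to the symmetric formulation ``no current flows along $e$ in both $(\Graph,\resist)$ and $(\Graph,\resist')$'' appearing in the statement.
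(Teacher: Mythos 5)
Your argument is correct and follows essentially the same route as the paper: both use Jeans' Least Power Theorem together with the identification \eqref{green-power} of the Green's function as a minimum power dissipation, proving the inequality by using $I_{\resist'}$ as a trial pattern for $\resist$, and the equality criterion by analyzing when each inequality in the chain is saturated. The only cosmetic differences are that you spell out the uniqueness of the power minimizer via strict convexity (which the paper invokes implicitly when it concludes $I_\resist = I_{\resist'}$), and in the converse direction you construct the inducing voltage explicitly rather than simply plugging $I_\resist$ in as a trial pattern for $\resist'$ as the paper does.
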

\begin{proof}
 Let $I_\resist(e)^2$ and $I_{\resist'}(e)^2$ be the squared currents flowing along $e$ under resistances $\resist$ and $\resist'$, respectively.  Then Jeans' Least Power Theorem tells us that
 \[\sum_{e\in\edges(\Graph)}\resist(e) I_{\resist}(e)^2\leq\sum_{e\in\edges(\Graph)}\resist(e) I_{\resist'}(e)^2\leq\sum_{e\in\edges(\Graph)}\resist'(e) I_{\resist'}(e)^2,\]
 so $\green(\Graph, \resist; \divD,\divD)\leq \green(\Graph, \resist'; \divD,\divD)$ as desired.  
 
 If for each edge we have either $\resist(e)=\resist'(e)$ or $I_{\resist}(e)^2=0$, then we also have
 \begin{align*}
  \sum_{e\in\edges(\Graph)}\resist(e)I_\resist(e)^2 &=\sum_{e\in\edges(\Graph)}\resist'(e)I_\resist(e)^2\\
  &\geq \sum_{e\in\edges(\Graph)}\resist'(e)I_{\resist'}(e)^2
 \end{align*}
 by Jeans' Least Power Theorem, establishing the reverse inequality.
 
 Finally, suppose that $\sum_e\resist(e)I_\resist(e)^2=\sum_e\resist'(e)I_{\resist'}(e)^2$.  Then we must also have 
 \[\sum_{e\in\edges(\Graph)}\resist(e) I_{\resist}(e)^2=\sum_{e\in\edges(\Graph)}\resist(e) I_{\resist'}(e)^2,\]
 since the latter is sandwiched between two equal quantities.  Then $I_{\resist'}$ is an edge current assignment inducing $\divD$ and minimizing the power dissipated, so we must have $I_\resist=I_{\resist'}$.  We thus deduce
 \[\sum_{e\in\edges(\Graph)}\resist(e) I_{\resist}(e)^2=\sum_{e\in\edges(\Graph)}\resist'(e) I_{\resist}(e)^2,\]
 so $\sum_e(\resist'(e)-\resist(e))I_\resist(e)^2$=0.  Since each term is nonnegative, they must all be zero, so each edge $e$ either satisfies $\resist(e)=\resist'(e)$ or $I_\resist(e)=I_{\resist'}(e)=0$.
\end{proof}

Our next task is to show that the Green's function extends to the case of improper resistive networks, in the manner of Proposition \ref{continuous} and the following description.

\begin{proof}[Proof of Proposition \ref{continuous} and Equation (\ref{limiting-value})]
 Once again, we reduce to the case that $\divD=\basis_i-\basis_j$ and $\divE=\basis_k-\basis_\ell$ in order to use Equation~\eqref{tree-formula}.  
 We will divide the numerator and denominator of that formula by the same quantity, and then show that as $\resist\to\resist_0$ the new numerator and denominator converge to the correct limits.
 
 Recall that while spanning trees only exist for connected graphs, we may always consider \emph{maximal forests}, which consist of a spanning tree for each connected component of the graph.  
 Maximal forests of $\Graph|_S$ connect constructions on $\Graph/S$ to constructions on $\Graph$, as shown in the following proposition:
 
 \begin{prop}\label{max-forest}
  Let $\Graph$ be a graph. Let $S\subset\edges(\Graph)$ a collection of edges, and form the contracted graph $\Graph/ S$.  
  Let $F\subset\edges(\Graph)$ be another collection of edges, and suppose that $F\cap S$ is a maximal forest of $\Graph|_S$.  
  Let $i$ and $j$ be two vertices of $\Graph$.
  \begin{enumerate}
   \item Each path from $[i]$ to $[j]$ through $F\setminus S$ in $\Graph/S$ arises uniquely as the contraction of a path from $i$ to $j$ through $F$ in $\Graph$.
   \item The vertices $i$ and $j$ are in the same connected component of $\Graph|_F$ if and only if $[i]$ and $[j]$ are in the same connected component of $(\Graph/S)|_{F\setminus S}$.
   \item $F$ is cycle-free in $\Graph$ if and only if $F\setminus S$ is cycle-free in $\Graph/S$.
 \end{enumerate}
  \end{prop}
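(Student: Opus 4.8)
The crux is the reduction afforded by the hypothesis that $F\cap S$ is a maximal forest of $\Graph|_S$: it forces $\Graph|_{F\cap S}$ to have exactly the same connected components as $\Graph|_S$, so the quotient map $[~\cdot~]\colon\vertices(\Graph)\to\vertices(\Graph/S)$ is simultaneously the quotient by the components of $\Graph|_{F\cap S}$. In particular, two vertices lie in one fibre of $[~\cdot~]$ if and only if they lie in one component of the forest $\Graph|_{F\cap S}$, and in that case there is a \emph{unique} path joining them inside $\Graph|_{F\cap S}$. Equivalently, $(\Graph/S)|_{F\setminus S}$ is nothing but $\Graph|_F$ with the spanning subforest $\Graph|_{F\cap S}$ contracted, and this is the only way $S$ enters the argument.

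I would prove (1) by an explicit splicing construction. Let $\bar P\colon [i]=w_0,\dots,w_k=[j]$ be a path in $(\Graph/S)|_{F\setminus S}$ with consecutive edges $f_1,\dots,f_k\in F\setminus S$; since the edges of $\Graph/S$ are literally the edges of $\Graph$ lying outside $S$, each $f_t$ is an edge of $\Graph$, joining some vertex $p_t$ over $w_{t-1}$ to some vertex $q_t$ over $w_t$. Now lift $\bar P$ by inserting, between consecutive $f_t$'s, the unique path through $\Graph|_{F\cap S}$ from $q_t$ to $p_{t+1}$, and likewise the unique such paths from $i$ to $p_1$ and from $q_k$ to $j$. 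The resulting walk lies in $\Graph|_F$, runs from $i$ to $j$, contracts to $\bar P$, and is a simple path because vertices over distinct $w_s$ are distinct while those over a fixed $w_s$ trace a simple path in a tree. For uniqueness: any $i$-to-$j$ path of $\Graph|_F$ contracting to $\bar P$ must traverse $f_1,\dots,f_k$ in this order (these are precisely the edges that survive contraction) and, between consecutive ones, must join two prescribed vertices inside a single component of $\Graph|_{F\cap S}$, so uniqueness of paths in a tree pins it down to the spliced lift. Part (2) then follows at once: ``$\Rightarrow$'' by contracting a path of $\Graph|_F$ between $i$ and $j$ to a walk of $(\Graph/S)|_{F\setminus S}$ between $[i]$ and $[j]$, and ``$\Leftarrow$'' by lifting a path between $[i]$ and $[j]$ via (1).

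For (3) I would invoke the elementary criterion that a graph $H$ is cycle-free if and only if $\#\edges(H)=\#\vertices(H)-\kappa(H)$, where $\kappa(H)$ denotes the number of connected components of $H$. Because $F\cap S$ is a maximal forest of $\Graph|_S$, we have $\kappa(\Graph|_{F\cap S})=\kappa(\Graph|_S)=\#\vertices(\Graph/S)$, and, $\Graph|_{F\cap S}$ being a forest, $\#(F\cap S)=\#\vertices(\Graph)-\#\vertices(\Graph/S)$; moreover, by part (2) the surjection $[~\cdot~]$ induces a bijection on connected components, so $\kappa(\Graph|_F)=\kappa((\Graph/S)|_{F\setminus S})$. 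Substituting $\#(F\setminus S)=\#F-\#(F\cap S)$ into the criterion applied to $\Graph|_F$ and simplifying turns it into the criterion applied to $(\Graph/S)|_{F\setminus S}$, which is exactly the asserted equivalence.

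I expect the only genuine difficulty to lie in the combinatorial bookkeeping in (1): namely, checking that the spliced walk is a \emph{simple} path and that it is the \emph{only} path of $\Graph|_F$ contracting to $\bar P$ --- the latter amounting to ruling out a non-tree detour inside a contracted component. Both points come down to uniqueness of paths in the tree $\Graph|_{F\cap S}$, and it is precisely the maximal-forest hypothesis that guarantees such tree-paths exist between \emph{any} two cofibred vertices. The natural order in which to carry out the argument is: first the reduction, then (1), then (2) as a corollary, and finally (3) via the counting argument, which uses only (2).
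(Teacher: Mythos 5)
Your proof of parts (1) and (2) is in substance the same as the paper's: both lift a path $\bar P$ in $(\Graph/S)|_{F\setminus S}$ to a path in $\Graph|_F$ by splicing in the unique paths inside the trees of $\Graph|_{F\cap S}$ between the mismatched endpoints of consecutive surviving edges, and both deduce (2) by contracting in one direction and applying (1) in the other. Your preliminary observation --- that the maximal-forest hypothesis makes $\Graph|_{F\cap S}$ and $\Graph|_S$ have the same component partition, so $[~\cdot~]$ is simultaneously the quotient by $\Graph|_{F\cap S}$ --- is the implicit content of the paper's ``these two vertices belong to the same connected component of $\Graph|_S$, so there is a unique path through $F\cap S$''. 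You are somewhat more explicit than the paper about why the spliced lift is \emph{simple} and why it is the \emph{unique} lift, which is a genuine (if minor) gap in the paper's exposition that you correctly flag as the only delicate point.

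Part (3) is where you genuinely diverge. The paper argues directly: a cycle in $\Graph|_F$ has at least one edge outside $S$ (because $F\cap S$ is a forest), and contracting it gives a nontrivial closed walk in $(\Graph/S)|_{F\setminus S}$ which must contain a cycle; conversely a cycle in $(\Graph/S)|_{F\setminus S}$ lifts via (1) to a cycle in $\Graph|_F$. You instead invoke the Euler-type criterion $\#\edges(H)=\#\vertices(H)-\kappa(H)$ and reduce (3) to a bookkeeping identity using $\#(F\cap S)=\#\vertices(\Graph)-\#\vertices(\Graph/S)$ and the bijection on components from (2). Both are correct; your route has the virtue that the forward implication falls out of the count automatically and one never has to argue that the contraction of a simple cycle contains a cycle (a fact the paper uses without justification, and which does require a word --- it holds because the contracted closed walk is a closed \emph{trail}, traversing the surviving edges of $C$ each exactly once). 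The paper's route is more self-contained in that it needs no auxiliary counting lemma. Either way, (3) reduces to (1)--(2), and your ordering of the argument matches the paper's.
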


\begin{proof}
 We prove (1), from which we deduce (2) and (3).  
 \begin{enumerate}
  \item Let $[i]\xrightarrow{e_0} [i_1]\xrightarrow{e_1} \dots \xrightarrow{e_n} [j]$ be a path from $[i]$ to $[j]$ through $F\setminus S$ in $\Graph/S$.  
  In $\Graph$, the target endpoint $i_k'$ of $e_k$ may not agree with the source endpoint $i_k''$ of $e_{k+1}$.  
  However, these two vertices belong to the same connected component of $\Graph|_S$, so there is a unique path through $F\cap S$ from $i_k''$ to $i_k'$.  
  In this way we obtain the desired unique path $i\to j$ using only the $e_k$ and edges from $F\cap S$.
  
  \item Suppose that there is a path $P:i\to j$ through $F$ in $\Graph$.  
  After contracting the edges in $S$, we are left with a walk $P\setminus S: [i]\to [j]$ through $F\setminus S$ in $\Graph/S$.  
  This walk may not itself be a path, as some of the intermediate vertices may have been identified in the contracting process, but $P\setminus S$ will nevertheless contain a path $P':[i]\to [j]$.
  
  Conversely, any path from $[i]$ to $[j]$ through $F\setminus S$ in $\Graph/S$ extends to a path from $i$ to $j$ through $F$ in $\Graph$, by (1).
  
  \item Suppose that $F$ is not cycle-free, so that $F$ contains a nontrivial path, say $C:i\to i$.  
  Since $F\cap S$ is cycle-free, one of the edges of $C$ must belong to $F\setminus S$.  
  Then after contracting the edges of $S$, we obtain a nontrivial walk $C\setminus S: [i]\to [i]$, which must contain a nontrivial path $C':[i]\to [i]$. 
  Therefore $F\setminus S$ is not cycle-free in $\Graph/S$.
  
  Conversely, suppose that $F\setminus S$ contains a nontrivial cycle $[i]\to[i]$.  
  Then by (1), we obtain a nontrivial path $i\to i$ through $F$ in $\Graph$, so $F$ is also not cycle-free. \qedhere
 \end{enumerate}
\end{proof}

Denote the collection of edge sets forming maximal forests for $S$ by $\maxforests(S)$.  
The quantity $\Sigma$ by which we will divide the numerator and denominator of \eqref{tree-formula} is 
\[\Sigma := \sum_{\hphantom{\qquad}\mathclap{M\in \maxforests(S)}\hphantom{\qquad}}\resist(S\setminus M).\]

Consider first the denominator $\sum_{T\in\trees(\Graph)}\resist(\edges(\Graph)\setminus T)$.  
Each spanning tree $T$ intersects $S$ in a forest: this forest is either a maximal forest of $\Graph|_S$ or is strictly contained in one.  
If the former, then $F:= T\cap S$ is a maximal forest of $\Graph|_S$ and $T': = T\setminus S$ is a spanning tree for $\Graph/ S$. 
Conversely, if $F$ is any maximal forest of $\Graph|_S$ and $T'$ is any spanning tree for $\Graph/S$, then $T:=T'\cup F$ is a spanning tree for $\Graph$ by Proposition~\ref{max-forest}.  
Therefore we can write the denominator as
\[\begin{gathered} 
\sum_{T\in\trees(\Graph)}\resist(\edges(\Graph)\setminus T) = \sum_{\substack{T\in\trees(\Graph)\\\mathclap{T\cap S\in\maxforests(S)}}}\resist(\edges(\Graph)\setminus T)
+\sum_{\substack{T\in\trees(\Graph)\\\mathclap{T\cap S\notin\maxforests(S)}}}\resist(\edges(\Graph)\setminus T)\\
= \left(\sum_{T'\in\trees(\Graph/S)}\resist(\edges(\Graph/S)\setminus T')\right)\cdot\Sigma\quad  + \sum_{\substack{T\in\trees(\Graph)\\\mathclap{T\cap S\notin\maxforests(S)}}}\resist(\edges(\Graph)\setminus T),\text{ so}\\
\frac1\Sigma \sum_{T\in\trees(\Graph)}\resist(\edges(\Graph)\setminus T) 
= \sum_{T'\in\trees(\Graph/S)}\resist(\edges(\Graph/S)\setminus T') + \sum_{\substack{T\in\trees(\Graph)\\\mathclap{T\cap S\notin\maxforests(S)}}}\frac{\resist(\edges(\Graph)\setminus T)}\Sigma.
\end{gathered}
\]
The limit of the sum over $\tree'$, as $\resist\to\resist_0$, can be evaluated just by replacing $\resist$ with $\resist_0$; 
the result is nonzero since $\Graph/S$ has at least one spanning tree and $\resist_0$ takes only positive values on the edges of $\Graph/S$.  
The limit of the second term vanishes: if $T\cap S$ is a non-maximal forest of $\Graph|_S$, then it is strictly contained in some maximal forest $M$.  
Then we may bound $\resist(\edges(\Graph)\setminus T)/\Sigma$ by
\begin{align*}
 \frac{\resist(\edges(\Graph)\setminus T)}{\Sigma} &\leq \frac{\resist(\edges(\Graph)\setminus T)}{\resist(S\setminus M)} = \resist(\edges(\Graph)\setminus (S\cup T))\,\resist (M\setminus (T\cap S)).
\end{align*}
Now by assumption, $M\setminus (T\cap S)$ is nonempty and consists only of edges in $S$.  
Therefore, as $\resist\to\resist_0$, we have $\resist(M\setminus(T\cap S))\to 0$.  As a result, we have 
\[\lim_{\resist\to\resist_0} \left(\frac{1}\Sigma \sum_{\tree\in\trees(\Graph)}\resist(\edges(\Graph)\setminus\tree)\right) = \sum_{\tree'\in\trees(\Graph/S)}\resist_0(\edges(\Graph/S)\setminus \tree').
\]

The argument for the numerator is similar: 
again, we find that if a 2-forest $F$ intersects $S$ in a non-maximal forest of $\Graph|_S$, then $\resist(\edges(\Graph)\setminus F)/\Sigma$ tends to $0$ as $\resist$ approachs $\resist_0$.  
The $2$-forests $F$ which do intersect $S$ in a maximal forest have the property that $F\setminus S$ is a $2$-forest of $\Graph/S$, and conversely, every choice of $2$-forest for $\Graph/S$ and maximal forest for $\Graph|_S$ combine to give a $2$-forest for $\Graph$.  
So we find that 
\[\lim_{\resist\to\resist_0} \left(\frac{1}\Sigma \sum_{\forest\in\forests{2}(\Graph)}\permute{i}{j}{k}{\ell}{\forest}\resist(\edges(\Graph)\setminus\tree)\right) = \sum_{\forest'\in\forests{2}(\Graph/S)}\permute{[i]}{[j]}{[k]}{[\ell]}{\forest'}\resist_0(\edges(\Graph/S)\setminus \forest'),
\]
where we used Proposition~\ref{max-forest} again to obtain the identity $\permute{i}{j}{k}{\ell}{\forest} = \permute{[i]}{[j]}{[k]}{[\ell]}{\forest\setminus S}$.

All together, we have
\begin{align*}
 \green(\Graph,\resist; \basis_i-\basis_j, \basis_k-\basis_\ell) &= \frac{\sum_{\forest\in\forests{2}(\Graph)}\permute{i}{j}{k}{\ell}{\forest}\resist(\edges(\Graph)\setminus\forest)}{\sum_{\tree\in \trees(\Graph)}\resist(\edges(\Graph)\setminus\tree)} \\
 &= \frac{\sum_{\forest\in\forests{2}(\Graph)}\permute{i}{j}{k}{\ell}{\forest}\resist(\edges(\Graph)\setminus\forest)/\Sigma}{\sum_{\tree\in \trees(\Graph)}\resist(\edges(\Graph)\setminus\tree)/\Sigma}\\
 &\to \frac{\sum_{\forest'\in\forests{2}(\Graph/S)}\permute{[i]}{[j]}{[k]}{[\ell]}{\forest'}\resist_0(\edges(\Graph/S)\setminus\forest')}{\sum_{\tree'\in \trees(\Graph/S)}\resist_0(\edges(\Graph/S)\setminus\tree')}\\
 &= \green(\Graph/S,\resist_0|_{\edges(\Graph/S)}; \basis_{[i]}-\basis_{[j]}, \basis_{[k]}-\basis_{[\ell]})\\
 &= \green(\Graph/S,\resist_0|_{\edges(\Graph/S)}; [\basis_{i}-\basis_{j}], [\basis_{k}-\basis_{\ell}]).
\end{align*}
By linearity, we obtain that Equation~\eqref{limiting-value} holds for arbitrary zero-sum vectors $\divD,\divE\in\rr^{\vertices(\Graph)}$.  
And since a continuous function on a dense subset of a metric space extends to the whole space if and only if its limiting value is well-defined at each point, we have proven Proposition~\ref{continuous} as well.
\end{proof} 

We now deduce the full extent of Proposition~\ref{basicgreenfacts} as the following corollary:

\begin{cor} \label{homogeneousimproper}
 Let $\Graph$ be a connected graph, with $\divD$ and $\divE$ two zero-sum elements of $\rr^{\vertices(\Graph)}$.  Then the homogeneity equation \eqref{homog-eq} holds for all $a\in\rr_{\geq 0}$ and all $\resist\in \rr_{\geq 0}^{\edges(\Graph)}$, the concavity inequality \eqref{concave-eq} holds for all $\resist_1,\dots,\resist_n\in\rr_{\geq 0}^{\edges(\Graph)}$, and the monotonicity inequality \eqref{monotonic-eq} holds for all $\resist,\resist'\in\rr_{\geq 0}^{\edges(\Graph)}$.  
 
 If equality holds in the monotonicity inequality and $(\Graph, \resist')$ is proper, then for each edge $e$ either $\resist(e)=\resist'(e)$ or no current flows along $e$ in $(\Graph, \resist')$.
\end{cor}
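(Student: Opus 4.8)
The strategy is to deduce each assertion of Corollary~\ref{homogeneousimproper} from its proper counterpart (Propositions~\ref{proper-homogeneous}, \ref{proper-concave}, \ref{proper-monotonic}) by a limiting argument, using that $\green(\Graph,\cdot\,;\divD,\divE)$ is continuous on all of $\rr_{\geq 0}^{\edges(\Graph)}$ by Proposition~\ref{continuous}. The basic device is that for any $\resist\in\rr_{\geq 0}^{\edges(\Graph)}$ the perturbed functions $\resist+\tfrac1k\mathbbm{1}$ (with $\mathbbm{1}$ the all-ones resistance) are proper and converge to $\resist$, and sums of such perturbed functions remain proper.

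\textbf{Homogeneity, concavity, and the monotonicity inequality.} For $a>0$ and $\resist\in\rr_{\geq 0}^{\edges(\Graph)}$, I would apply Proposition~\ref{proper-homogeneous} to $\resist+\tfrac1k\mathbbm{1}$ and let $k\to\infty$; continuity of $\green$ at $\resist$ and at $a\resist$ gives \eqref{homog-eq}. The case $a=0$ is handled directly: by \eqref{limiting-value} (with $S=\edges(\Graph)$) the value $\green(\Graph,0;\divD,\divE)$ equals the Green's function of the graph obtained by contracting all edges, which has a single vertex since $\Graph$ is connected, so the relevant zero-sum vectors are $0$ and the value is $0 = 0\cdot\green(\Graph,\resist;\divD,\divE)$. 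For concavity, given $\resist_1,\dots,\resist_n\in\rr_{\geq 0}^{\edges(\Graph)}$, apply Proposition~\ref{proper-concave} to the proper functions $\resist_i+\tfrac1k\mathbbm{1}$ (whose sum $\sum_i\resist_i+\tfrac nk\mathbbm{1}$ is again proper) and pass to the limit. For the monotonicity inequality, if $\resist\leq\resist'$ in $\rr_{\geq 0}^{\edges(\Graph)}$, then $\resist+\tfrac1k\mathbbm{1}\leq\resist'+\tfrac1k\mathbbm{1}$ are proper, so \eqref{monotonic-eq} holds for them by Proposition~\ref{proper-monotonic}, and the limit yields \eqref{monotonic-eq} for $\resist,\resist'$.

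\textbf{The equality clause, which I expect to be the only real obstacle.} Here a naive perturbation destroys the hypothesised equality, so a different trick is needed. Suppose $\green(\Graph,\resist;\divD,\divD)=\green(\Graph,\resist';\divD,\divD)$ with $(\Graph,\resist')$ proper, and fix an edge $e_0$ with $\resist(e_0)<\resist'(e_0)$ (edges with $\resist(e)=\resist'(e)$ need no argument). Choose $\delta$ with $0<\delta<\resist'(e_0)-\resist(e_0)$ and define $\resist''$ to agree with $\resist'$ on every edge except $e_0$, where $\resist''(e_0):=\resist(e_0)+\delta$. Then $\resist''$ is proper, and $\resist\leq\resist''\leq\resist'$ for every edge; applying the monotonicity inequality just established gives $\green(\Graph,\resist;\divD,\divD)\leq\green(\Graph,\resist'';\divD,\divD)\leq\green(\Graph,\resist';\divD,\divD)$, so $\green(\Graph,\resist'';\divD,\divD)=\green(\Graph,\resist';\divD,\divD)$. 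Now $\resist''\leq\resist'$ are both proper with $\resist''(e_0)\neq\resist'(e_0)$, so the equality clause of Proposition~\ref{proper-monotonic} forces no current to flow along $e_0$ in $(\Graph,\resist')$. Since $e_0$ was an arbitrary edge with $\resist(e_0)<\resist'(e_0)$, this proves the clause. The remaining verifications — that the perturbed functions and their sums are proper, and that the inequalities survive passage to the limit — are routine given Proposition~\ref{continuous}.
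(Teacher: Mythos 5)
Your proof is correct and takes essentially the same approach as the paper: the three inequalities follow from the proper case by continuity (Proposition~\ref{continuous}), and the equality clause is proved by sandwiching a proper resistance between $\resist$ and $\resist'$ and invoking Proposition~\ref{proper-monotonic}. The only difference is the choice of intermediate resistance — the paper uses $\rho=(\resist+\resist')/2$, which handles all edges simultaneously, whereas you perturb one edge at a time — but this is a cosmetic variation of the same idea.
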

\begin{proof}
 For homogeneity, the difference
 \[\green(\Graph,a\, \mu;\divD,\divE)-a \, \green(\Graph,\mu;\divD,\divE)\]
 vanishes for all $(a,\mu)$ in $\rr_{>0}\times\rr_{>0}^{\edges(\Graph)}$.  By continuity, then, it vanishes on the closure $\rr_{\geq 0}\times\rr_{\geq 0}^{\edges(\Graph)}$.  Similar arguments establish the concavity and monotonicity inequalities.
 
 Suppose that $(\Graph,\resist')$ is proper and $\resist\leq \resist'$.  Define a new resistance $\rho$ to be $(\resist+\resist')/2$; then $(\Graph,\rho)$ is also proper and $\resist\leq \rho\leq \resist'$.  By monotonicity, we have
 \[\green(\Graph,\resist;\divD,\divD)\leq \green(\Graph,\rho;\divD,\divD)\leq \green(\Graph,\resist';\divD,\divD),\]
 so if $\green(\Graph,\resist;\divD,\divD)=\green(\Graph,\resist';\divD,\divD)$ we must have $\green(\Graph,\rho;\divD,\divD)=\green(\Graph,\resist';\divD,\divD)$.  Then by Proposition \ref{proper-monotonic}, for each edge $e$ either $\rho(e)=\resist'(e)$, in which case $\resist(e)=\resist'(e)$, or no current flows through edge $e$ in $(\Graph,\rho)$ or $(\Graph,\resist')$.
\end{proof}

Our final result in this section is to prove the bound on the nonlinear part of $\green(\Graph, \cdot\,;\divD,\divE)$ asserted in Proposition~\ref{bound}:

\[ \left|\green\left(\Graph,\sum_{i=1}^n \resist_i;\divD,\divE\right)-\sum_{i=1}^n \green(\Graph,\resist_i;\divD,\divE)\right| \leq \|\divD\|\|\divE\| \min_{i\in\{1,\ldots,n\}} \sum_{j \neq i} |\mu_j|_1. \]

\begin{proof}[Proof of Proposition~\ref{bound}]
 We first prove the inequality in the case that $\divD=\divE$ and each $\resist_i$ takes only positive values.
 
 First consider the current assignment $\basis_k-\basis_\ell$.  
 In this case, no matter what the resistance function, the current flowing along any edge is between $-1$ and $1$ by Equation~\eqref{current-average}, since the latter expresses it as the weighted average of values in $\{-1,0,1\}$.  
 Therefore we have $I_\resist(e)^2\leq 1$ for every edge $e$ and every choice of resistance $\resist$.  
 A general zero-sum vector $\divD$ can be expressed as a nonnegative linear combination of vectors of the form $\basis_k-\basis_\ell$, with the total of all the coefficients equal to $\|\divD\|$.  
 Since the current along a given edge depends linearly on the vertex current assignment, we find that the currents are all bounded between $-\|\divD\|$ and $+\|\divD\|$, regardless of $\resist$.  
 In particular, we find that for all edges $e$ and resistances $\resist$, we have
 \[I_\resist(e)^2\leq \|\divD\|^2.\]
 Now set $\resist=\sum_{j=1}^n\resist_j$, and choose an arbitrary $i\in\{1,\dots,n\}$.  
 Then we have
 \begin{align*}
  \green(\Graph,\resist;\divD,\divD) &= \sum_{e\in\edges(\Graph)}\resist(e)I_{\resist}(e)^2\mathrlap{\text{ by Equation~\eqref{green-power}}}\\
  &\leq \sum_{e\in\edges(\Graph)}\resist(e)I_{\resist_i}(e)^2\text{ by Jeans' Least Power Theorem}\\
  &= \sum_{j=1}^n\sum_{e\in\edges(\Graph)}\resist_j(e) I_{\resist_i}(e)^2\\
  &= \sum_{e\in\edges(\Graph)}\resist_i(e) I_{\resist_i}(e)^2 + \sum_{j\neq i}\sum_{e\in\edges(\Graph)}\resist_j(e) I_{\resist_i}(e)^2\\
  &\leq \sum_{e\in\edges(\Graph)}\resist_i(e) I_{\resist_i}(e)^2 + \sum_{j\neq i}\sum_{e\in\edges(\Graph)}\resist_j(e)\|\divD\|^2\\
  &= \green(\Graph,\resist_i; \divD,\divD) + \|\divD\|^2\sum_{j\neq i}|\resist_j|_1.
 \end{align*}
Therefore we have 
\[\green(\Graph,\resist;\divD,\divD) - \sum_{j=1}^n \green(\Graph,\resist_j;\divD,\divD) \leq \green(\Graph,\resist;\divD,\divD) -  \green(\Graph,\resist_i;\divD,\divD)
 \leq\|\divD\|^2 \sum_{j\neq i}|\resist_j|_1,\]
and since this holds for each $i\in\{1,\dots,n\}$, we obtain
\[\green(\Graph,\resist;\divD,\divD)-\sum_{j=1}^n\green(\Graph,\resist_j;\divD,\divD)\leq \|\divD\|^2\min_{i\in\{1,\dots,n\}} \sum_{j\neq i}|\resist_j|_1.\]
 Since this holds for all $\resist_1,\dots,\resist_n\in\rr_{>0}^{\edges(\Graph)}$, by continuity it holds for improper resistances as well.
 
 Finally, we deduce from this the general case of the proposition. Note that for fixed $\Graph$ and $\resist_1,\dots,\resist_n\in\rr_{\geq 0}^{\edges(\Graph)}$, the function
 \[B:(\divD,\divE)\mapsto \green(\Graph,\resist;\divD,\divE)-\sum_{j=1}^n\green(\Graph,\resist_j;\divD,\divE)\]
 is a symmetric bilinear form that is positive semidefinite by the concavity inequality \eqref{concave-eq}, and we have proven that
 \[B(\divD,\divD)\leq c\|\divD\|^2\]
 with $c=\min_i\sum_{j\neq i}|\resist_j|_1$.
 By the Cauchy-Schwarz inequality, then, we obtain
 \[|B(\divD,\divE)|\leq\sqrt{B(\divD,\divD)}\sqrt{B(\divE,\divE)}\leq \sqrt{c\|\divD\|^2}\sqrt{c\|\divE\|^2}= c\|\divD\|\|\divE\|\]
 as desired.
\end{proof}

\vspace{0.5cm}

\noindent Address of the authors:\\ \\
Mathematical Institute  \\
Leiden University  \\
PO Box 9512  \\
2300 RA Leiden  \\
The Netherlands  \\ \\
Email: \verb+{bieselod,holmesdst,rdejong}@math.leidenuniv.nl+


\begin{thebibliography}{99}


\bibitem{blr} S. Bosch, W. L\"utkebohmert, M. Raynaud, \emph{N\'eron Models}. Ergebnisse der Mathematik und ihrer Grenzgebiete 3. Folge, vol. 21. Springer-Verlag 1990.

\bibitem{call} G. Call, \emph{Variation of local heights on an algebraic family of abelian varieties}. Th\'eorie des nombres (Quebec, PQ, 1987), 72--96, de Gruyter, Berlin, 1989.

\bibitem{de} P. Deligne, \emph{Le d\'eterminant de la cohomologie}. In: Current trends in arithmetical algebraic geometry (Arcata, Calif., 1985), Contemp. Math. 67 (1987), 93--177.

\bibitem{ldg} P. Deligne, \emph{Le lemme de Gabber}. In: S\'eminaire sur les pinceaux arithm\'etiques, Ast\'erisque 127 (1985), 131--150.

\bibitem{Diaz1991Strong-Bertini-} S. Diaz, D. Harbater, \emph{Strong Bertini theorems}. Trans. AMS 324 (1991), 73--86.

\bibitem{fc} G. Faltings, C. Chai, \emph{Degeneration of Abelian Varieties}. Ergebnisse der Mathematik und ihrer Grenzgebiete, 3. Folge, vol. 22, Springer Verlag, 1990.

\bibitem{gr} W. Green, \emph{Heights in families of abelian varieties}. Duke Math. J. 58 (1989), 617--632.

\bibitem{EGAIV3}
A. Grothendieck.
\newblock \'{E}l{\'e}ments de g{\'e}om{\'e}trie alg{\'e}brique. {IV}. \'{E}tude
  locale des sch{\'e}mas et des morphismes de sch{\'e}mas. {III}.
\newblock {\em Inst. Hautes {\'E}tudes Sci. Publ. Math.}, (28):255, 1966.

\bibitem{hainbiext} R. Hain, \emph{Biextensions and heights associated to curves of odd genus}. Duke Math. J. 61 (1990), 859--898.

\bibitem{hain_normal} R. Hain, \emph{Normal functions and the geometry of moduli spaces of curves}. In: G. Farkas and I. Morrison (eds.), Handbook of Moduli, Volume I.  Advanced Lectures in Mathematics, Volume XXIV, International Press, Boston, 2013.

\bibitem{ho1} D. Holmes. \emph{N\'eron models of jacobians over base schemes of dimension greater than~$1$}. Preprint, \verb+arxiv:1402.0647+.

\bibitem{ho2} D. Holmes, \emph{A N\'eron model of the universal jacobian}. Preprint, \verb+arxiv:1412.2243+. 

\bibitem{hdj} D. Holmes, R. de Jong, \emph{Asymptotics of the N\'eron height pairing}. Preprint, \verb+arxiv:1304.4768+. To appear in Math. Res. Lett.

\bibitem{je} J. Jeans, \emph{The Mathematical Theory of Electricity and Magnetism}. University Press, 1908.

\bibitem{alt} A. J. de Jong, \emph{Smoothness, semi-stability and alterations}. Inst. Hautes \'Etudes Sci. Publ. Math. 83 (1996), 51--93.

\bibitem{Laumon2000Champs-algebriq}
G. Laumon and L. Moret-Bailly. \emph{Champs alg{\'e}briques}. Ergebnisse der Mathematik und
  ihrer Grenzgebiete 3. Folge, vol. 39. Springer-Verlag, 2000.
  
\bibitem{lang} S. Lang, \emph{Fundamentals of Diophantine Geometry}. Springer Verlag 1983.

\bibitem{lear} D. Lear, \emph{Extensions of normal functions and asymptotics of the height pairing}. PhD thesis, University of Washington, 1990.

\bibitem{liu} Q. Liu, \emph{Algebraic Geometry and Arithmetic Curves}. Oxford University Press, 2006.

\bibitem{me} H.M. Melvin, \emph{On concavity of resistance functions}. J. Appl. Phys. 27 (1956), 658--659.

\bibitem{pinc} L. Moret-Bailly, \emph{Pinceaux de vari\'et\'es ab\'eliennes}. Ast\'erisque 129 (1985).

\bibitem{mb} L. Moret-Bailly, \emph{M\'etriques permises}. In: S\'eminaire sur les pinceaux arithm\'etiques, Ast\'erisque 127 (1985), 29--88.

\bibitem{ne} A. N\'eron, \emph{Mod\`eles minimaux des vari\'et\'es ab\'eliennes sur les corps locaux et globaux}. Inst. Hautes Etudes Sci. Publ. Math. 21 (1964). 

\bibitem{pearl} G. Pearlstein, \emph{$\mathrm{SL}_2$-orbits and degenerations of mixed Hodge structure}. J. Diff. Geometry 74 (2006), 1--67.

\bibitem{ray} M. Raynaud, \emph{Mod\`eles de N\'eron}.  C. R. Acad. Sci. Paris 
S\'er. A 262 (1966), 345--347.

\bibitem{sh} C. Shannon, D. Hagelbarger, \emph{Concavity of resistance functions}. J. Appl. Phys. 27 (1956), 42--43.

\bibitem{silv} J. Silverman, \emph{Heights and the specialization map for families of abelian varieties}. J. Reine Angew. Math. 342 (1983), 197--211.

\bibitem{tate} J. Tate, \emph{Variation of the canonical height of a point depending on a parameter}. Amer. Jnl. Math. 105 (1983), 287--294.

\bibitem{zh} S. Zhang, \emph{Admissible pairing on a curve}. Invent. Math. 112 (1993), 171--193.


\end{thebibliography}
\end{document}